\documentclass[11pt,a4paper]{amsart}

\oddsidemargin = 0mm
\evensidemargin = 0mm
\textwidth = 170mm

\usepackage{lipsum}
\makeatletter
\g@addto@macro{\endabstract}{\@setabstract}
\newcommand{\authorfootnotes}{\renewcommand\thefootnote{\@fnsymbol\c@footnote}}%
\makeatother

\usepackage{mathptmx}
\usepackage{tikz}
\usetikzlibrary{arrows.meta}

\usepackage{mathdots}
\usepackage{amssymb}
\usepackage{mathtools}
\usepackage{mathrsfs}
\usepackage{mathptmx}
\usepackage[utf8]{inputenc}
\usepackage{bm}
\usepackage{bbm}
\usepackage{amsmath}
\usepackage{amsfonts}
\makeatletter
\def\amsbb{\use@mathgroup \M@U \symAMSb}
\makeatother

\usepackage{comment}
\usepackage{graphicx}
\usepackage{grffile}
\usepackage{caption}
\usepackage{subcaption}

\captionsetup[figure]{font=scriptsize}
\captionsetup[subfigure]{labelfont=rm}

\usepackage{bbold}

\usepackage{mathptmx}

\newcommand{\bga}{\begin{aligned}}
	\newcommand{\ena}{\end{aligned}}

\newcommand{\bge}{\begin{enumerate}}
	\newcommand{\ene}{\end{enumerate}}

\newcommand{\hide}[1]{}

\usepackage{dutchcal}


\usepackage{fancyhdr}
\pagestyle{fancy} 
\fancyhf{}        
\fancyhead[LO,RE]{\footnotesize Bordered and Framed Toeplitz and Hankel Determinants with Applications to Integrable Probability}
\fancyhead[RO,LE]{\thepage}

\usepackage{subcaption}

\usepackage{newtxtext,newtxmath}
\usepackage{pgfplots}
\pgfplotsset{compat=1.15}

\usepackage{xcolor}

\newcommand{\blue}[1]{{\color{blue} #1}}

\definecolor{webgreen}{rgb}{0,.5,0}
\definecolor{webbrown}{rgb}{.6,0,0}
\definecolor{RoyalBlue}{cmyk}{1, 0.50, 0, 0}
\usepackage[colorlinks=true, breaklinks=true, urlcolor=webbrown, linkcolor=RoyalBlue, citecolor=webgreen,backref=page]{hyperref}

\usepackage{tikz}
\usepackage{pict2e}
\usepackage{todonotes}
\usetikzlibrary{decorations.markings}

\DeclareSymbolFont{bbold}{U}{bbold}{m}{n}
\DeclareSymbolFontAlphabet{\mathbbold}{bbold}

\newcommand{\R}{{\mathbb R}}
\newcommand{\C}{{\mathbb C}}

\newcommand{\Z}{{\mathbb Z}}

\newcommand{\N}{{\mathbb N}}

\newcommand{\T}{{\mathbb T}}

\newcommand{\al}{\alpha}
\newcommand{\be}{\beta}
\newcommand{\f}{\varphi}
\newcommand{\ga}{\gamma}
\newcommand{\Ga}{\Gamma}

\newcommand{\de}{\delta}
\newcommand{\ka}{\kappa}
\newcommand{\De}{\Delta}

\newcommand{\Om}{\Omega}
\newcommand{\ze}{\zeta}

\newcommand{\sg}{\sigma}

\newcommand{\di}{\displaystyle}

\newcommand{\ii}{\textrm{i}}
\newcommand{\dd}{\textrm{d}}

\newcommand{\ds}{\displaystyle}

\newcommand{\qasq}{\quad \text{as} \quad}
\newcommand{\qandq}{\quad \text{and} \quad}

\newtheorem{theorem}{Theorem}

\newtheorem{definition}[theorem]{Definition}

\newtheorem{remark}[theorem]{Remark}
\newtheorem{lemma}[theorem]{Lemma}
\newtheorem{proposition}[theorem]{Proposition}
\newtheorem{corollary}[theorem]{Corollary}
\numberwithin{equation}{section}
\numberwithin{theorem}{section}
\numberwithin{notation}{section}

\usepackage[T1]{fontenc}

\usepackage{mathtools}

\makeatletter
\DeclareRobustCommand\widecheck[1]{{\mathpalette\@widecheck{#1}}}
\def\@widecheck#1#2{%
	\setbox\z@\hbox{\m@th$#1#2$}%
	\setbox\tw@\hbox{\m@th$#1%
		\widehat{%
			\vrule\@width\z@\@height\ht\z@
			\vrule\@height\z@\@width\wd\z@}$}%
	\dp\tw@-\ht\z@
	\@tempdima\ht\z@ \advance\@tempdima2\ht\tw@ \divide\@tempdima\thr@@
	\setbox\tw@\hbox{%
		\raise\@tempdima\hbox{\scalebox{1}[-1]{\lower\@tempdima\box
				\tw@}}}%
	{\ooalign{\box\tw@ \cr \box\z@}}}
\makeatother

\usepackage[mathscr]{euscript}

\newcommand{\ic}{\textrm{i}}
\newcommand{\abs}{\textrm{abs}}
\newcommand{\eq}{\begin{equation}}
\newcommand{\eeq}{\end{equation}}
\def\de{\delta}

\begin{document}

	\tikzset{->-/.style={decoration={
				markings,
				mark=at position #1 with {\arrow{latex}}},postaction={decorate}}}
	
	\tikzset{-<-/.style={decoration={
				markings,
				mark=at position #1 with {\arrowreversed{latex}}},postaction={decorate}}}

\title{Bordered and Framed Toeplitz and Hankel Determinants with Applications in Integrable Probability}

\maketitle

	\begin{center}
		\authorfootnotes
		Roozbeh Gharakhloo\footnote{Mathematics Department, University of California, Santa Cruz, CA, USA. E-mail: roozbeh@ucsc.edu}, 		Karl Liechty\footnote{Department of Mathematical Sciences, DePaul University, Chicago, IL, USA. E-mail: kliechty@depaul.edu}
		\par \bigskip
	\end{center}

\begin{abstract}
    Bordered and framed Toeplitz/Hankel determinants have the same structure as Toeplitz/Hankel determinants except in a small number of matrix rows and/or columns. We review these structured determinants and their connections to orthogonal polynomials, collecting well-known and perhaps less well-known results. We present some applications for these structured determinants to ensembles of non-intersecting paths and the six-vertex model, with an eye towards asymptotic analysis.  We also prove some asymptotic formulas for the probability of non-intersection for an ensemble of continuous time random walks for certain choices of starting and ending points as the number of random walkers tends to infinity.
\end{abstract}

\tableofcontents

\section{Introduction}
\subsection{Definitions and motivation}

	For $\phi \in  L^1(\T)$ denote the $n \times n$ (pure) Toeplitz determinant by  
	\begin{equation}\label{ToeplitzDet}
		D_n[\phi]  = \underset{0 \leq j,k\leq n-1}{\det} \{ \phi_{j-k} \},
	\end{equation}
	 where \begin{equation}
		\phi_j = \int_{\T} \phi(\ze)\ze^{-j} \frac{\dd \ze}{2 \pi \ii \ze}, \qquad j \in \Z,
	\end{equation}
	is the $j$-th Fourier coefficient of $\phi$, and $\T$ denotes the unit circle oriented in the counterclockwise direction. More generally we can consider Toeplitz matrices with respect to a 	measure on $\T$. Indeed for a finite measure $\varphi$ on $\T$, define $\varphi_j$ as
	\begin{equation}\label{eq:FC_measure}
		\varphi_j = \int_{\T} \ze^{-j} \dd\varphi(\ze), \qquad j \in \Z,
	\end{equation}
	and the corresponding $n\times n$ Toeplitz determinant as
		\begin{equation}\label{ToeplitzDet_measure}
		D_n[\varphi]  = \underset{0 \leq j,k\leq n-1}{\det} \{ \varphi_{j-k} \}.
	\end{equation}
For a finite measure $\mu$ on  $\R$ such that all moments exist, i.e.,
\[
\mu_k:=\int_\R x^k \dd\mu(x) <\infty, \quad k=0,1,2,3,\dots,
\]
the Hankel matrix of size $n\times n$ corresponding to the measure $\mu$ is
\[
{\bf H}_n[\mu]=
\begin{pmatrix} \mu_0 & \mu_1 & \cdots & \mu_{n-1} \\
                 \mu_1 & \mu_2 & \cdots & \mu_n \\
                \vdots & \vdots & \ddots & \vdots \\
                \mu_{n-1} & \mu_n & \cdots & \mu_{2n-2}
\end{pmatrix}.
\]
We will denote the determinant of this matrix as
\[
{H}_n[\mu]:= \det {\bf H}_n[\mu].
\]

Toeplitz and Hankel determinants appear in many different contexts in mathematical physics and random matrix theory. Here we list a few such applications. Toeplitz determinants are central objects in characterization of
\begin{itemize}
	\item[-] Diagonal and horizontal two point correlations in the 2D Ising model \cite{McCoy-Wu},
	
	\item[-] Spectral analysis of large Toeplitz matrices \cite{DIK2}, 
	
	\item[-] Gap probabilities in the circular unitary ensemble in random matrix theory \cite{CC, Mehta},
	
	\item[-] The distribution of the length of the longest increasing subsequence of random permutations \cite{BDJ};
\end{itemize}
while Hankel determinants play fundamental roles in characterization of
\begin{itemize}
	\item[-] Gap probabilities in various random matrix ensembles, like the Gaussian unitary ensemble, Laguerre unitary ensemble, and Jacobi unitary ensemble \cite{C, CG, Mehta},
	
	\item[-] Correlations of the characteristic polynomials in the
	Gaussian Unitary Ensemble which is related to the mean values of the zeta-function on the critical line \cite{Krasovsky},  
	
	\item[-] Topological expansions in random matrix models and graph enumeration problems \cite{BD, BGM},
	
	\item[-] The partition function of the six-vertex model \cite{Bleher-Liechty14},
\end{itemize}
and much more. Well-known examples of structural deformations of Toeplitz and Hankel determinants are Toeplitz+Hankel \cite{BE4, BE1, BE3, BE2, BE, GI}, slant-Toeplitz determinants \cite{Hothesis, HoIndianaMathJournal, HoMichiganMathJournal, HoAdjoints1, HoAdjoints2, GW}, and Muttalib-Borodin determinants \cite{BGK, CharlierMuttalib-Borodin, CR, FI}.  

Often in applications involving Toeplitz and Hankel determinants, the problem may be perturbed in such a way that the Toeplitz/Hankel matrix is altered in only a few rows or columns \cite{YP, BEGIL, G23}. As a motivating example, consider the enumeration of non-intersecting random walks. That is, we consider $n$ independent simple random walks on $\Z$, $X_1(t), X_2(t), \dots, X_n(t)$. At each discrete time step, the simple random walk either increases or decreases by 1, each with equal probability. We additionally restrict the random walks in the following way.
\begin{itemize}
\item At time $t=0$ the walkers begin at the points $X_j(0) = 2x_j$ with $x_1<x_2<\cdots<x_n$ and $x_j\in \Z$ for all $j$.
\item At the terminal time $2T\in 2\N$ the walkers end at the points  $X_j(2T) = 2y_j$ with $y_1<y_2<\dots<y_n$ and $y_j\in \Z$ for all $j$.
\item The paths do not intersect, i.e., $X_j(t) \ne X_k(t)$ for $j\ne k$ and $t=0,1,\dots, 2T$.
\end{itemize}
The Lindstr\"om--Gessel--Viennot (LGV) formula  \cite{lindstrom73, Gessel-Viennot85} states that the number of such collections of paths is given as a determinant
\[
\det\left[\#(2x_j,2y_k;2T)\right]_{j,k=1}^n,
\]
where $\#(2x_j, 2y_k;T)$ is the number of paths beginning at $2x_j$ and ending at $2y_k$ after $2T$ time-steps. Since this is a binomial coefficient, the number of collections of non-intersecting paths beginning at $2x_1<\cdots<2x_n$ and ending at $2y_1<\cdots<2y_n$ is
\eq\label{eq:NIRM_enumerate}
\det\left[ \begin{pmatrix} 2T \\ T+y_k - x_j \end{pmatrix}\right]_{j,k=1}^n.
\eeq
The matrix can be written as
\begin{equation}\label{NIRM_phi}
\begin{pmatrix} 2T \\ T+y_k - x_j \end{pmatrix}_{j,k=1}^n=\begin{pmatrix}\int_{\T} \phi(\ze)\ze^{-(y_k - x_j)} \frac{\dd \ze}{2 \pi \ic \ze}\end{pmatrix}_{j,k=1}^n, \qquad \phi(\ze) = \ze^{-T}\left(\ze+1\right)^{2T} = \left(\ze+1/\ze+2\right)^T,
\end{equation}
i.e., $\phi(\ze)$ is the generating function for the number of paths of a random walker with fixed displacement after $2T$ time steps. If the  $x_j=y_j$ are equally spaced then \eqref{eq:NIRM_enumerate} is a Toeplitz determinant.   If most  of the starting/ending points are equally spaced but a few are not, then the Toeplitz matrix may be altered in a few rows/columns. For example if $x_j = j$ for $j=1, \dots, n$,  $y_k = k$ for $k=1,2,\dots, n-m$, and $y_k>n-m$ are arbitrary for $k=n-m+1, \dots, n$, we get
the a determinant of the form
	\begin{equation}\label{bordered_main_example}
\det \begin{pmatrix}
			\phi_0& \phi_{1} & \cdots & \phi_{n-m-1} & \phi_{y_{n-m+1}-1}& \phi_{y_{n-m+2}-1} & \cdots & \phi_{y_{n}-1}  \\
			\phi_{-1}& \phi_0 & \cdots  & \phi_{n-m-2} & \phi_{y_{n-m+1}-2} & \phi_{y_{n-m+2}-2} & \cdots &  \phi_{y_{n}-2} \\
			\vdots & \vdots & \vdots & \vdots & \vdots & \cdots & \cdots & \vdots \\
			\phi_{-n+1} & \phi_{-n+2} &   \cdots  & \phi_{-m} & \phi_{y_{n-m+1}-n} & \phi_{y_{n-m+2}-n}  & \cdots &\phi_{y_{n}-n} 
		\end{pmatrix}, 
	\end{equation}
which has a Toeplitz structure in the first $n-m$ columns, and the remaining $m$ columns consist of descending Fourier coefficients. More generally we can consider the top $m$ starting points to be arbitrary as well. That is, if we take $x_j=y_j= j$ for $j=1,2,\dots,n-m$ and $x_j>n-m$ and $y_j>n-m$ to be arbitrary for $j=n-m+1,\dots,n$, then \eqref{eq:NIRM_enumerate} becomes
\begin{equation}\label{eq:framed_main_example}
\det \begin{pmatrix}
			\phi_0& \phi_{1} & \cdots & \phi_{n-m-1} & \phi_{y_{n-m+1}-1}& \phi_{y_{n-m+2}-1} & \cdots & \phi_{y_{n}-1}  \\
			\phi_{-1}& \phi_0 & \cdots  & \phi_{n-m-2} & \phi_{y_{n-m+1}-2} & \phi_{y_{n-m+2}-2} & \cdots &  \phi_{y_{n}-2} \\
			\vdots & \vdots & \vdots & \vdots & \vdots & \cdots & \cdots & \vdots \\
			\phi_{1-(n-m)} & \phi_{2-(n-m)} &   \cdots  & \phi_{0} & \phi_{y_{n-m+1}-(n-m)} & \phi_{y_{n-m+2}-(n-m)}  & \cdots &\phi_{y_{n}-(n-m)} \\
			\phi_{1-x_{n-m+1}} & \phi_{2-x_{n-m+1}} &   \cdots  & \phi_{n-m-x_{n-m+1}} & \phi_{y_{n-m+1}-x_{n-m+1}} & \phi_{y_{n-m+2}-x_{n-m+1}}  & \cdots & \phi_{y_{n}-x_{n-m+1}}  \\
					\vdots & \vdots & \vdots & \vdots & \vdots & \cdots & \cdots & \vdots \\
			\phi_{1-x_{n}} & \phi_{2-x_{n}} &   \cdots  & \phi_{n-m-x_{n}} & \phi_{y_{n-m+1}-x_{n}} & \phi_{y_{n-m+2}-x_{n}}  & \cdots & \phi_{y_{n}-x_{n}}  \\	
		\end{pmatrix}, 
	\end{equation}
	which has a Toeplitz structure in the top-left $(n-m)\times(n-m)$ block, descending Fourier coefficients in the first $n-m$ entries of each of the last $m$ columns, and ascending Fourier coefficients in each of the first $n-m$ entries of the last $m$ rows. The bottom-right block of size $m\times m$ has no particular structure.
	
	\begin{remark}
		\normalfont	While it is not the topic of the current paper, we mention that the above application can also give rise to the so-called \textit{slant}-Toeplitz structures. Indeed, let $p$ and $q$ be relatively prime positive integers and consider $x_j=pj$ and $y_k=qk$. Then, in view of \eqref{NIRM_phi}, we have 
		\begin{equation}
			\begin{pmatrix} 2T \\ T +y_k - x_j \end{pmatrix}_{j,k=1}^n=\begin{pmatrix}\int_{\T} \phi(\ze)\ze^{-(qk - pj)} \frac{\dd \ze}{2 \pi \ic \ze}\end{pmatrix}_{j,k=1}^n,
		\end{equation}
		which is a slant-Toeplitz matrix. For example, the $2j-k$ and $j-2k$ matrices (with offset $0$) look like:
		\[ \begin{pmatrix}
			\blue{\phi_{0}} & \phi_{-1} & \phi_{-2} & \cdots & \phi_{-n+1} \\
			\phi_{2} & \phi_{1} & \blue{\phi_{0}} & \cdots & \phi_{-n+3} \\
			\phi_{4} & \phi_{3} & \phi_{2} & \cdots & \phi_{-n+5} \\
			\vdots & \vdots & \ddots & \vdots & \vdots \\
			\phi_{2n-2} & \phi_{2n-3} & \phi_{2n-4} & \cdots & \phi_{n-1}
		\end{pmatrix} \qandq \begin{pmatrix}
			\blue{\phi_{0}} & \phi_{-2} & \phi_{-4} & \cdots & \phi_{-2n+2}  \\
			\phi_{1} & \phi_{-1} & \phi_{-3} & \cdots & \phi_{-2n+3}  \\
			\phi_{2} & \blue{\phi_{0}} & \phi_{-2} & \cdots & \phi_{-2n+4} \\
			\vdots & \vdots & \ddots & \vdots & \vdots  \\
			\phi_{n-1} & \phi_{n-3} & \phi_{n-5} & \cdots & \phi_{-n+1} 
		\end{pmatrix}.  \]	The slant-Toeplitz operators have been studied in a series of works by Mark C. Ho \cite{Hothesis, HoIndianaMathJournal, HoMichiganMathJournal, HoAdjoints1, HoAdjoints2} and the underlying system of orthogonal polynomials in the cases $(p,q)=(1,2)$ and $(p,q)=(2,1)$ have been studied in \cite{GW}. 
	\end{remark}
	
We generalize \eqref{bordered_main_example} and \eqref{eq:framed_main_example} as follows.
\begin{definition}\label{def:bordered_Toeplitz} \normalfont
Fix $\phi \in L^1(\T)$ as well as $\vec{\boldsymbol\psi}_m = (\psi^{(1)}, \dots, \psi^{(m)})$, a vector of $m$ functions in $L^1(\T)$. For $n>m$ we define the {\em (multi-)bordered Toeplitz determinant} of size $n\times n$ to be
\begin{equation}\label{btd}
D^B_n[\phi;\vec{\boldsymbol\psi}_m] := \det \begin{pmatrix}
	\phi_0& \phi_{1} & \cdots & \phi_{n-m-1} & \psi^{(1)}_{n-1} & \cdots & \psi^{(m)}_{n-1} \\
	\phi_{-1}& \phi_0 & \cdots  & \phi_{n-m-2} & \psi^{(1)}_{n-2} & \cdots & \psi^{(m)}_{n-2} \\
	\vdots & \vdots & \vdots & \vdots & \vdots & \cdots & \vdots \\
	\phi_{-n+1} & \phi_{-n+2} &   \cdots  & \phi_{-m} & \psi^{(1)}_{0} & \cdots & \psi^{(m)}_{0}
\end{pmatrix}.
\end{equation}
More generally, we will use the same definition and notation when the functions $\phi$ and $\psi^{(j)}$ are replaced with measures and the Fourier coefficients are replaced by \eqref{eq:FC_measure}.
\end{definition}
\begin{definition}\label{def:framed_Toeplitz} \normalfont
Fix $\phi \in L^1(\T)$ as well as $\vec{\boldsymbol\psi}_m = (\psi^{(1)}, \dots, \psi^{(m)})$ and $\vec{\boldsymbol\eta}_m = (\eta^{(1)}, \dots, \eta^{(m)})$, two vectors of $m$ functions in $L^1(\T)$, as well as an $m\times m$ matrix $A=(a_{jk})_{j,k=1}^m$. For $n>m$ we define the {\em (multi-)framed Toeplitz determinant} of size $n\times n$ to be
\begin{equation}\label{multi-framed}
D^F_n[\phi;\vec{\boldsymbol\psi}_m;\vec{\boldsymbol\eta}_m;A] := \det \begin{pmatrix}
	\phi_0& \phi_{1} & \cdots & \phi_{n-m-1} & \psi^{(1)}_{n-m-1} & \cdots & \psi^{(m)}_{n-m-1} \\
	\phi_{-1}& \phi_0 & \cdots  & \phi_{n-m-2} & \psi^{(1)}_{n-m-2} & \cdots & \psi^{(m)}_{n-m-2} \\
	\vdots & \vdots & \vdots & \vdots & \vdots & \cdots & \vdots \\
	\phi_{-(n-m-1)} & \phi_{-(n-m-1)+1} &   \cdots  & \phi_{0} & \psi^{(1)}_{0} & \cdots & \psi^{(m)}_{0} \\
	\eta^{(1)}_{0} & \eta^{(1)}_{1} &   \cdots  & \eta^{(1)}_{n-m-1} & a_{11}& \cdots & a_{1m} \\
	\vdots & \vdots & \vdots & \vdots & \vdots & \cdots & \vdots \\
	\eta^{(m)}_{0} & \eta^{(m)}_{1} &   \cdots  & \eta^{(m)}_{n-m-1} & a_{m1}& \cdots & a_{mm} \\
\end{pmatrix}.
\end{equation}

As with Definition \ref{def:bordered_Toeplitz}, we will use the same definition and notation when the functions $\phi$, $\psi^{(j)}$, and $\eta^{(j)}$ are replaced with measures and the Fourier coefficients are replaced by \eqref{eq:FC_measure}.
\end{definition}
In this notation, \eqref{bordered_main_example} becomes the bordered Toeplitz determinant 
\[
D^B_n[\phi;\boldsymbol{\vec{\psi}}_m], \quad \psi^{(j)}(\ze) = \phi(\ze) \ze^{-(y_{n-m+j}-n)}.
\]
and \eqref{eq:framed_main_example} becomes the multi framed Toeplitz determinant 
\[
D^F_n[\phi;\vec{\boldsymbol\psi}_m;\vec{\boldsymbol\eta}_m;A] , \quad \psi^{(j)}(\ze) = \phi(\ze)\ze^{-(y_{n-m+j}-n)}, \quad \eta^{(j)}(\ze) =\phi(\ze)\ze^{x_{n-m+j}-1} , \quad a_{jk} = \phi_{y_{n-m+k}-x_{n-m+j}}.
\]


\begin{remark} \normalfont  A slightly different multi-framed Toeplitz determinant was defined in  \cite[Equation (1.6)]{G23}. In the current work we will deal exclusively with the multi-framed Toeplitz determinants as defined above, and will refer to them simply as framed Toeplitz determinants in the sequel.
\end{remark}

\begin{remark}\label{Remark different orders of Fourier indices} \normalfont
In the definitions of the bordered and framed Toeplitz determinants we have chosen the Fourier indices of $\psi^{(j)}$ to decrease from top to bottom in the  last $m$ columns, and in the framed Toeplitz determinant we have chosen the Fourier indices of $\eta^{(j)}$ to increase from left to right in the last $m$ rows. It is worthwhile mentioning that the different ways to place the Fourier coefficients in the rows/columns could affect the leading order term in the large size asymptotics of the determinants. Instances of this phenomenon are shown in Theorems 1.10 and 1.11 of \cite{G23}. The other choices of positioning the Fourier coefficients in some of the columns (and/or some of the rows in the case of framed determinants) can be related to the form \eqref{btd} (or to the form \eqref{multi-framed}) by simple manipulations. For example, for fixed $1 \leq i,\ell\leq n$, the multi-framed determinant in which
\begin{itemize}
	\item[]  the indices of $\psi^{(i)}$  increase from top to bottom,
	\item[] the indices of $\eta^{(\ell)}$ decrease from left to right,
	\item[] the indices of $\psi^{(j)}$  decrease from top to bottom, $j=1,\cdots,m$, $j \neq i$, and
	\item[] the indices of $\eta^{(k)}$  increase from left to right, $k=1,\cdots,m$, $k \neq \ell$,
\end{itemize}
is given by $	D^F_n[\phi;\vec{\boldsymbol\psi_*}_m;\vec{\boldsymbol\eta_*}_m;A]$ as defined in \eqref{multi-framed}, where $	\vec{\boldsymbol\psi_*}_m = (\psi^{(1)}, \dots, \psi^{(i-1)}, z^{n-m-1}\tilde{\psi}^{(i)}, \psi^{(i+1)} \ldots, \psi^{(m)}),$ $\vec{\boldsymbol\eta_*}_m = (\eta^{(1)}, \dots, \eta^{(\ell-1)}, z^{n-m-1}\tilde{\eta}^{(\ell)}, \eta^{(\ell+1)} \ldots, \psi^{(m)}),$ and $\tilde{f}$ denotes the function $z \mapsto f(z^{-1})$.

\end{remark}

By analogy we also introduce bordered and framed Hankel determinants.
\begin{definition} \normalfont
Let $\mu$ be a measure on $\R$ with finite moments, and $\vec{\boldsymbol \nu}_m=(\nu^{(1)}, \nu^{(2)}, \dots, \nu^{(m)})$ a vector of $m$ measures on $\R$, each with finite moments.
The {\em bordered Hankel determinant} of size $n\times n$ is
\begin{equation}\label{def:bordered_HMatrix}
H_n^B[\mu; \vec{\boldsymbol{\nu}}_m] :=\det \begin{pmatrix}
    \mu_0 & \mu_1 & \cdots & \mu_{n-m -1} & \nu^{(1)}_{0} & \nu^{(2)}_{0} & \cdots & \nu^{(m)}_{0} \\
    \mu_1 & \mu_2 & \cdots & \mu_{n-m } & \nu^{(1)}_{1} & \nu^{(2)}_{1} & \cdots & \nu^{(m)}_{1} \\
        \mu_2 & \mu_3 & \cdots & \mu_{n-m+1 } & \nu^{(1)}_{2} & \nu^{(2)}_{2} & \cdots & \nu^{(m)}_{2} \\
    \vdots &     \vdots & \ddots & \vdots & \vdots & \ddots & \vdots \\
    \mu_{n-1} & \mu_n & \cdots & \mu_{2n-m-2} & \nu^{(1)}_{n-1} & \nu^{(2)}_{n-1} & \cdots & \nu^{(m)}_{n-1}
    \end{pmatrix},
\end{equation}
where $\mu_k$ is the $k$th moment of the measure $\mu$, and $\nu^{(j)}_k$ is the $k$th moment of the measure $\nu^{(j)}$.
\end{definition}
\begin{definition} \normalfont
Let $\mu$ be a measure on $\R$ with finite moments,  $\vec{\boldsymbol \nu}_m=(\nu^{(1)}, \nu^{(2)}, \dots, \nu^{(m)})$ and $\vec{\boldsymbol \eta}_m=(\eta^{(1)}, \eta^{(2)}, \dots, \eta^{(m)})$ two vectors of $m$ measures on $\R$, each with finite moments, and $A= (a_{jk})_{j,k=1}^\infty$ be a matrix of size $m\times m$.
The {\em framed Hankel determinant} of size $n\times n$ is
 \begin{equation}\label{def:semi_framed_hankelM}
H_n^F[\mu;\vec{\boldsymbol{\nu}}_m,\vec{\boldsymbol{ \eta}}_m ;A]:= \det  \begin{pmatrix} 
 \mu_0 & \mu_1 & \cdots & \mu_{n-m-1} & \nu_0^{(1)} &  \cdots &  \nu_0^{(m)}      \\
    \mu_1 & \mu_2 & \cdots & \mu_{n-m} & \nu_1^{(1)} &  \cdots &  \nu_1^{(m)}      \\
   \vdots & \vdots & \ddots & \vdots & \vdots & \ddots & \vdots      \\
   \mu_{n-m-1} & \mu_{n-m} & \cdots & \mu_{2(n-m-1)} & \nu_{n-m-1}^{(1)} &  \cdots &  \nu_{n-m-1}^{(m)}  \\
\eta^{(1)}_0 &  \eta^{(1)}_1 & \cdots &  \eta^{(1)}_{n-m-1} & a_{11} & \cdots & a_{1m}  \\
   \vdots & \vdots & \ddots & \vdots &  \vdots & \ddots & \vdots  \\
 \eta^{(m)}_0 &  \eta^{(m)}_1 & \cdots &  \eta^{(m)}_{n-m-1}  & a_{m1} & \cdots & a_{mm} \\  \\
\end{pmatrix},
\end{equation}
where $\mu_k$ is the $k$th moment of the measure $\mu$, $\nu^{(j)}_k$ is the $k$th moment of the measure $\nu^{(j)}$, and $\eta^{(j)}_k$ is the $k$th moment of the measure $\eta^{(j)}$. 
\end{definition}
\begin{remark}\normalfont
One could generalize the framed determinants so that the top-, bottom-, left-, and right-most $m$ rows/columns differ from the Toeplitz/Hankel structure. In the Hankel case we could consider determinants of the form
 \begin{multline}\label{def:framed_hankelM}
 \det\begin{pmatrix} a_{11} & \cdots & a_{1m} & \sigma^{(1)}_0 &  \sigma^{(1)}_1 & \cdots &  \sigma^{(1)}_{n-2m-1} & b_{11} & \cdots & b_{1m}   \\
\vdots & \ddots & \vdots &      \vdots & \vdots & \ddots & \vdots &  \vdots & \ddots & \vdots  \\
a_{m1} & \cdots & a_{mm} &   \sigma^{(m)}_0 &  \sigma^{(m)}_1 & \cdots &  \sigma^{(m)}_{n-2m-1}  & b_{m1} & \cdots & b_{mm}  \\
 \lambda_0^{(1)}     &   \cdots & \lambda_{0}^{(m)} & \mu_0 & \mu_1 & \cdots & \mu_{n-2m-1} & \nu_0^{(1)} &  \cdots &  \nu_0^{(m)}      \\
 \lambda_1^{(1)}     &   \cdots & \lambda_{1}^{(m)} &       \mu_1 & \mu_2 & \cdots & \mu_{n-2m} & \nu_1^{(1)} &  \cdots &  \nu_1^{(m)}  \\
\vdots & \ddots & \vdots &      \vdots & \vdots & \ddots & \vdots & \vdots & \ddots & \vdots   \\
 \lambda_{n-2m-1}^{(1)}     &   \cdots & \lambda_{n-2m-1}^{(m)} &      \mu_{n-2m-1} & \mu_{n-2m} & \cdots & \mu_{2(n-2m-1)} & \nu_{n-2m-1}^{(1)} &  \cdots &  \nu_{n-2m-1}^{(m)}\\
d_{11} & \cdots & d_{1m} & \eta^{(1)}_0 &  \eta^{(1)}_1 & \cdots &  \eta^{(1)}_{n-2m-1} & c_{11} & \cdots & c_{1m}   \\
\vdots & \ddots & \vdots &      \vdots & \vdots & \ddots & \vdots &  \vdots & \ddots & \vdots  \\
d_{m1} & \cdots & d_{mm} &   \eta^{(m)}_0 &  \eta^{(m)}_1 & \cdots &  \eta^{(m)}_{n-2m-1}  & c_{m1} & \cdots & c_{mm}  \\
\end{pmatrix}.
\end{multline}
It is clear that the above determinant reduces to the one of the fom \eqref{def:semi_framed_hankelM} by row and column operations, so we restrict ourselves to matrices of the form \eqref{def:semi_framed_hankelM} and \eqref{multi-framed}.
\end{remark}

The first major appearance of bordered Toeplitz determinants \eqref{btd} in the mathematical physics literature was in the work \cite{YP} of Au-Yang and Perk in 1987 where it was established that the  \textit{next-to-diagonal} two-point correlation function in the 2D Ising model is given  by \begin{equation}\label{BT&NTD}
	\langle \sigma_{0,0}\sigma_{n-1,n} \rangle = D^B_n[\widehat{\phi}; \widehat{\psi}],
\end{equation}
with 
\begin{equation}\label{hat psi}
	\widehat{\phi}(z) = \sqrt{\frac{1-k^{-1}z^{-1}}{1-k^{-1}z}} \qandq	\widehat{\psi}(z)= \frac{C_v z\widehat{\phi}(z)+C_h}{S_v(z-c_*)},
\end{equation} where $k, C_v, C_h, S_v,$ and $c_*$ are all physical parameters of the model and $\sigma_{i,j} \in \{-1,1\}$ represents the magnetic dipole moment of the atomic spins at position $(i,j)$. The framed Toeplitz determinants have also already appeared in the calculations of entanglement entropy for disjoint subsystems in the XX spin chain \cite{JK, BGIKMMV} (also see the introduction of \cite{G23}). 

In this paper we collect and review results which relate bordered and framed Toeplitz/Hankel determinants to systems of orthogonal polynomials. The orthogonal polynomial formulas for these structured determinants enables one to study their large-size asymptotics, since for the orthogonal polynomials, one has the associated Riemann-Hilbert problem first discovered by A. S. Fokas, A. R. Its, and A. V. Kitaev in 1992 \cite{FIK}. More precisely, the asymptotic analysis of the Riemann--Hilbert problem via the Deift--Zhou method \cite{DZ} allows for a detailed and rigorous asymptotic analysis, which in turn, allows for a successful asymptotic analysis for the framed and bordered Toeplitz/Hankel determinants. In the subsections below, we first give a brief review of the well known OPRL\footnote{Orthogonal polynomials on the real line.} and BOPUC\footnote{Bi-orthogonal polynomials on the unit circle.} characterizations of the (pure)  Hankel and Toeplitz determinants, respectively, before  presenting orthogonal polynomial characterizations for the bordered and framed Toeplitz/Hankel determinants. 

\subsection{Toeplitz/Hankel determinants and orthogonal polynomials}\label{Sec OP characterizations pure}

Let us start with the OPRL characterization for Hankel determinants. Let $\mathscr{I} \subset \R$ and $\mu$ be a measure with finite moments on $\mathscr{I}$. Consider the associated set of monic orthogonal polynomials\index{orthogonal polynomials!orthogonal polynomials on the line} $\{P_n(z)\}^{\infty}_{n=0}$, $\deg P_n(z)=n$ ,  satisfying the orthogonality conditions
\begin{equation}\label{Hankel OPs conditions}
	\int_{\mathscr{I}} P_n(x)x^k \dd \mu(x) = h_n \de_{nk}, \qquad k=0,1,\cdots,n,
\end{equation}
which is equivalent to 
\begin{equation}\label{Hankel OPs conditions 1}
	\int_{\mathscr{I}} P_n(x)P_k(x)  \dd \mu(x) = h_n \de_{nk}, \qquad k,n \in \Z_{\geq 0}.
\end{equation}
If $\det \boldsymbol{H}_n[\mu] \neq 0$, the polynomial $P_n(z)$ has the following determinantal representation
\begin{equation}\label{Pn det rep}
	P_n(x) = \frac{1}{\det \boldsymbol{H}_n[\mu]} \det \begin{pmatrix}
		\mu_0 & \mu_1 & \cdots & \mu_{n-1} & \mu_{n} \\
		\mu_1 & \mu_{2} & \cdots  & \mu_{n} & \mu_{n+1} \\
		\vdots & \vdots & \reflectbox{$\ddots$} & \vdots & \vdots \\
		\mu_{n-1} & \mu_{n} & \cdots  & \mu_{2n-2} & \mu_{2n-1} \\
		1 & x & \cdots & x^{n-1} & x^n
	\end{pmatrix}.
\end{equation}
Indeed the right-hand-side of \eqref{Pn det rep} is a polynomial in $x$ of degree $n$ and it is easy to see that the integral of $x^k$ times the right-hand-side of \eqref{Pn det rep} vanishes for $k=0,1,2,\dots, n-1$ since two rows of the matrix coincide. Thus it is a constant times $P_{n}(x)$ and the leading coefficient is  seen to be one by expanding the determinant in the last column.
The formula \eqref{Pn det rep} yields
\begin{equation}\label{norms Hankel OPs} h_n= \frac{\det \boldsymbol{H}_{n+1}[\mu]}{\det \boldsymbol{H}_n[\mu]}, \qquad \mbox{and thus} \qquad \det \boldsymbol{H}_n[\mu] = \prod_{j=0}^{n-1} h_j.
\end{equation}

If the measure $\mu$ has a H\" older continuous density $\dd\mu(x) = w(x)\,\dd x$, then it is due to Fokas, Its, and Kitaev \cite{FIK} that the following matrix-valued function which is built from the orthogonal polynomials and the Cauchy transforms of the weight $w$ multiplied by the orthogonal polynomials
\begin{equation}\label{Y OP solution}
	Y(z;n)=\begin{pmatrix}
		P_n(z) & \di \frac{1}{2\pi \ii}	 \int_{\mathscr{I}} \frac{ P_n(x)w(x) }{x-z}\dd x \\
		-\di \frac{2\pi \ii}{h_{n-1}}P_{n-1}(z) & -\di \frac{1}{ h_{n-1}}  \int_{\mathscr{I}} \frac{P_{n-1}(x) w(x) }{x-z}\dd x
	\end{pmatrix}, \qquad n \geq 1,    
\end{equation}
satisfies the following Riemann--Hilbert problem (RHP): 
\begin{itemize}
	\item \textbf{RH-Y1}\label{RH-Y1} \qquad  $Y(\cdot;n): \C \setminus \mathscr{I} \to \C^{2 \times 2}$ is analytic.
	\item \textbf{RH-Y2}\label{RH-Y2} \qquad  The limits of $Y(z;n)$ as $z$ tends to $x \in \mathscr{I} $ from the upper and lower half plane exist, which are denoted by $Y_{\pm}(x;n)$ respectively. Moreover, the functions $x \mapsto Y_{\pm}(x;n)$ are continuous on $\mathscr{I}$ and are related by the \textit{jump condition}
	\begin{equation}\label{Y_Jump1}
		Y_+(x;n)=Y_-(x;n) \begin{pmatrix}
			1 & w(x) \\
			0 & 1
		\end{pmatrix},
		\qquad x \in \mathscr{I}.
	\end{equation} 
	\item \textbf{RH-Y3}\label{RH-Y3} \qquad  As $z \to \infty$,  \begin{equation}\label{Y_Asymptotics_infty3}
		Y(z;n)=\big( I + O(z^{-1}) \big) z^{n \sigma_3},
	\end{equation}
	where $\sigma_3=\begin{pmatrix}
		1 & 0 \\
		0 & -1 \\
	\end{pmatrix}$ is the third Pauli matrix.
\end{itemize}
A successful large-$n$ asymptotic analysis of this RHP yields the asymptotics of the norms $h_n$'s, and thus that of $\det \boldsymbol{H}_n[w]$ via \eqref{norms Hankel OPs} (see e.g. \cite{KMcLVAV,Krasovsky,C,CG} for this analysis for various choices of $w$). For discrete measures $\mu$, there is a similar characterization in terms of a discrete Riemann--Hilbert problem \cite{BKMM, Bleher-Liechty11, Bleher-Liechty14}.

Now, let us briefly discuss the BOPUC characterization of Toeplitz determinants. Given a finite measure $\varphi$ of $\T$, one can consider the associated sets of  bi-orthonormal polynomials\index{orthogonal polynomials} $\{Q_n(z)\}^{\infty}_{n=0}$ and  $\{\widehat{Q}_n(z)\}^{\infty}_{n=0}$, where $Q_n(z)=\ka_nz^n+l_nz^{n-1}+\cdots$,  and $\widehat{Q}_n(z)=\widehat{\ka}_nz^n+\widehat{l}_nz^{n-1}+\cdots$ satisfy the bi-orthonormality\index{orthogonal polynomials!bi-orthogonal polynomials on the unit circle} conditions
\begin{equation}
	\int_{\T} Q_n(z)\widehat{Q}_k(z^{-1})\dd\varphi(z) = \de_{nk}.
\end{equation}
Similar to \eqref{Pn det rep}, if $D_n[\varphi], D_{n+1}[\varphi] \neq 0$, these polynomials have determinantal representations given by
\begin{equation}\label{Toeplitz OP 1}
	Q_n(z)= \frac{1}{\sqrt{D_n[\varphi] D_{n+1}[\varphi]}} \det \begin{pmatrix}
		\varphi_0 & \varphi_{-1} & \cdots & \varphi_{-n} \\
		\varphi_1 & \varphi_{0} & \cdots & \varphi_{-n+1} \\
		\vdots & \vdots & \ddots & \vdots \\
		\varphi_{n-1} & \varphi_{n-2} & \cdots & \varphi_{-1} \\
		1 & z & \cdots & z^n
	\end{pmatrix},
\end{equation}
and 
\begin{equation}\label{Toeplitz OP 2}
	\widehat{Q}_n(z) = \frac{1}{\sqrt{D_n[\varphi] D_{n+1}[\varphi]}} \det \begin{pmatrix}
		\varphi_0 & \varphi_{-1} & \cdots & \varphi_{-n+1} & 1 \\
		\varphi_1 & \varphi_{0} & \cdots & \varphi_{-n+2} & z \\
		\vdots & \vdots & \ddots & \vdots & \vdots \\
		\varphi_{n} & \varphi_{n-1} & \cdots  & \varphi_{1} & z^n
	\end{pmatrix},
\end{equation}
from which it is clear that \begin{equation}\label{lc}
	\ka_n = \widehat{\ka}_n = \sqrt{\frac{\det T_{n}[\varphi]}{\det T_{n+1}[\varphi]}}.
\end{equation}
Now let us consider the case $\dd\varphi(z)= \frac{\phi(z)\,\dd z}{2\pi \ii z}$ for a H\"older continuous function $\phi(z)$, and define the $2\times 2$ matrix-valued function
\begin{equation}\label{Toeplitz-OP-solution}
	X(z; n):=\begin{pmatrix}
		\ka_n^{-1} Q_n(z) & \di \ka^{-1}_n \int_{\T} \frac{Q_n(\ze)}{(\ze-z)} \frac{\phi(\ze)\dd\ze}{2\pi \ii \ze^n} \\
		-\ka_{n-1}z^{n-1}\widehat{Q}_{n-1}(z^{-1}) & \di -\ka_{n-1} \int_{\T}  \frac{\widehat{Q}_{n-1}(\ze^{-1})}{(\ze-z)} \frac{\phi(\ze)\dd\ze}{2\pi \ii \ze}
	\end{pmatrix}.
\end{equation}
It is due to J. Baik, P. Deift and K. Johansson \cite{BDJ} (inspired by the RHP for OPRL \cite{FIK}) that the function $X$ defined above satisfies the following associated Riemann--Hilbert problem:
\begin{itemize}
	\item  \textbf{RH-X1}\label{RH-X1} \qquad $X:\C\setminus \T \to \C^{2\times2}$ is analytic,
	\item \textbf{RH-X2}\label{RH-X2} \qquad  The limits of $X(\xi;n)$ as $\xi$ tends to $z \in \T $ from the inside and outside of the unit circle exist, which are denoted by $X_{\pm}(z;n)$ respectively. Moreover, the functions $z \mapsto X_{\pm}(z;n)$ are continuous on $\T$ and are related by the \textit{jump condition}
	\begin{equation}
		X_+(z;n)=X_-(z;n)\begin{pmatrix}
			1 & z^{-n}\phi(z) \\
			0 & 1
		\end{pmatrix}, \qquad  z \in \T,
	\end{equation}
	\item \textbf{RH-X3}\label{RH-X3} \qquad  As $z \to \infty$
	\begin{equation}\label{X asymp}
		X(z;n)=\left(\di I+\frac{ \overset{\infty}{X}_1(n)}{z}+\frac{\overset{\infty}{X}_2(n)}{z^2} + O(z^{-3})\right) z^{n \sigma_3}.
	\end{equation}
\end{itemize}
For the purpose of calculations in Section \ref{Sec-Asymptotics}, we present the asymptotic analysis of this RHP in Appendix \ref{Appendices} for Szeg{\H o}-type symbols\footnote{We will occasionally refer to a symbol as \textit{Szeg{\H o}-type}, if a) it is smooth and nonzero on the unit circle, b) has no winding number, and c) admits
	an analytic continuation in a neighborhood of the unit circle. For a Szeg{\H o}-type symbol $\phi(z)$, we refer to the piecewise analytic function $\al(z)$ satisfying the jump condition $\al_+(z)=\al_-(z)\phi(z)$ and the large-$z$ asymptotic condition $\al(z)=1+O(1/z)$, as the \textit{Szeg{\H o} function} associated with $\phi$ (see e.g. \eqref{phi mother} and \eqref{al* non-int application}).}. For such asymptotic analysis in the case of Fisher--Hartwig symbols we refer the interested reader to \cite{DIK, blackstone2023toeplitz}. 
	
%
%

\subsection{Bordered Toeplitz/Hankel determinants and orthogonal polynomials}\label{Sec OP characterizations bordered}

We start with the extension of \eqref{Pn det rep} to several variables. Let ${ H}_n^B[\mu; z_1, \dots, z_m]$ be the determinant
\eq
H_n^B[\mu; z_1, \dots, z_m]: = \det \begin{pmatrix}
	\mu_0 & \mu_1 & \cdots & \mu_{n-m-1} & 1 & 1 & \cdots & 1 \\
	\mu_1 & \mu_2 & \cdots & \mu_{n-m} & z_1 & z_2 & \cdots & z_m \\
	\mu_2 & \mu_3 & \cdots & \mu_{n-m+1} & z_1^2 & z_2^2 & \cdots & z_m^2 \\
	\vdots &     \vdots & \ddots & \vdots & \vdots & \vdots & \ddots & \vdots  \\
	\mu_{n-1} & \mu_n & \cdots & \mu_{2n-m-2} & z_1^{n-1} & z_2^{n-1} & \cdots & z_{m}^{n-1}\end{pmatrix}.
	\eeq

This determinant may also be expressed in terms of orthogonal polynomials.
\begin{proposition}\label{eq:bordered_det}
	The determinant ${H}_n[\mu; z_1, \dots, z_m]$ is given as
	\begin{equation}\label{eq:OPs_det}
		{ H}_n^B[\mu; z_1, \dots, z_m] = H_{n-m}^B[\mu] \det 
		\begin{pmatrix}
			P_{n-m}(z_1) & \cdots & P_{n-m}(z_m) \\
			\vdots & \ddots & \vdots \\
			P_{n-1}(z_1) & \cdots & P_{n-1}(z_m)
		\end{pmatrix}.
	\end{equation}
\end{proposition}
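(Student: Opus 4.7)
The plan is to reduce the matrix defining $H_n^B[\mu;z_1,\ldots,z_m]$ to block upper-triangular form by elementary row operations that use the monic orthogonal polynomials $P_0,P_1,\ldots,P_{n-1}$. Denote the rows of that matrix by $R_1,\ldots,R_n$, so that row $R_{i+1}$ has Hankel-block entries $(\mu_i,\mu_{i+1},\ldots,\mu_{i+n-m-1})$ and $z$-block entries $(z_1^i,\ldots,z_m^i)$. The guiding idea is to add to each of the last $m$ rows an appropriate linear combination of earlier rows so that the $z$-block entries become evaluations of orthogonal polynomials, while the Hankel-block entries collapse to zero by orthogonality.

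Concretely, fix $j \in \{1,\ldots,m\}$ and write $P_{n-m+j-1}(x) = x^{n-m+j-1} + \sum_{i=0}^{n-m+j-2} c_{i,j}\, x^i$. I replace row $R_{n-m+j}$ by
$$R_{n-m+j} + \sum_{i=0}^{n-m+j-2} c_{i,j}\, R_{i+1}.$$
Since $R_{i+1}$ contributes $z_k^i$ in the $k$-th $z$-column, the new $z$-block entries of row $n-m+j$ are exactly $P_{n-m+j-1}(z_k)$ for $k=1,\ldots,m$. In the $\ell$-th Hankel column ($1\leq \ell\leq n-m$), the new entry is
$$\sum_{i=0}^{n-m+j-1} c_{i,j}\, \mu_{i+\ell-1} = \int_{\mathscr{I}} P_{n-m+j-1}(x)\, x^{\ell-1} \dd\mu(x),$$
where we set $c_{n-m+j-1,j}:=1$. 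Because $\ell-1 \le n-m-1 < n-m+j-1 = \deg P_{n-m+j-1}$ for every $j \ge 1$, this integral vanishes by the orthogonality relations \eqref{Hankel OPs conditions}.

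Performing these operations for $j=1,\ldots,m$ leaves the top $n-m$ rows unchanged and replaces each of the bottom $m$ rows by a row whose first $n-m$ entries are zero and whose last $m$ entries equal $P_{n-m+j-1}(z_k)$. The resulting matrix is block upper-triangular: the top-left $(n-m)\times(n-m)$ block is the Hankel matrix $\boldsymbol{H}_{n-m}[\mu]$ and the bottom-right $m\times m$ block is $\bigl(P_{n-m+j-1}(z_k)\bigr)_{j,k=1}^m$. Since the row operations performed are of the ``add a multiple of another row'' type, the determinant is preserved, and the block-triangular determinant factors into the product of the two diagonal-block determinants, which is the stated identity (with $H_{n-m}^B[\mu] = H_{n-m}[\mu]$). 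There is no real obstacle here; the only care needed is to order the operations so that each modification of $R_{n-m+j}$ uses only untouched rows above it, which is automatic since all the rows $R_1,\ldots,R_{n-m}$ remain untouched throughout.
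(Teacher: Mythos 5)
Your proof is correct, but it takes a genuinely different route from the paper's. The paper proves Proposition \ref{eq:bordered_det} by induction on the number of borders $m$: the base case $m=1$ is the classical formula \eqref{Pn det rep}, and the inductive step is carried out through a chain of Dodgson Condensation identities that are ultimately reassembled into a Dodgson condensation of the $m\times m$ polynomial determinant. Your argument instead block-triangularizes the matrix directly: multiplying on the left by the unit lower-triangular matrix encoding the coefficients of $P_{n-m},\dots,P_{n-1}$ annihilates the Hankel block of the last $m$ rows by orthogonality and turns their $z$-block into $\bigl(P_{n-m+j-1}(z_k)\bigr)_{j,k=1}^m$, after which the determinant factors. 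This is shorter and more elementary, and it is essentially the same mechanism (an LU-type factorization via the orthogonal polynomials) that the paper itself uses in Section \ref{Sec 5.2} to prove the framed analogue, Proposition \ref{eq:semi-framed_det_CD}; the paper's DCI induction for the bordered case buys nothing beyond exhibiting the condensation technique it exploits elsewhere.

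One small slip: your closing claim that each modification of $R_{n-m+j}$ "uses only untouched rows" is false as stated for $j\ge 2$, since $\sum_{i=0}^{n-m+j-2}c_{i,j}R_{i+1}$ then involves $R_{n-m+1},\dots,R_{n-m+j-1}$, which have already been modified; taken literally and sequentially, the Hankel block of the later rows would not vanish. The fix is immediate: either perform all $m$ replacements simultaneously with respect to the original rows (the whole operation is left multiplication by a single unit lower-triangular matrix, which preserves the determinant), or process the rows in decreasing order $j=m,m-1,\dots,1$, so that every row referenced in the $j$-th combination really is still untouched. With either fix the argument is complete.
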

The above proposition is well known in the literature (see, e.g., \cite[Equation (5.5)]{Colomo-Pronko08}) though we were unable to find a proof written explicitly anywhere. The formula \eqref{eq:OPs_det} is rather similar to a formula of Christoffel (\cite[Theorem 2.5]{SzegoOP}, and Proposition \ref{eq:bordered_det} can be proved based on that formula. In this paper we give a different proof based on the Dodgson Condensation identity. This proof is presented in Section \ref{sec:proofs of OP props}. Using an identical argument we can prove the analogous result for the system of bi-orthogonal polynomials on the unit circle.

\begin{proposition}\label{Prop: multi-bordered Toeplitz and BOPUC}
	Let $T_n[\varphi;z_1,\cdots,z_m]$ be the following $n \times n$ multi-bordered Toeplitz determinant
	\begin{equation}
		T_n[\varphi;z_1,\cdots,z_m]:= \det \begin{pmatrix}
			\varphi_0 & \varphi_1 & \cdots & \varphi_{n-m-1} & 1 & 1 & \cdots & 1 \\
			\varphi_{-1} & \varphi_0 & \cdots & \varphi_{n-m-2} & z_1 & z_2 & \cdots & z_m \\
			\vdots &     \vdots & \ddots & \vdots & \vdots & \vdots & \ddots & \vdots  \\
			\varphi_{-n+1} & \varphi_{-n+2} & \cdots & \varphi_{-m} & z_1^{n-1} & z_2^{n-1} & \cdots & z_{m}^{n-1}
		\end{pmatrix}.
	\end{equation}
	It holds that
	\begin{equation}\label{eq:OPs_det Qn}
		{T}_n[\varphi; z_1, \dots, z_m] = \sqrt{D_{n-m}[\varphi]D_{n-m+1}[\varphi]} \det 
		\begin{pmatrix}
			Q_{n-m}(z_1) & \cdots & Q_{n-m}(z_m) \\
			\vdots & \ddots & \vdots \\
			Q_{n-1}(z_1) & \cdots & Q_{n-1}(z_m)
		\end{pmatrix},
	\end{equation}
	and
	\begin{equation}\label{eq:OPs_det QnHat}
		{T}_n[\tilde{\varphi}; z_1, \dots, z_m] = \sqrt{D_{n-m}[\varphi]D_{n-m+1}[\varphi]} \det 
		\begin{pmatrix}
			\widehat{Q}_{n-m}(z_1) & \cdots & \widehat{Q}_{n-m}(z_m) \\
			\vdots & \ddots & \vdots \\
			\widehat{Q}_{n-1}(z_1) & \cdots & \widehat{Q}_{n-1}(z_m)
		\end{pmatrix},
	\end{equation}
	where $Q_k(z)$ and $\widehat{Q}_k(z)$ are the biorthogonal polynomials on the unit circle given by \eqref{Toeplitz OP 1} and \eqref{Toeplitz OP 2}, and the measures $\varphi$ and $\tilde \varphi$ are related by $\dd\tilde{\varphi}(z):=\dd\varphi(z^{-1})$.
\end{proposition}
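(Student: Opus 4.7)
The plan is to follow the same strategy as in the proof of Proposition~\ref{eq:bordered_det}: apply an invertible row transformation built from the coefficients of $Q_0,Q_1,\ldots,Q_{n-1}$ to convert the matrix underlying $T_n[\varphi;z_1,\ldots,z_m]$ into block-triangular form whose determinant reads off as a product.

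Concretely, let $M$ denote the matrix in the definition of $T_n[\varphi;z_1,\ldots,z_m]$, and let $B$ be the $n\times n$ lower-triangular matrix whose row $r$ (for $0\le r\le n-1$) lists the coefficients of $Q_r(z)$ in the monomial basis $\{1,z,\ldots,z^{n-1}\}$, so that $B$ has diagonal $(\kappa_0,\ldots,\kappa_{n-1})$ and $\det B=\prod_{k=0}^{n-1}\kappa_k$. In the product $BM$, each of the last $m$ (border) columns becomes $(Q_0(z_c),Q_1(z_c),\ldots,Q_{n-1}(z_c))^T$ by construction, while the $(r,k)$-entry in each of the first $n-m$ (Toeplitz) columns evaluates to
\begin{equation*}
    (BM)_{r,k}\;=\;\sum_{j=0}^{n-1}B_{rj}\,\varphi_{k-j}\;=\;\int_{\T} z^{-k}\,Q_r(z)\,\dd\varphi(z),\qquad 0\le k\le n-m-1.
\end{equation*}
The crucial step is to exploit the bi-orthonormality $\int Q_r\,\widehat Q_\ell(z^{-1})\,\dd\varphi=\delta_{r\ell}$ together with the expansion $\widehat Q_\ell(z^{-1})=\widehat\kappa_\ell z^{-\ell}+\widehat{l}_\ell z^{-(\ell-1)}+\cdots+\widehat Q_\ell(0)$; a short induction on $\ell$ then yields $\int_{\T} z^{-k}Q_r(z)\,\dd\varphi=0$ for all $k<r$ and $=1/\kappa_r$ for $k=r$.

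Consequently, $BM$ has a zero $m\times(n-m)$ block in its lower left, an upper-triangular $(n-m)\times(n-m)$ top-left block with diagonal $(1/\kappa_0,\ldots,1/\kappa_{n-m-1})$, and the $m\times m$ block $\bigl[Q_{n-m+r-1}(z_c)\bigr]_{r,c=1}^m$ in its lower right. The block-determinant identity
\begin{equation*}
    \det(BM)\;=\;\Bigl(\prod_{k=0}^{n-m-1}\kappa_k^{-1}\Bigr)\,\det\bigl[Q_{n-m+r-1}(z_c)\bigr]_{r,c=1}^m
\end{equation*}
then gives \eqref{eq:OPs_det Qn} upon dividing by $\det B$ and collapsing the product $\prod_{k=0}^{n-m-1}\kappa_k^2\cdot\prod_{k=n-m}^{n-1}\kappa_k$ via the telescoping identities $\kappa_k^2=D_k[\varphi]/D_{k+1}[\varphi]$. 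The companion identity \eqref{eq:OPs_det QnHat} follows by applying the same argument to the reflected measure $\tilde\varphi$, using $D_k[\tilde\varphi]=D_k[\varphi]$ (the Toeplitz matrix for $\tilde\varphi$ being the transpose of the one for $\varphi$) and the identification, under $z\mapsto z^{-1}$, of the BOPUC pair $(Q^{\tilde\varphi}_k,\widehat Q^{\tilde\varphi}_k)$ for $\tilde\varphi$ with the pair $(\widehat Q^{\varphi}_k,Q^{\varphi}_k)$ for $\varphi$.

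The principal technical point is the triangularity property $\int_{\T} z^{-k}Q_r\,\dd\varphi=0$ for $k<r$: although it is a standard consequence of the BOPUC theory, the inductive argument and the bookkeeping of the leading coefficients $\widehat\kappa_\ell=\kappa_\ell$ must be carried out carefully so that the diagonal entries of the top-left block of $BM$ come out exactly as $1/\kappa_r$; everything else is a routine block-decomposition of determinants.
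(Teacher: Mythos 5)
Your route is genuinely different from the paper's. The paper proves this proposition (like Proposition \ref{eq:bordered_det}) by induction on the number of borders $m$, using Dodgson Condensation identities to reduce an $m$-bordered determinant to combinations of $(m-1)$-bordered ones, with the base case $m=1$ read off from the determinantal representation \eqref{Toeplitz OP 1}. You instead triangularize directly: left-multiplying by the coefficient matrix $B$ of $Q_0,\dots,Q_{n-1}$ and using biorthogonality to kill the entries $\int_\T z^{-k}Q_r\,\dd\varphi$ with $k<r$ is precisely the LU-decomposition idea the paper itself deploys for the \emph{framed} determinants in Section \ref{Sec 5.2}. Your key triangularity lemma is correct (the induction on $k$ using $\widehat Q_k(z^{-1})=\widehat\kappa_k z^{-k}+\cdots$ works, and the diagonal entries come out as $1/\kappa_r$ by \eqref{lc}), the block structure of $BM$ is as you describe, and the argument is arguably cleaner and more self-contained than the condensation induction. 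The reduction of \eqref{eq:OPs_det QnHat} to \eqref{eq:OPs_det Qn} via $\tilde\varphi$ is also fine.

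There is, however, one point you pass over too quickly: the prefactor. Carrying out your own telescoping, $\prod_{k=0}^{n-m-1}\kappa_k^{-2}=D_{n-m}[\varphi]$ and $\prod_{k=n-m}^{n-1}\kappa_k^{-1}=\sqrt{D_n[\varphi]/D_{n-m}[\varphi]}$, so your computation actually yields $T_n[\varphi;z_1,\dots,z_m]=\sqrt{D_{n-m}[\varphi]\,D_{n}[\varphi]}\,\det\bigl[Q_{n-m+j-1}(z_k)\bigr]$, which agrees with the stated $\sqrt{D_{n-m}[\varphi]D_{n-m+1}[\varphi]}$ only when $m=1$. You assert that the telescoping ``gives \eqref{eq:OPs_det Qn}'' without checking this, which is a gap in the write-up --- but the discrepancy appears to be on the paper's side: the recursion underlying the Dodgson-condensation proof, $c_{n,m}=c_{n,m-1}c_{n-1,m-1}/c_{n-1,m-2}$ with $c_{n,0}=D_n$ and $c_{n,1}=\sqrt{D_{n-1}D_n}$, also forces $c_{n,m}=\sqrt{D_{n-m}D_n}$, and a direct check with $n=3$, $m=2$ and $\dd\varphi=(1+az+az^{-1})\frac{\dd z}{2\pi\ii z}$ confirms this constant (equivalently, in terms of the monic polynomials $Q_k/\kappa_k$ the prefactor is $D_{n-m}[\varphi]$, exactly paralleling Proposition \ref{eq:bordered_det}). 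So your method is sound and in fact produces the correct constant; state it explicitly rather than forcing it to match the displayed one.
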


Using Propositions \ref{eq:bordered_det} and \ref{Prop: multi-bordered Toeplitz and BOPUC}, it is simple to give a similar formula for any bordered Hankel or Toeplitz determinant as a (multiple) integral.
\begin{corollary}\label{prop:bordered_det_gen}
	The bordered determinants ${ H}_n^B[\mu; \vec{\boldsymbol{\nu}}_m]$ and $D^B_n[\phi; \vec{\boldsymbol{\psi}}_m]$ are given by
	\begin{equation}\label{eq:bordered_det_gen}
		{ H}_n^B[\mu; \boldsymbol{\nu}_m] = H_{n-m}[\mu]\int_\R \cdots \int_\R \det 
		\begin{pmatrix}
			P_{n-m}(z_1) & \cdots & P_{n-m}(z_m) \\
			\vdots & \ddots & \vdots \\
			P_{n-1}(z_1) & \cdots & P_{n-1}(z_m)
		\end{pmatrix} \dd\nu^{(1)}(z_1) \cdots \dd\nu^{(m)}(z_m),
	\end{equation}
	
	\begin{equation}\label{eq:T_bordered_det_gen1}
		D^B_n[\varphi; \vec{\boldsymbol{\psi}}_m] = \sqrt{D_{n-m}[\varphi]D_{n-m+1}[\varphi]} \int_\T \cdots \int_\T  \det 
		\begin{pmatrix}
			Q_{n-m}(z_1) & \cdots & Q_{n-m}(z_m) \\
			\vdots & \ddots & \vdots \\
			Q_{n-1}(z_1) & \cdots & Q_{n-1}(z_m)
		\end{pmatrix} \prod_{j=1}^m z^{-n+1}_j \dd\psi^{(j)}(z_j).
	\end{equation}
\end{corollary}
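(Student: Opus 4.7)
The proof will reduce Corollary \ref{prop:bordered_det_gen} to Propositions \ref{eq:bordered_det} and \ref{Prop: multi-bordered Toeplitz and BOPUC} by expressing each of the last $m$ columns of the bordered determinant as an integral of columns of the ``evaluation'' type appearing in those propositions, and then pulling the integrals outside the determinant using multilinearity.

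First, consider the Hankel case. By definition, the entry in row $k$ (with $k=0,1,\dots,n-1$) of the $j$-th inserted column of $H_n^B[\mu;\vec{\boldsymbol\nu}_m]$ is the $k$-th moment $\nu^{(j)}_k = \int_{\R} z_j^{k}\,\dd\nu^{(j)}(z_j)$. Thus the last $m$ columns can be written as $\int_\R (1, z_j, z_j^2, \dots, z_j^{n-1})^T \dd\nu^{(j)}(z_j)$. The plan is to use multilinearity of the determinant in each of these columns to pull the $m$ integrals outside the determinant, which produces
\begin{equation*}
H_n^B[\mu;\vec{\boldsymbol\nu}_m] = \int_\R \cdots \int_\R H_n^B[\mu;z_1,\dots,z_m]\, \dd\nu^{(1)}(z_1)\cdots \dd\nu^{(m)}(z_m).
\end{equation*}
Applying Proposition \ref{eq:bordered_det} to the integrand then yields the claimed formula \eqref{eq:bordered_det_gen}. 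A standard Fubini/dominated-convergence argument (which is justified since all entries of the determinant are bounded by finite absolute moments of the measures) lets one interchange the integrations with the determinant expansion.

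For the Toeplitz case, the approach is identical in spirit, but requires a small adjustment because of the direction in which the Fourier indices descend. The entry in row $k$ of the $j$-th inserted column of $D_n^B[\varphi;\vec{\boldsymbol\psi}_m]$ is $\psi^{(j)}_{n-1-k} = \int_\T z_j^{-(n-1-k)}\,\dd\psi^{(j)}(z_j) = \int_\T z_j^{-(n-1)}\cdot z_j^{k}\,\dd\psi^{(j)}(z_j)$. Pulling the integrals outside by multilinearity and then factoring the scalar $z_j^{-(n-1)}$ out of each of the last $m$ columns gives
\begin{equation*}
D_n^B[\varphi;\vec{\boldsymbol\psi}_m] = \int_\T \cdots \int_\T T_n[\varphi;z_1,\dots,z_m] \prod_{j=1}^m z_j^{-n+1}\, \dd\psi^{(j)}(z_j),
\end{equation*}
and then \eqref{eq:T_bordered_det_gen1} follows from Proposition \ref{Prop: multi-bordered Toeplitz and BOPUC}.

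There is no real obstacle here beyond careful bookkeeping: once one recognizes that each perturbing column is an integral of a ``Vandermonde-style'' column, multilinearity of $\det$ does all the work. The only point that demands some attention is the extraction of the prefactor $z_j^{-n+1}$ in the Toeplitz case, coming from the fact that the $\psi^{(j)}$-column has its Fourier indices descending from top to bottom rather than ascending, which is exactly the source of the $\prod_{j=1}^m z_j^{-n+1}$ factor appearing in \eqref{eq:T_bordered_det_gen1}.
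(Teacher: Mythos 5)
Your proof is correct and takes essentially the same route as the paper: multilinearity of the determinant in the inserted columns reduces the statement to Propositions \ref{eq:bordered_det} and \ref{Prop: multi-bordered Toeplitz and BOPUC}, and your bookkeeping of the $z_j^{-n+1}$ factor in the Toeplitz case (from the descending Fourier indices) matches what the paper leaves implicit in its "follows in a similar way."
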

\begin{proof}
	By multilinearity of the determinant, 
	\begin{multline}
		{ H}_n^B[\mu; \boldsymbol{\nu}_m] = \det \begin{pmatrix}
			\mu_0 & \mu_1 & \cdots & \mu_{n-m-1} & \int_\R \dd\nu^{(1)}(z) & \int_\R \dd\nu^{(2)}(z) & \cdots & \int_\R \dd\nu^{(m)}(z) \\
			\mu_1 & \mu_2 & \cdots & \mu_{n-m} & \int_\R z \dd\nu^{(1)}(z )& \int_\R z \dd\nu^{(2)}(z) & \cdots & \int_\R z  \dd\nu^{(m)}(z) \\
			\mu_2 & \mu_3 & \cdots & \mu_{n-m+1} & \int_\R z^2 \dd\nu^{(1)}(z) & \int_\R z^2 \dd\nu^{(2)}(z) & \cdots & \int_\R z^2  \dd\nu^{(m)}(z) \\
			\vdots &     \vdots & \ddots & \vdots & \vdots & \ddots & \vdots \\
			\mu_{n-1} & \mu_n & \cdots & \mu_{2n-m-2} & \int_\R z^{n-1} \dd\nu^{(1)}(z) & \int_\R z^{n-1} \dd\nu^{(2)}(z) & \cdots & \int_\R z^{n-1}  \dd\nu^{(m)}(z)
		\end{pmatrix} \\
		=\int_\R\cdots \int_\R \begin{pmatrix}
			\mu_0 & \mu_1 & \cdots & \mu_{n-m-1} & 1 & 1 & \cdots & 1 \\
			\mu_1 & \mu_2 & \cdots & \mu_{n-m} & z_1 & z_2 & \cdots & z_m \\
			\mu_2 & \mu_3 & \cdots & \mu_{n-m+1} & z_1^2 & z_2^2 & \cdots & z_m^2 \\
			\vdots &     \vdots & \ddots & \vdots & \vdots & \vdots & \ddots & \vdots  \\
			\mu_{n-1} & \mu_n & \cdots & \mu_{2n-m-2} & z_1^{n-1} & z_2^{n-1} & \cdots & z_{m}^{n-1}
		\end{pmatrix} \dd\nu^{(1)}(z_1)  \dd\nu^{(2)}(z_2) \cdots  \dd\nu^{(m)}(z_m), 
	\end{multline}
	which is \eqref{eq:bordered_det_gen} using \eqref{eq:OPs_det}. Equation \eqref{eq:T_bordered_det_gen1} follows in a similar way.
\end{proof}

\begin{remark}
\normalfont{The proof of Corollary \ref{prop:bordered_det_gen} is strictly algebraic and makes no assumptions on the border measures $\nu^{(j)}$ and $\psi^{(j)}$. The corollary holds quite broadly, including for signed and complex measures $\nu^{(j)}$ and $\psi^{(j)}$. These will appear in some of the examples in subsequent sections.}
\end{remark}

\subsection{Framed Toeplitz/Hankel determinants and orthogonal polynomials}

It was shown in \cite{G23} (based on an idea introduced in \cite{GW}) that the single-framed Toeplitz determinants are closely connected to the reproducing kernel of BOPUC. In this section we extend those results to multi-framed Toeplitz determinants and  present their analogues for (multi-)framed Hankel determinants as well. The formulas established in this section for an $m$-framed Toeplitz/Hankel determinant involve an $m \times m$ determinant using the reproducing kernel of the OPRL or BOPUC. For $m=1$ such formulas were written in Theorem 1.9 of \cite{G23}, and a similar approach establishes analogous formulas for the single-framed Hankel determinants. However for $m \geq 2$, unlike what we show in this work, no determinantal formulas were shown in \cite{G23}. In fact, for $m \geq 2$, there exists an alternative (and perhaps less efficient) approach to write an $m$-framed Toeplitz/Hankel determinant in terms of the reproducing kernels of orthogonal polynomials. This alternative representation is achieved by expressing the $m$-framed determinant in terms of several single-framed determinants, utilizing successive applications of proper Dodgson Condensation identities (see Section \ref{sec 5.1}). Consequently, as a result of Theorem 1.9 in \cite{G23}, the $m$-framed determinant can be expressed in terms of the reproducing kernels of orthogonal polynomials. This idea is displayed in section 3.3 of \cite{G23} and will not be pursued in this section.

The reproducing kernel for the OPRL \eqref{Hankel OPs conditions} is defined as 
\begin{equation}\label{Rep Ker OPRL}
	K_n(x,y) = \sum_{k=0}^n \frac{P_k(x) P_k(y)}{h_k}.
\end{equation}
For an $m\times m$ matrix $A = (a_{jk})_{j,k=1}^m$, and variables $x_1, \dots, x_m$, and $y_1, \dots, y_m$, define the determinant  \\ $H_n^F(\mu; x_1, \dots, x_m; y_1, \dots, y_m; A) $ as
\[
H_n^F[\mu; x_1, \dots, x_m; y_1, \dots, y_m; A] := \det \begin{pmatrix} \mu_0 & \mu_1 & \cdots & \mu_{n-m-1} & 1 & \cdots & 1 \\
	\mu_1 & \mu_2 & \cdots & \mu_{n-m} & x_1 & \cdots & x_m \\
	\mu_2 & \mu_3 & \cdots & \mu_{n-m+1} & x_1^2 & \cdots & x_m^2 \\
	\vdots & \vdots & \ddots & \vdots & \vdots & \ddots & \vdots\\
	\mu_{n-m-1} & \mu_{n-m} & \cdots & \mu_{2(n-m-1)} & x_1^{n-m-1} & \cdots & x_m^{n-m-1} \\
	1 & y_1 & \cdots & y_1^{n-m-1} & a_{11} & \dots & a_{1m} \\
	\vdots & \vdots & \ddots & \vdots & \vdots & \ddots & \vdots\\
	1 & y_m & \cdots & y_m^{n-m-1} & a_{m1} & \dots & a_{mm}
\end{pmatrix}.
\]
\begin{proposition}\label{eq:semi-framed_det_CD}  Let $A=(a_{jk})_{j,k=1}^m$. We have
	\begin{equation}\label{eq:thick_semifremed_CD}
		H_n^F[\mu; x_1, \dots, x_m; y_1, \dots, y_m; A] =H_{n-m}[\mu] \det\left[A - \left(K_{n-m-1}(x_k, y_j)\right)_{j,k=1}^m\right],
	\end{equation}
	where	$K_{n}(x,y) $
	is the reproducing kernel of the system of orthogonal polynomials on the real line given by \eqref{Rep Ker OPRL}.
	\end{proposition}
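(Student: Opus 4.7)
The plan is to write the determinant as a block matrix, apply the Schur complement formula, and identify the resulting object with the reproducing kernel via the LU-decomposition of the Hankel matrix induced by the orthogonal polynomials.

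First, set $N := n-m$ and view the framed matrix $M$ as the block matrix
\[
M = \begin{pmatrix} \boldsymbol{H} & X \\ Y^{T} & A \end{pmatrix},
\]
where $\boldsymbol{H} = \boldsymbol{H}_{N}[\mu]$ is the $N\times N$ Hankel block with entries $\mu_{i+\ell}$, while $X$ and $Y$ are the $N\times m$ Vandermonde-type matrices with entries $X_{\ell k} = x_{k}^{\ell}$ and $Y_{\ell j} = y_{j}^{\ell}$ for $\ell = 0,\ldots,N-1$. Assuming (without loss of generality by continuity) that $\boldsymbol{H}$ is invertible, the Schur complement identity gives
\[
\det M \;=\; \det \boldsymbol{H}\cdot \det\bigl(A - Y^{T}\boldsymbol{H}^{-1}X\bigr).
\]
It therefore suffices to show that $(Y^{T}\boldsymbol{H}^{-1}X)_{jk} = K_{N-1}(x_{k},y_{j})$.

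Next, I will use the monic OPRL to LU-diagonalize $\boldsymbol{H}$. Writing $P_{r}(x) = \sum_{i=0}^{r} c_{ri}\,x^{i}$ with $c_{rr}=1$ and letting $C = (c_{ri})_{r,i=0}^{N-1}$ be the resulting lower-triangular matrix, the orthogonality \eqref{Hankel OPs conditions 1} yields
\[
C\,\boldsymbol{H}\,C^{T} \;=\; \mathrm{diag}(h_{0},\ldots,h_{N-1}),
\qquad\text{hence}\qquad
\boldsymbol{H}^{-1} \;=\; C^{T}\,\mathrm{diag}(h_{0}^{-1},\ldots,h_{N-1}^{-1})\,C.
\]
Substituting this into $Y^{T}\boldsymbol{H}^{-1}X$ and swapping the order of summation,
\[
(Y^{T}\boldsymbol{H}^{-1}X)_{jk}
\;=\; \sum_{r=0}^{N-1} \frac{1}{h_{r}}\!\left(\sum_{i=0}^{N-1} c_{ri}\,y_{j}^{i}\right)\!\left(\sum_{\ell=0}^{N-1} c_{r\ell}\,x_{k}^{\ell}\right)
\;=\; \sum_{r=0}^{N-1} \frac{P_{r}(y_{j})\,P_{r}(x_{k})}{h_{r}}
\;=\; K_{N-1}(x_{k},y_{j}),
\]
where the final equality uses the symmetry of the reproducing kernel \eqref{Rep Ker OPRL}. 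Combined with $\det\boldsymbol{H} = H_{n-m}[\mu]$ this gives \eqref{eq:thick_semifremed_CD}.

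There is no real obstacle in this argument; the only subtlety is the bookkeeping between row/column indices of $X$, $Y$, and the triangular matrix $C$, and handling the singular case $\det\boldsymbol{H}_{N}[\mu]=0$ by a standard perturbation/continuity argument. An alternative but essentially equivalent route is to perform the row/column operations $M \mapsto \begin{pmatrix} C & 0 \\ 0 & I_{m}\end{pmatrix} M \begin{pmatrix} C^{T} & 0 \\ 0 & I_{m}\end{pmatrix}$ directly, which diagonalizes the top-left block and transforms the border blocks into $(P_{r}(x_{k}))$ and $(P_{r}(y_{j}))$; expanding the resulting determinant along the last $m$ rows and columns then reproduces the kernel sum.
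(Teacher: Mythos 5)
Your proposal is correct and uses essentially the same mechanism as the paper: the paper conjugates the full framed matrix by $\mathrm{diag}(\boldsymbol{A}_{n-m-1}, I_m)$ and its transpose (the LU factorization of the Hankel block via the orthogonal polynomial coefficients) and then takes determinants, which is exactly the "alternative route" you describe in your closing paragraph. Reordering the steps as Schur complement first, then $\boldsymbol{H}^{-1}=C^{T}\mathrm{diag}(h_r^{-1})C$, is only a cosmetic difference.
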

		The above result, which will be proved in Section \ref{Sec 5.2}, implies a general  formula for framed Hankel determinants.
	
	\begin{corollary}\label{Cor:semi-framed_det_gen} Let $A=(a_{jk})_{j,k=1}^m$. We have
		\begin{equation}\label{eq:s_framed_det_gen}
			H_n^F(\mu; \nu^{(1)}, \dots, \nu^{(m)}; \eta^{(1)}, \dots, \eta^{(m)}; A) = H_{n-m}[\mu] \det\left[ A - \left(\int_\R\int_\R K_{n-m-1}(x_k, y_j)\dd\nu^{(k)}(x_k)\dd\eta^{(j)}(y_j)\right)_{j,k=1}^m\right],
		\end{equation}
			where	$K_{n}(x,y) $
		is the reproducing kernel of the system of orthogonal polynomials on the real line given by \eqref{Rep Ker OPRL}.
	\end{corollary}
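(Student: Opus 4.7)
The plan is to promote Proposition \ref{eq:semi-framed_det_CD} from point evaluations to integrals against the measures $\nu^{(k)}$ and $\eta^{(j)}$, combining multilinearity of the determinant with a polynomial identification argument. The direct integration trick that worked in Corollary \ref{prop:bordered_det_gen} does not apply \emph{verbatim} here, because integrating $H_n^F[\mu;\vec{x};\vec{y};A]$ against $\prod_{k}\dd\nu^{(k)}(x_k)\prod_{j}\dd\eta^{(j)}(y_j)$ would let the constant $a_{jk}$ entries of the bottom-right block acquire spurious total-mass factors $\nu^{(k)}(\R)\eta^{(j)}(\R)$; I would therefore exploit the affine dependence of the two sides on the moment data instead.

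First I would observe that both sides of \eqref{eq:s_framed_det_gen} are affine functions of the moment vector $(\nu^{(k)}_0,\ldots,\nu^{(k)}_{n-m-1})$ when all remaining data are held fixed, and likewise affine in each $(\eta^{(j)}_0,\ldots,\eta^{(j)}_{n-m-1})$. For the left-hand side the moments of $\nu^{(k)}$ appear only in the top $n-m$ entries of column $n-m+k$ (the bottom $m$ entries being the constants $a_{jk}$), so multilinearity of the determinant in columns gives the claim. For the right-hand side, only the $k$-th column of
\[
\widetilde{M}:=\Bigl(\int_{\R}\!\int_{\R} K_{n-m-1}(x,y)\,\dd\nu^{(k)}(x)\,\dd\eta^{(j)}(y)\Bigr)_{j,k=1}^{m}
\]
depends on $\nu^{(k)}$, and linearly at that, so $\det[A-\widetilde{M}]$ is multilinear in its columns and hence affine in the $\nu^{(k)}$-moments; the argument for $\eta^{(j)}$ is identical with rows in place of columns.

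Next I would specialize to Dirac masses $\nu^{(k)}=\delta_{x_k}$ and $\eta^{(j)}=\delta_{y_j}$: the substitution turns $\nu^{(k)}_\ell$ into $x_k^\ell$ and $\eta^{(j)}_\ell$ into $y_j^\ell$, while $\widetilde{M}$ collapses to $(K_{n-m-1}(x_k,y_j))_{j,k}$, so both sides of \eqref{eq:s_framed_det_gen} coincide at these configurations by Proposition \ref{eq:semi-framed_det_CD}. Writing each side as an affine function $\alpha+\sum_{\ell}\beta_\ell\,\nu^{(k)}_\ell$ of the moment vector, this agreement reads as the polynomial identity $\alpha+\sum_{\ell}\beta_\ell\,x_k^\ell$ (in $x_k$) holding on both sides for every $x_k\in\R$; linear independence of the monomials $1,x_k,\ldots,x_k^{n-m-1}$ then forces the coefficients $\alpha$ and all $\beta_\ell$ to match, so the two affine functions of $(\nu^{(k)}_0,\ldots,\nu^{(k)}_{n-m-1})$ coincide identically. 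Iterating this successively for $k=1,\ldots,m$ and then for $j=1,\ldots,m$ propagates the equality to arbitrary measures, yielding \eqref{eq:s_framed_det_gen}. The delicate point is that a one-parameter curve of Dirac moments suffices to pin down an affine function on an $(n-m)$-dimensional moment vector, which relies crucially on the degree-$1$ structure established in the first step.
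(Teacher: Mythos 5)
Your instinct that the naive integration of Corollary \ref{prop:bordered_det_gen} breaks down here is correct for the reason you give: integrating the point-evaluation determinant against $\dd\nu^{(k)}$ would multiply the constant entries $a_{jk}$ by the total masses. But the repair the paper intends (and the one that actually works) is much more direct than yours: one simply repeats the computation \eqref{312} with the Vandermonde-type blocks $\boldsymbol{Z}_{n-m-1}(x_1,\dots,x_m)$ and $\boldsymbol{Z}_{n-m-1}(y_1,\dots,y_m)$ replaced by the moment matrices $(\nu^{(k)}_\ell)_{\ell,k}$ and $(\eta^{(j)}_\ell)_{\ell,j}$. Since $\boldsymbol{A}_{n-m-1}(\nu^{(k)}_\ell)_{\ell,k}=\bigl(\int_\R P_\ell(x)h_\ell^{-1/2}\,\dd\nu^{(k)}(x)\bigr)_{\ell,k}$ by linearity of the integral, the conjugated matrix is again of the form $\left(\begin{smallmatrix}I & * \\ * & A\end{smallmatrix}\right)$ and the Schur-complement entry is exactly $\int\!\int K_{n-m-1}(x,y)\,\dd\nu^{(k)}(x)\,\dd\eta^{(j)}(y)$. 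The constant block never gets integrated, so the difficulty you were circumventing never arises.

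Your alternative route, however, has a genuine gap at its final step. The Dirac moment vectors $(1,x,\dots,x^{n-m-1})$ all have first coordinate equal to $1$, so they sweep out only the affine hyperplane $\{v_0=1\}$ of the moment space $\R^{n-m}$, and an affine function is \emph{not} determined by its restriction to a proper affine hyperplane. Concretely, writing each side as $\alpha+\sum_{\ell=0}^{n-m-1}\beta_\ell v_\ell$, the substitution $v_\ell=x^\ell$ produces the polynomial $(\alpha+\beta_0)+\beta_1 x+\cdots+\beta_{n-m-1}x^{n-m-1}$: comparing coefficients only yields $\alpha+\beta_0=\alpha'+\beta_0'$ and $\beta_\ell=\beta_\ell'$ for $\ell\ge 1$, and your claim that ``linear independence of the monomials forces $\alpha$ and all $\beta_\ell$ to match'' is false because $\alpha$ and $\beta_0$ both feed into the constant term. (The functions $F(v)=v_0$ and $G(v)\equiv 1$ agree on every Dirac moment vector but are different affine functions.) To close the gap you would need one additional evaluation off the hyperplane — e.g.\ verifying the identity for $\nu^{(k)}$ equal to the zero measure, which amounts to matching the constant terms $\alpha=\alpha'$ — and that verification essentially requires the Schur-complement computation you were trying to avoid. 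Given that, the direct LU argument is both shorter and complete.
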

	\begin{proof}
		The proof is nearly identical to that of \eqref{eq:bordered_det_gen}.
	\end{proof}
	\begin{remark}\normalfont
		If the powers of $x$ and/or $y$ are reversed in order, similar results follow as corollaries. For $m=1$ we record the rest of the four different orders of the powers of $x$ and $y$ below:
		\begin{multline}
			\det \begin{pmatrix} \mu_0 & \mu_1 & \cdots & \mu_{n-2} & x^{n-2} \\
				\mu_1 & \mu_2 & \cdots & \mu_{n-1} & x^{n-3} \\
				\mu_2 & \mu_3 & \cdots & \mu_{n} & x^{n-4} \\
				\vdots & \vdots & \ddots & \vdots & \vdots\\
				\mu_{n-2} & \mu_{n-1} & \cdots & \mu_{2n-4} & 1 \\
				1 & y & \cdots & y^{n-2} & a
			\end{pmatrix} = { H}_n^F[\mu; 1/x,y; a/x^{n-2}]x^{n-2} \\
			=H_{n-1}[\mu](a-x^{n-2}K_{n-2}(1/x,y)),
		\end{multline}
		\begin{multline}
			\det \begin{pmatrix} \mu_0 & \mu_1 & \cdots & \mu_{n-2} & 1 \\
				\mu_1 & \mu_2 & \cdots & \mu_{n-1} & x^2 \\
				\mu_2 & \mu_3 & \cdots & \mu_{n} & x^3 \\
				\vdots & \vdots & \ddots & \vdots & \vdots\\
				\mu_{n-2} & \mu_{n-1} & \cdots & \mu_{2n-4} & x^{n-2} \\
				y^{n-2} & y^{n-3} & \cdots &   1 & a
			\end{pmatrix} = {H}_n^F[\mu; x,1/y; a/y^{n-2}]y^{n-2} \\
			=H_{n-1}[\mu](a-y^{n-2}K_{n-2}(x,1/y)),
		\end{multline}
		\begin{multline}
			\det \begin{pmatrix} \mu_0 & \mu_1 & \cdots & \mu_{n-2} & x^{n-2} \\
				\mu_1 & \mu_2 & \cdots & \mu_{n-1} & x^{n-3} \\
				\mu_2 & \mu_3 & \cdots & \mu_{n} & x^{n-4} \\
				\vdots & \vdots & \ddots & \vdots & \vdots\\
				\mu_{n-2} & \mu_{n-1} & \cdots & \mu_{2n-4} & 1 \\
				y^{n-2} & y^{n-3} & \cdots &   1 & a
			\end{pmatrix} = { H}_n^F[\mu; 1/x,1/y; a/(xy)^{n-2}](xy)^{n-2} \\
			=H_{n-1}[\mu](a-(xy)^{n-2}K_{n-2}(1/x,1/y)).
		\end{multline}
	\end{remark}

	\smallskip
	
Analogously in the Toeplitz case, define the determinant
		\[
	D_n^F[\varphi; x_1, \dots, x_m; y_1, \dots, y_m; A] = \begin{pmatrix} \varphi_0 & \varphi_{-1} & \cdots & \varphi_{-n+m+1} & 1 & \cdots & 1 \\
			\varphi_1 & \varphi_0 & \cdots & \varphi_{-n+m+2} & x_1 & \cdots & x_m \\
			\vdots & \vdots & \ddots & \vdots & \vdots & \ddots & \vdots\\
			\varphi_{n-m-1} & \varphi_{n-m-2} & \cdots & \varphi_{0} & x_1^{n-m-1} & \cdots & x_m^{n-m-1} \\
			1 & y_1 & \cdots & y_1^{n-m-1} & a_{11} & \dots & a_{1m} \\
			\vdots & \vdots & \ddots & \vdots & \vdots & \ddots & \vdots\\
			1 & y_m & \cdots & y_m^{n-m-1} & a_{m1} & \dots & a_{mm}
		\end{pmatrix}.
		\]
		The following results can be proven almost identically as Proposition \ref{eq:semi-framed_det_CD}, and respectively generalize Theorems 3.5 and 1.9 of \cite{G23}.
	\begin{proposition}\label{eq:semi-framed_det_CDaa}  Let $A=(a_{jk})_{j,k=1}^m$. We have
		\begin{equation}\label{eq:thick_semifremed_CD1}
			 D^F_n[\varphi; x_1, \dots, x_m; y_1, \dots, y_m; A] =D_{n-m}[\varphi] \det\left[A - \left(\mathscr{K}_{n-m-1}(x_k, y_j)\right)_{j,k=1}^m\right],
		\end{equation}
		where	\begin{equation}\label{BOPUC Rep Ker}
			\mathscr{K}_{n}(x,y) := \sum_{j=0}^{n} Q_{j}(y)\widehat{Q}_{j}(x),
		\end{equation}
		is the reproducing kernel of the system of biorthogonal polynomials on the unit circle:
			\begin{equation}\label{bi-orthogonality intro}
			\int_{\T} Q_j(\ze)\widehat{Q}_k(\ze^{-1})\dd\varphi(\ze)= \de_{jk}, \qquad j,k \in \N \cup \{0\}.
		\end{equation}
		\end{proposition}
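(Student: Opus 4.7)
The plan is to mirror the Schur complement argument that works for Proposition \ref{eq:semi-framed_det_CD} in the Hankel case, replacing the OPRL reproducing kernel by its BOPUC analogue. Write the framed Toeplitz matrix in block form
\[
M \;=\; \begin{pmatrix} T & X \\ Y & A \end{pmatrix},
\]
with $T=(\varphi_{j-k})_{j,k=0}^{n-m-1}$ the top-left Toeplitz block (whose determinant is $D_{n-m}[\varphi]$), $X_{jk}=x_k^{\,j}$ the top-right block, and $Y_{jk}=y_j^{\,k}$ the bottom-left block. Assuming $D_{n-m}[\varphi]\neq 0$ (the degenerate case follows by continuity in $\varphi$), the Schur complement identity gives
\[
\det M \;=\; \det T \cdot \det\!\bigl(A - Y T^{-1} X\bigr),
\]
so the proof reduces to computing the bilinear form $\sum_{r,s=0}^{n-m-1} y^r (T^{-1})_{rs} x^s$ and identifying it with $\mathscr{K}_{n-m-1}(x,y)$.

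The key step is a Gauss-type factorization of $T^{-1}$ in terms of the BOPUC coefficients. Writing $Q_j(z)=\sum_l q_{j,l} z^l$ and $\widehat{Q}_j(z)=\sum_l \hat q_{j,l} z^l$, and forming the upper-triangular matrices $L$ with $L_{lj}=q_{j,l}$ and $\widehat{L}$ with $\widehat{L}_{lj}=\hat q_{j,l}$, the biorthogonality \eqref{bi-orthogonality intro} expands as
\[
\delta_{jk} \;=\; \int_{\T} Q_k(z)\widehat{Q}_j(z^{-1})\,\dd\varphi(z) \;=\; \sum_{l,m} q_{k,l}\,\hat q_{j,m}\,\varphi_{m-l} \;=\; \bigl(\widehat{L}^{T} T L\bigr)_{jk},
\]
i.e.\ $T^{-1}=L\,\widehat{L}^{T}$. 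Substituting back and summing gives
\[
\sum_{r,s=0}^{n-m-1} y^{r}(T^{-1})_{rs}\,x^{s} \;=\; \sum_{j=0}^{n-m-1}\Bigl(\sum_r q_{j,r} y^r\Bigr)\Bigl(\sum_s \hat q_{j,s} x^s\Bigr) \;=\; \sum_{j=0}^{n-m-1} Q_j(y)\widehat{Q}_j(x) \;=\; \mathscr{K}_{n-m-1}(x,y),
\]
so that $(YT^{-1}X)_{jk}=\mathscr{K}_{n-m-1}(x_k,y_j)$, which together with $\det T=D_{n-m}[\varphi]$ yields \eqref{eq:thick_semifremed_CD1}.

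The main obstacle is bookkeeping: one must be careful about which variable is the ``hat'' argument and which is the bare argument in $\mathscr{K}_{n}(x,y)=\sum_j Q_j(y)\widehat{Q}_j(x)$, since a swap of $x$ and $y$ would correspond to taking the transpose of the kernel matrix and hence to exchanging the roles of the border measures and the frame measures. An alternative route, closer in spirit to the Dodgson-condensation proof given in Section \ref{sec:proofs of OP props} and \ref{Sec 5.2}, is to perform column operations on the first $n-m$ columns using the determinantal formula \eqref{Toeplitz OP 1} for $Q_j$ and row operations on the first $n-m$ rows using \eqref{Toeplitz OP 2} for $\widehat{Q}_j$, reducing the Toeplitz block to the identity (up to the factor $D_{n-m}[\varphi]$) while simultaneously turning the entries $x_k^{\,j}$ and $y_j^{\,k}$ into the polynomial evaluations $Q_j(x_k)$ and $\widehat{Q}_j(y_k)$; expanding the bottom-right block then produces the claimed kernel determinant. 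Either route is a direct adaptation of the real-line argument, so the proof is indeed ``almost identical'' to that of Proposition \ref{eq:semi-framed_det_CD}.
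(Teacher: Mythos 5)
Your proposal is correct and is essentially the paper's own argument: the factorization $T^{-1}=L\widehat{L}^{T}$ extracted from the biorthogonality relation is exactly the Toeplitz analogue of the LU-decomposition lemma the paper proves for $\boldsymbol{H}_{n+1}[\mu]$, and your Schur-complement step is the same computation the paper performs by conjugating the block matrix by $\mathrm{diag}(\boldsymbol{A}_{n-m-1},I)$ and taking determinants. Your index bookkeeping also checks out — with $\varphi_j=\int\ze^{-j}\dd\varphi$ one indeed gets $(YT^{-1}X)_{jk}=\sum_{l}Q_l(y_j)\widehat{Q}_l(x_k)=\mathscr{K}_{n-m-1}(x_k,y_j)$, matching \eqref{eq:thick_semifremed_CD1}.
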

		\begin{corollary}\label{eq:semi-framed_det_CDbb} 
			Let $A=(a_{jk})_{j,k=1}^m$. We have
			\begin{equation}\label{eq:thick_semifremed_CD2}
				\begin{split}
					&  D_n^F[\phi; \psi^{(1)}, \dots, \psi^{(m)}; \eta^{(1)}, \dots, \eta^{(m)}; A] \\ &  =D_{n-m}[\phi] \det\left[A - \left( \int_{\T} \int_{\T} \mathscr{K}_{n-m-1}(x^{-1}_k, y^{-1}_j) \dd\psi^{(k)}(x_k) \dd\eta^{(j)}(y_j) \right)_{j,k=1}^m\right],
				\end{split} 
			\end{equation}
			where	$\mathscr{K}_{n}(x,y) $
			is the reproducing kernel of the system of biorthogonal polynomials on the unit circle given by \eqref{BOPUC Rep Ker}.
		\end{corollary}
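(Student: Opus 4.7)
The plan is to derive this corollary from Proposition \ref{eq:semi-framed_det_CDaa} by the same integrate-then-invoke-multilinearity recipe that turns Proposition \ref{eq:semi-framed_det_CD} into Corollary \ref{Cor:semi-framed_det_gen} in the Hankel case. First I would rewrite every Fourier coefficient in the border of \eqref{multi-framed} as an integral against the associated measure using \eqref{eq:FC_measure}; in particular $\psi^{(k)}_{n-m-1-i} = \int_{\T} z^{i-(n-m-1)} \dd\psi^{(k)}(z)$ for the last $m$ columns and $\eta^{(\ell)}_j = \int_\T w^{-j} \dd\eta^{(\ell)}(w)$ for the last $m$ rows. The $m \times m$ corner block $A$ consists of scalars and is untouched by this step. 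Using multilinearity of the determinant column-by-column for the last $m$ columns and row-by-row for the last $m$ rows, all $2m$ integrations are pulled outside the determinant.

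The inner $n \times n$ determinant that remains has polynomial entries in the integration variables $x_1,\ldots,x_m$ (one per $\psi^{(k)}$) and $y_1,\ldots,y_m$ (one per $\eta^{(j)}$). To match the ascending-power pattern of Proposition \ref{eq:semi-framed_det_CDaa}, I would absorb the factor $z^{-(n-m-1)}$ into each $\psi^{(k)}$-integration and perform the change of variables $z \mapsto z^{-1}$, and likewise invert the $\eta^{(j)}$-variables. After these substitutions the inner determinant coincides with $D^F_n[\phi; x_1,\ldots,x_m; y_1,\ldots,y_m; A]$, and Proposition \ref{eq:semi-framed_det_CDaa} then evaluates it pointwise as $D_{n-m}[\phi] \det\left[A - (\mathscr{K}_{n-m-1}(x_k, y_j))_{j,k=1}^m\right]$.

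Finally, the key observation making the remaining step work is that each entry of this $m \times m$ determinant depends on exactly one $x_k$ (through its column index) and one $y_j$ (through its row index). This one-to-one dependence permits a second round of multilinearity — now applied inside the $m \times m$ determinant — to push the $2m$ integrations back through it and collapse them into a single double integral per entry. Reverting the change of variables produces the arguments $x_k^{-1}$ and $y_j^{-1}$ inside $\mathscr{K}_{n-m-1}$, which is the claimed formula.

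The main technical obstacle is bookkeeping: Definition \ref{def:framed_Toeplitz} arranges the $\psi^{(k)}$-indices in decreasing order down each border column, so the conversion to ascending powers required by Proposition \ref{eq:semi-framed_det_CDaa} forces the inversion $z \mapsto z^{-1}$, and it is precisely this inversion that produces the $x_k^{-1}$ and $y_j^{-1}$ in the final kernel argument; once this index-to-variable translation is done carefully, the remainder of the argument is formally identical to the Hankel case.
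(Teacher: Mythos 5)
Your overall strategy --- reduce to the monomial-border determinant and invoke Proposition \ref{eq:semi-framed_det_CDaa} --- is the intended route, but two of your steps do not go through as described. First, the ``pull all $2m$ integrations outside the determinant'' step fails for framed (as opposed to bordered) determinants because of the constant corner block: the $k$-th border column of \eqref{multi-framed} is $(\psi^{(k)}_{n-m-1},\dots,\psi^{(k)}_0,a_{1k},\dots,a_{mk})^T$, and only its first $n-m$ entries are moments of $\psi^{(k)}$. Linearity in that column splits the determinant into an integral term plus a separate term carrying the constants $a_{jk}$; you cannot write the whole column as a single integral against $\dd\psi^{(k)}$ unless $\psi^{(k)}(\T)=1$, and the border measures here are allowed to be signed or complex. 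The same obstruction reappears in your final ``push back in'' step, since $\int\bigl(a_{jk}-\mathscr{K}(x_k,y_j)\bigr)\dd\psi^{(k)}(x_k)=a_{jk}\,\psi^{(k)}(\T)-\int\mathscr{K}\,\dd\psi^{(k)}$, which differs from $a_{jk}-\int\mathscr{K}\,\dd\psi^{(k)}$ in general. The correct mechanism is the bordered-determinant identity $\det\left(\begin{smallmatrix}T&U\\V&A\end{smallmatrix}\right)=\det T\,\det\bigl(A-VT^{-1}U\bigr)$: each entry $(VT^{-1}U)_{jk}$ is bilinear in the moment column $U_{\cdot k}$ and the moment row $V_{j\cdot}$ while $A$ stays put, and it is through this bilinearity --- not naive multilinearity of the reduced $m\times m$ determinant --- that the integrations pass onto the kernel.

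Second, the index-to-variable translation, which you yourself flag as the main obstacle, is where the argument actually breaks. Writing $\psi^{(k)}_{n-m-1-i}=\int_\T z^{-(n-m-1)}z^{i}\,\dd\psi^{(k)}(z)$ already produces \emph{ascending} powers of $z$ (against the weight $z^{-(n-m-1)}\dd\psi^{(k)}(z)$), so the additional inversion $z\mapsto z^{-1}$ you propose for the $\psi$-columns would yield descending powers and destroy the match with the column pattern of $D_n^F[\varphi;x_1,\dots;y_1,\dots;A]$; only the $\eta$-rows $\eta^{(j)}_i=\int_\T w^{-i}\dd\eta^{(j)}(w)$ force an inversion. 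You also never address the fact that the Toeplitz block of \eqref{multi-framed} has entries $\phi_{j-i}$ while that of $D_n^F[\varphi;x_1,\dots;y_1,\dots;A]$ has $\varphi_{i-j}$, so one must transpose the whole matrix (interchanging the roles of the $\psi$-columns and $\eta$-rows and replacing $A$ by $A^T$) before Proposition \ref{eq:semi-framed_det_CDaa} applies. Carrying this bookkeeping out --- for instance at $n=3$, $m=1$ one gets $D_3^F[\phi;\psi;\eta;a]=aD_2[\phi]+\iint\bigl(\phi_1-\phi_0x^{-1}-\phi_0y^{-1}+\phi_{-1}x^{-1}y^{-1}\bigr)\dd\psi(x)\dd\eta(y)$ --- one arrives at a kernel of the form $\mathscr{K}_{n-m-1}(y_j^{-1},x_k)\,x_k^{-(n-m-1)}$, with only the $\eta$-variable inverted and an extra power of the $\psi$-variable, rather than $\mathscr{K}_{n-m-1}(x_k^{-1},y_j^{-1})$. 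Your assertion that ``reverting the change of variables produces the arguments $x_k^{-1}$ and $y_j^{-1}$'' is therefore precisely the unproved step: you must either redo this translation explicitly or reconcile it with the sign/ordering conventions of \eqref{multi-framed} and \eqref{BOPUC Rep Ker} before the displayed formula can be asserted.
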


\begin{remark}\normalfont
As with Corollary \ref{prop:bordered_det_gen}, Corollaries \ref{Cor:semi-framed_det_gen} and \ref{eq:semi-framed_det_CDbb} hold when the border measures $\nu^{(j)}$, $\eta^{(j)}$, and $\psi^{(j)}$ are signed or complex measures.
\end{remark}

The formulas of this subsection and the previous one for bordered and framed Toeplitz/Hankel determinants pave the way for their large-size asymptotic analysis, due to the Riemann-Hilbert characterizations for OPRL and BOPUC. Using such orthogonal polynomial representations, a few strong Szeg{\H o} limit theorems of the form
	\begin{equation}\label{SSLT type}
		G^n[\phi]E[\phi]\left(H[\phi;\boldsymbol{\psi};\boldsymbol{\eta}]+O(e^{-cn})\right), \qquad  \ c>0,
	\end{equation}
	  were proven for certain bordered/framed Toeplitz determinants with Szeg{\H o}-type symbols $\phi$ in \cite{BEGIL} and \cite{G23}. In Section \ref{Sec-Asymptotics}, we present a concise overview of the strong Szegő limit theorem, accompanied by precise definitions of $G[\phi]$ and $E[\phi]$ as given in equation \eqref{G and E}\footnote{For a more comprehensive exploration of this theorem, we direct readers to \cite{DIK1}.	The expression for $H[\phi;\boldsymbol{\psi};\boldsymbol{\eta}]$ (see \eqref{SSLT type}) varies depending on whether bordered or framed Toeplitz determinants are under consideration. We refer the interested reader to \cite{BEGIL} and \cite{G23} for detailed descriptions of $H$ in various settings.}.  Although the primary focus of this paper is not the computation of asymptotics using these orthogonal polynomial characterizations, we include some calculations in Section \ref{Sec-Asymptotics} to underscore their relevance.

\subsection{Applications}

In Sections \ref{sec:NIBE}, \ref{section Nonintersecting paths}, and \ref{sec 6-vertex} below, we present several examples of bordered and framed Toeplitz/Hankel determinants which appear in various integrable models in probability. 
\begin{itemize}
\item Section \ref{sec:NIBE} considers the maximal height of an ensemble of non-intersecting Brownian excursions, previously studied in \cite{Forrester-Majumdar-Schehr11, Liechty12} in the case of all particles beginning and ending at the origin, in which case the distribution of the maximal height is given by a Hankel determinant with a discrete weight. In Section \ref{sec:NIBE} we consider the case in which a few of the starting/ending points may be non-zero, in which case the  distribution of the maximal height is described by a bordered or framed Hankel determinant of a discrete weight. Using Corollaries \ref{prop:bordered_det_gen} and \ref{eq:s_framed_det_gen} we describe the corresponding formulation in terms of orthogonal polynomials and give some idea of how the asymptotic analysis would proceed and describe expected asymptotic results.
\item Section \ref{section Nonintersecting paths} considers the width of several ensembles of non-intersecting  paths:  Brownian bridges, discrete time random walks, and continuous time random walks. Each of these ensembles was  previously studied in \cite{Baik-Liu14}, in which the authors showed that for the simplest initial conditions on paths, the distribution of the width is the ratio of two Hankel determinants for non-intersecting Brownian bridges, and the ratio of two Toeplitz determinants for the non-intersecting discrete or continuous time random walks. We once again consider the case in which a few particles have starting/ending positions different from those of \cite{Baik-Liu14}, and formulate the distribution in terms of bordered and framed Toeplitz/Hankel determinants, and consequently in terms of orthogonal polynomials using Corollaries \ref{prop:bordered_det_gen} and \ref{Cor:semi-framed_det_gen}.
\item Section \ref{sec 6-vertex} considers the six-vertex model with domain wall boundary conditions (DWBC), one of the most important models in two-dimensional statistical physics. The partition function of the model may be formulated as a Hankel determinant \cite{Zinn_Justin00, Bleher-Liechty14}, and a certain variation known as the {\it partially inhomogeneous partition function} may be expressed as a bordered or framed Hankel determinant. The partially inhomogeneous partition function has a probabilistic interpretation as the generating function for certain statistics near the boundary of a state of the model  \cite{Bogoliubov-Pronko-Zvonarev02, Colomo-Pronko08, Colomo-Pronko10, Colomo-Pronko-ZinnJustin10, Gorin-Liechty23}, and for the bordered version of the Hankel determinant, an asymptotic analysis was recently performed in \cite{Gorin-Liechty23}. We briefly review this result and present the framed version as well.
\end{itemize}
 The formulas derived in Sections \ref{sec:NIBE}, \ref{section Nonintersecting paths}, and \ref{sec 6-vertex} all invite detailed asymptotic analysis in upcoming works. We do not give a complete asymptotic analysis of the examples mentioned above, but we do give some partial results and analysis to  highlight the capacity of the orthogonal polynomial characterizations. We choose to study the asymptotics of the probability of non-intersection for an ensemble of continuous time random walks as the number of random walkers approach infinity. This is the quantity which appears in the denominator of \eqref{eq:W_bordered_prop_D} and \eqref{eq:W_bordered_prop_C}. See Section \ref{Sec Discrete and continuous time random walks} for the definition of a continuous time random walk as the difference of two independent Poisson processes of rate $\lambda = 1/2$. We prove 
\begin{theorem}\label{thm asymp bordered}
	Consider $n+2$ independent continuous time random walks $X_1(t), \dots, X_{n+2}(t)$ which begin at locations $x_1<x_2<\cdots<x_{n+2}$ at time $t=0$. For a given $y_1<y_2<\dots<y_{n+2}$, let $\mathbb{P}_{n+2}(x_1,\dots, x_{n+2}; y_1,\dots,y_{n+2};T)$ be the probability that $X_1(t), \dots, X_{n+2}(t)$ satisfy $X_j(T)=y_j$ for $j=1,\dots,n+2$, and $X_1(t), \dots, X_{n+2}(t)$ do not intersect for $0\le t\le T$. Assume  $x_j=j$ for $1\leq j \leq n+2$, $y_k=k$ for $1\leq k\leq n$, while both $y_{n+1}$ and $y_{n+2}$ are arbitrary except for the restriction $n<y_{n+1}<y_{n+2}$. If $T$, $\be_2:=y_{n+2}-n$, and $\be_1:=y_{n+1}-n$ are bounded as $n\to\infty$, then $\mathbb{P}_{n+2}(x_1,\dots, x_{n+2}; y_1,\dots,y_{n+2};T)$  as $n \to \infty$ has the asymptotics
	\begin{equation}\label{Pn+2 asymp}
	\mathbb{P}_{n+2}(x_1,\dots, x_{n+2}; y_1,\dots,y_{n+2};T)=	\frac{e^{-(n+2)T}\left(\frac{T}{2}\right)^{\be_{1}+\be_{2}-3}e^{\frac{T^2}{4}}}{(\be_{1}-1)!(\be_{2}-1)!}(\be_2-\be_1) \left(1+O\left(\frac{1}{n^2}\right)\right).
	\end{equation}
\end{theorem}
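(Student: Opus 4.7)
The plan is to reduce $\PP_{n+2}$ to a bordered Toeplitz determinant with the Szeg{\H o}-type symbol $\phi(\ze):=e^{(T/2)(\ze+\ze^{-1})}$ and then leverage the biorthogonal-polynomial representation from Corollary \ref{prop:bordered_det_gen} together with the strong Szeg{\H o} limit theorem and classical reverse-polynomial asymptotics to extract the explicit constant.

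By the Karlin--McGregor theorem applied to continuous-time random walks (whose single-particle transition probability is $\PP(X(T)=y\mid X(0)=x)=e^{-T}I_{y-x}(T)$, with $I_k$ the modified Bessel function of the first kind),
\[
\PP_{n+2}(x_1,\dots,x_{n+2};y_1,\dots,y_{n+2};T) = e^{-(n+2)T}\det\bigl[I_{y_k-x_j}(T)\bigr]_{j,k=1}^{n+2}.
\]
The generating-function identity $\phi(\ze)=\sum_{k\in\Z}I_k(T)\ze^k$ identifies this determinant, for the prescribed initial/final configuration, with the bordered Toeplitz determinant $D^B_{n+2}[\phi;\vec{\boldsymbol\psi}_2]$ where $\psi^{(j)}(\ze):=\ze^{2-\be_j}\phi(\ze)$ for $j=1,2$. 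Corollary \ref{prop:bordered_det_gen} then yields
\[
D^B_{n+2}[\phi;\vec{\boldsymbol\psi}_2] = \sqrt{D_n[\phi]\,D_{n+1}[\phi]}\,\bigl(B_{n,\be_1}B_{n+1,\be_2}-B_{n,\be_2}B_{n+1,\be_1}\bigr),
\]
where $B_{p,\be}:=\int_\T Q_p(\ze)\,\ze^{-(n+\be-1)}\phi(\ze)\,\tfrac{\dd\ze}{2\pi\ii\ze} = [\ze^{n+\be-1}]\bigl(Q_p(\ze)\phi(\ze)\bigr)$ is a single Fourier coefficient of $Q_p\phi$.

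The strong Szeg{\H o} limit theorem, applied to $\log\phi(\ze)=\tfrac{T}{2}(\ze+\ze^{-1})$ with $V_0=0$ and $V_{\pm1}=T/2$, gives $D_p[\phi]=e^{T^2/4}(1+O(e^{-cp}))$ and hence $\sqrt{D_n[\phi]D_{n+1}[\phi]}\to e^{T^2/4}$. For the coefficients $B_{p,\be}$, expanding $Q_p(\ze)=\sum_{j=0}^p c_j^{(p)}\ze^j$ and relabelling by $i=p-j$ gives $B_{p,\be}=\sum_{i\ge 0}c_{p-i}^{(p)}I_{\be-1-(p-n)+i}(T)$, where $c_{p-i}^{(p)}=[\ze^i]Q_p^*(\ze)$ are the coefficients of the reverse polynomial $Q_p^*(\ze):=\ze^pQ_p(1/\ze)$. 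The Szeg{\H o} outer factor of $\phi$ is $D(\ze)=e^{T\ze/2}$, and Szeg{\H o}'s classical theorem---made quantitative by the Deift--Zhou analysis outlined in the appendix for the analytic Szeg{\H o}-type symbol $\phi$---gives $Q_p^*(\ze)\to 1/D(\ze)=e^{-T\ze/2}$ uniformly on a neighborhood of $\overline{\D}$ with exponentially small error, so $c_{p-i}^{(p)}=(-T/2)^i/i!+O(e^{-cp})$ for each fixed $i$. The factorial decay of $I_m(T)$ in $m$ permits dominated convergence, and the cancellation $e^{-T/(2\ze)}\phi(\ze)=e^{T\ze/2}$ then yields, for each $k\ge 1$,
\[
\sum_{i\ge 0}\frac{(-T/2)^i}{i!}\,I_{k-1+i}(T)=\frac{1}{2\pi\ii}\oint_\T\ze^{-k}e^{T\ze/2}\,\dd\ze=\frac{(T/2)^{k-1}}{(k-1)!}.
\]
Specializing with $(p,k)=(n,\be_j)$ and $(p,k)=(n+1,\be_j-1)$ gives $B_{n,\be_j}\to(T/2)^{\be_j-1}/(\be_j-1)!$ and $B_{n+1,\be_j}\to(T/2)^{\be_j-2}/(\be_j-2)!$ (with the convention $1/(-1)!:=0$, consistent with the exact vanishing $B_{n+1,\be_j}=0$ when $\be_j=1$, which follows from orthogonality of $Q_{n+1}$ against $\ze^0,\dots,\ze^n$).

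Substituting into the $2\times 2$ determinant and simplifying yields
\[
B_{n,\be_1}B_{n+1,\be_2}-B_{n,\be_2}B_{n+1,\be_1}\to\frac{(T/2)^{\be_1+\be_2-3}(\be_2-\be_1)}{(\be_1-1)!(\be_2-1)!},
\]
and combining this with $\sqrt{D_n[\phi]D_{n+1}[\phi]}\to e^{T^2/4}$ and the $e^{-(n+2)T}$ factor produces the stated formula \eqref{Pn+2 asymp}. The error $O(1/n^2)$ is a loose upper bound on the exponentially small $O(e^{-cn})$ correction furnished by the Riemann--Hilbert analysis. The principal technical hurdle is precisely this quantitative reverse-polynomial asymptotic: one needs a rate of convergence for $Q_p^*(\ze)-e^{-T\ze/2}$ that is uniform on a neighborhood of $\overline{\D}$ (not merely on compact subsets of $\D$) in order to justify exchanging the limit $p\to\infty$ with the summation in the generating-function identity above, and this is precisely what the Deift--Zhou steepest-descent analysis outlined in the appendix supplies for the analytic Szeg{\H o}-type symbol $\phi$.
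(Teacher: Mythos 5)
Your argument is correct, but it takes a genuinely different route from the paper's. The paper proves Theorem \ref{thm asymp bordered} by applying a Dodgson condensation identity to split the two-bordered determinant $D^B_{n+2}[\phi;\boldsymbol{\psi}_2]$ into single-bordered determinants with symbols $\phi$ and $z\phi$; since $z\phi$ has nonzero winding number, this forces a detour through the algebraic relations between the $Z$- and $X$-Riemann--Hilbert problems (Theorems \ref{thm Z-RHP intro} and \ref{thmXZ intro}) and a residue computation of $C_n[\phi]$ and $\mathscr{B}(n)\sim T/(2(n+1))$, whose genuine $1/n$ contributions cancel only after the two cross terms of the condensation identity are combined --- which is why the paper's error bound is $O(1/n^2)$. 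You instead apply Corollary \ref{prop:bordered_det_gen} directly to the two-bordered determinant and reduce everything to the single Fourier coefficients $B_{p,\be}$ of $Q_p\phi$, which you evaluate through the reversed-polynomial asymptotics $Q_p^*(z)=e^{-Tz/2}\left(1+O(\rho^{-p})\right)$. The uniformity on $\overline{\D}$ that you correctly flag as the crux is indeed supplied by the appendix: from \eqref{X in terms of R exact} in $\Om_0\cup\Om_1$ one reads off $X_{21}(z;p+1)=-\al^{-1}(z)\left(1+O(\rho^{-p})\right)$, i.e.\ $\ka_{p}Q_{p}^*(z)=e^{-Tz/2}\left(1+O(\rho^{-p})\right)$ up to the unit circle, so your Cauchy-estimate and dominated-convergence steps go through; equivalently one can bypass the series altogether by noting $B_{p,\be}=\ka_p X_{12}^{(k)}(0;p)/k!$ with $k=n+\be-1-p$ and $X_{12}=\al\left(1+O(\rho^{-2p})\right)$ in $\Om_0$, which gives your limits in one stroke. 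Your route buys three things: it avoids the non-Szeg\H{o} symbol $z\phi$ and the $Z$-RHP machinery entirely; it shows the relative error is actually $O(e^{-cn})$, strictly sharper than the stated $O(1/n^2)$ and explaining structurally the cancellation of the $1/n$ terms that the paper verifies by hand; and it extends mechanically to $m$ borders, where the limiting $m\times m$ determinant $\det\bigl[(T/2)^{\be_j-i}/(\be_j-i)!\bigr]_{i,j=1}^m$ produces exactly the Vandermonde factor conjectured in \eqref{Pn+m asymp}.
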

\begin{theorem}\label{thm asymp framed}
 	Let $\mathbb{P}_{n+1}(x_1,\dots, x_{n+1}; y_1,\dots,y_{n+1};T)$ be as defined in Theorem \ref{thm asymp bordered}. Suppose that $x_j=j$ for $1\leq j\leq n$, $y_k=k$ for $1\leq k\leq n$, while both $x_{n+1}$ and $y_{n+1}$ are arbitrary. If $T$, $\be_1:=y_{n+1}-n$, and $\ga_1:=x_{n+1}-n$ are bounded as $n\to\infty$, then $\mathbb{P}_{n+1}(x_1,\dots, x_{n+1}; y_1,\dots,y_{n+1};T)$ as $n \to \infty$ has the asymptotics
	\begin{equation}\label{top-path-different-asymp}
		e^{-(n+1)T}e^{\frac{T^2}{4}} \left[ I_{y_{n+1}-x_{n+1}}(T) + \left(\frac{T}{2}\right)^{\ga_{1}+\be_{1}-2} \sum_{k=0}^{k_0} \frac{\left(\frac{T}{2}\right)^{-2k}}{(\ga_{1}-1-k)!(\be_{1}-1-k)!} \right] \left(1+O(e^{-c n})\right),
	\end{equation}
	where $I_\nu(\cdot)$ is the modified Bessel function of order $\nu$, $k_0:=\min\{\ga_1-1,\be_1-1\}$, and $c$ is any positive constant.
\end{theorem}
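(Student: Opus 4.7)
The plan is to convert $\PP_{n+1}$ into a single-framed Toeplitz determinant with the Bessel symbol $\phi(\ze)=e^{(T/2)(\ze+\ze^{-1})}$, then apply Corollary \ref{eq:semi-framed_det_CDbb} to reduce the problem to the large-$n$ asymptotics of the pure Toeplitz determinant $D_n[\phi]$ and of a double integral of the BOPUC reproducing kernel.

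First, the Karlin--McGregor/Lindstr\"om--Gessel--Viennot formula gives
\begin{equation*}
\PP_{n+1}=e^{-(n+1)T}\det\bigl[I_{y_k-x_j}(T)\bigr]_{j,k=1}^{n+1},
\end{equation*}
since the one-step transition density of the continuous-time walk is $e^{-T}I_{y-x}(T)$. Because $I_j(T)=\phi_j$ and only $x_{n+1},y_{n+1}$ depart from the equispaced pattern, the above matrix has the form \eqref{multi-framed} with $m=1$; explicitly the determinant equals $D^F_{n+1}[\phi;\psi^{(1)};\eta^{(1)};a_{11}]$ with
\begin{equation*}
\psi^{(1)}(\ze)=\phi(\ze)\ze^{1-\be_1},\qquad \eta^{(1)}(\ze)=\phi(\ze)\ze^{n+\ga_1-1},\qquad a_{11}=I_{y_{n+1}-x_{n+1}}(T).
\end{equation*}
Invoking Corollary \ref{eq:semi-framed_det_CDbb} with $m=1$ then yields
\begin{equation*}
D^F_{n+1}[\phi;\psi^{(1)};\eta^{(1)};a_{11}]=D_n[\phi]\bigl(a_{11}-\mathcal{I}_n\bigr),\quad \mathcal{I}_n:=\int_{\T}\!\int_{\T}\mathscr{K}_{n-1}(x^{-1},y^{-1})\,\dd\psi^{(1)}(x)\,\dd\eta^{(1)}(y).
\end{equation*}

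For the pure Toeplitz factor, $\phi$ is smooth, positive, analytic on $\T$, of winding number zero, with $(\log\phi)_0=0$ and $(\log\phi)_{\pm 1}=T/2$; the strong Szeg{\H o} limit theorem therefore gives $D_n[\phi]=e^{T^2/4}\bigl(1+O(e^{-cn})\bigr)$ for some $c>0$. For $\mathcal{I}_n$, I would apply the Deift--Zhou nonlinear steepest descent analysis to the Riemann--Hilbert problem for $X(z;n)$ described around \eqref{Toeplitz-OP-solution}. Crucially, $\phi$ has the explicit Szeg{\H o} factorization $\phi(z)=e^{(T/2)z}\cdot e^{(T/2)z^{-1}}$ with both factors entire, so the outer parametrix is exact and the small-norm RHP resulting from opening lenses can be solved with $O(e^{-cn})$ error. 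Substituting the resulting uniform asymptotic formulas for $Q_j,\widehat Q_j$ on $\T$ into the Christoffel--Darboux representation of $\mathscr{K}_{n-1}$ produces the leading behavior of $\mathscr{K}_{n-1}(x^{-1},y^{-1})$ on $\T\times\T$.

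The remaining step is to evaluate $\mathcal{I}_n$ by residue/contour integration. Since $\be_1$ and $\ga_1$ are fixed as $n\to\infty$, after inserting the Szeg{\H o} factors the integrand reduces to a finite linear combination of terms of the form $x^a y^b \phi(x)\phi(y)$, and the Cauchy integrals collapse to finite sums involving only the Taylor coefficients of $e^{(T/2)z}$ and $e^{(T/2)z^{-1}}$. The truncation at $k_0=\min\{\ga_1-1,\be_1-1\}$ emerges naturally as the largest index for which both $\ga_1-1-k$ and $\be_1-1-k$ remain nonnegative, beyond which the relevant residues vanish. Combining $D_n[\phi]\sim e^{T^2/4}$ with the resulting evaluation of $a_{11}-\mathcal{I}_n$ then produces the bracketed expression in \eqref{top-path-different-asymp}. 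The main obstacle is this last step: the Deift--Zhou analysis is standard for a symbol this smooth, but the combinatorial bookkeeping of residues for a general pair $(\ga_1,\be_1)$ requires some care to separate the $a_{11}$ contribution from the finite sum and to match every term against the Bessel/Taylor coefficients appearing in the claimed formula.
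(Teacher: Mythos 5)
Your route is essentially the paper's own: the Karlin--McGregor/LGV formula turns $\mathbb{P}_{n+1}$ into a single-framed Toeplitz determinant with symbol $\phi(\ze)=e^{\frac{T}{2}(\ze+\ze^{-1})}$ and corner entry $I_{y_{n+1}-x_{n+1}}(T)$, Corollary \ref{eq:semi-framed_det_CDbb} (equivalently Theorem \ref{semis in terms of RepKer intro}) reduces it to $D_n[\phi]\bigl(a_{11}-\mathcal{I}_n\bigr)$, the strong Szeg{\H o} theorem gives $D_n[\phi]=e^{T^2/4}(1+O(e^{-cn}))$, and the kernel integral is evaluated from the Riemann--Hilbert asymptotics of $X(z;n)$ using the exact Szeg{\H o} factors $e^{Tz/2}$ and $e^{T/(2z)}$, with residues truncating at $k_0=\min\{\ga_1-1,\be_1-1\}$. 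This matches Section \ref{Sec-Asymptotics} step for step; the only part you leave unexecuted is the final residue bookkeeping, which the paper carries out by deforming both contours off $\T$, expanding the Cauchy kernel $\frac{1}{z_1-z_2}$ as a geometric series, and observing that one of the two resulting sums vanishes identically by Cauchy's theorem while the other collapses to the stated finite sum.
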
 
The proofs of Theorems \ref{thm asymp bordered} and \ref{thm asymp framed} are presented in Section \ref{Sec-Asymptotics}. These calculations are performed under the simplifying assumptions that the total time $T$ and the length of the gaps between consecutive starting and ending points of the random walks are bounded. It is very interesting to study the situations in which  these quantities grow at various rates with $n$, the number of random walks.
\begin{remark}
	\normalfont  It may be worth mentioning that when $x_j=j$ and $y_k=k$ for all $1\leq j,k\leq n$, then one has to analyze the (pure) Toeplitz determinant with symbol $e^{\frac{T}{2}(z+z^{-1}-2)}$ and the strong Szeg{\H o} theorem implies  \begin{equation}
			\mathbb{P}_{n}(1,2,\cdots,n;1,2,\cdots,n;T) = e^{-nT}e^{\frac{T^2}{4}}\left(1+O(e^{-c n})\right), \qquad c>0.
		\end{equation}
		Also, comparing with Theorem \ref{thm asymp bordered}, if we have $n+1$ continuous time random walks where only the top path has a gapped ending point $y_{n+1}>n+1$, then one has to asymptotically analyze the associated single bordered Toeplitz determinant, and as shown in Theorem \ref{thm 7.2}, \[ \mathbb{P}_{n+1}(1,2,\cdots,n+1;1,2,\cdots,n,y_{n+1};T) = \frac{e^{-(n+1)T}\left(\frac{T}{2}\right)^{\be_{1}-1}e^{\frac{T^2}{4}}}{(\be_{1}-1)!}\left(1+O(e^{-c n})\right), \qquad c>0, \]
		where $\be_1:= y_{n+1}-n$. Comparing this with \eqref{Pn+2 asymp} invites one to expect the asymptotic formula 
			\begin{equation}\label{Pn+m asymp}
			\mathbb{P}_{n+m}(1,\dots, n+m; 1,\dots,n,y_{n+1},\ldots,y_{n+m};T)=	\frac{e^{\frac{T^2}{4}}e^{-(n+m)T} \prod_{j=1}^{m} \left(\frac{T}{2}\right)^{  \be_{j}-j}}{\prod_{j=1}^{m}(\be_{j}-j)!}\prod_{1\le j<k\le m}(\be_k-\be_j) \left(1+o(1)\right),
		\end{equation}
with $\be_j:= y_{n+j}-j$, although we do not propose it as a conjecture yet.
\end{remark}

\subsection{Outline for the rest of the paper}
Sections \ref{sec:NIBE}, \ref{section Nonintersecting paths}, and \ref{sec 6-vertex} deal with the applications described above. Propositions \ref{eq:bordered_det}, \ref{Prop: multi-bordered Toeplitz and BOPUC}, \ref{eq:semi-framed_det_CD}, and \ref{eq:semi-framed_det_CDaa} are then proven in Section \ref{sec:proofs of OP props}. The bordered and framed Hankel determinant formulas appearing in Sections \ref{sec:NIBE} and \ref{section Nonintersecting paths} are proven in Section \ref{proofs of props}. Finally in Section \ref{Sec-Asymptotics} we prove the asymptotic results of Theorems \ref{thm asymp bordered} and \ref{thm asymp framed}.

\section{Height of non-intersecting Brownian excursions with wanderers}\label{sec:NIBE}

Consider $n$ Brownian excursions $X_1(t), \dots, X_n(t)$, i.e., Brownian bridges on $\R_+$ with an absorbing wall at zero. For $0\le m <n$ and vectors $\vec\al = (\al_1, \dots, \al_m)$, $\vec\be = (\be_1, \dots, \be_m)$, satisfying $\al_1 \ge \al_2 \ge \dots \ge \al_m \ge 0$ and $\be_1\ge \be_2 \ge \dots \ge \be_m\ge 0$, we will condition the Brownian excursions as follows (also see Figure \ref{BE-wanderers}):
\begin{itemize}
\item At time $t=0$ we start the paths at $X_1(0)=X_2(0) = \cdots = X_{n-m}(0) = 0$, and $X_n(0) = \al_1, X_{n-1}(0) = \al_2, \cdots X_{n-m+1}(0) = \al_m$.
\item At time $t=1$ the paths end at the points $X_1(1) = X_{2}(1) = \dots = X_{n-m}(1) = 0$, and $X_n(1) =\be_1, X_{n-1}(1) = \be_2, \dots, X_{n-m+1}(1)= \be_m.$
\item The particles do not intersect at times $0<t<1$, and are ordered as $X_1(t) <X_2 (t) < \dots < X_n(t)$.
\end{itemize}
We denote  the conditional probability described above as $\mathbb{P}_m^{\vec\al, \vec\be}$, and refer to the model as {\it $n$ non-intersecting Brownian excursions with wanderers on the time interval $[0,1]$}. Our primary interest in this section is in the random variable 
\[
\mathcal{M}_n := \max_{t\in (0,1)} X_n(t).
\]

\begin{center}
 \begin{figure}[h]
\begin{center}
   \scalebox{0.32}{\includegraphics{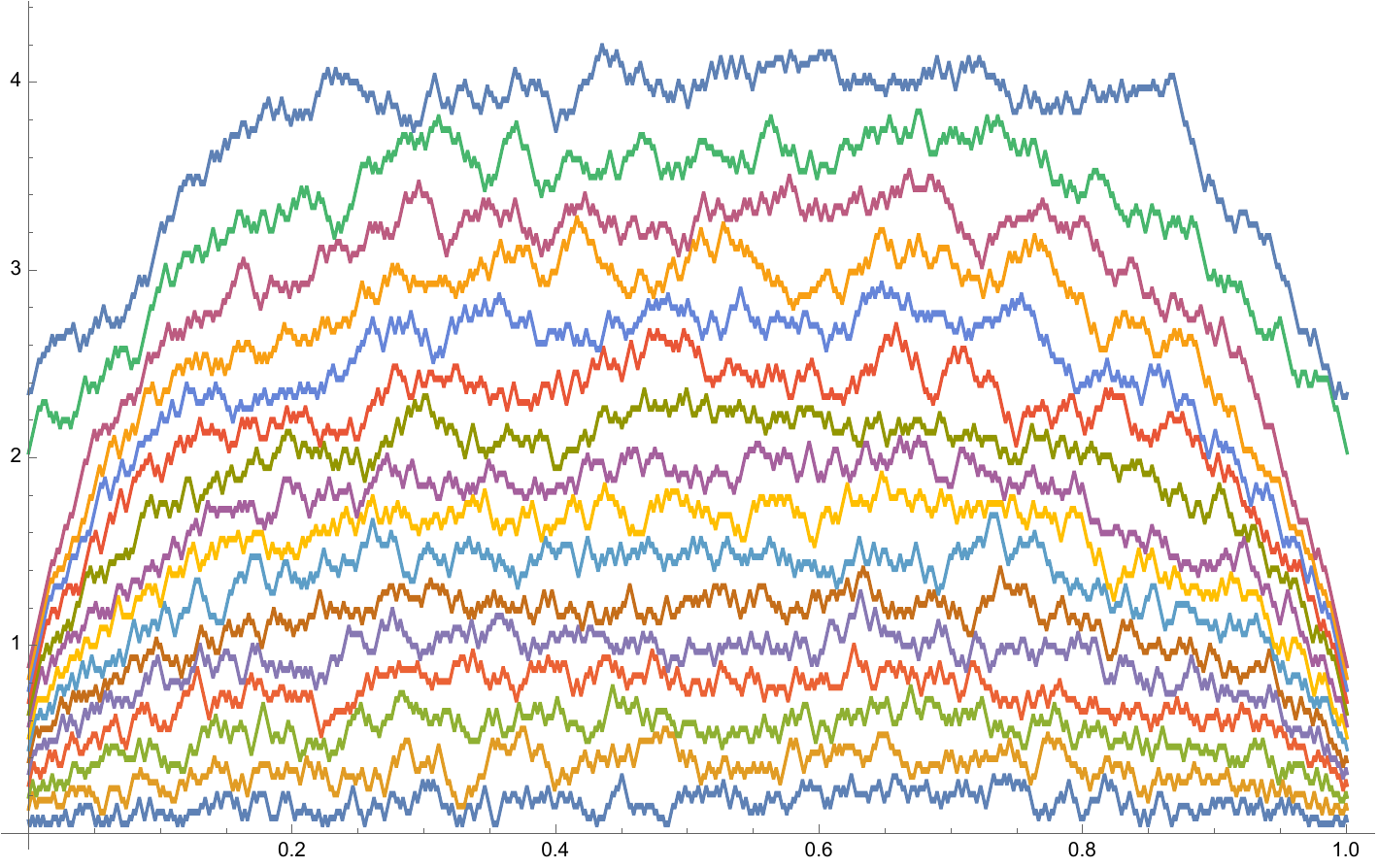}\hspace{1cm}\includegraphics{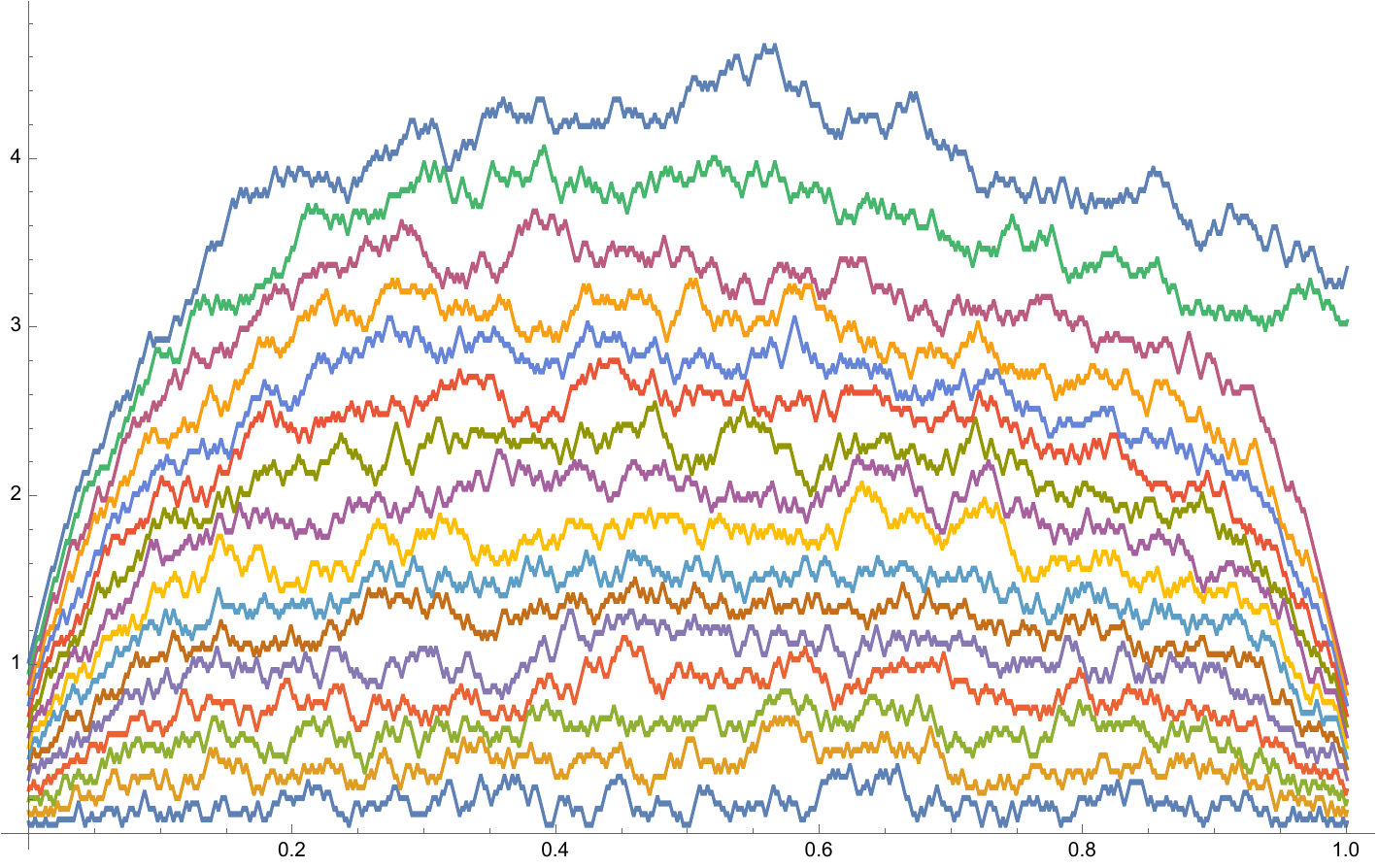}}
\end{center}
        \caption[]{Two simulations of non-intersecting Brownian excursions with wanderers with $n=16$ and $m=2$. In the figure on the left, $
        \al_1, \al_2, \be_1$, and $\be_2$ are all positive and the distribution of $\mathcal{M}_n$ is described in \eqref{eq:framed_M_dist} by a framed Hankel determinant. In the figure on the right, $
        \al_1= \al_2 =0$, and the distribution of $\mathcal{M}_n$ is described in \eqref{eq:bordered_M_dist} by a bordered Hankel determinant.}\label{BE-wanderers}
    \end{figure}
\end{center}

In the paper \cite{Liechty12} (see also \cite{Forrester-Majumdar-Schehr11}) it was shown that the in the case $m=0$, i.e., all Brownian particles begin and end at 0, distribution of the random variable $\mathcal{M}_n$ is expressed as a Hankel determinant with respect to a discrete measure. Here we describe the case $m>0$, in which most of the particles begin and end at 0, but a few have positive starting/ending points. In this case the discrete Hankel determinant is replaced by a bordered or framed Hankel determinant. To describe these determinants introduce the discrete measure supported on perfect squares\footnote{While it is perhaps more natural to replace $h\mapsto h^2$ in the definition of $\mu$ so that the measure is supported on the positive integers, we choose to write $\mu$ as supported on the perfect squares so that the relevant Hankel matrices are written with all moments of $\mu$ rather than just the even ones.}
\eq\label{def:mu_NIBE}
\mu=\sum_{h= 1, 4, 9, 16, \dots} h e^{-\frac{\pi^2 h}{2M^2}}\de_h, 
\eeq
as well as the signed measure depending on a parameter $\al$, 
\eq\label{def:nu_NIBE}
\nu(\al)  = \frac{1}{\al}\sum_{h= 1, 4, 9, 16, \dots}  \left(\sqrt{h} e^{-\frac{\pi^2h}{2M^2}}\sin\left(\frac{\sqrt{h} \pi\al}{M}\right)\right)\de_h,
\eeq
and the constants depending on two parameters
\eq\label{def:s_NIBE}
s(\al,\be) = \frac{1}{\al\be}\sum_{h=1,4,9,16,\dots} e^{-\frac{\pi^2h}{2M^2}}\sin\left(\frac{\sqrt{h} \pi\al}{M}\right)\sin\left(\frac{\sqrt{h} \pi\be}{M}\right) = \frac{1}{\al\be}\sum_{h=1}^\infty e^{-\frac{\pi^2h^2}{2M^2}}\sin\left(\frac{h \pi\al}{M}\right)\sin\left(\frac{h\pi\be}{M}\right).
\eeq
We have the following proposition describing the distribution of $\mathcal{M}_n$ in terms of a framed or bordered Hankel determinant.
\begin{proposition}\label{prop:NIBE_bordered_framed}
Fix integers $n$ and $m$ with $0\le m <n$ as well as vectors $\vec\al$ and $\vec \be$ satisfying  $\al_1 > \al_2 > \dots > \al_m > 0$ and $\be_1> \be_2 > \dots > \be_m> 0$. The distribution of the maximum height $\mathcal{M}_n$ of $X_n(t)$ is given by the formula
\begin{multline}\label{eq:framed_M_dist}
\mathbb{P}_m^{\vec\al, \vec\be}(\mathcal{M}_n < M) =\frac{\displaystyle{2^{n/2} \pi^{2(n-m)^2+n/2}\prod_{j=1}^m e^{\frac{1}{2}(\al_j^2+\be_j^2)}}}{\displaystyle{M^{2(n-m)^2+n}\det\left[\frac{\sinh(\al_{m-j+1}\be_{m-k+1})}{\al_{m-j+1}\be_{m-k+1}} -\sum_{\ell = 0}^{n-m-1} \frac{(\al_{m-j+1}\be_{m-k+1})^{2\ell}}{(2\ell+1)!}\right]_{j,k=1}^m\prod_{j=1}^{n-m} (2j-1)!}}  \\
\times 
H_n^F(\mu; \nu(\be_m), \nu(\be_{m-1}), \dots, \nu(\be_1); \nu(\al_m), \nu(\al_{m-1}), \dots, \nu(\al_1); S) ,
\end{multline} for all $M \ge \max\{\al_1, \be_1\},$
where the framed Hankel determinant $H_n^F(\mu; {\boldsymbol \nu}_m; {\boldsymbol \eta}_m; S)$ is as defined in \eqref{def:semi_framed_hankelM}, the measures $\mu$ and $\nu(\al)$ are defined in \eqref{def:mu_NIBE} and \eqref{def:nu_NIBE}, and the $m\times m$ matrix $S$ is given by
\[
S = \bigg(s(\al_{m-j+1}, \be_{m-k+1})\bigg)_{j,k=1}^m,
\]
with $s(\al,\be)$ defined in \eqref{def:s_NIBE}.

In the case all particles begin at zero, but the top $m$ particles end at $0<\be_m<\be_{m-1}<\cdots<\be_1$, the distribution of the maximum height $\mathcal{M}_n$ is given by the formula
\begin{multline}\label{eq:bordered_M_dist}
\mathbb{P}_m^{\vec 0, \vec \be}( \mathcal{M}_n < M ) =
 (-1)^{m(n-1)}\frac{2^{n/2} \pi^{n^2+(n-m)^2+n/2}}{M^{n^2+(n-m)^2+n}\De({\vec \be}^2)}\prod_{j=1}^m \frac{e^{\frac{1}{2}\be_j^2}}{\be_j^{2(n-m)}}\prod_{j=1}^{n-m} \left(\frac{1}{(2j-1)!}\right)  \\
\times  H_n^B[\mu; \nu(\be_m), \dots, \nu(\be_1)], \end{multline}
where $\De(\vec \beta^2) = \prod_{1\le j<k\le m} (\be_k^2-\be_j^2)$ and the bordered Hankel determinant $H_n^B[\mu; \vec{\boldsymbol \nu}_m]$ is as defined in \eqref{def:bordered_HMatrix}.
\end{proposition}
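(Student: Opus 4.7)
The plan is to apply the Karlin--McGregor formula together with the Dirichlet heat kernel on $(0,M)$, take the excursion limits at the zero endpoints, and recognize the resulting structure as the framed (or bordered) Hankel determinant. The Dirichlet heat kernel on $(0,M)$ with absorbing boundaries is
\[
p_1^M(x,y) = \frac{2}{M}\sum_{k=1}^{\infty}\sin\frac{k\pi x}{M}\sin\frac{k\pi y}{M}\,e^{-k^2\pi^2/(2M^2)},
\]
and the conditional probability $\mathbb{P}_m^{\vec\al,\vec\be}(\mathcal{M}<M)$ equals the ratio $\det[p_1^M(x_j,y_l)]/\det[p_1^\infty(x_j,y_l)]$, where $p_1^\infty(x,y)=\tfrac{2}{\sqrt{2\pi}}e^{-(x^2+y^2)/2}\sinh(xy)$ is the half-line Dirichlet kernel and $(x_j,y_l)$ denote the starting/ending points, with the first $n-m$ of each sent to zero in the excursion limit while the last $m$ equal $\al_{m-j+1}$ and $\be_{m-l+1}$ respectively.

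The main step is to carry out the excursion limit. Using the Taylor expansions
\[
\sin\frac{k\pi\ep}{M}=\sum_{r\ge 0}\frac{(-1)^r(\pi/M)^{2r+1}}{(2r+1)!}\ep^{2r+1}k^{2r+1},\qquad \sinh(\ep\de)=\sum_{r\ge 0}\frac{(\ep\de)^{2r+1}}{(2r+1)!},
\]
one performs column and row operations on the $n-m$ columns/rows with vanishing endpoints. The standard ``L'H\^opital for determinants'' argument, made rigorous via Cauchy--Binet (the numerator determinant expands as a sum over increasing $n$-tuples of mode indices $k_1<\cdots<k_n$, each term a product of $\sin$-determinants), extracts Vandermonde prefactors in both $\ep_j^2$ and $k_i^2$, so that the leading contribution as $\ep_j\to 0$ isolates the lowest modes. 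These Vandermondes (one from the numerator, one from the denominator) cancel against each other together with the $\ep$-prefactors coming from the excursion normalization.

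What remains is precisely a framed Hankel determinant in the sense of Definition \ref{def:semi_framed_hankelM}: the top-left $(n-m)\times (n-m)$ block contains $\sum_{k=1}^\infty k^{2(j+l)-2}e^{-k^2\pi^2/(2M^2)}=\mu_{j+l-2}$; the off-diagonal blocks combine one Taylor expansion with one full sine, producing the moments of the signed measures $\nu(\be_\cdot)$ and $\nu(\al_\cdot)$; and the bottom-right corner collects the full double sine sums, which are precisely $s(\al_\cdot,\be_\cdot)$. The denominator yields in the same way the factorial product $\prod_{j=1}^{n-m}(2j-1)!$ (from the $(2r+1)!$ in the $\sinh$ expansion) and the $m\times m$ determinant $\det[\sinh(\al\be)/(\al\be)-1]$; the ``$-1$'' arises because the $r=0$ term in $\sinh(xy)/(xy)=\sum_{r\ge 0}(xy)^{2r}/(2r+1)!$ is exactly what is subtracted by the corresponding row operation in the $\al$-limit. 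The bordered formula \eqref{eq:bordered_M_dist} then follows by specializing $\vec\al\to\vec 0$, which collapses the bottom $m$ rows back into an extended Hankel block and turns the bottom-right corner into a Vandermonde in $\be_j^2$.

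The main obstacle is not conceptual but combinatorial: tracking the sign $(-1)^{(n-m)(n-m-1)/2}$, the powers of $\pi$ and $M$, and the factorials arising at each stage of the limit, and finally fitting them to match the exact prefactor in \eqref{eq:framed_M_dist} and \eqref{eq:bordered_M_dist}. A useful consistency check is the specialization $m=0$ of both formulas, which must reproduce the classical Hankel determinant formula of \cite{Forrester-Majumdar-Schehr11, Liechty12}.
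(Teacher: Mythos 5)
Your proposal is correct and follows essentially the same route as the paper: Karlin--McGregor with the two-wall (Dirichlet) kernel in eigenfunction form and the one-wall $\sinh$ kernel, sequential Taylor expansion with row/column operations to take the excursion limit at the zero endpoints, identification of the surviving blocks as the moments $\mu_k$, $\nu_k(\cdot)$, and corner entries $s(\cdot,\cdot)$, and finally the $\vec\al\to\vec 0$ specialization for the bordered case. The paper reaches the sine-series kernel via reflection plus Poisson summation and organizes the limit purely by row operations rather than a Cauchy--Binet mode expansion, but these are cosmetic differences in the same argument.
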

\begin{remark}\label{rem:distinct_ab}
The above Proposition is stated for distinct $\al_1,\dots, \al_{m}$ and $\be_1,\dots, \be_{m}$. If any of the $\al_j$'s or $\be_j$'s coincide, then the framed or bordered Hankel determinants in \eqref{eq:framed_M_dist} and \eqref{eq:bordered_M_dist} vanish, as do the $m\times m$ determinants in the denominators of \eqref{eq:framed_M_dist} and \eqref{eq:bordered_M_dist}. In this case those formulas may be understood using l'H\^{o}pitals rule.
\end{remark}

In the case $m=0$, i.e., all starting and ending points of the non-intersecting Brownian excursions are at 0, it was proven in \cite{Liechty12} that for all $x\in \R$, 
\[
\lim_{n\to\infty} \mathbb{P}_0(\mathcal{M}_n < \sqrt{2n} - 2^{-7/6} n^{1/6} x) = F_1(2^{2/3} x),
\]
where $F_1(x)$ is the cumulative distribution function for the Tracy--Widom GOE (TW-GOE) distribution which describes the location of the largest eigenvalue in the Gaussian orthogonal ensemble of real symmetric random matrices \cite{Tracy-Widom96}. This result is quite expected since the top path $X_n(t)$ converges to the Airy$_2$ process minus a parabola in the proper scaling limit\footnote{It is well known that the top path in $n$ non-intersecting Brownian {\it bridges} conditioned to have all starting and ending points at zero converges in the proper scaling limit to the Airy$_2$ process minus a parabola. For non-intersecting Brownian excursions, the convergence of the top path to the Airy$_2$ process minus a parabola follows from combining a finite dimensional convergence implied by \cite[Section 5.3]{Tracy-Widom07} with the results of \cite{Corwin-Hammond11}.}, and it is well known that that the distribution of the maximum of the Airy$_2$ process minus a parabola is given by TW-GOE (up to a constant rescaling factor) \cite{Johansson03, Corwin-Quastel-Remenik13}.

When $n-m$ starting and ending points are at zero, but $m$ of them are positive so that $X_{n-j+1}(0) = \al_j$ and $X_{n-j+1}(1) = \be_j$ for $j=1,\dots, m$, then a different limiting process may emerge as the scaling limit of the top path. In particular, if $m$ is fixed and $\al_j$ and $\be_j$ are scaled with $n$ as 
\eq\label{eq:Airy_wanderers_scaling}
\al_j = \sqrt{2n} - (2n)^{1/6} a_j, \quad \be_j = \sqrt{2n} - (2n)^{1/6} b_j,
\eeq
then the top path $X_n(t)$ should converge\footnote{The works \cite{Adler-Delepine-van_Moerbeke09} and \cite{Adler-Ferrari-van_Moerbeke10} proved this convergence for non-intersecting Brownian bridges, not non-intersecting Brownian excursions.} in a certain scaling limit as $n\to\infty$ to a perturbation of the Airy$_2$ process known as the {\it Airy process with wanderers}, again minus a parabola. The Airy process with wanderers  depends on the parameters $a_1, \dots, a_m$ and $b_1,\dots, b_m$, and we denote it as $\mathcal{A}_m^{(\vec a, \vec b)}(t)$, where $\vec a = (a_1, \dots, a_m)$ and $\vec b = (b_1,\dots, b_m)$.  This first appeared in the paper \cite{Borodin-Peche08} in the context of last passage percolation with boundary sources, and then in \cite{Adler-Delepine-van_Moerbeke09, Adler-Ferrari-van_Moerbeke10} as a scaling limit of non-intersecting paths. More recently in \cite{Liechty-Nguyen-Remenik22}, several formulas were obtained for the distribution of the supremum of $\mathcal{A}_m^{(\vec a, \vec b)}(t)-t^2$ for $0\le a_1\le \dots\le a_m$ and $0\le b_1\le\dots\le b_m$\footnote{The condition that all $a_j$'s and $b_j$'s are positive is essential to get a nontrivial limit. Otherwise the probability converges to 0.}. We denote this distribution function as $F_m^{(\vec a, \vec b)}(x)$:
\eq
F_m^{(\vec a, \vec b)}(x) := \mathbb{P}\left(\sup_{t\in \R} \left(\mathcal{A}_m^{(\vec a, \vec b)}(t)-t^2\right)<x\right).
\eeq
In \cite{Liechty-Nguyen-Remenik22}, $F_m^{(\vec a, \vec b)}(x)$ is expressed as a Fredholm determinant (Theorem 1) and in terms of a Lax pair for the Painlev\'{e} II equation (Theorem 2). Both formulas have well defined limits as the parameters $a_j$ and/or $b_j$ go to $+\infty$. In view of the scaling \eqref{eq:Airy_wanderers_scaling}, this corresponds to pushing the starting and/or ending points of the non-intersecting paths to 0. In particular we have
\[
\lim_{\substack {\vec \al \to \vec 0 \\ \vec \be \to \vec 0}} F_m^{(\vec a, \vec b)}(x)=:F_m^{(\infty, \infty)}(x) = F_1(2^{2/3} x),
\]
where $F_1(x)$ is the TW-GOE distribution function. We can also take the parameters $a_j$ to $+\infty$ but leave $\vec b$ finite, which corresponds to the starting points of the non-intersecting paths going to 0 but the top $m$ ending points still positive. We denote this limit as 
\[
F_m^{(\infty, \vec b)}(x) := \lim_{\vec \al \to \vec 0 } F_m^{(\vec a, \vec b)}(x).
\]
In light of the expected convergence of $X_n(t)$ to $\mathcal{A}_m^{(\vec a, \vec b)}(t)$ in the proper scaling limit, we therefore expect that for $\al_j$ and $\be_j$ scaled as in \eqref{eq:Airy_wanderers_scaling} with $a_j>0$ and $b_j>0$ for $j=1,\dots,m$,
\eq\label{eq:limits_of_NIME}
\begin{aligned}
\lim_{n\to\infty} \mathbb{P}^{\vec \al, \vec \be}_m(\mathcal{M}_n < \sqrt{2n} - 2^{-7/6} n^{1/6} x) &= F_m^{(\vec a, \vec b)}(x), \\
\lim_{n\to\infty} \mathbb{P}^{\vec 0, \vec \be}_m(\mathcal{M}_n < \sqrt{2n} - 2^{-7/6} n^{1/6} x) &= F_m^{(\infty, \vec b)}(x).
\end{aligned}
\eeq

Somewhat surprisingly, \cite[Corollary 1]{Liechty-Nguyen-Remenik22} states that the two limiting functions described above are essentially the same. More precisely it states
\[
 F_m^{(\vec a, \vec b)}(x) =  F_{2m}^{(\infty, \langle \vec a, \vec b\rangle)}(x),
 \]
 where $\langle \vec a, \vec b\rangle$ is the reordering of the combined elements of $\vec a$ and $\vec b$ in non-decreasing order. Assuming \eqref{eq:limits_of_NIME}, this would imply that $\mathbb{P}^{\vec \al, \vec \be}_m(\mathcal{M}_n < \sqrt{2n} - 2^{-7/6} n^{1/6} x)$, expressed in \eqref{eq:framed_M_dist} in terms of a framed Hankel determinant, and  and $\mathbb{P}^{\vec 0, ( \vec \al, \vec \be )}_{2m}(\mathcal{M}_n < \sqrt{2n} - 2^{-7/6} n^{1/6} x)$, expressed in \eqref{eq:bordered_M_dist} as a bordered Hankel determinant, have the same scaling limit as $n\to\infty$ when $\vec \al$ and $\vec \be$ are scaled as \eqref{eq:Airy_wanderers_scaling} with $a_j,b_j>0$, and  $( \vec \al, \vec \be )$ is the reordering of the combined elements of $\vec a$ and $\vec b$ in non-increasing order.
 
 In order to evaluate the formulas \eqref{eq:framed_M_dist} and \eqref{eq:bordered_M_dist} as $n\to\infty$, it is useful to use Propositions \ref{eq:bordered_det_gen} and \ref{eq:s_framed_det_gen} to express them in terms of orthogonal polynomials. Then one may hope to evaluate them asymptotically as $n\to\infty$ provided asymptotics of the orthogonal polynomials are known. To apply the Propositions \ref{eq:bordered_det_gen} and \ref{eq:s_framed_det_gen}, we first introduce the system of monic orthogonal polynomials $\{P_k(x)=x^k+\cdots\}_{k=0}^\infty$ with respect to the discrete measure $\mu$ given in \eqref{def:mu_NIBE}:
\begin{equation}
\sum_{x=1,4,9,16,\dots} P_k(x)P_j(x) xe^{-\frac{\pi^2}{2M^2} x} = h_k \de_{jk},
\end{equation}
or equivalently
\begin{equation}\label{eq:NIBE_OP_def}
\sum_{x=1}^\infty P_k(x^2)P_j(x^2) x^2 e^{-\frac{\pi^2}{2M^2} x^2} = h_k \de_{jk}.
\end{equation}
Applying Corollary \ref{eq:semi-framed_det_CDbb} to \eqref{eq:framed_M_dist} gives
\begin{multline}\label{eq:NIBE_framed_OPs}
\mathbb{P}_m^{\vec\al, \vec\be}(\mathcal{M}_n < M) = \frac{2^{n/2} \pi^{2(n-m)^2+n/2}\prod_{j=1}^m e^{\frac{1}{2}(\al_j^2+\be_j^2)}H_{n-m}[\mu] }{\displaystyle{M^{2(n-m)^2+n}\det\left[\frac{\sinh(\al_{m-j+1}\be_{m-k+1})}{\al_{m-j+1}\be_{m-k+1}} -\sum_{\ell = 0}^{n-m-1} \frac{(\al_{m-j+1}\be_{m-k+1})^{2\ell}}{(2\ell+1)!}\right]_{j,k=1}^m\prod_{j=1}^{n-m} (2j-1)!}}   \\
\times \det\left[ S - \left(\sum_{x,y=1}^\infty K_{n-m-1}(x^2, y^2)xye^{-\frac{\pi^2(x_j+y_k^2)}{2M^2}}\sin\left(\frac{x \pi \be_k}{M}\right)\sin\left(\frac{x \pi \al_j}{M}\right)\right)_{j,k=1}^m\right],
\end{multline}
where
\[
K_{n-m-1}(x, y)=\sum_{k=0}^{n-m-1} \frac{P_k(x)P_k(y)}{h_k}, \quad S = \left( \sum_{h=1}^\infty \frac{e^{-\frac{\pi^2h^2}{2M^2}}\sin\left(\frac{h \pi\al_{m-j+1}}{M}\right)\sin\left(\frac{h\pi\be_{m-k+1}}{M}\right)}{\al_{m-j+1}\be_{m-k+1}}\right)_{j,k=1}^m.
\]
Applying Corollary  \ref{prop:bordered_det_gen} to \eqref{eq:bordered_M_dist} gives
\begin{multline}\label{eq:NIBE_bordered_OPs}
\mathbb{P}_m^{\vec 0, \vec \be}( \mathcal{M}_n < M ) =  (-1)^{n(n-1)/2+(n-m)(n-m-1)/2}\frac{2^{n/2} \pi^{n^2+(n-m)^2+n/2}}{M^{n^2+(n-m)^2+n}\De({\vec \be}^2)}\prod_{j=1}^m \frac{e^{\frac{1}{2}\be_j^2}}{\be_j^{2(n-m)}} \prod_{j=1}^{n-m} \left(\frac{1}{(2j-1)!}\right) \\
\times H_{n-m}[\mu]\det\left[\sum_{x=1}^\infty \left(x e^{-\frac{\pi^2x_j^2}{2M^2}}\sin\left(\frac{x \pi \be_j}{M}\right)\right) P_{n-m+k-1}(x^2)\right]_{j,k=1}^m.
\end{multline}

\medskip 

To evaluate \eqref{eq:NIBE_framed_OPs} and \eqref{eq:bordered_det_gen} asymptotically as $n\to\infty$, one first needs large-degree asymptotic formulas for the orthogonal polynomials \eqref{eq:NIBE_OP_def}. In general this can be accomplished by the Riemann--Hilbert method \cite{DKMVZ, BKMM, Bleher-Liechty14}. Indeed, for the orthogonal polynomials \eqref{eq:NIBE_OP_def}, the relevant Riemann--Hilbert analysis was performed in \cite{Liechty12}, although explicit asymptotic formulas for the orthogonal polynomials do not appear there. Nonetheless, one can use the steepest descent analysis of the Riemann--Hilbert problem from \cite{Liechty12} to extract asymptotic formulas for the orthogonal polynomials \eqref{eq:NIBE_OP_def} and then insert the asymptotic formulas for the orthogonal polynomials into \eqref{eq:NIBE_framed_OPs} and \eqref{eq:bordered_det_gen}. This is the approach outlined in \cite[Appendix A]{Liechty-Nguyen-Remenik22} to prove \eqref{eq:limits_of_NIME} with $m=2$ and $\vec\al=\vec 0$, though few details are given there. Here we repeat the remark from \cite[Appendix A]{Liechty-Nguyen-Remenik22} that larger values of $m$ require finer asymptotic formulas for the orthogonal polynomials, thus the analysis becomes increasingly difficult for larger values of $m$.

\section{Width of non-intersecting processes}\label{section Nonintersecting paths}

The paper \cite{Baik-Liu14} considers the width of various ensembles of non-intersecting paths: non-intersecting Brownian bridges, discrete random walks, and continuous time random walks. For the simplest initial conditions on the random paths, the distribution of the width of the ensemble is described as the ratio of two Hankel determinants for the non-intersecting Browinan bridges, and the ratio of two Toeplitz determinants for the non-intersecting random walks. In this section we describe how the initial conditions may be perturbed slightly so that the Toeplitz/Hankel determinants from \cite{Baik-Liu14} are replaced with bordered or framed Toeplitz/Hankel determinants.

\subsection{Non-intersecting Brownian bridges with wanderers}
Consider $n$ independent Brownian motions \\ $X_1(t), \dots, X_n(t)$. For integers $m_1 \ge 0$ and $m_2\ge 0$ satisfying $m_1+m_2=m<n$. define the vectors  $\vec\al = (\al_1, \dots, \al_m)$, $\vec\be = (\be_1, \dots, \be_m)$, satisfying $\al_1 \le \al_2 \le \cdots \le \al_{m_1} < 0 < \al_{m_1+1} \le \cdots \le \al_{m_1+m_2}$ and $\be_1 \le \be_2 \le \cdots \le \be_{m_1} < 0 < \be_{m_1+1} \le \cdots \le \be_{m_1+m_2}$. Similar to the non-intersecting Brownian excursions in Section \ref{sec:NIBE}, we will condition the Brownian motions as follows (also see Figure \ref{BB-wanderers1}):
\begin{itemize}
\item At time $t=0$ we start the paths at $X_{m_1+1}(0)=X_{m_1+2}(0) = \dots = X_{n-m_2}(0) = 0$, as well as $X_1(0) = \al_1, X_{2}(0) = \al_2, \dots, X_{m_1}(0) = \al_{m_1},$ and $X_{n-m_2+1}(0)=\al_{m_1+1}, X_{n-m_2+2}(0)=\al_{m_1+2}, \dots, X_n(0) = \al_{m_1+m_2}.$
\item At time $t=1$ the paths end at the points $X_{m_1+1}(1)=X_{m_1+2}(1) = \dots = X_{n-m_2}(1) = 0$, as well as $X_1(1) = \be_1, X_{2}(1) = \be_2, \dots, X_{m_1}(1) = \be_{m_1},$ and $X_{n-m_2+1}(1)=\be_{m_1+1}, X_{n-m_2+2}(1)=\be_{m_1+2}, \dots, X_n(1) = \be_{m_1+m_2}.$
\item The particles not to intersect at times $0<t<1$, and are ordered as $X_1(t) <X_2 (t) < \dots < X_n(t)$.
\end{itemize}
We refer to this model as {\it non-intersecting Brownian bridges with wanderers} and denote the conditional probability described above as $\mathbb{P}_{m_1,m_2}^{\vec\al,\vec\be}$.

\begin{center}
 \begin{figure}[h]
\begin{center}
   \scalebox{0.25}{\includegraphics{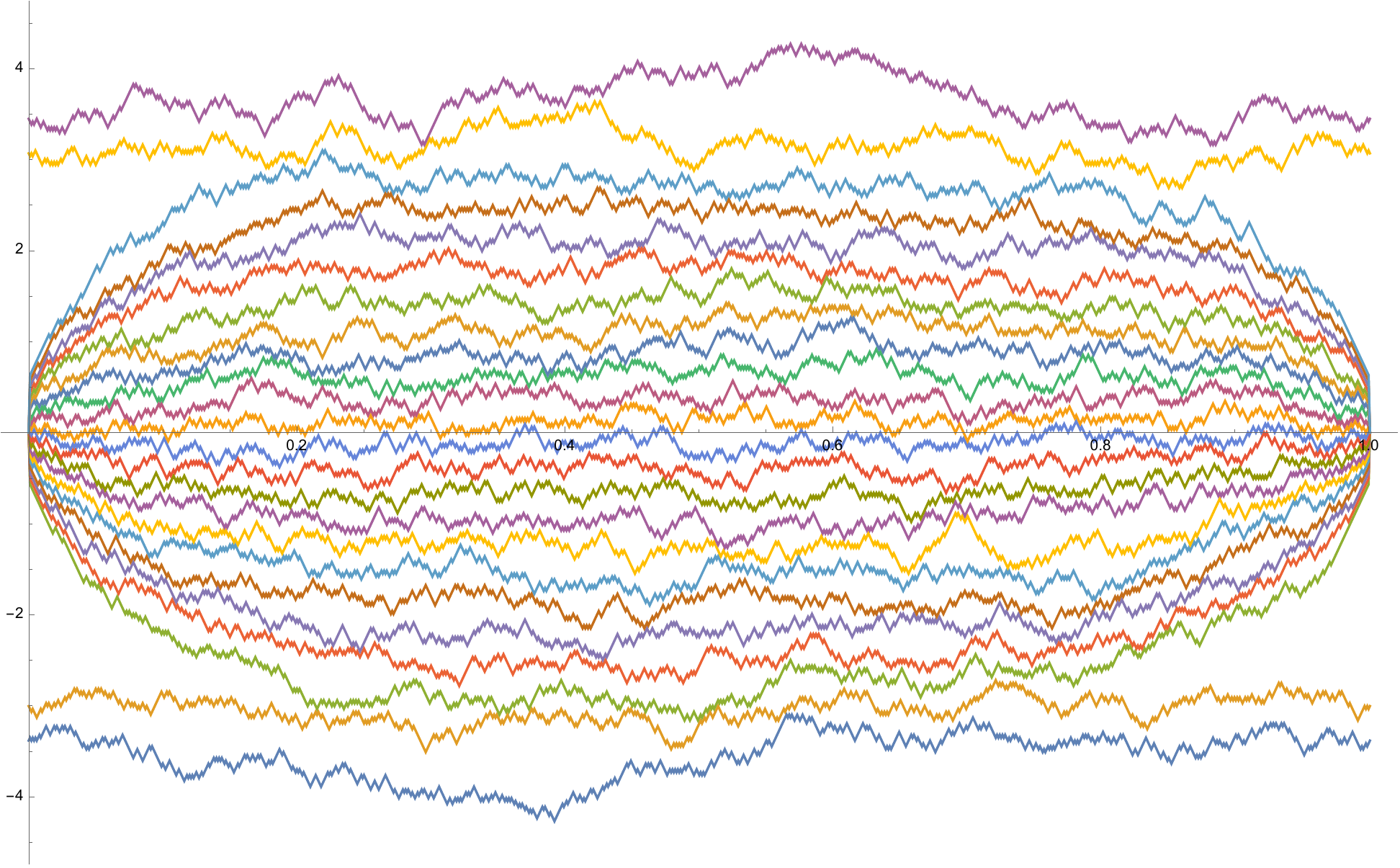}\hspace{1cm}\includegraphics{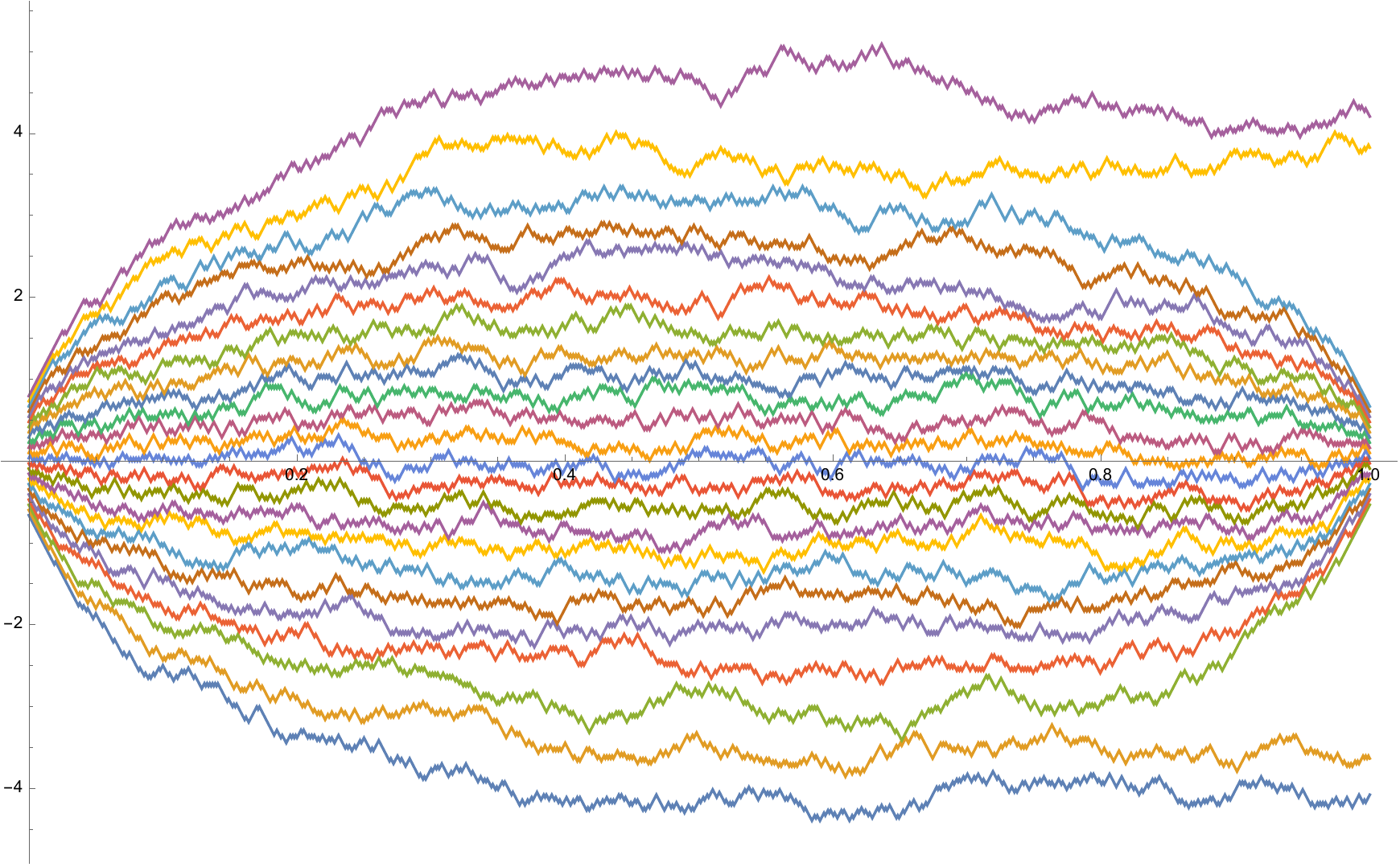}}
\end{center}
        \caption[]{Two simulations of non-intersecting Brownian bridges with wanderers with $n=24$ and $m_1=m_2=2$. In the figure on the left, $
        \al_1, \al_2, \al_3, \al_4, \be_1, \be_2, \be_3$, and $\be_4$ are all non-zero and the distribution of $W_n$ is described in \eqref{eq:framed_W_dist} in terms of framed Hankel determinants. In the figure on the right, $
        \al_1= \al_2 =\al_3 = \al_4=0$, and the distribution of $W_n$ is described in \eqref{eq:bordered_W_dist} in terms of bordered Hankel determinants.}\label{BB-wanderers1}
    \end{figure}
\end{center}

We define the width of the ensemble as 
\[
\mathscr{W}_n := \max_{0\le t \le 1}\{X_n(t) - X_1(t)\}.
\] 
For the case $m_1=m_2=0$, in which all particles begin and end at 0, the distribution of $\mathscr{W}_n$ was presented in \cite{Baik-Liu14} as the integral of the ratio of two Hankel determinants with respect to Gaussian (or Gaussian-like) measures, one continuous and one discrete. Here we present the analogous formulas in terms of bordered and/or framed Hankel determinants when $m_1$ or $m_2$ is nonzero. In order to present the formulas we first define the continuous measure $\mu$ and discrete measure $\mu^{M,\tau}$ as
\eq\label{def:mu_NIBB}
\dd\mu(x) = e^{-x^2/2}\dd x, \quad \mu^{M,\tau} = \frac{2\pi}{M}\sum_{x\in \mathcal{D}_{M,\tau}} e^{-x^2/2}\de_{x},
\eeq
where
\eq\label{def:discrete_set}
\mathcal{D}_{M,\tau} = \left\{ \frac{2\pi (k-\tau)}{M}   \ : \ k\in \Z\right\}.
\eeq
We also introduce the complex measures $\mu_{\be}$ and $\mu^{M,\tau}_{\be}$ as
\eq\label{def:mu_beta_NIBB}
\dd\mu_{\be}(x) = e^{-x^2/2-\ii\be x}\dd y, \quad \mu^{M,\tau}_{\be} = \frac{2\pi}{M}\sum_{x\in \mathcal{D}_{M,\tau}} e^{-x^2/2-\ii\be x}\de_{x},
\eeq
We then have the following proposition, which relies on the Karlin--McGregor formula \cite{Karlin-McGregor59} for non-intersecting processes. We present a proof in Section \ref{proofs of props}.

\begin{proposition}\label{prop:NIBB_bordered_framed}
Fix integers $n$, $m_1$, and $m_2$ with $m_1\ge 0$, $m_2\ge 0$, and  $m=m_1+ m_2 <n$ as well as vectors $\vec\al=(\al_1,\dots,\al_m)$ and $\vec \be=(\be_1,\dots,\be_m)$ satisfying  $\al_1 < \al_2 < \cdots < \al_{m_1} < 0 < \al_{m_1+1} < \cdots < \al_{m_1+m_2}$ and $\be_1 < \be_2 < \cdots < \be_{m_1} < 0 < \be_{m_1+1} < \cdots < \be_{m_1+m_2}$. The distribution of the width of the non-intersecting Brownian bridges with wanderers is given by    by the formula
\eq\label{eq:framed_W_dist}
\mathbb{P}_{m_1,m_2}^{\al,\be}(\mathscr{W}_n < M) = \int_0^{1} \frac{H_n^F[\mu^{M,\tau};\mu^{M,\tau}_{\be_{1}},\mu^{M,\tau}_{\be_{2}},\dots,\mu^{M,\tau}_{\be_{m}}; \mu^{M,\tau}_{(-\al_{1})},\dots,\mu^{M,\tau}_{(-\al_{m})}; A^{M,\tau}]}{H_n^F[\mu;\mu_{\be_{1}}\mu_{\be_{2}},\dots, \mu_{\be_{m}}; \mu_{(-\al_{1})},\dots,\mu_{(-\al_{m})}; A]}\dd\tau,
\eeq
 for all $M \ge \max\{\al_m-\al_1, \be_m-\be_1\},$
where the framed Hankel determinant $H_n^F(\mu; \vec{\boldsymbol \nu}_m; \vec{\boldsymbol \eta}_m; A)$ is as defined in \eqref{def:semi_framed_hankelM},  and the $m\times m$ matrix $A$ is given by
\eq\label{eq:const_matrix_NIBB}
\begin{aligned}
A&=\left( \int_\R e^{-y^2/2 +\ii y(\al_{k}-\be_{j})}\,\dd y\right)_{j,k=1}^m=(2\pi)^{m/2}\left( e^{- (\al_{k}-\be_{j})^2/2}\right)_{j,k=1}^m \\
 A^{M,\tau}&=\left( \frac{2\pi}{M}\sum_{y\in \mathcal{D}_{M,\tau}}  e^{-y^2/2 +\ii y(\al_{k}-\be_{j})}\right)_{j,k=1}^m. \\
\end{aligned}
\eeq
In the case all particles begin at zero, but the bottom $m_1$ particles end at $\be_1<\be_{2}<\cdots<\be_{m_1}<0$ and the top $m_2$ particles end at $0<\be_{m_1+1}<\dots<\be_m$, the distribution of the $\mathscr{W}_n$ is given by the formula
\eq\label{eq:bordered_W_dist}
\mathbb{P}_{m_1,m_2}^{\vec 0,\vec\be}(\mathscr{W}_n < M) = \int_0^{1} \frac{H_n^B[\mu^{M,\tau};\mu^{M,\tau}_{\be_1},\dots, \mu^{M,\tau}_{\be_{m}} ]}{H_n^B[\mu; \mu_{\be_1},\dots, \mu_{\be_{m}} ]}\dd\tau,
\eeq
where the bordered Hankel determinant $H_n^B[\mu; \vec{\boldsymbol \nu}_m]$ is as defined in \eqref{def:bordered_HMatrix}.
\end{proposition}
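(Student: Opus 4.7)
The plan is to apply the Karlin--McGregor formula to non-intersecting Brownian bridges with the width constraint realized as a periodic reduction on the circle $\R/M\Z$, and then to use Poisson summation to expose the framed Hankel structure.

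Following the approach of Baik--Liu, the event $\{W_n<M\}$ is equivalent, up to the choice of a fundamental domain of length $M$, to the existence of a non-intersecting lift of the paths to $\R/M\Z$. Integrating over the shift of the fundamental domain and normalizing by the unconstrained Karlin--McGregor probability, one obtains
\[
\mathbb{P}_{m_1,m_2}^{\vec\al,\vec\be}(W_n<M)=\int_0^1\frac{\det[\tilde p_1^{M,\tau}(\al_j,\be_k)]_{j,k=1}^n}{\det[p_1(\al_j,\be_k)]_{j,k=1}^n}\,\dd\tau,
\]
where $p_1(x,y)=(2\pi)^{-1/2}e^{-(x-y)^2/2}$ is the Gaussian transition density and $\tilde p_1^{M,\tau}$ is its circular analogue indexed by the shift $\tau$ of the fundamental domain. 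The parameter $\tau$ in \eqref{eq:framed_W_dist} arises precisely from this shift.

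Next, I would use Poisson summation to write $\tilde p_1^{M,\tau}(x,y)$ as a discrete sum of the form $M^{-1}\sum_{z\in\mathcal{D}_{M,\tau}}e^{-z^2/2}e^{\ii z(y-x)}$ over the shifted dual lattice \eqref{def:discrete_set}, while in the denominator the Gaussian identity gives $p_1(x,y)=(2\pi)^{-1}\int_\R e^{-z^2/2}e^{\ii z(y-x)}\,\dd z$. Both representations share the factorized form $e^{-z^2/2}e^{\ii z\be_k}e^{-\ii z\al_j}$, with $z$ either summed over $\mathcal{D}_{M,\tau}$ or integrated over $\R$; this is the identity that produces the same framed Hankel block pattern in numerator and denominator. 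The $n-m$ rows and columns corresponding to particles at $0$ yield pure moments of $\mu^{M,\tau}$ or $\mu$, forming the top-left $(n-m)\times(n-m)$ Hankel block. The $m$ wanderer rows/columns produce moments of the complex-deformed measures $\mu^{M,\tau}_{\be_j}$ and $\mu^{M,\tau}_{-\al_k}$ introduced in \eqref{def:mu_beta_NIBB}, decorating the last $m$ columns and rows. The wanderer-to-wanderer entries in the bottom-right $m\times m$ block reduce exactly to $A^{M,\tau}$ and $A$ of \eqref{eq:const_matrix_NIBB}. In the bordered case $\vec\al=\vec 0$ the wanderer-starting rows collapse onto bulk rows and the bottom-right block disappears, leaving the bordered Hankel formula \eqref{eq:bordered_W_dist}.

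The main obstacle will be the coincident limit $\al_j\to 0$ and $\be_j\to 0$ for the $n-m$ bulk particles in the Karlin--McGregor determinant, which is a $0/0$ indeterminate form. This is resolved by first perturbing the coinciding points to an $\ep$-spaced ladder, extracting the Vandermonde-like $\ep$-prefactors via Taylor expansion of the entries (using $e^{\ii z\be_k}=\sum_q (\ii z\be_k)^q/q!$), and observing that the prefactors are identical in numerator and denominator and therefore cancel. After this cancellation, multilinearity of the determinant in its columns (and rows in the framed case) directly produces the stated ratios of framed or bordered Hankel determinants.
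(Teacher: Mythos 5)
Your proposal is correct and follows essentially the same route as the paper: start from the Baik--Liu ratio formula $\int_0^1 \det[g(x_j-y_k,\tau)]/\det[p(x_j-y_k)]\,\dd\tau$, apply Poisson summation to the periodized kernel and the Gaussian Fourier identity to the free kernel so that both entries take the factorized form $e^{-z^2/2}e^{\ii z(\be_k-\al_j)}$ summed over $\mathcal{D}_{M,\tau}$ or integrated over $\R$, resolve the coincident bulk limits by Taylor expansion with row/column operations so the Vandermonde-type prefactors cancel between numerator and denominator, and read off the framed (resp.\ bordered) Hankel structure. The paper's proof carries out exactly these steps, quoting the starting formula as a lemma from Baik--Liu just as you do.
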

\begin{remark}
As in Remark \ref{rem:distinct_ab} following Proposition \ref{prop:NIBE_bordered_framed}, the above formulas are also valid when two or more of the $\al_j$'s or $\be_j$'s coincide, after applyng l'H\^{o}pital's rule.
\end{remark}

In the paper \cite{Baik-Liu14}, it was shown that for the case $m_1=m_2=0$, so that all Brownian particles begin and end at 0, for any $x\in \R$, 
\eq\label{eq:F2_limit}
\lim_{n\to\infty} \mathbb{P}_0\left(\mathscr{W}_n \le 2\sqrt{n}+2^{-2/3} n^{-1/6}x\right) = F_2(x),
\eeq
where $F_2(x)$ is the Tracy--Widom GUE (TW-GUE) distribution function which describes the location of the largest eigenvalue in the Gaussian unitary ensemble of complex Hermitian random matrices \cite{Tracy-Widom94}. Since the top and bottom curves of the ensemble each converge in the proper scaling limit to the Airy$_2$ process minus a parabola, the result \eqref{eq:F2_limit} has the interpretation that \cite[Equation (19)]{Baik-Liu14}
\[
\mathbb{P}\left(2^{-1/3}\sup_{t\in \R}\left(\mathcal{A}^{(1)}(t) + \mathcal{A}^{(2)}(t) - 2t^2\right) \le x\right)= F_2(x),
\]
where $\mathcal{A}^{(2)}$ and $\mathcal{A}^{(2)}$ are two independent copies of the Airy$_2$ process. In other words the supremum of the sum of two independent Airy$_2$ processes minus a parabola is given by the Tracy--Widom GUE distribution. In the setting of non-intersecting Brownian motions with wanderers we expect a similar result for Airy processes with wanderers. Namely, for fixed $m_1$ and $m_2$ if we take the scalings
\eq\label{eq:NIBB_M_scaling}
M = 2\sqrt{n}+2^{-2/3} n^{-1/6}x, 
\eeq
and
\begin{align}
 \al_j &= -2\sqrt{n} +2^{2/3} n^{1/6} a_j, \quad \be_j = -2\sqrt{n} +2^{2/3} n^{1/6} b_j, \quad j=1,\dots, m_1, \\
 \al_j &= 2\sqrt{n} -2^{2/3} n^{1/6} a_j, \quad \be_j = 2\sqrt{n} -2^{2/3} n^{1/6} b_j, \quad j=m_1+1,\dots, m_1+m_2, 
\end{align}
with all $a_j$'s and $b_j$'s positive, then we expect \eqref{eq:framed_W_dist} to converge to the distribution of the supremum of the sum of to Airy processes with wanderers minus a parabola. That is, we expect
\[
\lim_{n\to\infty}\mathbb{P}_{m_1,m_2}^{\al,\be}(\mathscr{W}_n < M) =\mathbb{P}\left(2^{-1/3}\sup_{t\in \R}\left(\mathcal{A}^{(\vec a_{m_1}, \vec b_{m_1})}(t) + \mathcal{A}^{(\vec a_{m_2}, \vec b_{m_2})}(t) - 2t^2\right) \le x\right),
\]
where $\vec a_{m_1} = (a_1, \dots, a_{m_1})$, $\vec a_{m_2} = (a_{m_1+1}, \dots, a_{m_1+m_2})$, $\vec b_{m_1} = (b_1, \dots, b_{m_1})$, $\vec b_{m_2} = (b_{m_1+1}, \dots, b_{m_1+m_2})$, and the Airy processes with wanderers $\mathcal{A}^{(\vec a_{m_1}, \vec b_{m_1})}$ and $\mathcal{A}^{(\vec a_{m_2}, \vec b_{m_2})}(t) $ are assumed to be independent. Similarly we expect \eqref{eq:bordered_W_dist} to converge to the distribution of 
\[
2^{-1/3}\sup_{t\in \R}\left(\mathcal{A}(t) + \mathcal{A}^{(\vec a_{m}, \vec b_{m})}(t) - 2t^2\right),
\]
where $\mathcal{A}$ is the usual Airy$_2$ process, and is independent from the Airy process with wanderers $\mathcal{A}^{(\vec a_{m}, \vec b_{m})}$.

With an eye towards asymptotic analysis, we can write \eqref{eq:framed_W_dist} and \eqref{eq:bordered_W_dist} in terms of orthogonal polynomials using Corollaries  \ref{prop:bordered_det_gen} and  \ref{eq:semi-framed_det_CDbb}. 
Introduce the systems of monic orthogonal polynomials $\{P_k(x)\}_{k=0}^\infty$, $P_k(x) = x^k+\cdots$, and $\{P_k^{M, \tau}(x)\}_{k=0}^\infty$, $P_k(x) = x^k+\cdots$, via the orthogonality conditions
\eq\label{eq:OPs_def_NIBB}
\int_\R P_j(x)P_k(x)e^{-x^2/2}\dd x = h_k\de_{jk}, \quad  \frac{2\pi}{M}\sum_{x\in \mathcal{D}_{M,\tau}} P_j^{M,\tau}(x)P_k^{M,\tau}(x)e^{-x^2/2} = h_k^{M,\tau}\de_{jk}.
\eeq

Then using Corollary \ref{eq:semi-framed_det_CDbb} in  \eqref{eq:framed_W_dist}  gives  (with $m=m_1+m_2$),
 \begin{multline}\label{eq:framed_NIBB_OPs}
 \mathbb{P}_{m_1,m_2}^{\vec\al,\vec\be}(\mathscr{W}_n < M) = \ds\int_0^{1} \frac{H_{n-m}[\mu^{M,\tau}]}{H_{n-m}[\mu]} \\
\times \frac{\det\left[A^{M,\tau} - \left(\left(\frac{2\pi}{M}\right)^2 \ds\sum_{y_j\in \mathcal{D}_{M,\tau}}\ds\sum_{x_k\in \mathcal{D}_{M,\tau}} K_{n-m-1}^{M,\tau}(x_k, y_j) e^{-x_k^2/2+\ii x_k \al_k - y_j^2/2-\ii y_j \be_j}
\right)_{j,k=1}^{m}\right]}{\det\left[A- \left(\ds\int_\R\ds\int_\R K_{n-m-1}(x_k, y_j) e^{-x_k^2/2+\ii x_k \al_k - y_j^2/2-\ii y_j \be_j}\,\dd x_k\,\dd y_j
\right)_{j,k=1}^{m}\right]}\,\dd \tau,
\end{multline}
where
\[
K_{n-m-1}(x,y) = \sum_{k=0}^{n-m-1} \frac{P_k(x)P_k(y)}{h_k}, \quad K_{n-m-1}^{M,\tau}(x,y) = \sum_{k=0}^{n-m-1} \frac{P_k^{M,\tau}(x)P_k^{M,\tau}(y)}{h_k^{M,\tau}},
\]
and using Corollary \ref{prop:bordered_det_gen} in \eqref{eq:bordered_W_dist}  gives (with $m=m_1+m_2$),
\begin{multline}\label{eq:NIBB_OP_B_gen}
\mathbb{P}_{m_1,m_2}^{\vec 0,\be}(\mathscr{W}_n < M) \ds  \int_0^{1}\frac{H_{n-m}[\mu^{M,\tau}]}{H_{n-m}[\mu]} \\ \times  \frac{\left(\frac{2\pi}{M}\right)^m\ds \sum_{x_1, x_2, \dots, x_m \in \mathcal{D}_{M,\tau} }  \det 
\begin{pmatrix}
P_{n-m}^{M,\tau}(x_1) & \cdots & P_{n-m}^{M,\tau}(x_m) \\
\vdots & \ddots & \vdots \\
P_{n-1}^{M,\tau}(x_1) & \cdots & P_{n-1}^{M,\tau}(x_m)
\end{pmatrix} \ds\prod_{j=1}^{m} e^{-x_j^2/2-\ii \be_{j}}  }{\ds\int_\R \cdots\ds \int_\R \det 
\begin{pmatrix}
P_{n-m}(x_1) & \cdots & P_{n-m}(x_m) \\
\vdots & \ddots & \vdots \\
P_{n-1}(x_1) & \cdots & P_{n-1}(x_m)
\end{pmatrix} \ds\prod_{j=1}^{m} \left(e^{-x_j^2/2-\ii \be_{j}} \dd x_j\right)}\dd\tau.
\end{multline}

It was shown in \cite{Baik-Liu14} that in the scaling \eqref{eq:NIBB_M_scaling},
\[
\lim_{n\to\infty} \frac{H_{n-m}[\mu^{M,\tau}]}{H_{n-m}[\mu]} =F_2(x),
\]
where this limit is independent of $\tau$. To evaluate the rest of \eqref{eq:framed_NIBB_OPs}, \eqref{eq:NIBB_OP_B_gen} as $n\to\infty$, one needs to know the large degree behavior of the orthogonal polynomials $P_k(z)$ and $P_j^{M,\tau}(z)$, preferably throughout the complex plane. The polynomials $\{P_j(x)\}_{j=0}^\infty$ are nothing but the Hermite polynomials, for which asymptotics are well known. For the polynomials $\{P_j^{M,\tau}(z)\}_{j=0}^\infty$, asymptotic formulas may be obtained by the Riemann--Hilbert method \cite{BKMM, Bleher-Liechty11, Bleher-Liechty14}. For this system of orthogonal polynomials with the scaling \eqref{eq:NIBB_M_scaling}, the nonlinear steepest analysis of the corresponding discrete Riemann--Hilbert problem was carried out in \cite{Liechty12} (see also \cite{Liechty-Wang16}). Although asymptotic formulas for $P_n^{M,\tau}(z)$ do not appear directly in that paper, they follow from the analysis which appears there. If one has asymptotic formulas for $P_n^{M,\tau}(z)$ as $n\to\infty$, they could be inserted into \eqref{eq:framed_NIBB_OPs}, \eqref{eq:NIBB_OP_B_gen}. Then, at least for small values of $m$, the sums appearing in the numerators of \eqref{eq:framed_NIBB_OPs}, \eqref{eq:NIBB_OP_B_gen} could be evaluated by first replacing the sum by a complex integral using the residue theorem, and then evaluating the integral using the method of steepest descent. This approach was used in \cite[Section 5]{Liechty-Wang16} to asymptotically calculate a sum very similar to the one in the numerator of \eqref{eq:framed_NIBB_OPs}.

\subsection{Discrete and continuous time random walks}\label{Sec Discrete and continuous time random walks}
In the paper \cite{Baik-Liu14}, the authors also consider the width of the non-intersecting random walks described in the introduction. That is, we consider $n$ independent simple random walks on $\Z$, $X_1(t), X_2(t), \dots, X_n(t)$ which are conditioned to begin at the points $X_j(0)=2x_j$ and end at the points $X_j(2T) =2 y_j$ after a fixed time $2T \in 2\N$, with $x_1<x_2<\dots<x_n$ and $y_1<y_2<\dots<y_n$. The {\it width} of this ensemble of non-intersecting paths as
\[
W_n(2T) = \max_{0\le t \le 2T}\{ X_n(t) - X_1(t)\}.
\]

As discussed in the introduction, the probability that the paths begin at the points $2x_1, \dots, 2x_n$ and end at the points $2y_1, \dots, 2y_n$ at time $2T$ without intersecting for $0\le t\le 2T$ is
\eq\label{NIRM_phi_prob}
\det\left[\phi_{y_k-x_j}\right]_{j,k=1}^n, \quad \phi(\ze) = \ze^{-T}\left(\frac{\ze+1}{2}\right)^{2T},
\eeq
where the extra factor of $1/2$ in $\phi(\ze)$ as compared to \eqref{NIRM_phi} is due to the fact that \eqref{NIRM_phi_prob} gives a probability whereas \eqref{NIRM_phi} is counting paths.

A variation of the above set-up is to consider continuous time random walks. Let $N_1(t)$,  $N_2(t)$ be independent Poisson processes with rate $\lambda = 1/2$, and let $X(t)= N_1(t) - N_2(t)$ be their difference. The probability generating function for the transition probabilities for $N_1(t)$ (and $N_2(t)$) is
\[
p_{N_1}(z; t) = \sum_{k=0}^\infty \mathbb{P}(N_1(t) = k) z^k = \sum_{k=0}^\infty e^{-\frac{t}{2}} \frac{1}{k!}\left(\frac{t}{2}\right)^k z^k = e^{\frac{t}{2}(z-1)}.
\]
The probability generating function for $X(t) = N_1(t)- N_2(t)$ is therefore
\begin{equation}\label{P_T}
	p_{X}(z;t) = p_{N_1}(z;t)p_{N_2}(1/z; t) = e^{\frac{t}{2}(z+1/z-2)}.
\end{equation}
We refer to the process $X(t) = N_1(t) - N_2(t)$ as a continuous time random walk;  it increases or decreases by 1 after exponentially distributed waiting times.
We can consider $n$ continuous time random walks $X_1(t), \dots, X_n(t)$  beginning at locations $x_1<x_2<\cdots<x_n$ at time $t=0$ and conditioned not to intersect for $0\le t \le T$. Then, similar to \eqref{NIRM_phi_prob}, by the Karlin--McGregor/LGV formula, the transition probability for the vector $(X_1(t), \dots, X_n(t))$ to be at the location $(y_1, \dots, y_n)$ at time $t$ is
\begin{equation}\label{NIRM_phi_continuous}
	e^{-nT} \det\left(\phi_{y_k-x_j}\right), \qquad \phi(\ze) =  e^{\frac{T}{2}(\ze+1/\ze)}.
\end{equation}

\begin{center}
 \begin{figure}[h]
\begin{center}
   \scalebox{0.57}{\includegraphics{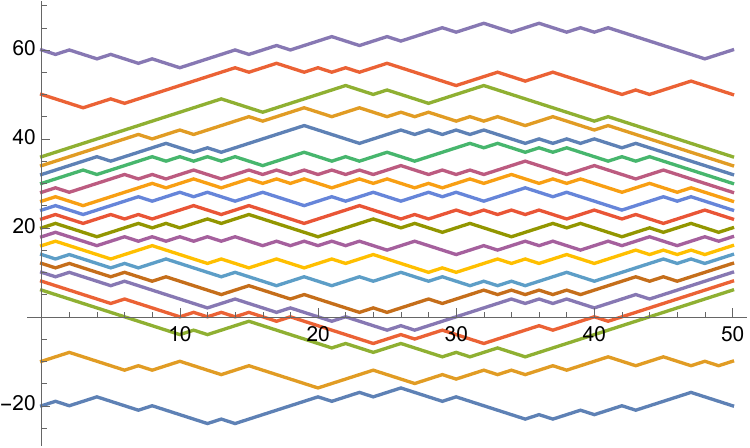}\hspace{1cm}\includegraphics{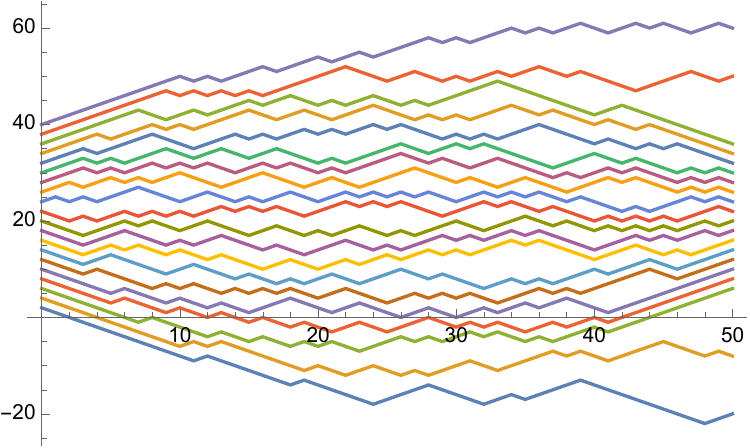}}
\end{center}
        \caption[]{Two simulations of non-intersecting discrete time random walks with wanderers with $n=20$, $2T=50$, and $m_1=m_2=2$. In the figure on the left, the starting and ending points of the walkers  are equispaced except for the lowest and highest pair, and the distribution of $W_n$ is described in \eqref{eq:W_framed_prop_D} in terms of framed Hankel determinants. In the figure on the right, the starting points are all equispaced but the ending points are not and the distribution of $W_n$ is described in \eqref{eq:W_bordered_prop_D} in terms of bordered Hankel determinants.}\label{DT-wanderers}
    \end{figure}
\end{center}

\begin{center}
 \begin{figure}[h]
\begin{center}
   \scalebox{0.22}{\includegraphics{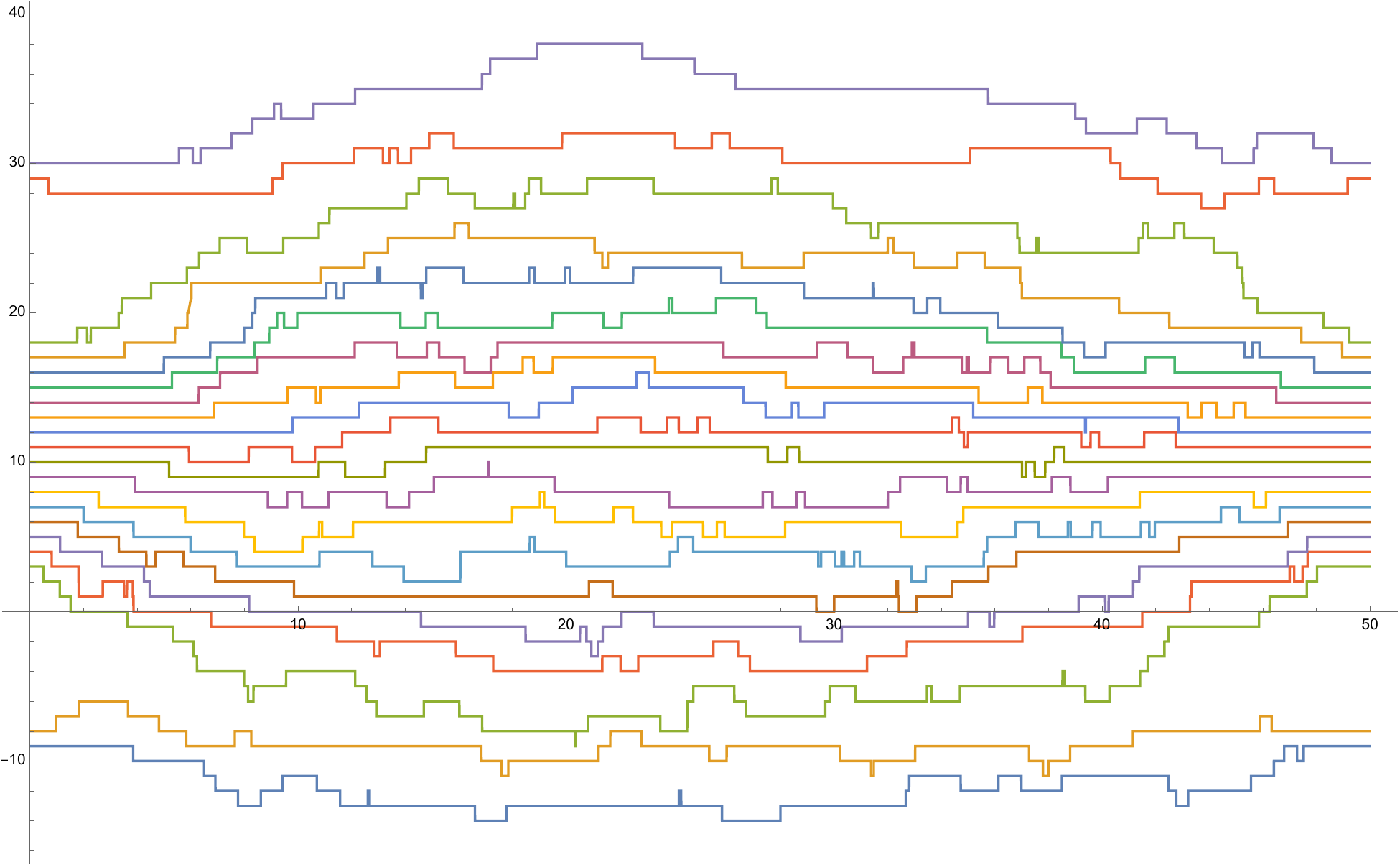}\hspace{1cm}\includegraphics{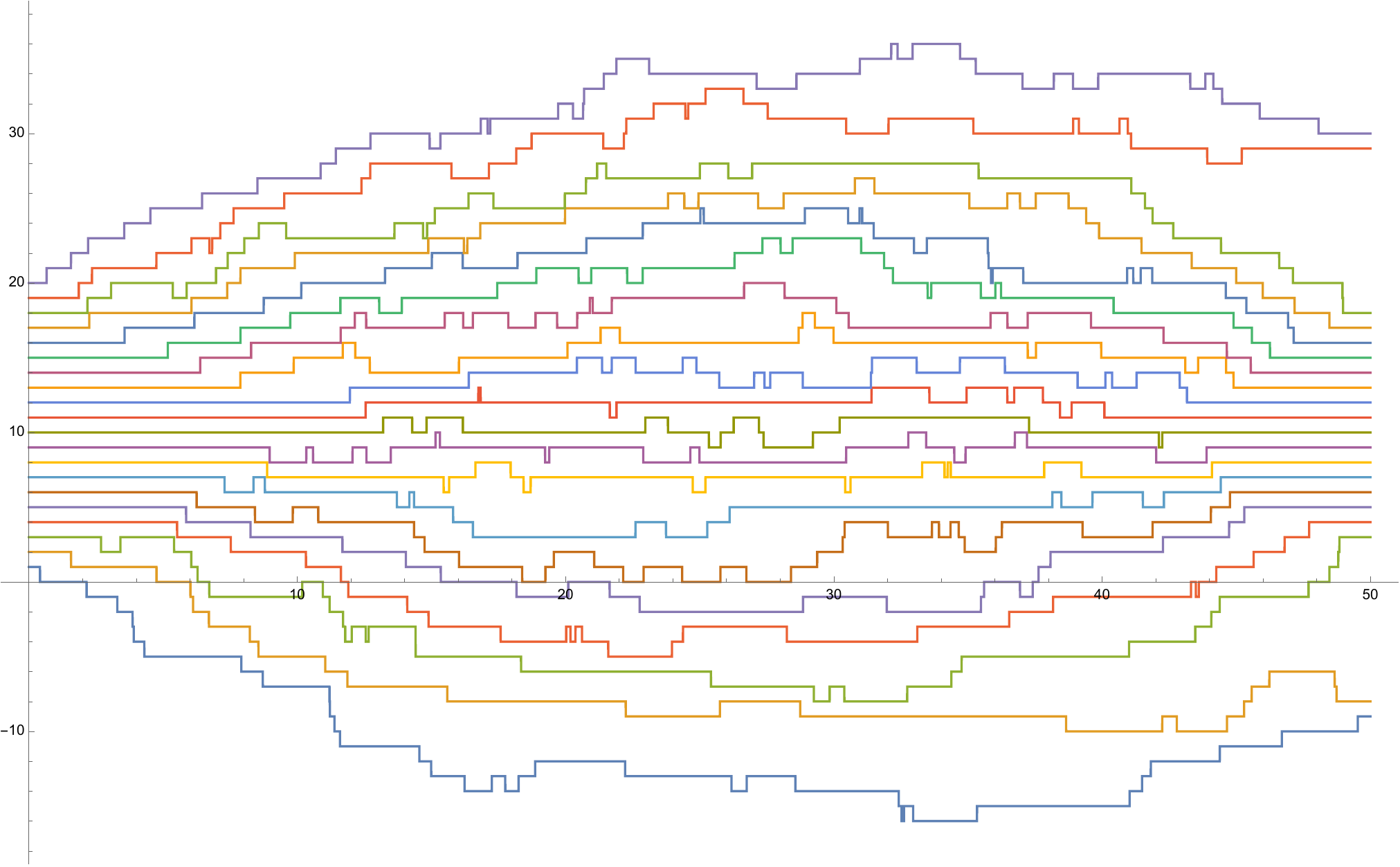}}
\end{center}
        \caption[]{Two simulations of non-intersecting continuous time random walks with wanderers with $n=20$, $T=50$, and $m_1=m_2=2$. In the figure on the left, the starting and ending points of the walkers are equispaced except for the lowest and highest pair, and the distribution of $W_n$ is described in \eqref{eq:W_framed_prop_C} in terms of framed Hankel determinants. In the figure on the right, the starting points are all equispaced but the ending points are not and the distribution of $W_n$ is described in \eqref{eq:W_bordered_prop_C} in terms of bordered Hankel determinants.}\label{CT-wanderers}
    \end{figure}
\end{center}

The width of non-intersecting ensembles of both discrete and continuous random walks were discussed in \cite{Baik-Liu14}, where it was shown that they can be expressed in terms of a discrete analogue of determinants described above. 
For a positive integer $M\in \Z_{>0}$ and a point $s\in \T$, define the discrete set $\mathcal{D}_{M,s}$ as\footnote{Note that the set $\mathcal{D}_{M,s}$ defined here is different from the set $\mathcal{D}_{M,\tau}$ defined in \eqref{def:discrete_set}, though they play similar roles.}
\[
\mathcal{D}_{M,s} = \{ z\in \T  \ : \  z^M = s\} ,
\]
and for a function $f\in L^1(\T)$ define the discrete measure
\eq\label{def:discretization}
f^{M,s}:=\frac{1}{M} \sum_{z\in \mathcal{D}_{M,s}} f(z) \de_z.
\eeq
Note that as $M\to\infty$,  the discrete measure $f^{M,s}$ converges to $\frac{f(z)\dd z}{2\pi \ii z}$.
\begin{proposition}\label{prop:NIRW_width}
    Let $x_1<x_2<\dots<x_n$ and  $y_1<y_2<\cdots < y_n$ be integers, and consider the ensemble of $n$ non-intersecting discrete time random walks beginning at points $X_j(0) = 2x_j$ for $j=1,\dots,n$, conditioned to end at points $X_k(2T) = 2y_k$ for $j=1,\dots, n$. The width of this ensemble of non-intersecting paths is given as
\begin{equation}\label{W_n_CDF_discreteRW}
    \mathbb{P}(W_n(2T) < 2M) = \frac{1}{\det \left(\phi_{y_k-x_j}\right)_{j,k=1}^n}\int_{|s| = 1}\det \left(\phi^{M,s}_{y_k-x_j}\right)_{j,k=1}^n\frac{\dd s}{2\pi i s}, \quad \phi(\ze) = \ze^{-T}\left(\ze+1\right)^{2T}.
\end{equation}
Similarly, in the ensemble of $n$ non-intersecting continuous time random walks beginning at points $X_j(0) = x_j$ for $j=1,\dots,n$ and conditioned to end at points $X_k(T) = y_k$ for $k=1,\dots, n$, the width of the ensemble is given as
\begin{equation}\label{W_n_CDF_contRW}
    \mathbb{P}(W_n(T) < M) = \frac{1}{\det \left(\phi_{y_k-x_j}\right)_{j,k=1}^n}\int_{|s| = 1}\det\left(\phi^{M,s}_{y_k-x_j}\right)_{j,k=1}^n \frac{\dd s}{2\pi i s}, \quad \phi(\ze) = e^{\frac{T}{2}(\ze+1/\ze)}.
\end{equation}

\end{proposition}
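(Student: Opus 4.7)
My plan is to prove both formulas by a single scheme that reduces everything to Karlin--McGregor / Lindstr\"om--Gessel--Viennot (LGV) plus a contour extraction of the zero-winding sector on a discrete cylinder. The denominator in each of \eqref{W_n_CDF_discreteRW} and \eqref{W_n_CDF_contRW} is immediate from Karlin--McGregor / LGV: with $\phi_j$ the (probability-weighted) number of single-walker paths of displacement $j$ (in $2T$ discrete steps in the first case and in continuous time $T$ in the second), the determinant $\det(\phi_{y_k - x_j})_{j,k=1}^n$ is exactly the probability-weighted count of ensembles of non-intersecting paths from $\vec x$ to $\vec y$, which is the normalization appearing in the conditional probability $\PP(W_n(2T) < 2M)$ (respectively $\PP(W_n(T) < M)$).

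For the numerator I would first unfold the discretized measure. A direct computation from the definition \eqref{def:discretization}, using the Dirac-comb identity $\sum_{k \in \Z}(s\zeta^{-M})^{k} = \frac{2\pi}{M}\sum_{\omega^M = s}\delta(\zeta - \omega)$ on $|\zeta|=1$, gives
\[
\phi^{M,s}_{j} \;=\; \sum_{k \in \Z} s^{\,k}\, \phi_{j + kM}.
\]
Multilinearity of the determinant in its columns then yields
\[
\det\!\big(\phi^{M,s}_{y_k - x_j}\big) \;=\; \sum_{\vec\ell \in \Z^{n}} s^{\,\ell_1+\cdots+\ell_n}\, \det\!\big(\phi_{y_k + \ell_k M - x_j}\big),
\]
and integrating against $\dd s /(2\pi\ii s)$ on $|s|=1$ extracts the $s^{0}$ coefficient:
\[
\int_{|s|=1} \det\!\big(\phi^{M,s}_{y_k-x_j}\big)\,\frac{\dd s}{2\pi\ii s} \;=\; \sum_{\substack{\vec\ell \in \Z^n \\ \ell_1+\cdots+\ell_n=0}} \det\!\big(\phi_{y_k + \ell_k M - x_j}\big).
\]

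The hard part will be identifying this winding-zero sum with the probability-weighted count of non-intersecting ensembles from $\vec x$ to $\vec y$ additionally satisfying the width constraint. The correct viewpoint is to regard the walkers on an appropriate discrete cylinder whose circumference matches the width bound; the width constraint becomes the condition that the projected walkers retain their cyclic order at every time. A cylindrical Karlin--McGregor argument --- lifting cyclic walkers back to their universal cover $\Z$, encoding each walker's net spatial displacement as an integer winding number $\ell_k$, and applying the standard LGV sign-reversing involution on the first pair of cylinder-crossings --- should produce precisely the right-hand side of the previous display, with the constraint $\sum_k \ell_k = 0$ expressing vanishing total winding. The technical obstacle is to verify this identification carefully: configurations with $\vec\ell \neq \vec 0$ but $\sum_k \ell_k = 0$ must either yield a degenerate determinant (coincident columns after sorting the shifted endpoints) or be annihilated by the cylindrical LGV involution, leaving just the contributions accounting for genuine width-$<2M$ (resp.\ $<M$) ensembles.

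Once this identification is established, dividing by $\det(\phi_{y_k - x_j})$ yields \eqref{W_n_CDF_discreteRW}; the continuous-time statement \eqref{W_n_CDF_contRW} follows by the identical scheme, with $\phi(\zeta) = \zeta^{-T}(\zeta+\zeta^{-1})^{2T}$ replaced by the Poisson-difference generating function $\phi(\zeta) = e^{(T/2)(\zeta+\zeta^{-1})}$ from \eqref{P_T}. Every ingredient (Karlin--McGregor for independent continuous-time Markov chains, the Fourier unfolding via the Dirac comb, multilinearity, and the cylindrical sign-reversing involution) is structurally agnostic to whether time is discrete or continuous, so no new argument is needed for the continuous case beyond transcribing the discrete one.
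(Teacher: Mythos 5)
Your plan follows the same route as the paper, but where you reconstruct the argument the paper simply delegates: its entire proof is the observation that for equispaced endpoints the formulas \eqref{W_n_CDF_discreteRW} and \eqref{W_n_CDF_contRW} are Propositions 4.2 and 4.1 of \cite{Baik-Liu14}, and that the proofs there never use the Toeplitz (equispaced) structure, so they apply verbatim to general $\vec x,\vec y$. Your ingredients are exactly those of the cited proof: Karlin--McGregor/LGV for the denominator; the unfolding $\phi^{M,s}_j=\sum_{k\in\Z}s^k\phi_{j+kM}$, which is correct and follows from \eqref{def:discretization} together with the fact that $\frac{1}{M}\sum_{z^M=s}z^{\ell}$ equals $s^{\ell/M}$ when $M\mid\ell$ and $0$ otherwise; multilinearity; and extraction of the zero-total-winding sector by the $s$-integral. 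The one step you defer --- identifying $\sum_{\ell_1+\cdots+\ell_n=0}\det\bigl(\phi_{y_k+\ell_kM-x_j}\bigr)$ with the weighted count of nonintersecting ensembles of width $<M$ --- is precisely the content of the cited propositions: it rests on the equivalence of ``width $<M$ at all times'' with nonintersection of the $M$-periodized family $\{X_j(t)+kM\}_{j,k}$, after which the cylindrical Karlin--McGregor involution cancels the nonzero winding vectors of zero sum exactly as you anticipate. So the proposal is correct in outline and coincides with the paper's (borrowed) argument, but as written the central combinatorial identification is a declared intention rather than a proof; to close it you would either carry out that involution explicitly or, as the paper does, cite \cite{Baik-Liu14} and remark that the argument there is insensitive to the choice of starting and ending points.
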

\begin{proof}
For the pure Toeplitz case $x_j = y_j = j, j=1,2,\dots, n$, the formulas \eqref{W_n_CDF_discreteRW} and \eqref{W_n_CDF_contRW} are exactly \cite[Proposition 4.2]{Baik-Liu14} and \cite[Proposition 4.1]{Baik-Liu14}, respectively. The proofs there do not depend on the Toeplitz structure of the determinant, and our proof proceeds in exactly the same way.
\end{proof}

Similar to the previous section, we can choose the starting/ending points so that the determinants in \eqref{W_n_CDF_discreteRW} and \eqref{W_n_CDF_contRW} have bordered or framed structure.  In addition to the symbol $\phi$ and its discretization $\phi^{M,s}$, define the symbols $\psi[\be]$ and $\eta[\al]$ as
\eq
\psi[\be](\ze):=\phi(\ze) \ze^{n-m_2-\be}, \quad \eta[\al](\ze):=\phi(\ze) \ze^{-(m_1+1-\al)},
\eeq 
and their discretizations
\[
\psi[\be]^{M,s}:=\sum_{z\in \mathcal{D}_{M,s}} \psi[\be](z)\de_z, \quad \eta[\al]^{M,s}:=\sum_{z\in \mathcal{D}_{M,s}} \eta[\be](z)\de_z.
\]
Also let $A$ and $A^{M,s}$ be the $m\times m$ matrices
\[
A=\begin{pmatrix} \phi_{\be_k-\al_j}\end{pmatrix}_{j,k=1}^m, \quad A^{M,s}=\begin{pmatrix} \phi^{M,s}_{\be_k-\al_j}\end{pmatrix}_{j,k=1}^m.
\]
 We then have the following propositions which follow immediately from Proposition \ref{prop:NIRW_width} and elementary row operations.

\begin{proposition}
Fix non-negative integers $m_1$ and $m_2$ such that $m_1+m_2<n$, and integers $\al_1<\al_2<\dots<\al_{m_1}<m_1<n-m_2<\al_{m_1+1}<\dots<\al_{m_1+m_2}$ and $\be_1<\be_2<\dots<\be_{m_1}<m_1<n-m_2<\be_{m_1+1}<\dots<\be_{m_1+m_2}$. Fix the numbers $x_1, \dots, x_n$ and $y_1,\dots, y_n$ as follows: $x_j =y_j = j$ for $j=m_1+1, \dots, n-m_2$; $x_j=\al_j$ and $y_j=\be_j$ for $j=1,\dots,m_1$ ; and $x_{n-m_2+j}= \al_j$ and $y_{n-m_2+j}j= \be_j$ for $j=m_1+1,\dots, m_1+m_2$.

In the ensemble of discrete time simple random walks with starting points $X_j(0) = 2x_j$ and ending points $X_j(2T) = 2y_j$, the distribution of the width \eqref{W_n_CDF_discreteRW}  has the form (with $m=m_1+m_2$)
\begin{equation}\label{eq:W_framed_prop_D}
    \mathbb{P}(W_n(2T) < 2M) =\ds\int_{|s| = 1} \frac{D_n^F[\phi^{M,s}; \psi[\be_1]^{M,s}, \dots , \psi[\be_m]^{M,s}; \eta[\al_1]^{M,s}, \dots, \eta[\al_m]^{M,s}; A^{M,s}]}{D_n^F[\phi; \psi[\be_1], \dots , \psi[\be_m]; \eta[\al_1], \dots, \eta[\al_m]; A]}\frac{\dd s}{2\pi \ii s},
\end{equation}
 where $\phi$ is as in  \eqref{W_n_CDF_discreteRW} .
 
 Similarly in the ensemble of continuous time random walks with starting points $X_j(0) = x_j$ and ending points $X_j(T) = y_j$, the distribution of the width \eqref{W_n_CDF_contRW} has the form (with $m=m_1+m_2$)
\begin{equation}\label{eq:W_framed_prop_C}
    \mathbb{P}(W_n(T) < M) =\ds\int_{|s| = 1} \frac{D_n^F[\phi^{M,s}; \psi[\be_1]^{M,s}, \dots , \psi[\be_m]^{M,s}; \eta[\al_1]^{M,s}, \dots, \eta[\al_m]^{M,s}; A^{M,s}]}{D_n^F[\phi; \psi[\be_1], \dots , \psi[\be_m]; \eta[\al_1], \dots, \eta[\al_m]; A]}\frac{\dd s}{2\pi \ii s},
\end{equation}
 where $\phi$ is as in  \eqref{W_n_CDF_contRW}.
 \end{proposition}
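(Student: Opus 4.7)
The plan is to apply Proposition \ref{prop:NIRW_width} and then identify each of the $n\times n$ determinants that appear in its numerator and denominator with the claimed multi-framed Toeplitz determinant, by means of an explicit row and column permutation. Since Proposition \ref{prop:NIRW_width} writes both widths in the common form
\[
P(W_n<M)=\frac{1}{\det(\phi_{y_k-x_j})_{j,k=1}^n}\int_{|s|=1}\det(\phi^{M,s}_{y_k-x_j})_{j,k=1}^n\,\frac{\dd s}{2\pi\ii s},
\]
it suffices to bring each of the two $n\times n$ determinants into the framed shape of Definition \ref{def:framed_Toeplitz}; the sign of any permutation applied in the process is the same in both and will cancel in the ratio.

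First I would apply the row and column permutations $\pi_R,\pi_C$ that send the bulk indices $\{m_1+1,\dots,n-m_2\}$ (in their natural order) to the first $n-m$ positions, and send the $m_1$ bottom wanderer indices followed by the $m_2$ top wanderer indices to the last $m$ positions. After this relabeling, the matrix splits into four blocks which I would verify individually. At the new position $(r',c')$ with both coordinates in $\{1,\dots,n-m\}$, the bulk--bulk entry is $\phi_{(m_1+c')-(m_1+r')}=\phi_{c'-r'}$, matching the Toeplitz block in Definition \ref{def:framed_Toeplitz}. The bulk--wanderer entry at $(r',n-m+i)$ is $\phi_{\be_i-m_1-r'}$, the wanderer--bulk entry at $(n-m+j,c')$ is $\phi_{m_1+c'-\al_j}$, and the wanderer--wanderer entry at $(n-m+j,n-m+k)$ is $\phi_{\be_k-\al_j}$, which equals $A_{jk}$ by construction.

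The content of the proof is then to match the two border blocks against the Fourier coefficients appearing in Definition \ref{def:framed_Toeplitz}. A short calculation starting from the definitions $\psi[\be](\ze)=\phi(\ze)\ze^{n-m_2-\be}$ and $\eta[\al](\ze)=\phi(\ze)\ze^{-(m_1+1-\al)}$ in the statement gives $\psi[\be_i]_r=\phi_{r+\be_i+m_2-n}$ and $\eta[\al_j]_r=\phi_{r+m_1+1-\al_j}$. Setting $r=n-m-r'$ in the first yields $\psi[\be_i]_{n-m-r'}=\phi_{\be_i-m_1-r'}$, matching the bulk--wanderer entry; setting $r=c'-1$ in the second yields $\eta[\al_j]_{c'-1}=\phi_{m_1+c'-\al_j}$, matching the wanderer--bulk entry. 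These identifications together show that the permuted $\det(\phi_{y_k-x_j})_{j,k=1}^n$ is exactly the framed Toeplitz determinant appearing in the denominator of \eqref{eq:W_framed_prop_D}. The determinant under the integral sign is handled by the same permutations, because the discretization map $f\mapsto f^{M,s}$ of \eqref{def:discretization} is linear and commutes with multiplication by $\ze^k$, so $(\psi[\be_i]^{M,s})_r=\phi^{M,s}_{r+\be_i+m_2-n}$ and analogously for $\eta$. The continuous-time case \eqref{eq:W_framed_prop_C} is proved verbatim, the only change being the choice of symbol $\phi$.

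The main obstacle is purely bookkeeping: tracking how the shift exponents built into $\psi[\be]$ and $\eta[\al]$ align with the positions of the wanderer rows and columns in the original matrix and with the Fourier indices $n-m-r'$ and $c'-1$ demanded by Definition \ref{def:framed_Toeplitz}. The specific choices of these exponents in the statement have been made precisely so that the identifications described above hold; once the indices are aligned, the proof reduces to the block-by-block verification sketched here, matching the authors' comment that the proposition ``follows immediately from Proposition \ref{prop:NIRW_width} and elementary row operations.''
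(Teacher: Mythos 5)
Your proposal is correct and follows the same route as the paper, which simply asserts that the result ``follows immediately from Proposition \ref{prop:NIRW_width} and elementary row operations'' without supplying details; your block-by-block index verification (in particular the checks $\psi[\be_i]_{n-m-r'}=\phi_{\be_i-m_1-r'}$ and $\eta[\al_j]_{c'-1}=\phi_{m_1+c'-\al_j}$, and the observation that the permutation sign cancels in the ratio) fills in exactly the bookkeeping the authors omit.
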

 If the starting points are all consecutive, the framed determinants above reduce to bordered ones.
  \begin{proposition}
Fix non-negative integers $m_1$ and $m_2$ such that $m_1+m_2<n$, and integers $\be_1<\be_2<\dots<\be_{m_1}<m_1<n-m_2<\be_{m_1+1}<\dots<\be_{m_1+m_2}$.
Fix the numbers $x_1, \dots, x_n$ and $y_1,\dots, y_n$ as follows: $x_j =j$ for $j=1,\dots, n$; and $y_j = j$ for $j=m_1+1, \dots, n-m_2$;  $y_j=\be_j$ for $j=1,\dots,m_1$ ;  and $y_{n-m_2+j}= \be_j$ for $j=m_1+1,\dots, m_1+m_2$.

In the ensemble of discrete time simple random walks with starting points $X_j(0) = 2x_j$ and ending points $X_j(2T) = 2y_j$, the distribution of the width \eqref{W_n_CDF_discreteRW}  has the form (with $m=m_1+m_2$)
\begin{equation}\label{eq:W_bordered_prop_D}
    \mathbb{P}(W_n(2T) < 2M) =\ds\int_{|s| = 1} \frac{D_n^B[\phi^{M,s}; \psi[\be_1]^{M,s}, \dots , \psi[\be_m]^{M,s}]}{D_n^B[\phi; \psi[\be_1], \dots , \psi[\be_m] ]}\frac{\dd s}{2\pi \ii s},
\end{equation}
 where $\phi$ is as in  \eqref{W_n_CDF_discreteRW} .
 
 In the ensemble of continuous time simple random walks with starting points $X_j(0) = x_j$ and ending points $X_j(T) = y_j$, the distribution of the width \eqref{W_n_CDF_contRW}  has the form (with $m=m_1+m_2$)
\begin{equation}\label{eq:W_bordered_prop_C}
    \mathbb{P}(W_n(T) < M) =\ds\int_{|s| = 1} \frac{D_n^B[\phi^{M,s}; \psi[\be_1]^{M,s}, \dots , \psi[\be_m]^{M,s}]}{D_n^B[\phi; \psi[\be_1], \dots , \psi[\be_m] ]}\frac{\dd s}{2\pi \ii s},
\end{equation}
 where $\phi$ is as in  \eqref{W_n_CDF_contRW}.
\end{proposition}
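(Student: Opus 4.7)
The plan is to deduce both \eqref{eq:W_bordered_prop_D} and \eqref{eq:W_bordered_prop_C} directly from Proposition \ref{prop:NIRW_width}, which for any choice of starting and ending points writes
\[
P(W_n(T)<M)=\int_{|s|=1}\frac{\det(\phi^{M,s}_{y_k-x_j})_{j,k=1}^n}{\det(\phi_{y_k-x_j})_{j,k=1}^n}\,\frac{\dd s}{2\pi\ii s},
\]
with the appropriate $\phi$. The only remaining task is to identify the numerator and denominator as bordered Toeplitz determinants in the sense of Definition \ref{def:bordered_Toeplitz}.

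Since all starting points $x_j=j$ are consecutive, the matrix $(\phi_{y_k-x_j})_{j,k=1}^n$ has exact Toeplitz structure within any block of columns whose $y_k$-values are consecutive: the Fourier index shifts by $-1$ from row to row, so a contiguous block with $y_k$ consecutive is a Toeplitz slab. Under our hypotheses this is exactly the middle block of $n-m$ columns (those with $y_k=k$ for $k\in\{m_1+1,\dots,n-m_2\}$); the remaining $m$ columns (with $y_k\in\{\be_1,\dots,\be_m\}$) are arbitrary border columns, and they are split into the first $m_1$ positions and the last $m_2$ positions of the matrix. A single column permutation — for example, moving the leftmost $m_1$ columns to sit immediately to the right of the Toeplitz slab — gathers all $m$ border columns into the right-most positions, yielding the canonical shape of \eqref{btd}. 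The sign $(-1)^{m_1(n-m)}$ produced by this permutation appears identically in both the numerator and denominator of the integrand and hence cancels in the ratio.

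After the permutation the Toeplitz block consists of entries $\phi_{(m_1+k)-j}$ and the $i$-th border column consists of entries $\phi_{\be_i-j}$; these match the pattern in \eqref{btd} up to a uniform shift of the Fourier indices in the Toeplitz and border symbols. That shift corresponds to multiplying $\phi$ and each $\psi[\be_i]$ by a fixed monomial $\ze^a$. The crucial point is that the discretization $f\mapsto f^{M,s}$ of \eqref{def:discretization} commutes with multiplication by any monomial: $(\ze^a f)^{M,s}=\ze^a\,f^{M,s}$. Consequently the very same monomial normalization factor appears in both bordered Toeplitz determinants $D_n^B[\,\cdot\,]$ of the ratio, so it too cancels, reducing the integrand to the stated ratio of $D_n^B$'s with the $\phi$ and $\psi[\be_i]$ defined in the excerpt. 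The argument is entirely algebraic and applies without modification both to the discrete-time symbol $\phi(\ze)=\ze^{-T}(\ze+\ze^{-1})^{2T}$ of \eqref{W_n_CDF_discreteRW} and to the continuous-time symbol $\phi(\ze)=e^{(T/2)(\ze+\ze^{-1})}$ of \eqref{W_n_CDF_contRW}. There is no genuine analytic obstacle; the main technical point is simply the careful accounting of signs from the column permutation and of Fourier-index shifts between the original matrix and the canonical template \eqref{btd}, both of which are handled symmetrically in numerator and denominator and therefore disappear from the final identity.
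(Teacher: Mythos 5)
Your overall route is exactly the paper's: the paper offers no written proof beyond the remark that the result ``follows immediately from Proposition \ref{prop:NIRW_width} and elementary row operations,'' and your proposal fleshes out precisely that — apply Proposition \ref{prop:NIRW_width} with the prescribed starting/ending points, permute the $m_1$ left-most border columns next to the other $m_2$, and observe that the permutation sign is common to numerator and denominator and so cancels in the ratio. All of that is fine.

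The one step I would not accept as written is the final ``cancellation'' of the residual monomial shift. If the matrix you obtain after permuting columns is $\pm D_n^B[\ze^a\phi;\,\ze^a\psi^{(1)},\dots,\ze^a\psi^{(m)}]$ rather than $\pm D_n^B[\phi;\,\psi^{(1)},\dots,\psi^{(m)}]$, these are genuinely different numbers, not the same determinant up to a scalar factor: multiplying every symbol by $\ze^a$ shifts every Fourier index by $a$, which amounts to re-indexing the rows, and $D_n^B[\ze^a\phi;\ze^a\vec{\boldsymbol\psi}]/D_n^B[\phi;\vec{\boldsymbol\psi}]$ is not a constant independent of the symbols. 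So ``the same monomial normalization factor appears in both determinants and cancels'' does not make sense as a cancellation between the two $D_n^B$'s appearing in the stated ratio. The repair is to avoid introducing a shift at all: read the symbols off directly from the entries, exactly as the paper does for \eqref{bordered_main_example} — the border column with ending point $\be$ has row-$j$ entry $\phi_{\be-j}=\psi_{n-j}$ with $\psi(\ze)=\phi(\ze)\ze^{n-\be}$, and the Toeplitz block's symbol is whatever monomial multiple of $\phi$ its entries dictate. The identity that genuinely matters for the ratio formula — and which you do state correctly — is that discretization commutes with multiplication by monomials, so the numerator's symbols are exactly the discretizations $(\cdot)^{M,s}$ of the denominator's; together with the common permutation sign, that is all one needs. (Incidentally, the paper's own exponents in $\psi[\be]$ and its use of plain $\phi$ in the Toeplitz block when $m_1>0$ do not obviously match the convention of \eqref{btd} either, so the precise monomial bookkeeping deserves the explicit care your argument currently skips.)
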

\noindent Two simulations, related respectively to \eqref{eq:W_framed_prop_D} and \eqref{eq:W_bordered_prop_D}, for the discrete random walks with $m_1=m_2=2$ are displayed in Figure \ref{DT-wanderers}. Similarly for continuous time random walks, two simulations related respectively to \eqref{eq:W_framed_prop_C} and \eqref{eq:W_bordered_prop_C} are shown in Figure \ref{CT-wanderers}.

As in the previous subsection on the width of non-intersecting Brownian bridges, we expect \eqref{eq:W_framed_prop_D}, \eqref{eq:W_framed_prop_C}, \eqref{eq:W_bordered_prop_D} and \eqref{eq:W_bordered_prop_C} to have scaling limits related to the supremum of the sum of two Airy processes with wanderers minus a parabola. We do not describe that limit in detail, but we do give the orthogonal polynomials which may be relevant for asymptotic analysis.

For $\phi$ given by either \eqref{W_n_CDF_discreteRW} or \eqref{W_n_CDF_contRW}, introduce the bi-orthogonal system of polynomials  $Q_k(z)=\ka_kz^k+\cdots$,  and $\widehat{Q}_n(z)=\widehat{\ka}_kz^k+\cdots$ via the bi-orthonormality conditions
	\begin{equation}\label{eq:rw_contOPs}
	\int_{\T} Q_j(\ze)\widehat{Q}_k(\ze^{-1})\phi(\ze)\frac{\dd z}{2\pi \ii \ze}= \de_{jk}, \qquad j,k \in \Z_{\ge 0},
	\end{equation}
	as well as bi-orthogonal system of polynomials  $Q_k^{M,s}(z)=\ka_k^{M,s}z^k+\cdots$,  and $\widehat{Q}^{M,s}_k(z)=\widehat{\ka}_k^{M,s}z^k+\cdots$ 
	defined via
\begin{equation}\label{eq:rw_discOPs}
	\frac{1}{M}\sum_{z^M=s} Q_j^{M,s}(z)\widehat{Q}_k^{M,s}(z^{-1})\phi(z)= \de_{jk}, \qquad j,k \in \Z_{\ge 0}. 
	\end{equation}
Then applying Corollary \ref{prop:bordered_det_gen} gives that the right-hand-side of \eqref{eq:W_bordered_prop_D} and \eqref{eq:W_bordered_prop_C} is,
\begin{multline}\label{W_n_CDF_RW_bordered}
 \ds\int_{|s| = 1} \sqrt{\frac{D_{n-m}[\phi^{M,s}]D_{n-m+1}[\phi^{M,s}]}{D_{n-m}[\phi]D_{n-m+1}[\phi]}} \\
\times    \frac{\ds\sum_{\{z_1^M, z_2^M, \dots, z_m^M = s\}}\begin{pmatrix}
Q^{M,s}_{n-m}(z_1) & \cdots & Q^{M,s}_{n-m}(z_m) \\
\vdots & \ddots & \vdots \\
Q_{n-1}^{M,s}(z_1) & \cdots & Q^{M,s}_{n-1}(z_m)
\end{pmatrix} \prod_{j=1}^{m} z_j^{1-m_2-\be_j}\phi_j(z_j)\dd z_j}{\ds\int\cdots\int_{\T^m}\begin{pmatrix}
Q_{n-m}(z_1) & \cdots & Q_{n-m}(z_m) \\
\vdots & \ddots & \vdots \\
Q_{n-1}(z_1) & \cdots & Q_{n-1}(z_m)
\end{pmatrix} \prod_{j=1}^{m} z^{1-m_2-\be_j}_j  \frac{\phi_j(z_j)\dd z_j}{2\pi \ii z_j}}\frac{\dd s}{2\pi \ii s}.
\end{multline}
Applying Corollary \ref{eq:semi-framed_det_CDbb} gives that the right-hand-side of \eqref{eq:W_framed_prop_D} and \eqref{eq:W_framed_prop_C} is,
\begin{multline}\label{W_n_CDF_RW_framed}
 \ds\int_{|s| = 1} \frac{D_{n-m}[\phi^{M,s}]}{D_{n-m}[\phi]} \frac{\det\left[A^{M,s} - \left(\sum_{z,w \in \mathcal{D}_{M,s}}\mathscr{K}^{M,s}_{n-m-1}(z^{-1},w^{-1}) \phi(z)\phi(w) z^{n-m_2-\be_k} w^{-(m_1+1-\al_j)} \right)_{j,k=1}^m\right]}{
\det\left[A - \left(\int_\T\int_\T\mathscr{K}_{n-m-1}(z^{-1},w^{-1}) \frac{\phi(z)\phi(w) z^{n-m_2-\be_k} w^{-(m_1+1-\al_j)}}{(2\pi \ii)^2 zw} \right)_{j,k=1}^m\right]
},
\end{multline}
where
\[
\mathscr{K}_{n-m-1}(z,w) := \sum_{j=0}^{n-m-1} Q_{j}(w)\widehat{Q}_{j}(z), \qquad \mathscr{K}^{M,s}_{n-m-1}(z,e) := \sum_{j=0}^{n-m-1} Q_{j}^{M,s}(w)\widehat{Q}_{j}^{M,s}(z).
\]

For $\phi$ given by \eqref{NIRM_phi_continuous}, a steepest descent analysis of the Riemann--Hilbert problem for the orthogonal system \eqref{eq:rw_contOPs} was carried out in \cite{BDJ}; for the discrete version of the same symbol, a steepest descent analysis of the discrete Riemann--Hilbert problem for the orthogonal system \eqref{eq:rw_contOPs} was carried out in \cite{Baik-Jenkins13}. To evaluate \eqref{W_n_CDF_RW_bordered}, one could first extract asymptotics of $Q_n(z)$ from the analysis of \cite{BDJ}, \cite{Baik-Jenkins13} and insert those asymptotics into \eqref{W_n_CDF_RW_bordered}. We do not do this in the general case in this work, but we give partial results in some special cases. For the case $m=2$, the asymptotic result for the denominator of \eqref{W_n_CDF_RW_bordered} is presented in Theorem \ref{thm asymp bordered}. Also, for $m=1$ the asymptotic result for the denominator of \eqref{W_n_CDF_RW_framed} is given in Theorem \ref{thm asymp framed}. The asymptotic analysis leading to these results is presented in Section \ref{Sec-Asymptotics}.

\section{The six-vertex model with DWBC}\label{sec 6-vertex}

The six-vertex model is stated on a square lattice of size $n\times n$, with arrows drawn on edges satisfying {\it ice rule}: at each vertex, exactly two adjacent arrows point inward and two point outward. There are six possible configurations of arrows at each vertex, and we label them as shown in Figure \ref{arrows}.

\begin{center}
 \begin{figure}[h]
\begin{center}
   \scalebox{0.32}{\includegraphics{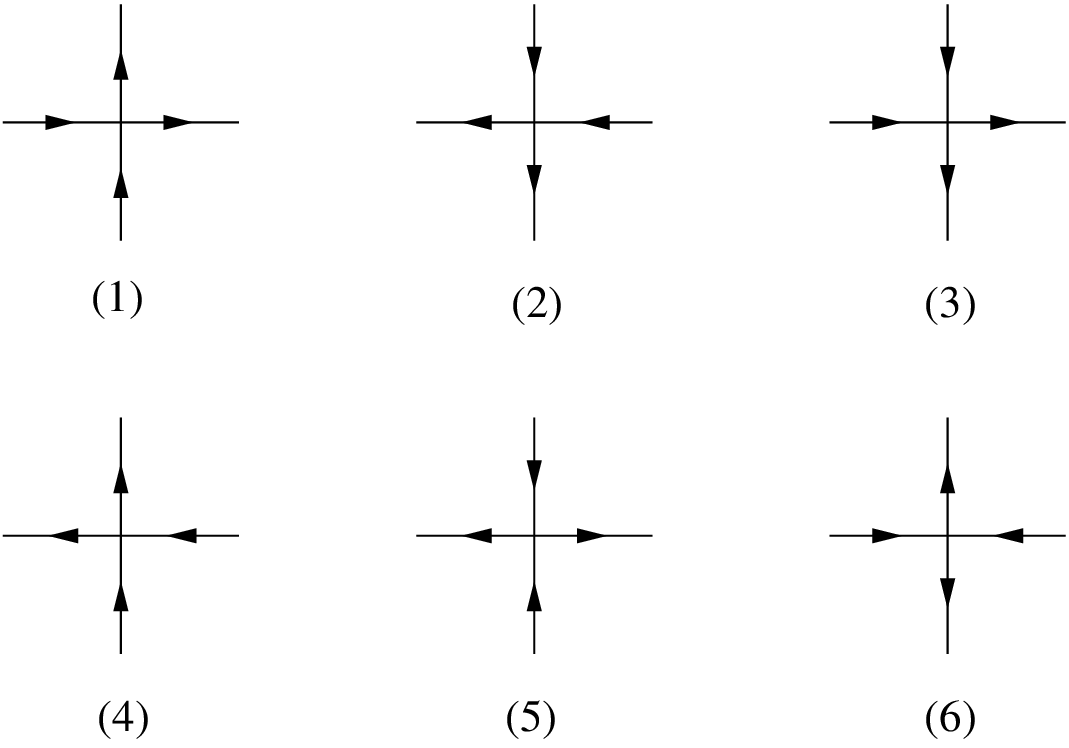}}
\end{center}
        \caption[The six arrow configurations allowed at a vertex]{The six arrow configurations allowed at a vertex.}\label{arrows}
    \end{figure}
\end{center}
	
We can give a weight $w_i>0$ to each vertex type $i = 1,2,\dots, 6$, and define the weight of an arrow configuration $\sigma$ as
\[
w_n(\sg) = \prod_{i=1}^6 w_i^{N_i(\sg)},
\]
where $N_i(\sg)$ is the number of vertices of type $i$ in the configuration $\sg$. The Gibbs measure is then given by the normalized weight of a configuration
\eq\label{eq:6v_mu}
\mu_n(\sg) = \frac{w_n(\sg)}{Z_n}, \quad Z_n = \sum_{\sg} w(\sg).
\eeq

The quantity $Z_n$ is the partition function for the model, and the sum defining it is over all allowable configurations $\sg$ on the $n\times n$ lattice, which may be restricted by some boundary conditions as well as by the ice-rule. For certain boundary conditions known as {\it Domain Wall Boundary Conditions} (DWBC), the partition function may be expressed as a Hankel determinant.  In DWBC all arrows on the left and right boundary point outward, and all arrows on the top and bottom boundaries point inward. See Figure \ref{fig:5by5} for an example of an arrow configuration with DWBC on the $5\times 5$ lattice. The determinantal formula for the partition function of the DWBC six-vertex model is known as the {\it Izergin--Korepin formula}.

For fixed boundary conditions like DWBC, the Gibbs measure of the six-vertex model is invariant under reversal of arrows (see e.g., \cite[Chapter 5, Section 2]{Bleher-Liechty14}), so we may give the weight $a>0$ to the vertices of type 1 and 2; the weight $b>0$ to the vertices of type 3 and 4; and the weight $c>0$ to the vertices of type 5 and 6. Then the phase diagram for the model is as follows.
\begin{itemize}
\item {\bf Ferroelectric phase}. If $a>b+c$ (resp. $b>a+c$) then vertices of Type 1 and 2 (resp. Type 3 and 4) are dominant. In the ferroelectric phase it is typical to parametrize the weights as
\begin{equation}\label{pf4}
a\equiv a(t, \ga)=\sinh(t-\ga), \quad
b\equiv b(t, \ga)=\sinh(\ga+t), \quad
c\equiv c(\ga)=\sinh(2\ga), \quad
0<|\ga|<t.
\end{equation}

\item {\bf Disordered phase}. If the quantities $a, b$, and $c$ form a triangle, then there is no dominant weight. 
In the disordered phase it is typical to parametrize the weights as
\begin{equation}\label{pf6}
a\equiv a(t, \ga)=\sin(\ga-t), \quad
b\equiv b(t, \ga)=\sin(\ga+t), \quad
c\equiv c(\ga)=\sin(2\ga), \quad
|t|<\ga<\pi/2.
\end{equation}
\item {\bf Antiferroelectric phase}. If $c>a+b$, then vertices of Type 5 and 6 are dominant. In the antiferroelectric phase it is typical to parametrize the weights as
\begin{equation}\label{pf5}
a\equiv a(t, \ga)=\sinh(\ga-t), \quad
b\equiv b(t, \ga)=\sinh(\ga+t), \quad
c\equiv c(\ga)=\sinh(2\ga), \quad
|t|<\ga.
\end{equation}
\end{itemize}

\begin{figure}
\begin{center}
\begin{tikzpicture}[scale=1]
\draw[step=1cm, black, very thick] (-2,-2) grid (2,2);
\draw[-{Stealth[length=3mm, width=3mm]},very thick] (2,2) -- (2.5,2);
\draw[-{Stealth[length=3mm, width=3mm]},very thick](2,1) -- (2.5,1);
\draw[-{Stealth[length=3mm, width=3mm]},very thick] (2,0) -- (2.5,0);
\draw[-{Stealth[length=3mm, width=3mm]},very thick] (2,-1) -- (2.5,-1);
\draw[-{Stealth[length=3mm, width=3mm]},very thick] (2,-2) -- (2.5,-2);
\draw[-{Stealth[length=3mm, width=3mm]},very thick] (-2,2) -- (-2.5,2);
\draw[-{Stealth[length=3mm, width=3mm]},very thick] (-2,1) -- (-2.5,1);
\draw[-{Stealth[length=3mm, width=3mm]},very thick] (-2,0) -- (-2.5,0);
\draw[-{Stealth[length=3mm, width=3mm]},very thick] (-2,-1) -- (-2.5,-1);
\draw[-{Stealth[length=3mm, width=3mm]},very thick] (-2,-2) -- (-2.5,-2);
\draw[-{Stealth[length=3mm, width=3mm]},very thick] (2,2.5) -- (2,2.2);
\draw[very thick] (2,2.2) -- (2,2);
\draw[-{Stealth[length=3mm, width=3mm]},very thick] (1,2.5) -- (1,2.2);
\draw[very thick] (1,2.2) -- (1,2);
\draw[-{Stealth[length=3mm, width=3mm]},very thick] (0,2.5) -- (0,2.2);
\draw[very thick] (0,2.2) -- (0,2);
\draw[-{Stealth[length=3mm, width=3mm]},very thick] (-2,2.5) -- (-2,2.2);
\draw[very thick] (-2,2.2) -- (-2,2);
\draw[-{Stealth[length=3mm, width=3mm]},very thick] (-1,2.5) -- (-1,2.2);
\draw[very thick] (-1,2.2) -- (-1,2);

\draw[-{Stealth[length=3mm, width=3mm]},very thick] (2,-2.5) -- (2,-2.2);
\draw[very thick] (2,-2.2) -- (2,-2);
\draw[-{Stealth[length=3mm, width=3mm]},very thick] (1,-2.5) -- (1,-2.2);
\draw[very thick] (1,-2.2) -- (1,-2);
\draw[-{Stealth[length=3mm, width=3mm]},very thick] (0,-2.5) -- (0,-2.2);
\draw[very thick] (0,-2.2) -- (0,-2);
\draw[-{Stealth[length=3mm, width=3mm]},very thick] (-2,-2.5) -- (-2,-2.2);
\draw[very thick] (-2,-2.2) -- (-2,-2);
\draw[-{Stealth[length=3mm, width=3mm]},very thick] (-1,-2.5) -- (-1,-2.2);
\draw[very thick] (-1,-2.2) -- (-1,-2);

\draw[{Stealth[length=3mm, width=3mm]}-,very thick] (-1.6,2) -- (-1.5,2) ;
\draw[-{Stealth[length=3mm, width=3mm]},very thick] (-0.5,2) -- (-0.4,2) ;
\draw[-{Stealth[length=3mm, width=3mm]},very thick] (0.5,2) -- (0.6,2) ;
\draw[-{Stealth[length=3mm, width=3mm]},very thick] (1.5,2) -- (1.6,2) ;

\draw[-{Stealth[length=3mm, width=3mm]},very thick] (-2,1.5) -- (-2,1.4) ;
\draw[{Stealth[length=3mm, width=3mm]}-,very thick] (-1,1.6) -- (-1,1.5) ;
\draw[-{Stealth[length=3mm, width=3mm]},very thick] (0,1.5) -- (0,1.4) ;
\draw[-{Stealth[length=3mm, width=3mm]},very thick] (1,1.5) -- (1,1.4) ;
\draw[-{Stealth[length=3mm, width=3mm]},very thick] (2,1.5) -- (2,1.4) ;

\draw[-{Stealth[length=3mm, width=3mm]},very thick] (-2,0.5) -- (-2,0.4) ;
\draw[-{Stealth[length=3mm, width=3mm]},very thick] (-2,-0.5) -- (-2,-0.4) ;
\draw[-{Stealth[length=3mm, width=3mm]},very thick] (-2,-1.5) -- (-2,-1.4) ;

\draw[{Stealth[length=3mm, width=3mm]}-,very thick] (-1.6,-2) -- (-1.5,-2) ;
\draw[-{Stealth[length=3mm, width=3mm]},very thick] (-0.5,-2) -- (-0.4,-2) ;
\draw[-{Stealth[length=3mm, width=3mm]},very thick] (0.5,-2) -- (0.6,-2) ;
\draw[-{Stealth[length=3mm, width=3mm]},very thick] (1.5,-2) -- (1.6,-2) ;

\draw[-{Stealth[length=3mm, width=3mm]},very thick] (2,0.5) -- (2,0.4) ;
\draw[-{Stealth[length=3mm, width=3mm]},very thick] (2,-0.5) -- (2,-0.4) ;
\draw[-{Stealth[length=3mm, width=3mm]},very thick] (2,-1.5) -- (2,-1.4) ;

\draw[{Stealth[length=3mm, width=3mm]}-,very thick] (-1,-1.6) -- (-1,-1.5) ;
\draw[-{Stealth[length=3mm, width=3mm]},very thick] (0,-1.5) -- (0,-1.4) ;
\draw[-{Stealth[length=3mm, width=3mm]},very thick] (1,-1.5) -- (1,-1.4) ;

\draw[{Stealth[length=3mm, width=3mm]}-,very thick] (-1.6,-1) -- (-1.5,-1) ;
\draw[{Stealth[length=3mm, width=3mm]}-,very thick] (-1.4,0) -- (-1.5,0) ;
\draw[{Stealth[length=3mm, width=3mm]}-,very thick] (-1.6,1) -- (-1.5,1) ;

\draw[{Stealth[length=3mm, width=3mm]}-,very thick] (1.6,-1) -- (1.5,-1) ;
\draw[{Stealth[length=3mm, width=3mm]}-,very thick] (1.4,0) -- (1.5,0) ;
\draw[{Stealth[length=3mm, width=3mm]}-,very thick] (1.6,1) -- (1.5,1) ;

\draw[-{Stealth[length=3mm, width=3mm]},very thick] (1,-0.5) -- (1,-0.6) ;
\draw[-{Stealth[length=3mm, width=3mm]},very thick] (1,0.5) -- (1,0.4) ;

\draw[-{Stealth[length=3mm, width=3mm]},very thick] (0.5,1) -- (0.6,1) ;
\draw[-{Stealth[length=3mm, width=3mm]},very thick] (0.5,0) -- (0.4,0) ;
\draw[-{Stealth[length=3mm, width=3mm]},very thick] (0.5,-1) -- (0.4,-1) ;

\draw[-{Stealth[length=3mm, width=3mm]},very thick] (-0.5,1) -- (-0.6,1) ;

\draw[-{Stealth[length=3mm, width=3mm]},very thick] (-0.5,-1) -- (-0.6,-1) ;
\draw[-{Stealth[length=3mm, width=3mm]},very thick] (-1,-0.5) -- (-1,-0.6) ;
\draw[-{Stealth[length=3mm, width=3mm]},very thick] (0,-0.5) -- (0,-0.4) ;

\draw[-{Stealth[length=3mm, width=3mm]},very thick] (0,0.5) -- (0,0.6) ;
\draw[-{Stealth[length=3mm, width=3mm]},very thick] (-1,0.5) -- (-1,0.6) ;

\draw[-{Stealth[length=3mm, width=3mm]},very thick] (-0.5,0) -- (-0.6,0) ;

\draw[red,thick,fill=red] (-1,2) circle (1mm);
\draw[red,thick,fill=red] (-2,0) circle (1mm);
\draw[red,thick,fill=red] (-1,-2) circle (1mm);
\draw[red,thick,fill=red] (2,0) circle (1mm);
\draw[red,thick,fill=red] (0,1) circle (1mm);
\draw[red,thick,fill=red] (1,-1) circle (1mm);

\draw[blue,thick,fill=blue] (-1,0) circle (1mm);

\end{tikzpicture}
\end{center}
\caption{A six-vertex configuration on the $5\times 5$ lattice with DWBC. Type 5 vertices are marked in red, and type-6 vertices are marked in blue. The domain wall boundary conditions and the ice rule ensure that there is exactly one type-5 vertex along each boundary, and no more than $k$ type 5 vertices in the $k$th row from the right or left and the $k$th column from the top or bottom. Vertices of types 5 and 6 alternate along each row and column.}\label{fig:5by5}
\end{figure}
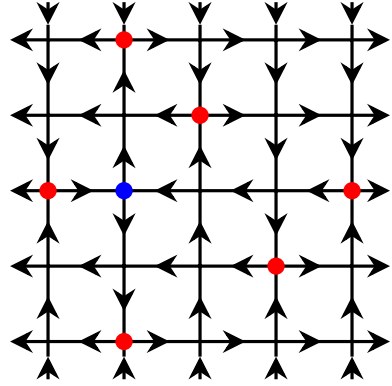

In order to describe the Izergin--Korepin formula in its most general form, it is convenient to introduce vertex weights which depend on the location of the vertex in the $n\times n$ lattice as well as the vertex type. To do so, fix $n=1,2,\dots$, and $2n+1$ complex parameters $\chi_1,\dots,\chi_n; \psi_1,\dots,\psi_n; \gamma$. For a configuration $\sigma$ of the six-vertex model in an $n\times n$ square with DWBC and one of the parametrizations \eqref{pf4} -- \eqref{pf5}, we define the weight of the vertex at $(x,y)$ to be
 $$
  \omega(x,y;\sigma)=
  \begin{cases}
    a(\psi_y-\chi_x, \ga), &\quad\text{if }\sigma(x,y)\text{ is of Type 1 or 2,}\\
    b(\psi_y-\chi_x, \ga), &\quad\text{if }\sigma(x,y)\text{ is of Type 3 or 4,}\\
    c(\ga), &\quad\text{if }\sigma(x,y)\text{ is of Type 5 or 6.}
  \end{cases}
 $$
 The inhomogeneous partition function is then defined as
 \begin{equation}
Z_n(\chi_1,\dots,\chi_n; \psi_1,\dots,\psi_n; \gamma)=\sum_{\sigma} \prod_{x=1}^n \prod_{y=1}^n \omega(x,y;\sigma),
 \end{equation}
where the sum is over all configurations satisfying DWBC.

The Izergin--Korepin formula states that for any of the parametrizations \eqref{pf4} -- \eqref{pf5},
\begin{multline}
\label{eq_IK_det}
Z_n(\chi_1,\dots,\chi_n;\, \psi_1,\dots,\psi_n;\, \gamma)\\= \frac{\prod\limits_{i,j=1}^n \bigl(a(\psi_j-\chi_i, \ga) b(\psi_j-\chi_i, \ga)\bigr)}{\prod\limits_{i<j} \bigl(b(\chi_i-\chi_j,0)b(\psi_i-\psi_j,0)\bigr)} \det \left[ \frac{c(\ga)}{a(\psi_j-\chi_i, \ga) b(\psi_j-\chi_i, \ga)} \right]_{i,j=1}^n.
\end{multline}
The partition function for the homogeneous model is then obtained by taking all parameters $\chi_1, \dots , \chi_n$ to be the same, and $\psi_1, \dots, \psi_n$ to be the same. For convenience we can take $\chi_1, \dots , \chi_n=0$ and $\psi_1, \dots, \psi_n=t$. The determinant vanishes when two of the $\psi$'s or $\chi$'s coincide, as does the denominator of the prefactor, so one must be careful to evaluate the limit as $\chi_1, \dots , \chi_n\to 0$ and $\psi_1, \dots, \psi_n\to t$. To do so it is convenient to define the function
\[
\varphi(t) = \frac{c(\ga)}{a(t, \ga) b(t, \ga)},
\]
so that the Izergin--Korepin determinant is 
\[
Z_n(\chi_1,\dots,\chi_n;\, \psi_1,\dots,\psi_n;\, \gamma)= \frac{\prod\limits_{i,j=1}^n \bigl(a(\psi_j-\chi_i, \ga) b(\psi_j-\chi_i, \ga)\bigr)}{\prod\limits_{i<j} \bigl(b(\chi_i-\chi_j,0)b(\psi_i-\psi_j,0)\bigr)} \det \left[ \varphi(\psi_j-\chi_i) \right]_{i,j=1}^n.
\]
We can take $\chi_i\to 0$ in sequential order row-by-row. By expanding $\varphi(\psi_j - \chi_i)$ as a Taylor series around $\psi_j$ and using row operations we obtain
\[
Z_n(0^n;\, \psi_1,\dots,\psi_n;\, \gamma)= \frac{\prod\limits_{j=1}^n \bigl(a(\psi_j, \ga) b(\psi_j, \ga)\bigr)^n}{\left(\prod_{j=0}^{n-1} j!\right) \prod\limits_{i<j} \bigl(b(\psi_i-\psi_j,0)} 
\det \left[ \varphi^{(i-1)}(\psi_j) \right]_{i,j=1}^n.
\]
We can then take $\psi_j\to t$ in sequential order column-by-column, again expanding $\varphi^{(i-1)}(\psi_j)$ in a Taylor series around $\psi_j = t$ and using column operations to obtain
\[
Z_n(0^n;\, t^n;\, \gamma)= \frac{ \bigl(a(t, \ga) b(t, \ga)\bigr)^{n^2}}{\left(\prod_{j=0}^{n-1} j!\right)^2 } 
\det \left[ \varphi^{(i+j-2)}(t) \right]_{i,j=1}^n.
\]
To recognize this as a Hankel determinant, one must recognize the function $\varphi(t)$ as the Laplace transform of some measure m:
\[
\varphi(t) = \int_{\R} e^{tx} {\rm dm}(x),
\]
so that 
\eq\label{eq:laplace}
\varphi^{(i+j-2)}(t) = \int_{\R} x^{i+j-2}e^{tx} {\rm dm}(x) = \mu_{i+j-2}, \quad \dd\mu(x)= e^{tx} {\rm dm}(x),
\eeq
and the homogeneous partition function becomes
\[
Z_n = Z_n(0^n;\, t^n;\, \gamma) = \frac{ \bigl(a(t, \ga) b(t, \ga)\bigr)^{n^2}}{\left(\prod_{j=0}^{n-1} j!\right)^2 } 
H_n[\mu].
\]
This characterization as a Hankel determinant was exploited in a series of works to compute the large-$n$ asymptotics of the partition function in various regions of the phase diagram \cite{Zinn_Justin00, Bleher-Fokin06, Bleher-Liechty09, Bleher-Liechty10, Bleher-Bothner12, Bleher-Liechty14}.

Variations on the above formula can be obtained by taking a partially homogeneous limit of \eqref{eq_IK_det}. Such partially inhomogeneous partition functions in fact function as generating functions for the location of the $c$-type vertices near the boundary of the $n\times n$ domain. For example, if we take $\chi_1, \dots, \chi_n \to 0$ and $\psi_1, \dots, \psi_{n-1} \to t$ as before, but leave $\psi_n$ free, we obtain
\begin{multline}\label{eq:Zn_limit_D}
Z_n(0^n;\, t^{n-1}, \psi_n;\, \gamma) = \frac{ \bigl(a(t, \ga) b(t, \ga)\bigr)^{n^2-n}\bigl(a(\psi_n, \ga) b(\psi_n, \ga)\bigr)^{n}}{(n-1)!b(\psi_n -t, 0)^{n-1}\left(\prod_{j=0}^{n-2} j!\right)^2 }  \\
\times \det \begin{pmatrix} \f(t, \ga) &  \f'(t, \ga) & \dots  & \f^{(n-2)}(t,\ga) & \f(\psi_n, \ga) \\ 
\f'(t, \ga) &  \f{''}(t,\ga) & \dots  &  \f^{(n-1)}(t, \ga) & \f'(\psi_n, \ga)
\\ \vdots & \vdots & \ddots  & \vdots & \vdots \\
\f^{(n-2)}(t, \ga) & \f^{(n-1)}(t, \ga) & \dots  & \f^{(2n-4)}(t, \ga) & \f^{(n-2)}(\psi_n, \ga) \\
\f^{(n-1)}(t, \ga) &  \f^{(n)}(t, \ga) & \dots &  \f^{(2n-3)}(t, \ga) & \f^{(n-1)}(\psi_n, \ga) \end{pmatrix}.
\end{multline}
If we write $\psi_n = t + \xi$ and use the Laplace transform \eqref{eq:laplace}, the above formula becomes
\begin{multline}
Z_n(0^n;\, t^{n-1}, \psi_n;\, \gamma) = \frac{ \bigl(a(t, \ga) b(t, \ga)\bigr)^{n^2-n}\bigl(a(\psi_n, \ga) b(\psi_n, \ga)\bigr)^{n}}{(n-1)!b(\psi_n -t, 0)^{n-1}\left(\prod_{j=0}^{n-2} j!\right)^2 }  \\
\times \det \begin{pmatrix} \mu_0 &  \mu_1 & \dots  & \mu_{n-2} & \nu_0 \\ 
\mu_1 & \mu_2 & \dots  &  \mu_n & \nu_1
\\ \vdots & \vdots & \ddots  & \vdots & \vdots \\
\mu_{n-2} & \mu_{n-1}  & \dots  & \mu_{2n-4} & \nu_{n-2} \\
\mu_{n-1} &  \mu_{n} & \dots &  \mu_{2n-3} & \nu_{n-1} \end{pmatrix}.
\end{multline}
where $\mu$ is as in \eqref{eq:laplace}, and $\nu$ is defined as
\[
\dd\nu(x) = e^{(t+\xi)x}{\rm dm}(x) = e^{\xi x} \dd\mu(x).
\]
This is the bordered Hankel determinant $H_n^B[\mu; \nu]$ and Corollary \ref{prop:bordered_det_gen} gives the formula 
\begin{multline}\label{eq:6v_bordered}
Z_n(0^n;\, t^{n-1}, \psi_n;\, \gamma) = \frac{ \bigl(a(t, \ga) b(t, \ga)\bigr)^{n^2-n}\bigl(a(t+\xi, \ga) b(t+\xi, \ga)\bigr)^{n}}{(n-1)!b(\xi, 0)^{n-1}\left(\prod_{j=0}^{n-2} j!\right)^2 } \\
\times H_{n-1}[\mu]\int_{\R} P_{n-1}(x) e^{\xi x} \dd\mu(x),  \\
\end{multline}
where the system of monic polynomials $\{P_{k}(x)\}_{k=0}^\infty$ is defined by the orthogonality condition
\eq\label{eq:6v-OPs}
\int_{\R} P_j(x) P_k(x) \dd\mu(x) = h_k \de_{jk}.
\eeq
This partially inhomogeneous partition function tracks the location of the $c$-type vertex in the last column of the six-vertex state, which we may denote $\Lambda_n$. This fact has been used throughout the past 20 years to investigate the distribution of $\Lambda_n$ \cite{Bogoliubov-Pronko-Zvonarev02} and subsequently to investigate other correlation functions in the six-vertex model \cite{Colomo-Pronko08, Colomo-Pronko10, Colomo-Pronko-ZinnJustin10}. The formula \eqref{eq:6v_bordered} in particular was analyzed asymptotically in the recent work \cite{Gorin-Liechty23} in each of the phase regions to prove Gaussian fluctuations for $\Lambda_n$ as $n\to\infty$ in the disordered and anti-ferroelectric phases, and convergence of $\Lambda_n$ to a geometric random variable in the ferroelectric phase\footnote{In fact \cite{Gorin-Liechty23} did much more. There it was shown that in the disordered and anti-ferroelectric phase regions the  type-5 vertices near the boundary converge in a scaling limit to the GUE corners process from random matrix theory \cite{Johansson-Nordenstam06} and in the ferroelectric phase region they converge to the stochastic six-vertex model \cite{borodin2016stochastic}.}. The orthogonal polynomials \eqref{eq:6v-OPs} depend on the phase region, and their asymptotics can be ascertained from the Riemann--Hilbert analysis presented in \cite{Bleher-Fokin06, Bleher-Liechty10, Bleher-Liechty14} (see also \cite[Appendix A.2]{Gorin-Liechty23} for the orthogonal polynomials in the ferroelectric phase region).

One can also jointly track the location of the $c$-type vertex in the last row and last column by taking $\chi_1, \dots, \chi_{n-1} \to 0$, $\psi_1, \dots, \psi_{n-1} \to t$,  $\psi_n = t+\xi_2$, and $\chi_n = -\xi_1$. Similar to \eqref{eq:Zn_limit_D}, one finds
\begin{multline}\label{eq:Zn_limit_D_sf}
Z_n(0^{n-1}, -\xi_1;\, t^{n-1}, t+\xi_2;\, \gamma) = \\
\frac{ \bigl[a(t, \ga) b(t, \ga)\bigr]^{(n-1)^2}\bigl[a(t+\xi_1, \ga) b(t+\xi_1, \ga)a(t+\xi_2, \ga) b(t+\xi_2, \ga)\bigr]^{n-1}a(t+\xi_1+\xi_2, \ga)b(t+\xi_1+\xi_2, \ga)}{\left[b(\xi_1, 0)b(\xi_2, 0)\right]^{n-1}\left(\prod_{j=0}^{n-2} j!\right)^2 }  \\
\times \det \begin{pmatrix} \mu_0 &  \mu_1 & \dots  & \mu_{n-2} & \nu_0 \\ 
\mu_1 &  \mu_2 & \dots  & \mu_{n-1} & \nu_1
\\ \vdots & \vdots & \ddots  & \vdots & \vdots \\
\mu_{n-2} & \mu_{n-1} & \dots  & \mu_{2n-4} & \nu_{n-2} \\
\eta_0 &  \eta_1 & \dots &  \eta_{n-2} & a \end{pmatrix},
\end{multline}
where $\mu$ is as in \eqref{eq:laplace} and 
\[
\dd\eta(x) = e^{(t+\xi_1)x}{\rm dm}(x) = e^{\xi_1 x} \dd\mu(x), \quad \dd\nu(x) = e^{(t+\xi_2)x}{\rm dm}(x) = e^{\xi_2 x} \dd\mu(x),
\]
and the corner entry $a$ is
\[
a=\f(t+\xi_1+\xi_2) = \int_\R e^{(t+\xi_1+\xi_2)x} {\rm dm}(x).
\]
This is a framed Hankel determinant, and we have
\begin{multline}
Z_n(0^{n-1}, -\xi_1;\, t^{n-1}, t+\xi_2;\, \gamma) = \\
\frac{ \bigl[a(t, \ga) b(t, \ga)\bigr]^{(n-1)^2}\bigl[a(t+\xi_1, \ga) b(t+\xi_1, \ga)a(t+\xi_2, \ga) b(t+\xi_2, \ga)\bigr]^{n-1}a(t+\xi_1+\xi_2, \ga)b(t+\xi_1+\xi_2, \ga)}{\left[b(\xi_1, 0)b(\xi_2, 0)\right]^{n-1}\left(\prod_{j=0}^{n-2} j!\right)^2 }  \\
\times H_n^F[\mu;\nu, \eta;a].
\end{multline}
Corollary \ref{eq:semi-framed_det_CDbb} then gives
\begin{multline}\label{eq:6v-doubly_refined}
Z_n(0^{n-1}, -\xi_1;\, t^{n-1}, t+\xi_2;\, \gamma) = \\
\frac{ \bigl[a(t, \ga) b(t, \ga)\bigr]^{(n-1)^2}\bigl[a(t+\xi_1, \ga) b(t+\xi_1, \ga)a(t+\xi_2, \ga) b(t+\xi_2, \ga)\bigr]^{n-1}a(t+\xi_1+\xi_2, \ga)b(t+\xi_1+\xi_2, \ga)}{\left[b(\xi_1, 0)b(\xi_2, 0)\right]^{n-1}\left(\prod_{j=0}^{n-2} j!\right)^2 }  \\
\times H_{n-1}[\mu]\left(a-\int_{\R}\int_{\R} e^{\xi_1 x+\xi_2 y}K_{n-2}(x,y)\dd\mu(x)\dd\mu(y)\right) \\
= \frac{ \bigl[a(t, \ga) b(t, \ga)\bigr]^{(n-1)^2}\bigl[a(t+\xi_1, \ga) b(t+\xi_1, \ga)a(t+\xi_2, \ga) b(t+\xi_2, \ga)\bigr]^{n-1}a(t+\xi_1+\xi_2, \ga)b(t+\xi_1+\xi_2, \ga)}{\left[b(\xi_1, 0)b(\xi_2, 0)\right]^{n-1}\left(\prod_{j=0}^{n-2} j!\right)^2 }  \\
\times H_{n-1}[\mu]\left(\int_\R e^{(\xi_1+\xi_2)x} d\mu(x)-\int_{\R}\int_{\R} e^{\xi_1 x+\xi_2 y}\sum_{k=0}^{n-2} \frac{P_k(x)P_k(y)}{h_k}\dd\mu(x)\dd\mu(y)\right).
\end{multline}
It is an open and interesting question to evaluate \eqref{eq:6v-doubly_refined} as $n\to\infty$ in order to obtain information on the joint distribution of the location of the $c$-type vertices in the first row and column.

\section{Proofs of Propositions \ref{eq:bordered_det}, \ref{Prop: multi-bordered Toeplitz and BOPUC}, \ref{eq:semi-framed_det_CD}, and \ref{eq:semi-framed_det_CDaa}}\label{sec:proofs of OP props}

\subsection{Proof of Propositions \ref{eq:bordered_det} and \ref{Prop: multi-bordered Toeplitz and BOPUC}}\label{sec 5.1} We prove Proposition \ref{eq:bordered_det} in detail. The proof of Proposition \ref{Prop: multi-bordered Toeplitz and BOPUC} is similar. 

Before proceeding let us recall the  \textit{Dodgson Condensation Identity}\footnote{Also known as the \textit{Desnanot–Jacobi} identity or the \textit{Sylvester determinant} identity.} , which we occasionally abbreviate as DCI (see \cite{Abeles,Fulmek-Kleber,Bressoud, GW} and references therein). Let $\boldsymbol{\mathscr{M}}$ be an $n \times n$ matrix. By \[  \mathscr{M} \left\lbrace \begin{matrix} j_1& j_2& \cdots & j_{\ell} \\  k_1& k_2& \cdots & k_{\ell} \end{matrix} \right\rbrace, \]
we mean the determinant of the $(n-\ell)\times(n-\ell)$ matrix obtained from $\boldsymbol{\mathscr{M}}$ by removing the rows $j_i$ and the columns $k_i$, $1\leq i \leq \ell$. Although the order of writing the row and column indices is immaterial for this definition, in this work we prefer to respect the order of indices, for example we prefer to write \[ \mathscr{M} \left\lbrace \begin{matrix} 3& 5 \\  1& 4 \end{matrix} \right\rbrace, \] and not \[ \mathscr{M} \left\lbrace \begin{matrix} 5 & 3 \\  1& 4 \end{matrix} \right\rbrace \qquad \mbox{or} \qquad \mathscr{M} \left\lbrace \begin{matrix} 3& 5 \\  4& 1 \end{matrix} \right\rbrace \qquad \mbox{or} \qquad \mathscr{M} \left\lbrace \begin{matrix} 5 & 3 \\  4& 1 \end{matrix} \right\rbrace, \]
although all of these are the same determinant. Let $j_1 < j_2$ and $k_1 < k_2$. The Dodgson Condensation identity reads
\begin{equation}\label{DODGSON}
	\mathscr{M} \cdot \mathscr{M}\left\lbrace \begin{matrix} j_1 & j_2 \\  k_1& k_2 \end{matrix} \right\rbrace = \mathscr{M}\left\lbrace \begin{matrix} j_1  \\  k_1 \end{matrix} \right\rbrace \cdot \mathscr{M}\left\lbrace \begin{matrix} j_2  \\  k_2 \end{matrix} \right\rbrace - \mathscr{M}\left\lbrace \begin{matrix} j_1  \\  k_2 \end{matrix} \right\rbrace \cdot \mathscr{M}\left\lbrace \begin{matrix} j_2  \\  k_1 \end{matrix} \right\rbrace.
\end{equation}

Now we prove Proposition \ref{eq:bordered_det}. Let $n\in \Z_{\geq 2} $ be arbitrary. The case $m=n$ is obvious by row operations. For $m<n$, we prove this Proposition by induction on $m\in \N$, the number of borders. For $m=1$, the result obviously holds due to \eqref{Pn det rep}. To prepare for proving the inductive step, we introduce the notations
	\begin{equation}\label{M1 and M2}
		\mathscr{M}_1 \equiv {H}_n[\mu; z_1, \dots, z_m], \qandq         \mathscr{M}_2 \equiv {H}_n[\mu; z_1, \dots, z_{m-1}],
	\end{equation}
	for the $n\times n$ multi-bordered Hankel determinants respectively with $m$ and $m-1$ borders of monomials. Recalling \eqref{DODGSON} we use the following two Dodgson Condensation identities:
	\begin{equation}\label{Dodgson1}
		\mathscr{M}_1 \cdot \mathscr{M}_1\left\lbrace \begin{matrix} 0 & n-1 \\  n-2 & n-1 \end{matrix} \right\rbrace = \mathscr{M}_1\left\lbrace \begin{matrix} 0  \\  n-2 \end{matrix} \right\rbrace \cdot \mathscr{M}_1\left\lbrace \begin{matrix} n-1  \\  n-1 \end{matrix} \right\rbrace - \mathscr{M}_1\left\lbrace \begin{matrix} 0  \\  n-1 \end{matrix} \right\rbrace \cdot \mathscr{M}_1\left\lbrace \begin{matrix} n-1  \\  n-2 \end{matrix} \right\rbrace,
	\end{equation}
	and
	\begin{equation}\label{Dodgson2}       	\mathscr{M}_2 \cdot \mathscr{M}_2\left\lbrace \begin{matrix} 0 & n-1 \\  n-m & n-1 \end{matrix} \right\rbrace = \mathscr{M}_2\left\lbrace \begin{matrix} 0  \\  n-m \end{matrix} \right\rbrace \cdot \mathscr{M}_2\left\lbrace \begin{matrix} n-1  \\  n-1 \end{matrix} \right\rbrace - \mathscr{M}_2\left\lbrace \begin{matrix} 0  \\  n-1 \end{matrix} \right\rbrace \cdot \mathscr{M}_2\left\lbrace \begin{matrix} n-1  \\  n-m \end{matrix} \right\rbrace.
	\end{equation}
	We study the determinants on the right hand side of these DCIs one by one. Notice that, by the induction hypothesis
	\begin{equation}\label{M1 n-1 n-1}
		\mathscr{M}_1\left\lbrace \begin{matrix} n-1  \\  n-1 \end{matrix} \right\rbrace = H_{n-1}[\mu;z_1,\cdots,z_{m-1}] = H_{n-m}[\mu] \underset{(m-1)\times(m-1)}{\det} 
		\begin{pmatrix}
			P_{n-m}(z_1) & \cdots & P_{n-m}(z_{m-2}) & P_{n-m}(z_{m-1}) \\
			\vdots & \ddots & \vdots & \vdots \\
			P_{n-2}(z_1) & \cdots & P_{n-2}(z_{m-2}) & P_{n-2}(z_{m-1})
		\end{pmatrix},
	\end{equation}
	and
	\begin{equation}\label{M1 n-1 n-2}
		\mathscr{M}_1\left\lbrace \begin{matrix} n-1  \\  n-2 \end{matrix} \right\rbrace = H_{n-1}[\mu;z_1,\cdots,z_{m-2},z_m] = H_{n-m}[\mu] \underset{(m-1)\times(m-1)}{\det} 
		\begin{pmatrix}
			P_{n-m}(z_1) & \cdots & P_{n-m}(z_{m-2}) & P_{n-m}(z_{m}) \\
			\vdots & \ddots & \vdots & \vdots \\
			P_{n-2}(z_1) & \cdots & P_{n-2}(z_{m-2}) & P_{n-2}(z_{m})
		\end{pmatrix}.
	\end{equation}
	The remaining two determinants on the right hand side of \eqref{Dodgson1} are not directly expressible in terms of $H_{n-1}$, since due to the removal of the first row, their $11$-entries are $\mu_1$ as opposed to $\mu_0$. Let us now turn our attention to \eqref{Dodgson2}. We can already observe that 
	\begin{align}
		\mathscr{M}_2\left\lbrace \begin{matrix} 0 & n-1 \\  n-m & n-1 \end{matrix} \right\rbrace & = \mathscr{M}_1\left\lbrace \begin{matrix} 0 & n-1 \\  n-2 & n-1 \end{matrix} \right\rbrace, \label{M20n-1,n-mn-1} \\
		\mathscr{M}_2\left\lbrace \begin{matrix} 0  \\  n-m  \end{matrix}  \right\rbrace \equiv \mathscr{M}_2\left\lbrace \begin{matrix} 0  \\  n-m  \end{matrix}  \right\rbrace [z_1,\cdots,z_{m-1}] & = \mathscr{M}_1\left\lbrace \begin{matrix} 0  \\   n-1 \end{matrix} \label{M10n-1}\right\rbrace, \\
		\mathscr{M}_2\left\lbrace \begin{matrix} 0  \\  n-m  \end{matrix} \right\rbrace [z_1,\cdots,z_{m-2},z_m] & = \mathscr{M}_1\left\lbrace \begin{matrix} 0  \\   n-2 \end{matrix} \label{M10n-2}\right\rbrace, \\
		\mathscr{M}_2\left\lbrace \begin{matrix} n-1  \\  n-m  \end{matrix} \right\rbrace \equiv \mathscr{M}_2\left\lbrace \begin{matrix} n-1  \\  n-m  \end{matrix} \right\rbrace[z_1,\cdots,z_{m-1}] & = \mathscr{M}_1\left\lbrace \begin{matrix} n-1  \\   n-1 \end{matrix} \right\rbrace, \qquad \mbox{and} \label{M2n-1n-ma} \\ \mathscr{M}_2\left\lbrace \begin{matrix} n-1  \\   n-m \end{matrix} \right\rbrace[z_1,\cdots,z_{m-2},z_m] & = \mathscr{M}_1\left\lbrace \begin{matrix} n-1  \\   n-2 \end{matrix} \right\rbrace. \label{M2n-1n-mb}
	\end{align}
	We can now express the two determinants $\mathscr{M}_1\left\lbrace \begin{matrix} 0  \\   n-1 \end{matrix} \right\rbrace$ and $\mathscr{M}_1\left\lbrace \begin{matrix} 0  \\   n-2 \end{matrix} \right\rbrace$ in terms of elements in the Dodgson Condensation identity \eqref{Dodgson2}. In fact, from \eqref{Dodgson2} and  we have
	\begin{equation}\label{M10n-1}
		\begin{split} \mathscr{M}_1\left\lbrace \begin{matrix} 0  \\   n-1 \end{matrix} \right\rbrace[z_1,\cdots,z_{m-1}] =
			\frac{1}{{\mathscr{M}_2\left\lbrace \begin{matrix} n-1  \\   n-1 \end{matrix} \right\rbrace[z_1,\cdots,z_{m-2}]}} & \left(
			\mathscr{M}_2[z_1,\cdots,z_{m-1}]\mathscr{M}_2\left\lbrace \begin{matrix} 0 & n-1 \\  n-m & n-1 \end{matrix} \right\rbrace[z_1,\cdots,z_{m-2}] \right.
			\\ & 
			\left.
			+ \ \mathscr{M}_2\left\lbrace \begin{matrix} 0  \\   n-1 \end{matrix} \right\rbrace[z_1,\cdots,z_{m-2}]\mathscr{M}_2\left\lbrace \begin{matrix} n-1  \\   n-m \end{matrix} \right\rbrace[z_1,\cdots,z_{m-1}] \right).
		\end{split}
	\end{equation}
	In view of \eqref{M10n-2}, for $\mathscr{M}_1\left\lbrace \begin{matrix} 0  \\   n-2\end{matrix} \right\rbrace$ we have the same expression given on the right hand side of the equation above, except that we only need to replace $z_{m-1}$ by $z_m$. Indeed,
	\begin{equation}\label{M10n-2B}
		\begin{split} \mathscr{M}_1\left\lbrace \begin{matrix} 0  \\   n-2 \end{matrix} \right\rbrace[z_1,\cdots,z_{m-2},z_m] =
			\frac{1}{{\mathscr{M}_2\left\lbrace \begin{matrix} n-1  \\   n-1 \end{matrix} \right\rbrace[z_1,\cdots,z_{m-2}]}} & \left(
			\mathscr{M}_2[z_1,\cdots,z_{m-2},z_m]\mathscr{M}_2\left\lbrace \begin{matrix} 0 & n-1 \\  n-m & n-1 \end{matrix} \right\rbrace[z_1,\cdots,z_{m-2}] \right.
			\\ & \hspace{-1cm}
			\left.
			+ \ \mathscr{M}_2\left\lbrace \begin{matrix} 0  \\   n-1 \end{matrix} \right\rbrace[z_1,\cdots,z_{m-2}]\mathscr{M}_2\left\lbrace \begin{matrix} n-1  \\   n-m \end{matrix} \right\rbrace[z_1,\cdots,z_{m-2},z_m] \right).
		\end{split}
	\end{equation}
	Using \eqref{M2n-1n-ma} and \eqref{M2n-1n-mb}, we can write these DCIs as
	\begin{equation}\label{M10n-1A}
		\begin{split} \mathscr{M}_1\left\lbrace \begin{matrix} 0  \\   n-1 \end{matrix} \right\rbrace =
			\frac{1}{{\mathscr{M}_2\left\lbrace \begin{matrix} n-1  \\   n-1 \end{matrix} \right\rbrace}} & \left(
			\mathscr{M}_2 \cdot \mathscr{M}_2\left\lbrace \begin{matrix} 0 & n-1 \\  n-m & n-1 \end{matrix} \right\rbrace 
			+ \mathscr{M}_2\left\lbrace \begin{matrix} 0  \\   n-1 \end{matrix} \right\rbrace \cdot \mathscr{M}_1\left\lbrace \begin{matrix} n-1  \\   n-1 \end{matrix} \right\rbrace \right),
		\end{split}
	\end{equation} and
	\begin{equation}\label{M10n-2A}
		\mathscr{M}_1\left\lbrace \begin{matrix} 0  \\   n-2 \end{matrix} \right\rbrace =
		\frac{1}{{\mathscr{M}_2\left\lbrace \begin{matrix} n-1  \\   n-1 \end{matrix} \right\rbrace}}  \left(
		\mathscr{M}_2[z_1,\cdots,z_{m-2},z_m]\mathscr{M}_2\left\lbrace \begin{matrix} 0 & n-1 \\  n-m & n-1 \end{matrix} \right\rbrace 
		+ \mathscr{M}_2\left\lbrace \begin{matrix} 0  \\   n-1 \end{matrix} \right\rbrace\mathscr{M}_1\left\lbrace \begin{matrix} n-1  \\   n-2 \end{matrix} \right\rbrace \right).
	\end{equation}
	Solving for $\mathscr{M}_1$ from \eqref{Dodgson1} yields
	\begin{equation}
		\mathscr{M}_1  =    \frac{\mathscr{M}_1\left\lbrace \begin{matrix} 0  \\  n-2 \end{matrix} \right\rbrace \cdot \mathscr{M}_1\left\lbrace \begin{matrix} n-1  \\  n-1 \end{matrix} \right\rbrace - \mathscr{M}_1\left\lbrace \begin{matrix} 0  \\  n-1 \end{matrix} \right\rbrace \cdot \mathscr{M}_1\left\lbrace \begin{matrix} n-1  \\  n-2 \end{matrix} \right\rbrace }{\mathscr{M}_1\left\lbrace \begin{matrix} 0 & n-1 \\  n-2 & n-1 \end{matrix} \right\rbrace}.
	\end{equation}
	Now using \eqref{M20n-1,n-mn-1}, \eqref{M10n-1A} and \eqref{M10n-2A} in the above expression we obtain
	\begin{equation}\label{M1 in terms of stuff with m-1 borders}
		\mathscr{M}_1  = \frac{\mathscr{M}_2[z_1,\cdots,z_{m-2},z_m] \cdot \mathscr{M}_1\left\lbrace \begin{matrix} n-1  \\  n-1 \end{matrix} \right\rbrace - \mathscr{M}_2[z_1,\cdots ,z_{m-2},z_{m-1}] \cdot \mathscr{M}_1\left\lbrace \begin{matrix} n-1  \\  n-2 \end{matrix} \right\rbrace }{\mathscr{M}_2\left\lbrace \begin{matrix}  n-1 \\   n-1 \end{matrix} \right\rbrace}
	\end{equation}
	after straightforward simplifications.  Now we notice that all objects on the right hand side of \eqref{M1 in terms of stuff with m-1 borders} either have $m-1$ or $m-2$ borders, and thus for each of them we can use the induction hypothesis, recalling \eqref{M1 and M2} .  To this end we already have equations \eqref{M1 n-1 n-1} and \eqref{M1 n-1 n-2} for $\mathscr{M}_1\left\lbrace \begin{matrix} n-1  \\  n-1  \end{matrix} \right\rbrace$ and $\mathscr{M}_1\left\lbrace \begin{matrix} n-1  \\  n-2  \end{matrix} \right\rbrace$, while for the other terms we can write
	\begin{equation}
		\mathscr{M}_2\left\lbrace \begin{matrix} n-1  \\  n-1  \end{matrix} \right\rbrace  = H_{n-m+1}[\mu] \det 
		\begin{pmatrix}
			P_{n-m+1}(z_1) & \cdots & P_{n-m+1}(z_{m-2}) \\
			\vdots & \ddots & \vdots \\
			P_{n-2}(z_1) & \cdots & P_{n-2}(z_{m-2})
		\end{pmatrix},
	\end{equation}
	\begin{equation}
		\mathscr{M}_2[z_1,\cdots ,z_{m-2},z_{m-1}]  = H_{n-m+1}[\mu] \det 
		\begin{pmatrix}
			P_{n-m+1}(z_1) & \cdots & P_{n-m+1}(z_{m-2}) & P_{n-m+1}(z_{m-1}) \\
			\vdots & \ddots & \vdots & \vdots \\
			P_{n-1}(z_1) & \cdots & P_{n-1}(z_{m-2}) & P_{n-1}(z_{m-1})
		\end{pmatrix},
	\end{equation}
	and
	\begin{equation}
		\mathscr{M}_2[z_1,\cdots ,z_{m-2},z_{m}] = H_{n-m+1}[\mu] \det 
		\begin{pmatrix}
			P_{n-m+1}(z_1) & \cdots & P_{n-m+1}(z_{m-2}) & P_{n-m+1}(z_{m}) \\
			\vdots & \ddots & \vdots & \vdots \\
			P_{n-1}(z_1) & \cdots & P_{n-1}(z_{m-2}) & P_{n-1}(z_{m})
		\end{pmatrix}.
	\end{equation}
	Plugging these, together with \eqref{M1 n-1 n-1} and \eqref{M1 n-1 n-2}, into \eqref{M1 in terms of stuff with m-1 borders}, we observe that the right hand side of \eqref{M1 in terms of stuff with m-1 borders} is nothing but 
	\begin{equation}\label{Dodgson3}
		H_{n-m}[\mu]\left(  \frac{ \mathscr{D}\left\lbrace \begin{matrix} 0  \\  m-2 \end{matrix} \right\rbrace \cdot \mathscr{D}\left\lbrace \begin{matrix} m-1  \\  m-1 \end{matrix} \right\rbrace - \mathscr{D}\left\lbrace \begin{matrix} 0  \\  m-1 \end{matrix} \right\rbrace \cdot \mathscr{D}\left\lbrace \begin{matrix} m-1  \\  m-2 \end{matrix} \right\rbrace}{\mathscr{D}\left\lbrace \begin{matrix} 0 & m-1 \\  m-2 & m-1 \end{matrix} \right\rbrace} \right),
	\end{equation}
	with 
	\begin{equation}
		\mathscr{D} := \det 
		\begin{pmatrix}
			P_{n-m}(z_1) & \cdots & P_{n-m}(z_{m-1}) & P_{n-m}(z_{m}) \\
			\vdots & \ddots & \vdots & \vdots \\
			P_{n-1}(z_1) & \cdots & P_{n-1}(z_{m-1}) & P_{n-1}(z_{m})
		\end{pmatrix}.
	\end{equation}
	Now, due to the following Dodgson Condensation identity
	\begin{equation}
		\mathscr{D} \cdot  \mathscr{D}\left\lbrace \begin{matrix} 0 & m-1 \\  m-2 & m-1 \end{matrix} \right\rbrace =  \mathscr{D}\left\lbrace \begin{matrix} 0  \\  m-2 \end{matrix} \right\rbrace \cdot \mathscr{D}\left\lbrace \begin{matrix} m-1  \\  m-1 \end{matrix} \right\rbrace - \mathscr{D}\left\lbrace \begin{matrix} 0  \\  m-1 \end{matrix} \right\rbrace \cdot \mathscr{D}\left\lbrace \begin{matrix} m-1  \\  m-2 \end{matrix} \right\rbrace,
	\end{equation}
	we finally arrive at the desired equality $\mathscr{M}_1 = H_{n-m}[\mu] \mathscr{D}$.

\subsection{Proof of Propositions \ref{eq:semi-framed_det_CD} and \ref{eq:semi-framed_det_CDaa}}\label{Sec 5.2} 
 We prove Proposition \ref{eq:semi-framed_det_CD} in detail. The proof of Proposition \ref{eq:semi-framed_det_CDaa} is similar.

The proof of the Proposition \ref{eq:semi-framed_det_CD} relies on the LU decomposition of the Hankel matrix. Write the monic orthogonal polynomial \eqref{Hankel OPs conditions} as
	\[
	P_n(x) = \sum_{k=0}^{n} p_{n,k}x^k,
	\]
	and introduce the triangular matrix
	\begin{equation}
		\boldsymbol{L}_{n} := \begin{pmatrix}
			\frac{p_{0,0}}{\sqrt{h_0}} & 0 & \cdots & 0 \\
			\frac{p_{1,0}}{\sqrt{h_1}} & \frac{p_{1,1}}{\sqrt{h_1}} & \cdots & 0 \\
			\vdots & \vdots & \ddots & \vdots \\
			\frac{p_{n,0}}{\sqrt{h_n}} & \frac{p_{n,1}}{\sqrt{h_n}} & \cdots & \frac{p_{n,n}}{\sqrt{h_n}}
		\end{pmatrix}.
	\end{equation}
	\begin{lemma}\cite{GW}
		The LU decomposition of $\boldsymbol{H}_{n+1}[\mu]$ is given by 
		\begin{equation}\label{LDU H}
			\boldsymbol{H}_{n+1}[\mu] = \left[\boldsymbol{L}_{n}\right]^{-1} 	 \left[\boldsymbol{L}_{n}^T\right]^{-1}.
		\end{equation} 
	\end{lemma}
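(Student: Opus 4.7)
The plan is to prove the identity by directly translating the orthonormality of the normalized polynomials $\{P_j(x)/\sqrt{h_j}\}$ into a matrix identity and then inverting. Define $\widetilde{P}_j(x) := P_j(x)/\sqrt{h_j}$, so by \eqref{Hankel OPs conditions 1},
\begin{equation}
\int_{\mathscr{I}} \widetilde{P}_j(x)\widetilde{P}_k(x)\,\dd\mu(x) = \de_{jk}, \qquad 0\le j,k\le n.
\end{equation}
Note that the $(j,\ell)$-entry of $\boldsymbol{A}_n$ is precisely the coefficient of $x^\ell$ in $\widetilde{P}_j(x)$ (with the convention $p_{j,\ell}=0$ for $\ell>j$), so
\begin{equation}
\widetilde{P}_j(x) = \sum_{\ell=0}^{n} (\boldsymbol{A}_n)_{j,\ell}\, x^\ell.
\end{equation}

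The key step is to expand the orthonormality relation using this expression and recognize a triple matrix product. Substituting gives
\begin{equation}
\de_{jk} = \sum_{\ell=0}^{n}\sum_{r=0}^{n} (\boldsymbol{A}_n)_{j,\ell} (\boldsymbol{A}_n)_{k,r}\int_{\mathscr{I}} x^{\ell+r}\,\dd\mu(x) = \sum_{\ell=0}^{n}\sum_{r=0}^{n} (\boldsymbol{A}_n)_{j,\ell}\,\mu_{\ell+r}\,(\boldsymbol{A}_n^T)_{r,k},
\end{equation}
and the matrix $(\mu_{\ell+r})_{0\le\ell,r\le n}$ is exactly $\boldsymbol{H}_{n+1}[\mu]$. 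Hence the entire system of orthonormality conditions is equivalent to the single identity
\begin{equation}
\boldsymbol{A}_n\,\boldsymbol{H}_{n+1}[\mu]\,\boldsymbol{A}_n^T = I.
\end{equation}
Since $\boldsymbol{A}_n$ is lower triangular with strictly positive diagonal entries $p_{j,j}/\sqrt{h_j} = 1/\sqrt{h_j}$ (as $P_j$ is monic), it is invertible, and inverting both sides yields the claim $\boldsymbol{H}_{n+1}[\mu] = \boldsymbol{A}_n^{-1}(\boldsymbol{A}_n^T)^{-1}$.

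There is essentially no obstacle here beyond careful bookkeeping of indices; the content of the lemma is simply a repackaging of orthonormality in matrix form, with the triangular structure of $\boldsymbol{A}_n$ ensuring that the factorization is genuinely of LU (more precisely, $L L^T$) type. The only minor point worth mentioning explicitly in a writeup is the use of monicity to guarantee the diagonal entries of $\boldsymbol{A}_n$ are nonzero, which in turn justifies the inversion step and identifies the decomposition as the unique Cholesky factorization of the positive-definite matrix $\boldsymbol{H}_{n+1}[\mu]$.
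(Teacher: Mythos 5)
Your proposal is correct and follows essentially the same route as the paper: expand the orthonormality relation for $P_j/\sqrt{h_j}$ in terms of the coefficients $p_{j,\ell}$, recognize the resulting double sum as the entries of $\boldsymbol{A}_n\,\boldsymbol{H}_{n+1}[\mu]\,\boldsymbol{A}_n^T = I$, and invert using the triangularity and nonzero diagonal of $\boldsymbol{A}_n$. Your explicit remark about monicity guaranteeing invertibility is a point the paper leaves implicit, but there is no substantive difference in the argument.
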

	\begin{proof}[Proof of Lemma]
		We have
		\begin{equation}
			\begin{split}
				\delta_{jk} & = \int_{\R} \frac{P_{j}(x)}{\sqrt{h_j}} \frac{P_{k}(x)}{\sqrt{h_k}} \dd\mu(x)  = \sum_{m=0}^{j} \sum_{\ell=0}^{k} \frac{p_{j,m}p_{k,\ell}}{\sqrt{h_jh_k}} \int_{\R} x^{\ell+m}\dd\mu(x)  \\ &
				= \sum_{m=0}^{j} \sum_{\ell=0}^{k} \frac{p_{j,m}p_{k,\ell}}{\sqrt{h_jh_k}} \mu_{\ell+m} = \sum_{m=0}^{j} \sum_{\ell=0}^{k} 	\left( \boldsymbol{L}_{n} \right)_{j,m} \left( \boldsymbol{L}_{n} \right)_{k,\ell} \left( \boldsymbol{H}_{n+1}[\mu] \right)_{\ell,m}  \\ & = \sum_{m=0}^{j} \sum_{\ell=0}^{k}  \left( \boldsymbol{L}_{n} \right)_{j,k} \left( \boldsymbol{H}_{n+1}[\mu] \right)_{j,m}  \left( \boldsymbol{L}_{n}^T \right)_{m,k} =  \left( \boldsymbol{L}_{n} \boldsymbol{H}_{n+1}[\mu] \boldsymbol{L}_{n}^T \right)_{j,k},
			\end{split}
		\end{equation} 
		which is equivalent to \eqref{LDU H}.
	\end{proof}

		Let
		\[
		{\bf Z}_n(x_1, \dotsm, x_m) = \begin{pmatrix} 1 & 1 & \dots & 1 \\ x_1 & x_2 & \dots & x_m \\ \vdots & \vdots & \ddots & \vdots \\ x_1^n& x_2^n & \cdots &  x_m^n \end{pmatrix}, \qquad {\bf P}_n(x_1, x_2, \dots, x_m) =  \begin{pmatrix} \frac{P_0(x_1)}{\sqrt{h_0}} & \frac{P_0(x_2)}{\sqrt{h_0}} & \cdots & \frac{P_0(x_m)}{\sqrt{h_0}}\\ \frac{P_1(x_1)}{\sqrt{h_1}} & \frac{P_1(x_2)}{\sqrt{h_1}} & \cdots & \frac{P_1(x_m)}{\sqrt{h_1}}\\ \vdots & \vdots & \ddots & \vdots \\ \frac{P_n(x_1)}{\sqrt{h_n}} &  \frac{P_n(x_2)}{\sqrt{h_n}} &\cdots &  \frac{P_n(x_m)}{\sqrt{h_n}}  \end{pmatrix},
		\]
		and
\[
{\bf F}_n(\mu; x_1, \dots, x_m; y_1, \dots, y_m; A) = \begin{pmatrix} \mu_0 & \mu_1 & \cdots & \mu_{n-m-1} & 1 & \cdots & 1 \\
	\mu_1 & \mu_2 & \cdots & \mu_{n-m} & x_1 & \cdots & x_m \\
	\mu_2 & \mu_3 & \cdots & \mu_{n-m+1} & x_1^2 & \cdots & x_m^2 \\
	\vdots & \vdots & \ddots & \vdots & \vdots & \ddots & \vdots\\
	\mu_{n-m-1} & \mu_{n-m} & \cdots & \mu_{2(n-m-1)} & x_1^{n-m-1} & \cdots & x_m^{n-m-1} \\
	1 & y_1 & \cdots & y_1^{n-m-1} & a_{11} & \dots & a_{1m} \\
	\vdots & \vdots & \ddots & \vdots & \vdots & \ddots & \vdots\\
	1 & y_m & \cdots & y_m^{n-m-1} & a_{m1} & \dots & a_{mm}
\end{pmatrix}.
\]

		We have
		\begin{equation}\label{312}
			\begin{split}
				&\begin{pmatrix}
					\boldsymbol{L}_{n-m-1} & \boldsymbol{0}_{(n-m)\times m} \\
					\boldsymbol{0}^T_{(n-m)\times m } & {\bf I}_m
				\end{pmatrix}
				{\bf F}_n(\mu; x_1, \dots, x_m; y_1, \dots, y_m; A) 
				\begin{pmatrix}
					\boldsymbol{L}_{n-m-1}^T & \boldsymbol{0}_{(n-m)\times m} \\
					\boldsymbol{0}^T_{(n-m)\times m} & {\bf I}_m
				\end{pmatrix} \\
				& = \begin{pmatrix}
					\boldsymbol{L}_{n-m-1} & \boldsymbol{0}_{(n-m)\times m} \\
					\boldsymbol{0}^T_{(n-m)\times m } & {\bf I}_m
				\end{pmatrix} \begin{pmatrix}
					\boldsymbol{H}_{n-m}[\mu] & \boldsymbol{Z}_{n-m-1}(x_1,\dots,x_m) \\
					\boldsymbol{Z}^T_{n-m-1}(y_1,\dots,y_m) & A
				\end{pmatrix} \begin{pmatrix}
					\boldsymbol{L}_{n-m-1}^T & \boldsymbol{0}_{(n-m)\times m} \\
					\boldsymbol{0}^T_{(n-m)\times m} & {\bf I}_m
				\end{pmatrix} \\ & = \begin{pmatrix}
					\boldsymbol{L}_{n-m-1} \boldsymbol{H}_{n-m}[\mu]  \boldsymbol{L}^T_{n-m-1} & \boldsymbol{L}_{n-m-1} \boldsymbol{Z}_{n-m-1}(x) \\
					\boldsymbol{Z}^T_{n-m-1}(y) \boldsymbol{L}^T_{n-m-1} & A
				\end{pmatrix} = \begin{pmatrix}
					\boldsymbol{I}_{n-m}	 & \boldsymbol{P}_{n-m-1}(x) \\
					\boldsymbol{P}^T_{n-m-1}(y) & A
				\end{pmatrix}.
			\end{split}
		\end{equation}

		Taking the determinant of both sides of \eqref{312} yields \eqref{eq:thick_semifremed_CD}, using the fact that
		\begin{equation}
			\det  {\bf L}_{n-m-1} = \frac{1}{\sqrt{H_{n-m}[\mu]}}.
		\end{equation}

\section{Proofs of Propositions \ref{prop:NIBE_bordered_framed} and \ref{prop:NIBB_bordered_framed}}\label{proofs of props}

\subsection{Proof of Proposition \ref{prop:NIBE_bordered_framed}}

By the reflection principle, the transition probability density for a standard Brownian motion to go from $x$ to $y$ during the time interval $[0,t]$ with an absorbing wall at zero is
\begin{equation}\label{bm1}
p_{\abs}(t; y|x)=\frac{1}{\sqrt{2\pi t}} \left[e^{-\frac{1}{2t}(x-y)^2}-e^{-\frac{1}{2t}(x+y)^2}\right].
\end{equation}
We will also consider a Brownian motion with an absorbing wall at zero and one at height $M>0$.  By repeatedly applying the reflection principle, we find that the transition probability density for a Brownian motion to go from $x$ to $y$ during the time interval $[0,t]$ with absorbing walls at zero and $M$ is
\begin{equation}\label{bm3}
p_{\abs, \abs}^M(t; y|x)=\frac{1}{\sqrt{2\pi t}} \sum_{h=-\infty}^\infty  \left[e^{-\frac{1}{2t}(x-y+2Mh)^2}-e^{-\frac{1}{2t}(x+y+2Mh)^2}\right].
\end{equation}
We can rewrite the two-wall formula using the Poisson resummation formula.  This gives
\begin{equation}\label{bm5a}
\begin{aligned}
p_{\abs,\abs}^M(t; y|x)&=\frac{1}{2M} \sum_{h=-\infty}^\infty e^{-\frac{t\pi^2 h^2}{2M^2}} e^{\frac{\ii h\pi x}{M}}\left( e^{-\frac{\ii h\pi y}{M}}- e^{\frac{\ii h\pi y}{M}}\right) \\
&=\frac{2}{M} \sum_{h=1}^\infty e^{-\frac{t\pi^2 h^2}{2M^2}} \sin\left(\frac{h\pi x}{M}\right) \sin\left(\frac{h\pi y}{M}\right).
\end{aligned}
\end{equation}

Let
\begin{equation}\label{bm6}
{\vec x} = (x_1, \dots x_n) , \quad {\vec y} = (y_1, \dots y_n) , \quad 0<x_1< \cdots < x_n, \quad 0<y_1< \cdots < y_n.
\end{equation}
Then by the Karlin--McGregor formula \cite{Karlin-McGregor59} , the transition probability for $n$ non-intersecting BMs to go from ${\vec x}$ to ${\vec y}$ over the period of time from $0$ to $1$ with an absorbing wall at $0$ is
\begin{equation}\label{bm7}
q_{\abs}( {\vec y} | {\vec x}) = \det \bigg[p_{\abs}(1; y_k|x_j)\bigg]_{j,k=1}^n,
\end{equation}
and the analogous transition probability with absorbing walls at 0 and $M$ is
\begin{equation}\label{bm8}
q_{\abs, \abs}^M( {\vec y} | {\vec x}) = \det \bigg[p_{\abs,\abs}^M(1; y_k|x_j)\bigg]_{j,k=1}^n.
\end{equation}
Then the cumulative distribution function for $\mathcal{M}_n$ is
\begin{equation}\label{bm9}
\mathbb{P}_m^{\vec\al, \vec\be}(\mathcal{M}_n< M ) =\lim_{\substack{y_1, y_2, \dots y_{n-m}, x_{1}, x_{2}, \dots x_{n-m} \to 0 \\ x_n \to \al_1, \dots, x_{n-m+1} \to \al_m \\ y_n \to \be_1, \dots, y_{n-m+1} \to \be_m}} \frac{q_{{\abs}, {\abs}}^M( {\vec y} | {\vec x}) }{q_{\abs}( {\vec y} | {\vec x})},
\end{equation}
where the limits $y_1, y_2, \dots y_{n-m}, x_{1}, x_{2}, \dots x_{n-m} \to 0$ are taken sequentially to respect the order $x_1<x_2<\cdots <x_{n-m}$ and $y_1<y_2<\cdots <y_{n-m}$.

We have 
 \begin{equation}\label{sw7}
\begin{aligned}
q_{\abs}( {\vec y} | {\vec x})&=\left(\frac{1}{2\pi}\right)^{n/2} \det \bigg[ e^{-\frac{1}{2}(y_k-x_j)^2}-e^{-\frac{1}{2}(y_k+x_j)^2} \bigg]_{j,k=1}^n \\
&=\left(\frac{2}{\pi}\right)^{n/2} \prod_{j=1}^n e^{-\frac{1}{2}(x_j^2+y_j^2)} \det \bigg[\sinh(x_jy_k) \bigg]_{j,k=1}^n .\\
\end{aligned}
\end{equation}
Expanding the first $n-m$ rows as a Taylor series about $x_j=0$, and using row operations we easily find that as $x_1, x_2, \dots, x_{n-m}\to 0$ (respecting the order $x_1<x_2<\cdots <x_{n-m}$),
 \begin{equation}
\begin{aligned}
q_{\abs}( {\vec y} | {\vec x})&=\left(\frac{2}{\pi}\right)^{n/2}\prod_{j=1}^nx_jy_j  \prod_{j=1}^n e^{-\frac{1}{2}y_j^2} \prod_{j=n-m+1}^n e^{-\frac{1}{2}x_j^2} \prod_{k=1}^{n-m} \frac{x_k^{2(k-1)}}{(2k-1)!} \\& \quad 
\times\det\begin{pmatrix}1 & 1 & \cdots & 1 \\
y_1^2 & y_2^2 & \cdots & y_n^2 \\
\vdots & \vdots & \ddots & \vdots \\
y_1^{2(n-m-1)} & y_2^{2(n-m-1)} & \cdots & y_n^{2(n-m-1)} \\
\frac{\sinh(x_{n-m+1}y_1)}{x_{n-m+1}y_1} & \frac{\sinh(x_{n-m+1}y_2)}{x_{n-m+1}y_2} & \cdots & \frac{\sinh(x_{n-m+1}y_n)}{x_{n-m+1}y_n}\\
\vdots & \vdots & \ddots & \vdots \\
\frac{\sinh(x_ny_{1})}{x_ny_{1}} & \frac{\sinh(x_ny_{2})}{x_ny_{2}} & \cdots & \frac{\sinh(x_ny_{n})}{x_ny_{n}} 
\end{pmatrix}  \left(1+O(|{\vec {x}}_{n-m}|^2)\right) ,\\
\end{aligned}
\end{equation}
where ${\vec x}_{n-m} = (x_1, x_2, \dots, x_{n-m})$. We can now take $y_1, \dots, y_{n-m} \to 0$, and $y_n=\be_1$, $y_{n-1}=\be_2, \cdots, y_{n-m+1}=\be_m$ as well as $x_n=\al_1$, $x_{n-1}=\al_2, \cdots, x_{n-m+1}=\al_m$ to obtain
\begin{multline}\label{sw7a}
q_{\abs}( {\vec y} | {\vec x})= \left(\frac{2}{\pi}\right)^{n/2}\prod_{j=1}^{n-m}x_jy_j  \prod_{j=1}^m \al_j\be_j e^{-\frac{1}{2}(\al_j^2+\be_j^2)} \prod_{k=1}^{n-m} \frac{x_k^{2(k-1)}}{(2k-1)!} \\
\times\det\begin{pmatrix}1 & \cdots & 1 & 1 & \cdots & 1 \\
y_1^2  & \cdots & y_{n-m}^2 & \be_m^2 & \cdots & \be_1^2 \\
\vdots & \ddots & \vdots &\vdots & \ddots & \vdots  \\
y_1^{2(n-m-1)} & \cdots & y_{n-m}^{2(n-m-1)} & \be_m^{2(n-m-1)} & \cdots & \be_1^{2(n-m-1)} \\
0  & \cdots & 0 & \frac{\sinh(\al_m\be_m)}{\al_m\be_m}-\sum_{\ell=0}^{n-m-1} \frac{(\al_m\be_m)^{2\ell}}{(2\ell+1)!} & \dots & \frac{\sinh(\al_m\be_1)}{\al_m\be_1}-\sum_{\ell=0}^{n-m-1} \frac{(\al_m\be_1)^{2\ell}}{(2\ell+1)!} \\
\vdots  & \ddots &\vdots & \vdots & \ddots & \vdots \\
0  & \cdots & 0 & \frac{\sinh(\al_1\be_m)}{\al_1\be_m}-\sum_{\ell=0}^{n-m-1} \frac{(\al_1\be_m)^{2\ell}}{(2\ell+1)!}  & \dots & \frac{\sinh(\al_1\be_1)}{\al_1\be_1}-\sum_{\ell=0}^{n-m-1} \frac{(\al_1\be_1)^{2\ell}}{(2\ell+1)!} \\
\end{pmatrix} \\
\qquad \qquad \times  \left(1+O(|{\vec {x}_{n-m}}|^2)+O(|{\vec {y}_{n-m}}|^2)\right).
\end{multline}
The above determinant is block upper-triangular and is the product of the two diagonal blocks, one of which is a Vandermonde determinant:
\begin{equation}
\begin{aligned}
q_{\abs}( {\vec y} | {\vec x})&=\left(\frac{2}{\pi}\right)^{n/2}\prod_{j=1}^{n-m}x_jy_j  \prod_{j=1}^m \al_j\be_j e^{-\frac{1}{2}(\al_j^2+\be_j^2)} \prod_{k=1}^{n-m} \frac{x_k^{2(k-1)}}{(2k-1)!}\Delta({\vec y}_{n-m}^2) \\ 
&\quad \times \det\left[\frac{\sinh(\al_{m-j+1}\be_{m-k+1})}{\al_{m-j+1}\be_{m-k+1}} -\sum_{\ell=0}^{n-m-1} \frac{(\al_{m-j+1}\be_{m-k+1})^{2\ell}}{(2\ell+1)!}\right]_{j,k=1}^m\left(1+O(|{\vec x}_{n-m}|^2)+O(|{\vec y}_{n-m}|^2)\right),
\end{aligned}
\end{equation}
where 
\[
\Delta({\vec y}_{n-m}^2) = \prod_{1\le j< k \le n-m}(y_k^2-y_j^2).
\]

Now consider $q_{\abs,\abs}^M({\vec y} | {\vec x})$. It is given as
 \begin{equation}
q_{\abs,\abs}^M({\vec y} | {\vec x})=\left(\frac{2}{M}\right)^n \det \left[\sum_{h=1}^\infty e^{-\frac{\pi^2h^2}{2M^2}}\sin\left(\frac{h \pi x_j}{M}\right)\sin\left(\frac{h \pi y_k}{M}\right)\right]_{j,k=1}^n. \\
\end{equation}
Taking $x_1, x_2, \dots, x_{n-m}\to 0$, using Taylor expansion and row operations, we find
\begin{multline}
q_{\abs,\abs}^M({\vec y} | {\vec x})=(-1)^{(n-m-1)(n-m)/2}\left(\frac{2}{M}\right)^n \left(\frac{\pi}{M}\right)^{(n-m)^2} \prod_{k=1}^{n-m}\frac{x_k^{2(k-1)}}{(2k-1)!}\prod_{k=1}^n x_ky_k \\
 \times\det \begin{pmatrix}
\nu_0(y_1)  & \nu_0(y_2) & \cdots  & \nu_0(y_n) \\
\nu_1(y_1)  & \nu_1(y_2) & \cdots  & \nu_1(y_n) \\
\vdots & \vdots & \ddots & \cdots \\
\nu_{n-m-1}(y_1)  & \nu_{n-m-1}(y_2) & \cdots  & \nu_{n-m-1}(y_n) \\
s( x_{n-m+1},y_1) & s(x_{n-m+1},y_2) & \cdots & s( x_{n-m+1},y_n) \\
\vdots & \vdots & \ddots & \cdots \\
s(x_{n}, y_1) & s(x_{n},y_2) & \cdots & s( x_{n},y_n)
\end{pmatrix} 
\left(1+O(|{\vec x}_{n-m}|^2)\right),
\end{multline}
where $\nu_k(\al)$ is the $k$th moment of the measure $\nu(\al)$ defined in \eqref{def:nu_NIBE}, 
and the constants $s(\al, \be)$ are as defined in \eqref{def:s_NIBE}.
Now taking $y_1,y_2,\dots, y_{n-m}\to 0$, and $y_n=\be_1$, $y_{n-1}=\be_2, \cdots, y_{n-m+1}=\be_m$ as well as $x_n=\al_1$, $x_{n-1}=\al_2, \cdots, x_{n-m+1}=\al_m$   we find	
 \begin{multline}\label{eq:qh_abs_abs}
q_{\abs,\abs}^M({\vec y} | {\vec x})= \left(\frac{2}{M}\right)^n \left(\frac{\pi}{M}\right)^{2(n-m)^2} \prod_{k=1}^{n-m}\frac{x_k^{2(k-1)}y_k^{2(k-1)}}{((2k-1)!)^2}\prod_{k=1}^{n-m} x_ky_k\prod_{k=1}^m \al_k\be_k \\
\det \begin{pmatrix}  \mu_0 & \mu_1 & \cdots & \mu_{n-m-1} & \nu_0(\be_m) & \cdots & \nu_0(\be_1) \\
\mu_1 & \mu_2 & \cdots & \mu_{n-m} & \nu_1(\be_m) & \cdots & \nu_1(\be_1) \\
\vdots & \vdots & \ddots & \vdots & \vdots & \ddots & \cdots \\
\mu_{n-m-1} & \mu_{n-m} & \cdots & \mu_{2(n-m-1)} & \nu_{n-m-1}(\be_m) & \cdots & \nu_{n-m-1}(\be_1) \\
\nu_0(\al_m) & \nu_1(\al_m) & \cdots & \nu_{n-m-1}(\al_m) & s(\al_{m},\be_m) & \cdots & s(\al_m,\be_m) \\
\vdots & \vdots & \ddots & \vdots & \vdots & \ddots & \cdots \\
\nu_0(\al_1) & \nu_1(\al_1) & \cdots & \nu_{n-m-1}(\al_1) & s(\al_{1},\be_m) & \cdots & s(\al_{1},\be_1)
\end{pmatrix} \\
 \times\left(1+O(|{\vec x}_{n-m}|^2)+O(|{\vec y}_{n-m}|^2)\right) 
\end{multline}
where $\mu_k$ is the $k$th moment of the measure $\mu$ defined in \eqref{def:mu_NIBE}. 
 The ratio of \eqref{sw7a} and \eqref{eq:qh_abs_abs}, 
taking $y_1, \dots, y_{n-m}, x_{1}, x_{2}, \dots x_{n-m} \to 0$ then
gives \eqref{eq:framed_M_dist}. 

The formula for $P_m^{\vec 0, \vec\be}( \mathcal{M}_n < M ) $ in \eqref{eq:bordered_M_dist} then obtained from \eqref{eq:framed_M_dist} in a similar way by taking $\al_1, \dots,\al_m \to 0$. We omit the details of this limit.

\subsection{Proof of Proposition \ref{prop:NIBB_bordered_framed}}

We start with the distribution function for $\mathscr{W}_n$ when all starting and ending points are distinct.  This formula was given in \cite{Baik-Liu14}, and we present it here as a lemma.

\begin{lemma}[Equation (37) from \cite{Baik-Liu14}]
For $n$ non-intersecting Browninan bridges $X_1(t)<X_2(t)<\cdots<X_n(t)$ on the time interval $[0,1]$ with distinct starting points $X_j(0) = x_j$ with $x_1<x_2<\dots < x_n$ and distinct ending points $X_j(1) = y_j$ with $y_1 < \cdots < y_n$, the width $\mathscr{W}_n := \ds \sup_{0\le t \le 1}\left(X_n(t) - X_1(t)\right)$ is given by the formula
\begin{equation}\label{eq:Wn_BB}
\mathbb{P}(\mathscr{W}_n < M) = \frac{ \int_0^{1} \det[g(x_j-y_k, \tau)]_{j,k=1}^n\,d\tau}{\det[p(x_j-y_k)]_{j,k=1}^n},
\end{equation}
where 
\[
p(x) = \frac{1}{\sqrt{2\pi}} e^{-x^2/2}, \quad g(x,\tau) = \sum_{h\in \Z} p(x+hM) e^{ 2\pi \ii h\tau}.
\]
\end{lemma}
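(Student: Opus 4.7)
The plan is to prove the lemma by combining the Karlin--McGregor formula for the denominator with a reflection/unfolding argument for the numerator.

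First, by the classical Karlin--McGregor theorem, $\det[p(x_j - y_k)]_{j,k=1}^n$ is precisely the density (on $\R^n$) of $n$ independent Brownian motions started at $x_1 < \cdots < x_n$, ending at $y_1 < \cdots < y_n$ after time $1$, and remaining nonintersecting on $(0,1)$. Consequently the lemma reduces to identifying the numerator $\int_0^1 \det[g(x_j - y_k, \tau)]_{j,k=1}^n \,\dd\tau$ with the density of the same ensemble further restricted to the event $\{W_n < M\}$.

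For the numerator, I would expand $g(x, \tau) = \sum_{h \in \Z} p(x + hM)\, e^{2\pi \ii h\tau}$ inside the determinant by multilinearity and then integrate in $\tau$. Orthogonality of the characters $e^{2\pi \ii h \tau}$ on $[0,1]$ forces $h_1 + \cdots + h_n = 0$, producing
\begin{equation}\label{eq:plan_winding_sum}
\int_0^1 \det[g(x_j - y_k, \tau)]_{j,k=1}^n \,\dd\tau = \sum_{\substack{\vec h \in \Z^n \\ h_1 + \cdots + h_n = 0}} \det\bigl[p(x_j - y_k + h_j M)\bigr]_{j,k=1}^n.
\end{equation}
By Karlin--McGregor again, each summand is the signed density of $n$ Brownian bridges from the shifted starting points $\tilde x_j := x_j + h_j M$ to the endpoints $y_1,\ldots,y_n$, with nonintersection enforced in the natural left-to-right order on $\R$.

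The crux is then to identify the right-hand side of \eqref{eq:plan_winding_sum} with the density of nonintersecting bridges from $\vec x$ to $\vec y$ satisfying $W_n < M$. The geometric picture is that the projection $\pi: \R \to \R/M\Z$ sets up a bijection between (i) nonintersecting Brownian bridges on $\R$ from $\vec x$ to $\vec y$ with $W_n < M$, and (ii) nonintersecting Brownian bridges on the cylinder $\R/M\Z \times [0,1]$ from $\pi(\vec x)$ to $\pi(\vec y)$ whose lift to $\R$ has total winding zero. Parametrizing such cylinder configurations by their lifts produces exactly the winding vectors $\vec h$ with $\sum_j h_j = 0$ appearing in \eqref{eq:plan_winding_sum}, and the Karlin--McGregor signs are precisely what is needed to cancel all lifted configurations that fail to be nonintersecting on $\R$; the cancellation is realized by a first-crossing reflection at the first time $t^\ast$ where $X_n(t^\ast) - X_1(t^\ast) = M$, swapping the extreme paths after $t^\ast$ and adjusting the winding vector accordingly. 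The main technical obstacle is to verify this sign-reversing involution carefully for $n \ge 3$, where the cyclic reflection can disrupt the ordering of intermediate particles; one can handle this either by induction on the number of violation times or through a Lindstr\"om--Gessel--Viennot argument on trajectories lifted to the universal cover. Dividing \eqref{eq:plan_winding_sum} by the Karlin--McGregor denominator then yields the stated formula.
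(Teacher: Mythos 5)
You are attempting something the paper itself does not do: the paper imports this lemma verbatim as Equation (37) of \cite{Baik-Liu14} and gives no proof, so there is no internal argument to compare against — only the question of whether your argument stands on its own. Two pieces of it do: the denominator is indeed the Karlin--McGregor density, and your expansion of the numerator by multilinearity plus orthogonality of $e^{2\pi \ii h\tau}$ on $[0,1]$, yielding the sum over winding vectors $\vec h\in\Z^n$ with $h_1+\cdots+h_n=0$ of $\det[p(x_j-y_k+h_jM)]$, is correct. Even your cylinder bijection is true as stated (though you do not verify it): since noncollision on the circle forces the lifted configuration to stay in the region $\{u_1<\cdots<u_n<u_1+M\}$, the endpoint vector has diameter less than $M$, and zero total winding then forces every individual winding to vanish, so the lift ends exactly at $\vec y$.

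The genuine gap is the cancellation step, which is the heart of the lemma and which you explicitly leave unproved. What is needed is precisely the reflection principle for the affine Weyl group of type $\widetilde A_{n-1}$ acting on the alcove $\{u_1<\cdots<u_n<u_1+M\}$ (equivalently, the Karlin--McGregor formula for noncolliding Brownian motion on a circle of circumference $M$, as in Hobson--Werner, Grabiner, or Liechty--Wang): the signed sum over $S_n\ltimes\{h\in\Z^n:\sum_j h_j=0\}$ of shifted Gaussian densities equals the density of paths confined to the alcove, i.e.\ the event $\{W_n<M\}$ with the prescribed endpoints. The involution you sketch — reflecting across the single wall $u_n=u_1+M$ at the first width violation and swapping the two extreme paths — is not by itself a sign-reversing involution on the objects you are summing: after the swap the configuration can collide with intermediate paths, so it leaves the set of Karlin--McGregor nonintersecting configurations indexed by winding vectors; this is exactly the $n\ge 3$ obstruction you name and then defer to an unexecuted ``induction on violation times'' or LGV argument. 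The correct pairing must treat all walls uniformly: expand every determinant, view the numerator as a signed sum over the full affine group, and reflect each trajectory at the first time it touches any hyperplane $u_i-u_j\in M\Z$ across that hyperplane; only the identity element survives, because any path from $\vec x$ in the open alcove to a nontrivially transformed endpoint must cross a wall. Until this cancellation is carried out (or replaced by a citation to the alcove/circle reflection principle), the proof is incomplete at its central step.
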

The expression \eqref{eq:Wn_BB} has a well defined limit as $x_j\to 0$ and $y_j\to 0$ for $j=m_1+1, \dots, n-m_2$. Consider first the denominator $\det[p(x_j-y_k)]_{j,k=1}^n$. Using the formula $p(x) = \frac{1}{2\pi} \int_\R e^{-y^2/2+\ii xy}\, \dd y$, we can use the Maclaurin expansion of $p(x_j-y_k)$ and row operations to take the limit
\begin{multline}
\lim_{x_{m_1+1},x_{m_1+2},\dots,x_{n-m_2}\to 0}\frac{\det[p(x_j-y_k)]_{j,k=1}^n}{\ds\prod_{j=1}^{n-m_1-m_2} \frac{(x_{m_1+j}y_{m_1+j})^{j-1}}{((j-1)!)^2}} = \\
 \frac{1}{{\ds\prod_{j=1}^{n-m_1-m_2} \frac{(y_j)^{j-1}}{(j-1)!}}}\det\begin{bmatrix} 
\begin{pmatrix}
p(x_j-y_k)
\end{pmatrix}_{\substack{j=1\dots,m_1 \\ k=1\dots,n}} \\
\begin{pmatrix}
\frac{1}{2\pi}\int_\R e^{-y^2/2-\ii y_k}(\ii y)^{j-m_1-1}\dd y
\end{pmatrix}_{\substack{j=m_1+1\dots,n-m_1-m_2 \\ k=1\dots,n}} \\
\begin{pmatrix}
p(x_j-y_k)
\end{pmatrix}_{\substack{j=n-m_1-m_2+1\dots,n \\ k=1\dots,n}} \\
\end{bmatrix},
\end{multline}
where the limit is taken sequentially, first $x_{m_1+1}\to 0$, then $x_{m_1+2}\to 0$, etc. Now taking the limit as $y_{m_1+1},\dots, y_{n-m_2}\to 0$ (again sequentially), using Maclaurin expansion and column operations we find
\begin{multline}\label{eq:px_limit}
\lim_{\substack{x_{m_1+1},x_{m_1+2},\dots,x_{n-m_2}\to 0 \\ y_{m_1+1},y_{m_1+2},\dots,y_{n-m_2}\to 0}}\frac{\det[p(x_j-y_k)]_{j,k=1}^n}{\ds\prod_{j=1}^{n-m_1-m_2} \frac{(x_{m_1+j}y_{m_1+j}j)^{j-1}}{((j-1)!)^2}} = \\
\tiny{\det
  \begin{bmatrix}
  \begin{pmatrix} p(x_j-y_k) \end{pmatrix}_{j,k=1}^{m_1} & \begin{pmatrix} \frac{1}{2\pi}\int_\R e^{-y^2/2+\ii x_k}(-\ii y)^{k-m_1-1}\dd y\end{pmatrix}_{\substack{j=1,\dots,m_1 \\ k=m_1+1\dots,n-m_2}} &  \begin{pmatrix} p(x_j-y_k) \end{pmatrix}_{\substack{j=1,\dots,m_1 \\ k=n-m_2+1,\dots, n}} \\
\begin{pmatrix} \frac{1}{2\pi}\int_\R e^{-y^2/2-\ii y_k}(\ii y)^{j-m_1-1}\dd y\end{pmatrix}_{\substack{j= m_1+1,\dots,n-m_2 \\ k=1,\dots, m_1}} & \begin{pmatrix}\frac{1}{2\pi}  \int_\R e^{-y^2/2}(\ii y)^{j-m_1-1}(-\ii y)^{k-m_1-1}\end{pmatrix}_{j,k=m_1+1}^{n-m} & \begin{pmatrix} \frac{1}{2\pi}\int_\R e^{-y^2/2-\ii y_k}(\ii y)^{j-m_1-1}\dd y\end{pmatrix}_{\substack{j= m_1+1,\dots,n-m_2 \\ k=n-m_2+1,\dots, n}} \\
 \begin{pmatrix} p(x_j-y_k) \end{pmatrix}_{\substack{j=n-m_2+1,\dots,n \\ k = 1,\dots, m_1}} & \begin{pmatrix} \frac{1}{2\pi}\int_\R e^{-y^2/2+\ii x_k}(-\ii y)^{k-m_1-1}\dd y\end{pmatrix}_{\substack{j=n-m_2+1,\dots,n \\ k = m_1+1,\dots, n-m_2}}&  \begin{pmatrix} p(x_j-y_k) \end{pmatrix}_{\substack{j=n-m_2+1,\dots,n \\ k = n-m_2+1,\dots, n}}
\end{bmatrix}} \\
=\tiny{\det
  \begin{bmatrix}
  \begin{pmatrix} p(x_j-y_k) \end{pmatrix}_{j,k=1}^{m_1} & \begin{pmatrix} \frac{1}{2\pi}\int_\R e^{-y^2/2+\ii x_k}y^{k-m_1-1}\dd y\end{pmatrix}_{\substack{j=1,\dots,m_1 \\ k=m_1+1\dots,n-m_2}} &  \begin{pmatrix} p(x_j-y_k) \end{pmatrix}_{\substack{j=1,\dots,m_1 \\ k=n-m+1,\dots, n}} \\
\begin{pmatrix} \frac{1}{2\pi}\int_\R e^{-y^2/2-\ii y_k}y^{j-m_1-1}\dd y\end{pmatrix}_{\substack{j= m_1+1,\dots,n-m \\ k=1,\dots, m_1}} & \begin{pmatrix}\frac{1}{2\pi}  \int_\R e^{-y^2/2} y^{j+k-2(m_1+1)} \end{pmatrix}_{j,k=m_1+1}^{n-m} & \begin{pmatrix} \frac{1}{2\pi}\int_\R e^{-y^2/2-\ii y_k}y^{j-m_1-1}\dd y\end{pmatrix}_{\substack{j= m_1+1,\dots,n-m \\ k=n-m_2+1,\dots, n}} \\
 \begin{pmatrix} p(x_j-y_k) \end{pmatrix}_{\substack{j=n-m_2+1,\dots,n \\ k = 1,\dots, m_1}} & \begin{pmatrix} \frac{1}{2\pi}\int_\R e^{-y^2/2+\ii x_k} y^{k-m_1-1}\dd y\end{pmatrix}_{\substack{j=n-m_2+1,\dots,n \\ k = m_1+1,\dots, n-m_2}}&  \begin{pmatrix} p(x_j-y_k) \end{pmatrix}_{\substack{j=n-m_2+1,\dots,n \\ k = n-m_2+1,\dots, n}} 
\end{bmatrix}}. \\
\end{multline}
The above determinant is a framed Hankel determinant. Factoring out $(2\pi)^{-n}$ and writing $x_j = \al_j$ and $y_j=\be_j$ for $j=1,\dots, m_1$, as well as $x_{n-m_2+j} = \al_{m_1+j}$ and $y_{n-m_2+j} = \be_{m_1+j}$, it becomes
\[
\left(\frac{1}{2\pi}\right)^nH_n^F[\mu;\mu_{\be_{1}}\mu_{\be_{2}},\dots, \mu_{\be_{m}}; \mu_{(-\al_{1})},\dots,\mu_{(-\al_{m})}; A]
\]
after simple row and column operations, where $\mu$ and $\mu_\be$ are defined in \eqref{def:mu_NIBB} and \eqref{def:mu_beta_NIBB},  the matrix $A$ is defined in \eqref{eq:const_matrix_NIBB}.

Now consider $\det[g(\al_j-\be_k, \theta)]_{j,k=1}^n$. It is helpful to use the Poisson resummation formula to write  $g(x,\theta)$ as 
\[
g(x,\theta) = \frac{1}{M} \sum_{h\in \Z} e^{-\frac{1}{2}(\frac{2\pi}{M} (h-\tau))^2+\frac{2\pi \ii x}{M}(h-\tau)}.
\]
Furthermore, writing $y = \frac{2\pi }{M}(h-\tau)$, the above formula becomes (recall the discrete set $ \mathcal{D}_{M,\tau}$ defined in \eqref{def:discrete_set})
\[
g(x,\theta) = \frac{1}{M} \sum_{y\in \mathcal{D}_{M,\tau}} e^{-y^2/2+\ii xy},
\]
which has a very similar form to $p(x) =  \frac{1}{2\pi} \int_\R e^{-y^2/2+\ii xy}\, \dd y$. Similar to \eqref{eq:px_limit} then, we find
\begin{multline}\label{eq:gx_limit}
\lim_{\substack{x_{m_1+1},x_{m_1+2},\dots,x_{n-m_2}\to 0 \\ y_{m_1+1},y_{m_1+2},\dots,y_{n-m_2}\to 0}}\frac{\det[p(x_j-y_k)]_{j,k=1}^n}{\ds\prod_{j=1}^{n-m_1-m_2} \frac{(x_{m_1+j}y_{m_1+j}j)^{j-1}}{((j-1)!)^2}} = \\
 = \left(\frac{1}{M}\right)^n H_n^F[\mu;\mu_{\be_{1}}^{M,\tau}\mu_{\be_{2}}^{M,\tau},\dots, \mu_{\be_{m}}^{M,\tau}; \mu_{(-\al_{1})}^{M,\tau},\dots,\mu_{(-\al_{m})}^{M,\tau}; A^{M,\tau}],
\end{multline}
after writing $x_j = \al_j$ and $y_j=\be_j$ for $j=1,\dots, m_1$, as well as $x_{n-m_2+j} = \al_{m_1+j}$ and $y_{n-m_2+j} = \be_{m_1+j}$, 
where $\mu^{M,\tau}$ and $\mu_\be^{M,\tau}$ are defined in \eqref{def:mu_NIBB} and \eqref{def:mu_beta_NIBB},  the matrix $A^{M,\tau}$ is defined in \eqref{eq:const_matrix_NIBB}.

Combining \eqref{eq:Wn_BB}, \eqref{eq:px_limit}, and \eqref{eq:gx_limit}, we obtain \eqref{eq:framed_W_dist}. The proof of \eqref{eq:bordered_W_dist} is similar and we omit the details. This completes the proof of Proposition \ref{prop:NIBB_bordered_framed}.

\section{An illustration of asymptotics: continuous time non-intersecting paths}\label{Sec-Asymptotics}
Each one of the applications discussed in sections \ref{sec:NIBE}, \ref{section Nonintersecting paths}, and \ref{sec 6-vertex} require its own careful treatment in a separate work. However, in this section we showcase the asymptotic analysis associated with one of the aforementioned applications, that is finding the transition probability of the non-intersecting paths modeled by \eqref{P_T}, with prescribed starting and ending points. To this end, for certain choices of starting and ending points, we use some ideas from \cite{BEGIL} and \cite{G23} to illustrate some calculations.

The asymptotic behavior of Toeplitz determinants with respect to smooth functions can be described by the Szeg\H{o}--Widom theorem \cite{WIDOMBlock,Sz,BS}, which is formulated as
\begin{equation}\label{Szego Theorem}
	D_{n}[\phi] \sim G[\phi]^{n} E[\phi], \qquad n\to\infty,
\end{equation}
where the terms $G[\phi]$ and $E[\phi]$ are defined by:
\begin{equation}\label{G and E}
	G[\phi] = \exp \left([\log \phi]_{0}\right) \qandq E[\phi] = \exp \left( \sum_{k \geq 1} k[\log \phi ]_{k}[\log \phi]_{-k} \right).
\end{equation}
This theorem holds true when the function $\phi$ is suitably smooth, does not vanish on the unit circle, and possesses a winding number of zero. We refer to \cite{DIK1} for a comprehensive survey of the Szegő theorem, including an intriguing account of its historical developments.

 Let us consider the Toeplitz determinant generated by the symbol \begin{equation}\label{P_T1}
	P_X(z;T)  = e^{\frac{T}{2}(z+1/z-2)},
\end{equation}
first introduced in \eqref{P_T}. This is a Szeg{\H o}-type symbol, and thus the Szeg{\H o} limit theorem provides us with the large-size asymptotics. Indeed,
\begin{equation}\label{SSLT1}
	D_n[e^{\frac{T}{2}(z+1/z-2)}] = e^{-nT}D_n[e^{\frac{T}{2}(z+1/z)}] = e^{-nT} G[\phi]^nE[\phi] \left(1+O(e^{-cn})\right), \qquad \mbox{for any} \quad c>0, 
\end{equation}
where
\begin{equation}
	G[\phi] = [\log \phi]_0 = \int_{\T} \log(\phi(\tau;t)) \frac{\dd \tau}{2 \pi \ii \tau},
\end{equation}
\begin{equation}\label{E}
	E[\phi] = \sum^{\infty}_{k=0} k [\log \phi]_k [\log \phi]_{-k},
\end{equation}
and
\begin{equation}\label{phi mother}
	\phi \equiv \phi(z;T) := e^{\frac{T}{2}(z+1/z)}.
\end{equation}

\begin{remark} \normalfont
	The error term in \eqref{SSLT1} is $O(e^{-cn})$ for \textit{any} $c>0$, because  $e^{-c}$ is the radius of the contour $\Ga_0$ shown in Figure \ref{S_contour} and due to the analytic properties of $\phi$ this radius can be made as small as desired.
\end{remark}

The \textit{Szeg{\H o} function} associated with $\phi$ (see \eqref{47} and \eqref{45} in the Appendix) can be explicitly calculated as
\begin{equation}\label{al* non-int application}
	\al(z) = \begin{cases}
		e^{\frac{Tz}{2}} & |z|<1, \\
		e^{-\frac{T}{2z}} & |z|>1.
	\end{cases}
\end{equation}
Noticing that $G[\phi] = \al(0)$, we see that 
\begin{equation}\label{G non-int application}
	G[\phi]=1   
\end{equation}
in \eqref{SSLT1}. Due to the simplicity of the symbol, it is also straightforward to arrive at
\begin{equation}\label{E non-int application}
	E[\phi] = e^{\frac{T^2}{4}},
\end{equation}
via \eqref{E}.
So rewriting \eqref{SSLT1} we get
\begin{equation}\label{SSLT2}
	D_n[e^{\frac{T}{2}(z+1/z-2)}] = e^{\frac{T^2}{4}} e^{-nT}  \left(1+O(e^{-cn})\right), \qquad \mbox{for any} \quad c>0.
\end{equation}
\subsection{Top two paths with non-consecutive ending points. Proof of Theorem \ref{thm asymp bordered}} Consider $n$ continuous non-intersecting paths with starting points $x_1,\cdots,x_n$ and ending points $y_1, \cdots, y_n$, where $x_j = j$, $j=1,\cdots,n$, $y_k=k$, for $k=1,\cdots,n-2$, and $
n-1< y_{n-1} < y_n,
$ are arbitrary. For this calculation we assume
\begin{itemize}
	\item $T$ is finite, and
	\item the gaps $y_n-n$ and $y_{n-1}-n$ are finite as well.
\end{itemize}
As described in \S \ref{Sec Discrete and continuous time random walks}, the transition probability of the non-intersecting paths modeled by \eqref{P_T}, with the above starting and ending points,  is given by the two-bordered Toeplitz determinant:
\begin{equation}\label{two bordered transition probability}
	e^{-nT}D^B_n[\phi;\boldsymbol{\psi}_2],
\end{equation}
where
\begin{equation}
	\psi_1(z;n) =  \phi(z) z^{-(y_{n-1}-n)}    \qandq \psi_2(z;n) = \phi(z) z^{-(y_{n}-n)}. 
\end{equation}
Let us denote $$D^B_n[\phi;\boldsymbol{\psi}_2] \equiv \mathscr{D},$$  
and consider the Dodgson Condensation identity
\begin{equation}\label{DODGSON3}
	\mathscr{D} \cdot \mathscr{D}\left\lbrace \begin{matrix} 0 & n-1 \\  n-2& n-1 \end{matrix} \right\rbrace = \mathscr{D}\left\lbrace \begin{matrix} 0  \\  n-2 \end{matrix} \right\rbrace \cdot \mathscr{D}\left\lbrace \begin{matrix} n-1  \\  n-1 \end{matrix} \right\rbrace - \mathscr{D}\left\lbrace \begin{matrix} 0  \\  n-1 \end{matrix} \right\rbrace \cdot \mathscr{D}\left\lbrace \begin{matrix} n-1  \\  n-2 \end{matrix} \right\rbrace,
\end{equation} 
which can be written as
\begin{equation}\label{Dodgson 2-borderedAAA}		D^B_n[\phi;\boldsymbol{\psi}_2(z;n)] = D^B_{n-1}[\phi;z^{-1}\psi_1(z;n)] \frac{D^B_{n-1}[z\phi;\psi_2(z;n)]}{D_{n-2}[z \phi]} - D^B_{n-1}[\phi;z^{-1}\psi_2(z;n)] \frac{D^B_{n-1}[z\phi;\psi_1(z;n)]}{D_{n-2}[z \phi]},
\end{equation}
or
\begin{equation}\label{Dodgson 2-borderedAA}	
	\begin{split}
		D^B_{n+2}[\phi;\boldsymbol{\psi}_2(z;n+2)] & = D^B_{n+1}[\phi;z^{-1}\psi_1(z;n+2)] \frac{D^B_{n+1}[z\phi;\psi_2(z;n+2)]}{D_{n}[z \phi]}  \\ & - D^B_{n+1}[\phi;z^{-1}\psi_2(z;n+2)] \frac{D^B_{n+1}[z\phi;\psi_1(z;n+2)]}{D_{n}[z \phi]}. 
	\end{split}
\end{equation}
As illustrated in \cite{G23}, the two bordered Toeplitz determinants \[ D^B_{n+1}[\phi;z^{-1}\psi_1] \qandq  D^B_{n+1}[\phi;z^{-1}\psi_2] \]
can be found from 

\begin{equation}
	D^B_{n+1}[\phi;z^{-\ell}\phi] = \frac{D_n[\phi]}{\ell!} \frac{\dd ^{\ell}}{\dd z^{\ell}}X_{12}(z;n) \Bigg|_{z=0},
\end{equation}
where $X_{12}(z;n)$ is the $12$-entry in the solution of the Riemann--Hilbert problem \hyperref[RH-X1]{\textbf{RH-X1}} - \hyperref[RH-X3]{\textbf{RH-X3}}.  Thus it can be shown that for a Szeg{\H o}-type symbol $\phi$ we have
\begin{equation}\label{cor 271}
	D^B_{n+1}[\phi;z^{-\ell}\phi] =  G[\phi]^{n} E[\phi] \left( \frac{\al^{(\ell)}(0)}{\ell!} + O(e^{-\mathfrak{c}n}) \right), \qasq n \to \infty,	\end{equation}
where $\al$ is given by \eqref{45}, and the constants $G[\phi]$ and $E[\phi]$ are given by \eqref{G and E} and  $\mathfrak{c}$ is some positive constant (see Corollary 2.7.1 of \cite{G23}) depending on the analytic properties of $\phi$.

In view of \eqref{al* non-int application}, \eqref{G non-int application}, \eqref{E non-int application}, and \eqref{cor 271} we can write
\begin{equation}
	\begin{split}
		D^B_{n+1}[ \phi;z^{-\ell} \phi]  =  G[\phi]^{n} E[\phi] \left( \frac{\al^{(\ell)}(0)}{\ell!} + O(e^{-\mathfrak{c}n}) \right) =   e^{\frac{T^2}{4}}  \left( \frac{1}{\ell !}\left(\frac{T}{2}\right)^{\ell} + O(e^{-\mathfrak{c}n}) \right).   		
	\end{split}
\end{equation}
Therefore
\begin{equation}\label{524}
	D^B_{n+1}[\phi;z^{-1}\psi_1(z;n+2)] = D^B_{n+1}[\phi; \phi(z) z^{-(y_{n+1}-n-1)}] = e^{\frac{T^2}{4}}   \left( \frac{\left(\frac{T}{2}\right)^{y_{n+1}-n-1}}{(y_{n+1}-n-1) !}  + O(e^{-\mathfrak{c}n}) \right),
\end{equation}
and similarly
\begin{equation}\label{525}
	D^B_{n+1}[\phi;z^{-1}\psi_2(z;n+2)] = D^B_{n+1}[\phi; \phi(z) z^{-(y_{n+2}-n-1)}] = e^{\frac{T^2}{4}}   \left( \frac{\left(\frac{T}{2}\right)^{y_{n+2}-n-1}}{(y_{n+2}-n-1) !}  + O(e^{-\mathfrak{c}n}) \right).
\end{equation}
We have just arrived at the following result (compare with Theorem \ref{thm asymp bordered}) when only the top end-point is not adjacent to the rest. 
\begin{theorem}\label{thm 7.2}

Consider $n+1$ independent continuous time random walks $X_1(t), \dots, X_{n+1}(t)$ which begin at locations $x_1<x_2<\cdots<x_{n+1}$ at time $t=0$. For a given $y_1<y_2<\dots<y_{n+1}$, let $\mathbb{P}_{n+1}(x_1,\dots, x_{n+1}; y_1,\dots,y_{n+1};T)$ be the probability that $X_1(t), \dots, X_{n+1}(t)$ satisfy $X_j(T)=y_j$ for $j=1,\dots,n+1$, and $X_1(t), \dots, X_{n+1}(t)$ do not intersect for $0\le t\le T$.  Suppose  that $x_j=j$ for $1\leq j\leq n+1$, $y_k=k$ for $1\leq k\leq n$, while   $y_{n+1}$ is arbitrary. If $T$ and $\be_1:=y_{n+1}-n$ are bounded as $n\to\infty$ , $\mathbb{P}_{n+1}(x_1,\dots, x_{n+1}; y_1,\dots,y_{n+1};T)$ satisfies as $n \to \infty$,
	\begin{equation}
\mathbb{P}_{n+1}(x_1,\dots, x_{n+1}; y_1,\dots,y_{n+1};T) = 			\frac{e^{-(n+1)T}\left(\frac{T}{2}\right)^{\be_{1}-1}e^{\frac{T^2}{4}}}{(\be_{1}-1)!} \left(1+O(e^{-cn})\right),
	\end{equation}
	where $c>0$ is any positive number.
\end{theorem}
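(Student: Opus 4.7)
The plan is to recognize $\mathbb{P}_{n+1}$ as $e^{-(n+1)T}$ times a single-bordered Toeplitz determinant, and then apply the asymptotic formula \eqref{cor 271} which is already available in the paper as the main analytic input.

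First I would use the Karlin--McGregor/Lindstr\"om--Gessel--Viennot formula in its continuous-time form \eqref{NIRM_phi_continuous} to write
\begin{equation*}
\mathbb{P}_{n+1}(x_1,\dots,x_{n+1};y_1,\dots,y_{n+1};T)=e^{-(n+1)T}\det\bigl(\phi_{y_k-x_j}\bigr)_{j,k=1}^{n+1},\qquad \phi(z)=e^{\frac{T}{2}(z+z^{-1})}.
\end{equation*}
Substituting $x_j=j$ for $1\le j\le n+1$ and $y_k=k$ for $1\le k\le n$, the first $n$ columns of this matrix form a Toeplitz block with entries $\phi_{k-j}$, while the last column has entries $\phi_{y_{n+1}-j}$ for $j=1,\dots,n+1$. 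Matching this with Definition \ref{def:bordered_Toeplitz} for size $n+1$ and $m=1$, one reads off the border symbol
\begin{equation*}
\psi(z)=\phi(z)\,z^{-(y_{n+1}-n-1)}=\phi(z)\,z^{-(\be_1-1)},
\end{equation*}
so that
\begin{equation*}
\mathbb{P}_{n+1}(x_1,\dots,x_{n+1};y_1,\dots,y_{n+1};T)=e^{-(n+1)T}\,D^B_{n+1}\bigl[\phi;\,z^{-(\be_1-1)}\phi\bigr].
\end{equation*}

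Next I would invoke the asymptotic formula \eqref{cor 271} with $\ell=\be_1-1$ (a nonnegative integer by hypothesis, and bounded as $n\to\infty$), which gives
\begin{equation*}
D^B_{n+1}\bigl[\phi;\,z^{-(\be_1-1)}\phi\bigr]=G[\phi]^{n}\,E[\phi]\left(\frac{\alpha^{(\be_1-1)}(0)}{(\be_1-1)!}+O(e^{-\mathfrak{c}n})\right),\qquad n\to\infty.
\end{equation*}
All three ingredients have already been computed explicitly for this symbol in the lead-up to the theorem: $G[\phi]=1$ by \eqref{G non-int application}, $E[\phi]=e^{T^2/4}$ by \eqref{E non-int application}, and the Szeg\H{o} function is $\alpha(z)=e^{Tz/2}$ for $|z|<1$ by \eqref{al* non-int application}, so that $\alpha^{(\be_1-1)}(0)/(\be_1-1)!=(T/2)^{\be_1-1}/(\be_1-1)!$. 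Multiplying by the prefactor $e^{-(n+1)T}$ yields exactly \eqref{Pn+2 asymp} with a single border, i.e.\ the stated asymptotics.

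There is no real obstacle: the only analytic content is \eqref{cor 271}, which in turn rests on the Deift--Zhou steepest descent analysis of the Riemann--Hilbert problem \hyperref[RH-X1]{\textbf{RH-X1}}--\hyperref[RH-X3]{\textbf{RH-X3}} for the BOPUC associated with the Szeg\H{o}-type symbol $\phi(z)=e^{\frac{T}{2}(z+z^{-1})}$, carried out in \cite{G23} and summarized in the appendix. The only points to double-check are (i) that $\phi$ indeed satisfies the three conditions to be Szeg\H{o}-type (smooth and nonzero on $\T$, zero winding number, and analytic in a neighborhood of $\T$---in fact in $\C\setminus\{0\}$), so that \eqref{cor 271} applies, and (ii) that the uniformity of the error $O(e^{-\mathfrak{c}n})$ is genuine: since $\phi$ is entire in $z\in\C\setminus\{0\}$, the contour $\Gamma_0$ in the steepest descent analysis can be taken arbitrarily close to $0$, which justifies replacing $\mathfrak{c}$ by any positive constant $c$ as stated in the theorem. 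With these verifications the claimed asymptotics follows in one line from the identification above.
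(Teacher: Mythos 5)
Your proposal is correct and matches the paper's own derivation: the paper likewise identifies $\mathbb{P}_{n+1}$ with $e^{-(n+1)T}D^B_{n+1}[\phi;\,z^{-(\be_1-1)}\phi]$ and then applies \eqref{cor 271} together with the explicit Szeg\H{o} data $G[\phi]=1$, $E[\phi]=e^{T^2/4}$, and $\al(z)=e^{Tz/2}$ from \eqref{al* non-int application}--\eqref{E non-int application}, exactly as you do (this is the computation culminating in \eqref{524}). Your remarks on the Szeg\H{o}-type hypotheses and on taking $\Gamma_0$ close to the origin to get any $c>0$ are consistent with the paper's treatment.
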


Some of the terms on the right hand side of \eqref{Dodgson 2-borderedAA} involve the symbol $z\phi$, which is not a Szeg{\H o}-type (due to non-zero winding number) nor a non-degenerate Fisher--Hartwig symbol (see e.g. \cite{DIK}). This implies that one needs to analyze a separate Riemann--Hilbert problem. We write $Z(z;n)$ to refer to the solution of the $X$-RHP when $\phi$ is replaced by $z\phi$.  More precisely, $Z(z;n)$ satisfies

\begin{itemize}
	\item  \textbf{RH-Z1}\label{RH-Z1} \qquad $Z(\cdot;n):\C\setminus \T \to \C^{2\times2}$ is analytic,
	\item \textbf{RH-Z2}\label{RH-Z2} \qquad The limits of $Z(\ze;n)$ as $\ze$ tends to $z \in \T $ from the inside and outside of the unit circle exist,  which are denoted by $Z_{\pm}(z;n)$ respectively. Moreover, the functions $z \mapsto Z_{\pm}(z)$ are continuous on $\T$ and are related by
	
	\begin{equation}
		Z_+(z;n)=Z_-(z;n)\begin{pmatrix}
			1 & z^{-n+1}\phi(z) \\
			0 & 1
		\end{pmatrix}, \qquad  z \in \T,
	\end{equation}
	
	\item \textbf{RH-Z3}\label{RH-Z3} \qquad  As $z \to \infty$
	
	\begin{equation}
		Z(z;n)=\big( I + O(z^{-1}) \big) z^{n \sigma_3}.    
	\end{equation}
\end{itemize}
The usual Deift--Zhou nonlinear steepest descent method would fail for the Z-RHP (at the level of constructing the so-called \textit{global parametrix}, see the Appendix). However, for all $n$, it is in fact possible to find algebraic relationships between the solutions $X(z;n)$ and $Z(z;n)$. Indeed, in \cite{G23} it was shown that one has the following two relationships between these solutions.  

\begin{theorem}\label{thm Z-RHP intro}\cite{G23}
	The solution $Z(z;n)$ to the Riemann--Hilbert problem  \hyperref[RH-Z1]{\textbf{RH-Z1}} through \hyperref[RH-Z3]{\textbf{RH-Z3}} can be expressed in terms of the data extracted from the solution $X(z;n)$ of the Riemann--Hilbert problem \hyperref[RH-X1]{\textbf{RH-X1}} through \hyperref[RH-X3]{\textbf{RH-X3}} as 
	\begin{equation}\label{ZX1}
		Z(z;n) =  \left[   \begin{pmatrix}
			\di \frac{\overset{\infty}{X}_{1,12}(n)X_{21}(0;n)}{X_{11}(0;n)} & -\overset{\infty}{X}_{1,12}(n) \\[12pt]
			\di	-\frac{X_{21}(0;n)}{X_{11}(0;n)} & 1
		\end{pmatrix} z^{-1} + \begin{pmatrix}
			1 & 0 \\
			0 & 0
		\end{pmatrix} \right] X(z;n) \begin{pmatrix}
			1 & 0 \\
			0 & z
		\end{pmatrix},
	\end{equation}
	where $\overset{\infty}{X}_{1,12}(n)$ is the $12$-entry of the matrix $\overset{\infty}{X}_{1}(n)$ in the asymptotic expansion \eqref{X asymp}.
\end{theorem}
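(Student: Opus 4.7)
The plan is to define $\widetilde Z(z;n)$ to be the right-hand side of \eqref{ZX1}, verify that it solves the Riemann--Hilbert problem \hyperref[RH-Z1]{\textbf{RH-Z1}}--\hyperref[RH-Z3]{\textbf{RH-Z3}}, and then invoke uniqueness to conclude $\widetilde Z\equiv Z$. It is convenient to write the ansatz in the factored form
\[
\widetilde Z(z;n) = E(z)\,X(z;n)\,M(z), \qquad E(z) = Az^{-1} + B, \qquad M(z) = \begin{pmatrix} 1 & 0 \\ 0 & z \end{pmatrix},
\]
with constant $2\times 2$ matrices $A$ and $B$ to be pinned down. The factors $E$ and $M$ are single-valued and analytic on a neighborhood of $\T$, so the jump of $\widetilde Z$ across the unit circle is inherited entirely from $X$, and a direct conjugation computation turns the $X$-jump off-diagonal entry $z^{-n}\phi(z)$ into $z^{-n+1}\phi(z)$, which is exactly the $Z$-jump in \hyperref[RH-Z2]{\textbf{RH-Z2}}.

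Next I will fix $B$ and the second column of $A$ from the asymptotic condition \hyperref[RH-Z3]{\textbf{RH-Z3}}. Using that $M(z)$ commutes with $z^{n\sigma_3}$, the condition $\widetilde Z(z;n)z^{-n\sigma_3} = I + O(z^{-1})$ as $z\to\infty$ becomes a matching problem for $(Az^{-1} + B)\bigl(I + \overset{\infty}{X}_1(n)z^{-1} + O(z^{-2})\bigr)M(z)$. The requirement that no positive power of $z$ survive in the expansion forces $B = \bigl(\begin{smallmatrix} 1 & 0 \\ 0 & 0 \end{smallmatrix}\bigr)$, and matching the $z^{0}$ coefficient of the second column then determines $A_{12} = -\overset{\infty}{X}_{1,12}(n)$ and $A_{22} = 1$. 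The first column of $A$ is not constrained by the behavior at infinity, because it only enters the unknown $z^{-1}$ coefficient of the expansion.

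The remaining two entries of $A$ are fixed by analyticity at the origin, which is the only part of \hyperref[RH-Z1]{\textbf{RH-Z1}} that is not automatic: $X(z;n)$ is analytic inside the unit disk, but $E(z)$ has a simple pole at $z=0$. A Taylor expansion of $\widetilde Z$ near $z=0$ shows that the residue of $\widetilde Z$ at the origin equals $A\,X(0;n)\bigl(\begin{smallmatrix} 1 & 0 \\ 0 & 0 \end{smallmatrix}\bigr)$, so the cancellation of this pole is equivalent to the single vector equation $A\bigl(\begin{smallmatrix} X_{11}(0;n) \\ X_{21}(0;n)\end{smallmatrix}\bigr) = 0$; solving it for $A_{11}$ and $A_{21}$ in terms of the already-fixed $A_{12}, A_{22}$ reproduces exactly the first column of the matrix displayed in \eqref{ZX1}. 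The standard Liouville argument applied to $Z\widetilde Z^{-1}$ then completes the proof. The main obstacle I anticipate is justifying the division by $X_{11}(0;n)$ that enters $A_{11}$ and $A_{21}$: via the orthogonal polynomial identification $X_{11}(z;n) = \kappa_n^{-1} Q_n(z)$ given in \eqref{Toeplitz-OP-solution}, this reduces to $Q_n(0)\neq 0$, which follows from nonvanishing of the relevant Toeplitz determinants and is automatic under the Szeg{\H o}-type hypotheses used in the paper.
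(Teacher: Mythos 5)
Your proposal is correct, and it is the standard (and surely the intended) argument; the paper itself imports Theorem \ref{thm Z-RHP intro} from \cite{G23} without reproducing a proof, so there is nothing to contrast it with here. All three constraints you impose do pin down the matrix in \eqref{ZX1} exactly as you describe: the conjugation by $\mathrm{diag}(1,z)$ converts $z^{-n}\phi$ into $z^{-n+1}\phi$, the matching at infinity (using that $\mathrm{diag}(1,z)$ commutes with $z^{n\sigma_3}$) forces $B=\bigl(\begin{smallmatrix}1&0\\0&0\end{smallmatrix}\bigr)$, $A_{12}=-\overset{\infty}{X}_{1,12}(n)$, $A_{22}=1$, and the residue condition $A\bigl(\begin{smallmatrix}X_{11}(0;n)\\X_{21}(0;n)\end{smallmatrix}\bigr)=0$ yields the first column. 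The only detail worth adding before invoking Liouville on $Z\widetilde Z^{-1}$ is that $\widetilde Z$ is invertible: since the resulting $A$ satisfies $\det A=A_{11}A_{22}-A_{12}A_{21}=0$, one gets $\det(Az^{-1}+B)=z^{-1}$ and hence $\det\widetilde Z\equiv 1$, which is what makes the uniqueness step go through. Your caveat about $X_{11}(0;n)\neq 0$ is well placed but is really a solvability hypothesis implicit in the statement of the formula rather than a gap in the argument.
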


\begin{theorem}\label{thmXZ intro}\cite{G23}
	The solution $Z(z;n)$ to the Riemann--Hilbert problem  \hyperref[RH-Z1]{\textbf{RH-Z1}} through \hyperref[RH-Z3]{\textbf{RH-Z3}} can be expressed in terms of the data extracted from the solution $X(z;n)$ of the Riemann--Hilbert problem \hyperref[RH-X1]{\textbf{RH-X1}} through \hyperref[RH-X3]{\textbf{RH-X3}} as
	\begin{equation}\label{Z in terms of X1}
		Z(z;n) = \begin{pmatrix}
			z + \overset{\infty}{X}_{1,22}(n-1) - \di \frac{\overset{\infty}{X}_{2,12}(n-1)}{\overset{\infty}{X}_{1,12}(n-1)} & -\overset{\infty}{X}_{1,12}(n-1) \\ \di
			\frac{1}{\overset{\infty}{X}_{1,12}(n-1)} & 0
		\end{pmatrix} X(z;n-1),
	\end{equation} where $\overset{\infty}{X}_{1,jk}(n)$ and $\overset{\infty}{X}_{2,jk}(n)$ are the $jk$-entries of the matrices $\overset{\infty}{X}_{1}(n)$ and $\overset{\infty}{X}_{2}(n)$ in the asymptotic expansion \eqref{X asymp}.
\end{theorem}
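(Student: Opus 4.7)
The key observation is that the jump condition \textbf{RH-Z2} for $Z(z;n)$ involves the factor $z^{-n+1}\phi(z) = z^{-(n-1)}\phi(z)$, which is exactly the jump factor of $X(z;n-1)$ appearing in \textbf{RH-X2} with $n$ replaced by $n-1$. This strongly suggests that the ratio
\begin{equation}
M(z) := Z(z;n) X(z;n-1)^{-1}
\end{equation}
should be continuous across $\T$. My plan is to pin down the constant form of $M$, and then read off the identity by asymptotic matching at $z=\infty$.

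First I would verify that $\det X(z;n-1) \equiv 1$ (the jump has unit determinant and the normalization at infinity forces the determinant to tend to $1$), so $X^{-1}$ exists. A straightforward computation using the jump relations for $Z$ and $X$ (with index $n-1$) gives $M_+(z) = M_-(z)$ on $\T$, hence $M$ has no jump. Since $X(z;n-1)$ is analytic in a neighborhood of $z=0$ and its determinant is $1$, the same holds for $X^{-1}$; together with the analyticity of $Z$ off $\T$, one concludes that $M$ is entire on $\C$.

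Next, I would determine the polynomial form of $M(z)$ by tracking growth at infinity. Using \textbf{RH-Z3} and \textbf{RH-X3}, and the fact that $z^{n\sigma_3} z^{-(n-1)\sigma_3} = z^{\sigma_3}$, a direct entrywise computation with the leading asymptotics $Z = (I + O(z^{-1}))z^{n\sigma_3}$ and $X^{-1}(z;n-1) = z^{-(n-1)\sigma_3}(I + O(z^{-1}))$ yields
\begin{equation}
M(z) = \begin{pmatrix} z + O(1) & O(1) \\ O(1) & O(z^{-1}) \end{pmatrix}, \qquad z \to \infty.
\end{equation}
Combined with entirety, Liouville's theorem forces
\begin{equation}
M(z) = \begin{pmatrix} z + c_{11} & c_{12} \\ c_{21} & 0 \end{pmatrix},
\end{equation}
for some constants $c_{11}, c_{12}, c_{21}$ depending on $n$. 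The condition $\det M(z) = \det Z \cdot \det X^{-1} = 1$ immediately gives $c_{12} c_{21} = -1$.

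The remaining step is to compute $c_{21}$ and $c_{11}$ by comparing higher-order terms in the asymptotic expansion of the identity $Z(z;n) = M(z) X(z;n-1)$. Writing $\overset{\infty}{X}_{k} \equiv \overset{\infty}{X}_{k}(n-1)$ and expanding
\begin{equation}
X_{12}(z;n-1) = \overset{\infty}{X}_{1,12}\, z^{-n} + \overset{\infty}{X}_{2,12}\, z^{-n-1} + O(z^{-n-2}),
\end{equation}
\begin{equation}
X_{22}(z;n-1) = z^{-n+1} + \overset{\infty}{X}_{1,22}\, z^{-n} + O(z^{-n-1}),
\end{equation}
the $(2,2)$ entry of $M X$ equals $c_{21} X_{12}(z;n-1)$, which must match $Z_{22}(z;n) = z^{-n} + O(z^{-n-1})$; the coefficient of $z^{-n}$ yields
\begin{equation}
c_{21} = \frac{1}{\overset{\infty}{X}_{1,12}}, \qquad \text{hence} \qquad c_{12} = -\overset{\infty}{X}_{1,12}.
\end{equation}
Similarly, the $(1,2)$ entry of $MX$ is $(z + c_{11}) X_{12}(z;n-1) + c_{12} X_{22}(z;n-1)$, and this must agree with $Z_{12}(z;n)$, whose leading behavior is $O(z^{-n-1})$. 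Forcing the $z^{-n}$ coefficient to vanish produces the linear equation
\begin{equation}
\overset{\infty}{X}_{2,12} + c_{11}\, \overset{\infty}{X}_{1,12} - \overset{\infty}{X}_{1,12}\, \overset{\infty}{X}_{1,22} = 0,
\end{equation}
which gives $c_{11} = \overset{\infty}{X}_{1,22} - \overset{\infty}{X}_{2,12}/\overset{\infty}{X}_{1,12}$, completing the proof.

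The only delicate point is verifying that $\overset{\infty}{X}_{1,12}(n-1) \neq 0$ so that the constants are well defined; this is known to hold generically (it corresponds to nonvanishing of the associated Toeplitz determinants $D_{n-1}[\phi]$), and one may impose this as a nondegeneracy hypothesis if needed. All other steps are routine asymptotic bookkeeping.
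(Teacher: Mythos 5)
Your proof is correct: since the jumps of $Z(z;n)$ and $X(z;n-1)$ coincide, the ratio $M(z)=Z(z;n)X(z;n-1)^{-1}$ (well defined because $\det X\equiv 1$) is entire, Liouville forces it to be linear of the form $\begin{pmatrix} z+c_{11} & c_{12}\\ c_{21} & 0\end{pmatrix}$, and your coefficient matching at orders $z^{-n+1}$ and $z^{-n}$ in the second column correctly pins down $c_{11},c_{12},c_{21}$, with the nondegeneracy $\overset{\infty}{X}_{1,12}(n-1)\neq 0$ appropriately flagged. The paper itself defers the proof to \cite{G23}, and your argument is essentially the same standard route used there, so nothing further is needed.
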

\begin{remark} \normalfont
	The compatibility of \eqref{ZX1} and \eqref{Z in terms of X1} yields the well known three-term recurrence relations for the system of bi-orthogonal polynomials on the unit circle, see Lemma 2.16 of \cite{G23}.
\end{remark}

Now let us focus on the other two terms, i.e. $\frac{D^B_{n+1}[z\phi;\psi_2]}{D_{n}[z \phi]}$ and $\frac{D^B_{n+1}[z\phi;\psi_1]}{D_{n}[z \phi]}$, on the right hand side of \eqref{Dodgson 2-borderedAA}. From Lemma 2.10 of \cite{G23} , in particular, we have
\begin{equation}\label{526}
	\frac{D^B_{n+1}[z\phi;z^{1-\ell}\phi]}{D_n[z\phi]} 	 = \frac{1}{\ell!} \frac{\dd ^{\ell}}{\dd z^{\ell}}Z_{12}(z;n) \Bigg|_{z=0}.
\end{equation}
Using \eqref{ZX1} we have
\begin{equation}\label{Z12}
	Z_{12}(z;n) = \left(\mathscr{B}(n)+z\right) X_{12}(z;n) -\overset{\infty}{X}_{1,12}(n) X_{22}(z;n),
\end{equation}
where 
\begin{equation}\label{Bn}
	\mathscr{B}(n) :=  \frac{\overset{\infty}{X}_{1,12}(n)X_{21}(0;n)}{X_{11}(0;n)}.
\end{equation}
Using \eqref{X in terms of R exact}, as $n \to \infty$, we have
\begin{equation}\label{BBB}
	\mathscr{B}(n) = - \frac{C_n[\phi]}{C_{n-1}[\phi]} \times \left(1 + O(\rho^{-2n})\right),
\end{equation} 	where
\begin{equation}\label{Cnphi}
	C_n[\phi] := \frac{1}{2 \pi \ii} \int_{\Ga_0} \tau^n \phi^{-1}(\tau) \al^2(\tau) \dd \tau \equiv -R_{1,12}(0;n+1) = O(\rho^{-n}),
\end{equation}
$\al$ is given by \eqref{45}, $\rho^{-1}$ is the radius of the circle $\Ga_0$ shown in Figure \ref{S_contour}, and $R_{1,12}$ is given by \eqref{R1}. Let us compute the asymptotics of $\mathcal{B}(n)$ via \eqref{BBB} and \eqref{Cnphi}. For $\phi$ given by \eqref{P_T1}, and using \eqref{al* non-int application} we have
\begin{equation}
	\begin{split}
		C_n[\phi] & = \frac{1}{2 \pi \ii} \int_{\Ga_0} \tau^n e^{-\frac{T}{2}(\tau+1/\tau)} e^{T \tau} \dd \tau =  \frac{1}{2 \pi \ii} \int_{\Ga_0} \tau^n e^{\frac{T}{2}(\tau-1/\tau)} \dd \tau \\
		& = \frac{1}{2 \pi \ii} \sum_{k=0}^{\infty} \left(\frac{T}{2}\right)^k \frac{1}{k!} \int_{\Ga_0} \tau^n (\tau-\tau^{-1})^{k} \dd \tau = \frac{1}{2 \pi \ii} \sum_{k=0}^{\infty} \left(\frac{T}{2}\right)^k \frac{1}{k!} \int_{\Ga_0} \tau^n \sum^{k}_{\ell=0} \binom{k}{\ell} \tau^{k-\ell}(-\tau^{-1})^{\ell}  \dd \tau \\ & =  \sum_{k=0}^{\infty} \left(\frac{T}{2}\right)^k \frac{1}{k!} \sum^{k}_{\ell=0} (-1)^{\ell} \binom{k}{\ell} \int_{\Ga_0}   \tau^{n+k-2\ell+1}  \frac{\dd \tau }{2\pi \ii \tau}.
	\end{split}
\end{equation}
Only the pairs $(k,\ell)$ for which
\begin{equation}\label{equality}
	n+k-2\ell+1=0,
\end{equation}
contribute. Notice that if $k \leq n$, no such pair exists as the above equality implies that $\ell>k$ . Moreover, if $k>n$ and $k=n+2j$ for $j \in \N$, the above equality does not yield an integer value for $\ell$, so such values of $k$ also do not contribute. Hence
\begin{equation} \begin{split}
		C_n[\phi] & =  \left(\frac{T}{2}\right)^{n+1} \frac{1}{(n+1)!} \sum^{n+1}_{\ell=0} (-1)^{\ell} \binom{n+1}{\ell} \int_{\Ga_0}   \tau^{2n-2\ell+2}  \frac{\dd \tau }{2\pi \ii \tau} \\ & +  \sum_{k=n+3}^{\infty} \left(\frac{T}{2}\right)^k \frac{1}{k!} \sum^{k}_{\ell=0} (-1)^{\ell} \binom{k}{\ell} \int_{\Ga_0}   \tau^{n+k-2\ell+1}  \frac{\dd \tau }{2\pi \ii \tau}. 
	\end{split}
\end{equation}
Let us focus on the first term on the right hand side of the above equality. Obviously, only $\ell=n+1$ contributes and we thus have 
\begin{equation}\label{Cn phi 1}
	\begin{split}
		C_n[\phi] & =  \left(\frac{T}{2}\right)^{n+1} \frac{(-1)^{n+1} }{(n+1)!}  +  \sum_{k=n+3}^{\infty} \left(\frac{T}{2}\right)^k \frac{1}{k!} \sum^{k}_{\ell=0} (-1)^{\ell} \binom{k}{\ell} \int_{\Ga_0}   \tau^{n+k-2\ell+1}  \frac{\dd \tau }{2\pi \ii \tau} \\ &
		=  \left(\frac{T}{2}\right)^{n+1} \frac{(-1)^{n+1} }{(n+1)!} \left(1 + O\left(\frac{1}{n}\right)\right),
	\end{split}
\end{equation}
and therefore
\begin{equation}\label{Cn phi asym}
	C_n[\phi] = -\frac{T}{2\sqrt{2\pi}}  \frac{\left(-\frac{Te}{2n}\right)^n}{ (n+2)^{3/2}}\left(1 + O\left(\frac{1}{n}\right)\right),
\end{equation}
where we have used the Stirling's formula. Therefore from \eqref{Cn phi 1}
\begin{equation}
	\mathscr{B}(n) = - \frac{C_n[\phi]}{C_{n-1}[\phi]} \times \left(1 + O(\rho^{-2n})\right)  =  \frac{T}{2(n+1)} \left(1 + O\left(\frac{1}{n}\right)\right).
\end{equation}
Now, let us focus on $\overset{\infty}{X}_{1,12}(n)$ in \eqref{Z12}. From \hyperref[RH-X3]{\textbf{RH-X3}} we have
\begin{equation}
	\overset{\infty}{X}_{1}(n) = \lim_{z \to \infty} z\left( X(z;n)z^{-n\sigma_3} - I \right).
\end{equation}
From this,  and recalling \eqref{X in terms of R exact} for $z \in \Om_{\infty}$, \eqref{R asymp} and the fact that $\al(z) \to 1$ as $z \to \infty$, we find
\begin{equation}\label{X1,12 infty}
	\begin{split}
		\overset{\infty}{X}_{1,12}(n) & = \lim_{z \to \infty} z R_{1,12}(z) + O(\rho^{-3n})  =  \frac{1}{2 \pi \ii} \int_{\Ga_0} \tau^n \phi^{-1}(\tau) \al^2(\tau) \dd \tau \times \left(1 + O(\rho^{-2n})\right) \\ & = C_n[\phi] \times \left(1 + O(\rho^{-2n})\right) = O \left( \frac{\left(-\frac{Te}{2n}\right)^n}{ (n+2)^{3/2}} \right),
	\end{split}
\end{equation}
by \eqref{Cn phi asym}, where leading up to the last asymptotic equality we also have used \eqref{R_k's are small}, \eqref{R1} and the fact that $R_{2\ell}(z;n)$ is diagonal and $R_{2\ell+1}(z;n)$ is off-diagonal, $\ell \in \N \cup \{0\}$.  Now, in view of \eqref{Z12} let us focus on $X_{12}(z;n)$ and $X_{22}(z;n)$ in a neighborhood of $z=0$,  using \eqref{X in terms of R exact}, as $n \to \infty$. We have
\begin{align}
	X_{12}(z;n) & =  \al(z) \left( 1 + O \left( \frac{\rho^{-2n}}{1+|z|} \right) \right), \label{X12 Om0} \\
	X_{22}(z;n) & =  R_{1,21}(z;n) \al(z) \left( 1 + O \left( \frac{\rho^{-2n}}{1+|z|} \right) \right) = O \left( \frac{\rho^{-n}}{1+|z|} \right) , \label{X22 Om0}
\end{align}
where we have used \eqref{R_k's are small}, $\al$ and $R_1$ are respectively given by \eqref{45} and \eqref{R1}, and $\rho^{-1}$ is the radius of the circle $\Ga_0$ shown in Figure \ref{S_contour}. Combining these we conclude that 
\begin{equation}\label{Z12 a}
	Z_{12}(z;n) = \left(\mathscr{B}(n)+z\right)  \al(z) \left( 1 + O \left( \frac{\rho^{-2n}}{1+|z|} \right) \right) + O \left( \frac{\left(-\frac{Te}{2\rho n}\right)^n}{ (n+2)^{3/2} (1+|z|)} \right),
\end{equation}
where, in $\Om_0$ and as $n \to \infty$, the first term is $O(1)$ and the second term decays faster than $O(e^{-cn})$, for any $c>0$. Therefore, the asymptotically relevant term for computing the RHS of \eqref{526} is the first term on the RHS of \eqref{Z12 a}. Indeed we find
\begin{equation}
	\frac{\dd ^{\ell}}{\dd z^{\ell}}Z_{12}(z;n) \Bigg|_{z=0} = \mathscr{B}(n) \al^{(\ell)}(0) + \ell \al^{(\ell-1)}(0) + O(\rho^{-2n}),
\end{equation} 
therefore from \eqref{526} and \eqref{al* non-int application} we have
\begin{equation}
	\begin{split}
		\frac{D^B_{n+1}[z\phi;z^{1-\ell}\phi]}{D_n[z\phi]} & = \frac{1}{\ell !}\mathscr{B}(n) \al^{(\ell)}(0) + \frac{1}{(\ell-1)!}\al^{(\ell-1)}(0) + O(\rho^{-2n}) \\ & = \frac{1}{(\ell-1)!}\left(\frac{T}{2}\right)^{\ell-1} + \frac{1}{(\ell)!}\left(\frac{T}{2}\right)^{\ell+1}  \frac{1}{(n+1)} \left(1 + O\left(\frac{1}{n}\right)\right).
	\end{split}
\end{equation}
Therefore 
\begin{align}
	\frac{D^B_{n+1}[z\phi;\psi_1(z;n+2)]}{D_{n}[z \phi]}  & = \frac{1}{(y_{n+1}-n-2)!}\left(\frac{T}{2}\right)^{y_{n+1}-n-2} + \frac{1}{(y_{n+1}-n-1)!}\left(\frac{T}{2}\right)^{y_{n+1}-n}  \frac{1}{(n+1)} \left(1 + O\left(\frac{1}{n}\right)\right), \\ 		\frac{D^B_{n+1}[z\phi;\psi_2(z;n+2)]}{D_{n}[z \phi]}  & = \frac{1}{(y_{n+2}-n-2)!}\left(\frac{T}{2}\right)^{y_{n+2}-n-2} + \frac{1}{(y_{n+2}-n-1)!}\left(\frac{T}{2}\right)^{y_{n+2}-n}  \frac{1}{(n+1)} \left(1 + O\left(\frac{1}{n}\right)\right).
\end{align}
Plugging these, together with \eqref{524} and \eqref{525}, into \eqref{Dodgson 2-borderedAA} and subsequently in \eqref{two bordered transition probability} yields the result of Theorem \ref{thm asymp bordered}.

\subsection{Top path with non-consecutive starting and ending points.  Proof of Theorem \ref{thm asymp framed}}

Consider $n$ continuous non-intersecting paths with starting points $x_1,\cdots,x_n$ and ending points $y_1, \cdots, y_n$, where $x_j = j$, $j=1,\cdots,n-1$, $y_k=k$, for $k=1,\cdots,n-1$, while both $x_{n}$ and $y_{n}$ are arbitrary. In this section, like the previous one, we assume
\begin{itemize}
	\item $T$ is finite, and
	\item the gaps $y_{n}-n$ and $x_{n}-n$ are finite as well.
\end{itemize}
From \eqref{NIRM_phi_continuous} we see that the transition probability of the non-intersecting paths modeled by \eqref{P_T}, with the above starting and ending points, is given by the semi-framed Toeplitz determinant
	\begin{equation}\label{half-framed 3 intro}
	\mathscr{L}_n[\phi;\psi,\eta;a] := \det \begin{pmatrix}
		\phi_0& \phi_{-1} & \cdots & \phi_{-n+2} & \psi_{n-2}  \\
		\phi_{1}& \phi_0 & \cdots  & \phi_{-n+3} & \psi_{n-3} \\
		\vdots & \vdots & \ddots & \vdots & \vdots \\
		\phi_{n-2} & \phi_{n-3} &   \cdots  & \phi_{0} & \psi_{0} \\
		\eta_{0} & \eta_{1} & \cdots  & \eta_{n-2} & a
	\end{pmatrix}, 
\end{equation}
where $\phi$ is given by \eqref{phi mother},
\begin{equation}\label{NIRW_psi_eta-1}
	\psi(\ze;n) = \phi(\ze)\ze^{-(y_n - n+1)}, \quad \eta(\ze;n) =  \phi(\ze)\ze^{x_n - 1},
\end{equation}
and $a = \phi_{y_n - x_n}$. Above we have used the fact that the symbol $\phi$ is \textit{even}, that is $\phi(z^{-1})=\phi(z)$, which implies that $\phi_{j} = \phi_{-j}$.  Let us now recall the following results from Theorem 1.9 of \cite{G23} and its corollary  (see also Corollary \ref{eq:semi-framed_det_CDbb} of the current work).

\begin{theorem}\label{semis in terms of RepKer intro}\cite{G23}
	The semi-framed Toeplitz determinant $\mathscr{L}_n[\phi;\psi,\eta;a]$ can be represented in terms of the reproducing kernel of the system of bi-orthogonal polynomials on the unit circle associated with $\phi$ given by \eqref{BOPUC Rep Ker} and \eqref{bi-orthogonality intro} as
	\begin{align}
\frac{\mathscr{L}_{n+2}\left[\phi;  \psi, \eta ;a\right]}{D_{n+1}[\phi]} & = a - \int_{\T} \left[ \int_{\T} \mathscr{K}_n(z_1,z^{-1}_2)  \eta(z_2) \frac{\dd z_2}{2 \pi \ii z_2}   \right] z^{-n}_1 \psi(z_1) \frac{\dd z_1}{2 \pi \ii z_1},
	\end{align}
	where $D_{n}[\phi]$ is given by \eqref{ToeplitzDet}. Using the Christoffel-Darboux identity we have
	\begin{align}
			\frac{\mathscr{L}_{n+2}\left[\phi;\psi, \eta ;a\right]}{D_{n+1}[\phi]}  & = a  -  \int_{\T} \int_{\T} \frac{ \Tilde{\eta}(z_2) \tilde{\psi}(z_1)}{z_1-z_2} \det \begin{pmatrix}
		X_{11}(z_2;n+1) & X_{21}(z_2;n+2) \\
		X_{11}(z_1;n+1) & X_{21}(z_1;n+2)
	\end{pmatrix}  \frac{\dd z_2}{2 \pi \ii z_2} \frac{\dd z_1}{2 \pi \ii z_1}, 				\label{L and X-RHP}
	\end{align}
	where $D_{n}[\phi]$ is given by \eqref{ToeplitzDet}, and $X_{11}$ and $X_{21}$ are respectively the $11$ and $21$ entries of the solution to  \hyperref[RH-X1]{\textbf{RH-X1}}  to \hyperref[RH-X3]{\textbf{RH-X3}}.	
\end{theorem}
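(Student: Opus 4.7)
The plan is to establish both representations by combining a Schur complement reduction with a BOPUC-based factorization of the inverse Toeplitz matrix, followed by a Christoffel--Darboux rewriting to pass to the RHP form. I would view the $(n+2) \times (n+2)$ matrix defining $\mathscr{L}_{n+2}[\phi;\psi,\eta;a]$ as the block matrix
\[
\begin{pmatrix} T & \mathbf{v} \\ \mathbf{u}^T & a \end{pmatrix}, \qquad T = (\phi_{j-k})_{0 \le j,k \le n}, \quad \mathbf{v} = (\psi_{n-j})_{0 \le j \le n}, \quad \mathbf{u} = (\eta_j)_{0 \le j \le n},
\]
so that $\det T = D_{n+1}[\phi]$ and the Schur complement formula (extended by continuity beyond the generic locus $D_{n+1}[\phi] \neq 0$) gives $\mathscr{L}_{n+2}[\phi;\psi,\eta;a] = D_{n+1}[\phi]\bigl(a - \mathbf{u}^T T^{-1} \mathbf{v}\bigr)$.

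Next I would invert $T$ via the BOPUC coefficient matrices. Writing $Q_j(z) = \sum_\ell q_{j,\ell} z^\ell$ and $\widehat{Q}_k(z) = \sum_m \widehat{q}_{k,m} z^m$, the bi-orthonormality \eqref{bi-orthogonality intro} rewrites as $\mathbf{Q}\, T^T\, \widehat{\mathbf{Q}}^T = I_{n+1}$ with $\mathbf{Q} = (q_{j,\ell})$ and $\widehat{\mathbf{Q}} = (\widehat{q}_{k,m})$ lower-triangular, so that $T^{-1} = \mathbf{Q}^T \widehat{\mathbf{Q}}$. Expanding $\mathbf{u}^T T^{-1} \mathbf{v}$ as a triple sum and using the Fourier-integral collapses
\[
\sum_{k=0}^{n} \widehat{q}_{\ell, k}\, \psi_{n-k} = \int_{\T} z_1^{-n} \widehat{Q}_\ell(z_1)\, \psi(z_1)\, \frac{\dd z_1}{2\pi \ii z_1}, \qquad \sum_{j=0}^{n} q_{\ell, j}\, \eta_j = \int_{\T} Q_\ell(z_2^{-1})\, \eta(z_2)\, \frac{\dd z_2}{2\pi \ii z_2},
\]
I would interchange summation and integration and recognize $\sum_{\ell=0}^{n} Q_\ell(z_2^{-1}) \widehat{Q}_\ell(z_1) = \mathscr{K}_n(z_1, z_2^{-1})$ by \eqref{BOPUC Rep Ker}, yielding the first stated representation. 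For the Christoffel--Darboux form \eqref{L and X-RHP}, I would then invoke the CD identity for BOPUC, which rewrites $\mathscr{K}_n(z_1, z_2^{-1})$ as $(z_1 - z_2)^{-1}$ times a $2 \times 2$ determinant built from $Q_{n+1}, \widehat{Q}_{n+1}$ and their $z \mapsto z^{-1}$ reversals at the two top indices. Using the explicit identifications $X_{11}(z;n+1) = \kappa_{n+1}^{-1} Q_{n+1}(z)$ and $X_{21}(z;n+2) = -\kappa_{n+1} z^{n+1} \widehat{Q}_{n+1}(z^{-1})$ extracted from \eqref{Toeplitz-OP-solution}, together with a change of variables $z_2 \mapsto z_2^{-1}$ in the $\eta$-integral that produces $\widetilde{\eta}$ and $\widetilde{\psi}$, the leading coefficients $\kappa_{n+1}^{\pm 1}$ cancel and the minor in \eqref{L and X-RHP} emerges.

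The main technical obstacle I anticipate is the bookkeeping in this final step: the CD identity for BOPUC comes in several equivalent guises involving $Q_{n+1}, Q_{n+1}^*, \widehat{Q}_{n+1}, \widehat{Q}_{n+1}^*$, and one must match the specific $2 \times 2$ minor with the correct arguments $(z_2;n+1)$ and $(z_1;n+2)$ and verify that the $\kappa$-normalizations cancel exactly after the $z_2 \mapsto z_2^{-1}$ substitution. A secondary issue is removing the non-degeneracy hypothesis $D_{n+1}[\phi] \neq 0$ needed to apply the Schur complement; this is handled by noting that both sides of the claimed identity depend polynomially, hence continuously, on the Fourier data of $\phi, \psi, \eta$ and on $a$, so the identity extends from the generic locus to the full parameter space by continuity.
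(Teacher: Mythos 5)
Your proposal is correct and is essentially the paper's own route: the identity $\mathbf{Q}\,T^{T}\widehat{\mathbf{Q}}^{T}=I$ and the resulting factorization $T^{-1}=\mathbf{Q}^{T}\widehat{\mathbf{Q}}$ are exactly the triangular orthogonal-polynomial factorization used in Section \ref{Sec 5.2} to prove Propositions \ref{eq:semi-framed_det_CD} and \ref{eq:semi-framed_det_CDaa} (of which the quoted theorem is the $m=1$ Toeplitz instance, up to transposition and reordering of the border), your Schur-complement step is the same block-determinant identity used there, and the Christoffel--Darboux/Riemann--Hilbert rewriting is invoked just as in the cited source. One bookkeeping correction: to produce both $\tilde{\psi}(z_1)$ and $\tilde{\eta}(z_2)$ in \eqref{L and X-RHP} you must substitute $z\mapsto z^{-1}$ in both integration variables, not only in the $\eta$-integral (the substitution leaves $\frac{\dd z}{2\pi \ii z}$ invariant on $\T$), after which the factor $z_1^{-n}$ becomes $z_1^{n}$ and is absorbed by the Christoffel--Darboux identity --- this is precisely where the index shift to $X_{21}(\,\cdot\,;n+2)$ and the cancellation of $\kappa_{n+1}^{\pm 1}$ that you anticipate take place.
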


We need to analyze the semi-framed Toeplitz determinant $$e^{-(n+1)T}\mathscr{L}_{n+1}[\phi;\psi(z;n+1),\eta(z;n+1);\phi_{y_{n+1}-x_{n+1}}],$$
where $\psi$ and $\eta$ are given by \eqref{NIRW_psi_eta-1} and $\phi$ is given by \eqref{phi mother}. Theorem \ref{semis in terms of RepKer intro} gives a representation of semi-framed Toeplitz determinants in terms of the reproducing kernel of the biorthogonal polynomials on the unit circle. Indeed, 
\begin{equation}\label{550}
	\begin{split}
&	\mathscr{L}_{n+1}[\phi;\psi(z;n+1),\eta(z;n+1);\phi_{y_{n+1}-x_{n+1}}] 	\\ & =  D_n[\phi] \left\{\phi_{y_{n+1}-x_{n+1}} -  \int_{\T} \int_{\T} \frac{ \Tilde{\eta}(z_2;n+1) \tilde{\psi}(z_1;n+1)}{z_1-z_2} \det \begin{pmatrix}
	X_{11}(z_2;n) & X_{21}(z_2;n+1) \\
	X_{11}(z_1;n) & X_{21}(z_1;n+1)
\end{pmatrix}  \frac{\dd z_2}{2 \pi \ii z_2} \frac{\dd z_1}{2 \pi \ii z_1} \right\}, 
	\end{split}
\end{equation}
by \eqref{L and X-RHP}.  Recall that the integral representation for the modified Bessel function of integer order $j$ is

\begin{equation}
	I_j(T) = \frac{1}{\pi} \int_{0}^{\pi} e^{T \cos \theta} \cos(j \theta) \dd \theta = \int_{\T} z^{-j} e^{\frac{T}{2}(z+z^{-1})} \frac{\dd z}{2 \pi \ii z},
\end{equation} which is precisely the $j$-th Fourier coefficient of $\phi$ given by \eqref{phi mother},  so we have
\begin{equation}\label{FC Modified Bessel}
	\phi_{y_{n+1}-x_{n+1}} = I_{y_{n+1}-x_{n+1}}(T).
\end{equation}

Now we focus on computing the double integral in \eqref{550}.	From \eqref{X in terms of R exact} in $\Om_1$ (see Figure \ref{S_contour}) we have
\begin{align}
	X_{11}(z;n) & = z^n \al(z) \phi^{-1}(z) \left( 1+ O(\rho^{-n})\right), \qquad n \to \infty, \\
	X_{21}(z;n) & = - \al^{-1}(z)  \left( 1+ O(\rho^{-n})\right), \qquad n \to \infty.
\end{align}
Therefore, as $n \to \infty$, \eqref{550} can be written as
\begin{equation}
	\begin{split}
	& \frac{\mathscr{L}_{n+1}[\phi;\psi(z;n+1),\eta(z;n+1);\phi_{y_{n+1}-x_{n+1}}]}{D_n[\phi]}-  \phi_{y_{n+1}-x_{n+1}}  \\ & =  \int_{\T} \int_{\T} \frac{ \Tilde{\eta}(z_2;n+1) \tilde{\psi}(z_1;n+1)}{z_1-z_2} \left( - z^{n}_2 \al_+(z_2) \phi^{-1}(z_2) \al^{-1}_+(z_1)   \right)\frac{\dd z_2}{2 \pi \ii z_2} \frac{\dd z_1}{2 \pi \ii z_1} \\ & + \int_{\T} \int_{\T} \frac{ \Tilde{\eta}(z_2;n+1) \tilde{\psi}(z_1;n+1)}{z_1-z_2} \left(  z^{n}_1 \al_+(z_1) \phi^{-1}(z_1) \al^{-1}_+(z_2)  \right)\frac{\dd z_2}{2 \pi \ii z_2} \frac{\dd z_1}{2 \pi \ii z_1} + O(\rho^{-n}).
	\end{split}
\end{equation} Plugging in \eqref{NIRW_psi_eta-1} and using the scalar jump condition \eqref{47} we can rewrite this after straightforward simplifications as
\begin{equation}
	\begin{split}
& \frac{\mathscr{L}_{n+1}[\phi;\psi(z;n+1),\eta(z;n+1);\phi_{y_{n+1}-x_{n+1}}]}{D_n[\phi]}-  \phi_{y_{n+1}-x_{n+1}}  \\ &  =  \int_{\T} \int_{\T} \frac{1}{z_1-z_2}   \al_+(z_1)  \al^{-1}_-(z_2) z^{1-x_{n+1}}_2 z^{y_{n+1}}_1  \frac{\dd z_1}{2 \pi \ii z_1} \frac{\dd z_2}{2 \pi \ii z_2} \\ & - \int_{\T} \int_{\T} \frac{1}{z_1-z_2} \left( \frac{z_2}{z_1} \right)^n  \al_+(z_2)   \al^{-1}_-(z_1) z^{1-x_{n+1}}_2 z^{y_{n+1}}_1  \frac{\dd z_1}{2 \pi \ii z_1} \frac{\dd z_2}{2 \pi \ii z_2 } + O(\rho^{-n}).
	\end{split}
\end{equation}

For a fixed $\frak{r}>1$, define $	\T^{\frak{r}}_{\mp} := \left\{ z \ : \ |z|= \frak{r}^{\pm1} \right\}$. We now deform the contours of integration to rewrite the previous equation as

\begin{equation}\label{3.61 a}
	\begin{split}
& \frac{\mathscr{L}_{n+1}[\phi;\psi(z;n+1),\eta(z;n+1);\phi_{y_{n+1}-x_{n+1}}]}{D_n[\phi]}-  \phi_{y_{n+1}-x_{n+1}}  \\ &  =  \int_{\T^{\frak{r}}_-} \int_{\T^{\frak{r}}_+} \frac{1}{z_1-z_2}   \al(z_1)  \al^{-1}(z_2) z^{1-x_{n+1}}_2 z^{y_{n+1}}_1  \frac{\dd z_1}{2 \pi \ii z_1} \frac{\dd z_2}{2 \pi \ii z_2} \\ & - \int_{\T^{\frak{r}}_+} \int_{\T^{\frak{r}}_-} \frac{1}{z_1-z_2} \left( \frac{z_2}{z_1} \right)^n  \al(z_2)   \al^{-1}(z_1) z^{1-x_{n+1}}_2 z^{y_{n+1}}_1  \frac{\dd z_1}{2 \pi \ii z_1} \frac{\dd z_2}{2 \pi \ii z_2 } + O(\rho^{-n}) \\ &  = - \int_{\T^{\frak{r}}_-} \int_{\T^{\frak{r}}_+}    \al(z_1)  \al^{-1}(z_2) z^{-x_{n+1}}_2 z^{y_{n+1}}_1 \left( \sum_{k=0}^{\infty} \frac{z^k_1}{z^k_2} \right)  \frac{\dd z_1}{2 \pi \ii z_1} \frac{\dd z_2}{2 \pi \ii z_2} \\ & - \int_{\T^{\frak{r}}_+} \int_{\T^{\frak{r}}_-}  \left( \frac{z_2}{z_1} \right)^n  \al(z_2)   \al^{-1}(z_1) z^{1-x_{n+1}}_2 z^{y_{n+1}-1}_1 \left( \sum_{k=0}^{\infty} \frac{z^k_2}{z^k_1} \right)  \frac{\dd z_1}{2 \pi \ii z_1} \frac{\dd z_2}{2 \pi \ii z_2 } + O(\rho^{-n}) \\ = &    - \sum_{k=0}^{\infty} \left[  \int_{\T^{\frak{r}}_{+}}  \al(z) z^{n+1+k-x_{n+1}} \frac{\dd z}{2 \pi \ii z} \right] \left[  \int_{\T^{\frak{r}}_{-}}  \al^{-1}(z) z^{y_{n+1}-n-k-1} \frac{\dd z}{2 \pi \ii z} \right] \\ & - \sum_{k=0}^{\infty} \left[  \int_{\T^{\frak{r}}_{+}}  \al(z) z^{y_{n+1}+k} \frac{\dd z}{2 \pi \ii z} \right] \left[  \int_{\T^{\frak{r}}_{-}} \al^{-1}(z) z^{-x_{n+1}-k} \frac{\dd z}{2 \pi \ii z} \right] + O(\rho^{-n}).
	\end{split}
\end{equation}
Using \eqref{al* non-int application} we obtain
\begin{equation}\label{556}
	\begin{split}
		& \frac{\mathscr{L}_{n+1}[\phi;\psi(z;n+1),\eta(z;n+1);\phi_{y_{n+1}-x_{n+1}}]}{D_n[\phi]}-  \phi_{y_{n+1}-x_{n+1}}  \\ = &    - \sum_{k=0}^{\infty} \left[  \int_{\T^{\frak{r}}_{+}}  e^{\frac{Tz}{2}} z^{n+1+k-x_{n+1}} \frac{\dd z}{2 \pi \ii z} \right] \left[  \int_{\T^{\frak{r}}_{-}}  e^{\frac{T}{2z}} z^{y_{n+1}-n-k-1} \frac{\dd z}{2 \pi \ii z} \right] \\ & - \sum_{k=0}^{\infty} \left[  \int_{\T^{\frak{r}}_{+}}  e^{\frac{Tz}{2}} z^{y_{n+1}+k} \frac{\dd z}{2 \pi \ii z} \right] \left[  \int_{\T^{\frak{r}}_{-}} e^{\frac{T}{2z}} z^{-x_{n+1}-k} \frac{\dd z}{2 \pi \ii z} \right] + O(\rho^{-n}).
	\end{split}
\end{equation}
By the Cauchy Theorem we obtain
\begin{equation}\label{556}
	\begin{split}
		& \frac{\mathscr{L}_{n+1}[\phi;\psi(z;n+1),\eta(z;n+1);\phi_{y_{n+1}-x_{n+1}}]}{D_n[\phi]}-  \phi_{y_{n+1}-x_{n+1}} \\ &  =    - \sum_{k=0}^{\infty} \left[  \int_{\T^{\frak{r}}_{+}}  e^{\frac{Tz}{2}} z^{n+1+k-x_{n+1}} \frac{\dd z}{2 \pi \ii z} \right] \left[  \int_{\T^{\frak{r}}_{-}}  e^{\frac{T}{2z}} z^{y_{n+1}-n-k-1} \frac{\dd z}{2 \pi \ii z} \right]  + O(\rho^{-n}).
	\end{split}
\end{equation}
Residue calculation combined with \eqref{FC Modified Bessel} and the Szeg{\H o} Theorem for $\phi$ (recall \eqref{G non-int application} and \eqref{E non-int application})  yields \eqref{top-path-different-asymp}.

\appendix
\section{Solution of the Riemann--Hilbert problem for BOPUC with Szeg{\H o}-type symbols}\label{Appendices}

Let us recall the Riemann--Hilbert problem for BOPUC is due to J.Baik, P.Deift and K.Johansson \cite{BDJ}.

\begin{itemize}
	\item  \textbf{RH-X1} \qquad $X(\cdot;n):\C\setminus \T \to \C^{2\times2}$ is analytic,
	\item \textbf{RH-X2} \qquad  The limits of $X(\ze;n)$ as $\ze$ tends to $z \in \T $ from the inside and outside of the unit circle exist, and are denoted $X_{\pm}(z;n)$ respectively and are related by
	
	\begin{equation}
		X_+(z;n)=X_-(z;n)\begin{pmatrix}
			1 & z^{-n}\phi(z) \\
			0 & 1
		\end{pmatrix}, \qquad  z \in \T,
	\end{equation}
	
	\item \textbf{RH-X3} \qquad  As $z \to \infty$
	
	\begin{equation}
		X(z;n)=\big( I + O(z^{-1}) \big) z^{n \sigma_3}.
	\end{equation}
\end{itemize}
For the purposes of section \ref{Sec-Asymptotics} in this Appendix we show the standard steepest descent analysis to asymptotically solve this problem for a Szeg{\H o}-type symbol which was first shown in \cite{D}.  We first normalize the behavior at $\infty$ by defining \begin{equation}\label{40}
	T(z;n) := \begin{cases}
		X(z;n)z^{-n\sigma_3}, & |z|>1, \\
		X(z;n), & |z|<1.
	\end{cases}
\end{equation}
The function $T$ defined above satisfies the following RH problem

\begin{itemize}
	\item  \textbf{RH-T1} \qquad $T(\cdot;n) :\C\setminus \T \to \C^{2\times2}$ is analytic,
	\item  \textbf{RH-T2} \qquad    $T_{+}(z;n)=T_{-}(z;n)\begin{pmatrix}
		z^n & \phi(z) \\
		0 & z^{-n}
	\end{pmatrix}, \qquad  z \in \T$,
	\item  \textbf{RH-T3} \qquad   $T(z;n)=I+O(1/z), \qquad z \to \infty.$
\end{itemize}
So $T$ has a highly-oscillatory jump matrix as $n \to \infty$. The next transformation yields a Riemann Hilbert problem, normalized at infinity, having an exponentially decaying jump matrix on the \textit{lenses}. Note that we have the following factorization of the jump matrix of the $T$-RHP : \begin{equation}\label{Appendix Pure Toeplitz G_T factorization}
	\begin{pmatrix}
		z^n & \phi(z) \\
		0 & z^{-n}
	\end{pmatrix} = \begin{pmatrix}
		1 & 0 \\
		z^{-n}\phi(z)^{-1} & 1
	\end{pmatrix}\begin{pmatrix}
		0 & \phi(z) \\
		-\phi(z)^{-1} & 0
	\end{pmatrix}\begin{pmatrix}
		1 & 0 \\
		z^{n}\phi(z)^{-1} & 1
	\end{pmatrix}  \equiv J_{0}(z;n)J^{(\infty)}(z)J_{1}(z;n).
\end{equation}
Now, we define the following function : \begin{equation}\label{42}
	S(z;n):=\begin{cases}
		T(z;n)J^{-1}_{1}(z;n), & z \in \Om_1, \\
		T(z;n)J_{0}(z;n), & z \in \Om_2, \\
		T(z;n), & z\in \Om_0\cup \Om_{\infty}.
	\end{cases}
\end{equation}
Also introduce the following function on $\Ga_S := \Ga_0 \cup \Ga_1 \cup \T$:  \begin{equation}\label{43}
	J_{ S}(z;n)=\begin{cases}
		J_{1}(z;n), & z \in \Ga_0, \\
		J^{(\infty)}(z), & z \in \T, \\
		J_{0}(z;n), & z \in \Ga_1. \\
	\end{cases}
\end{equation} 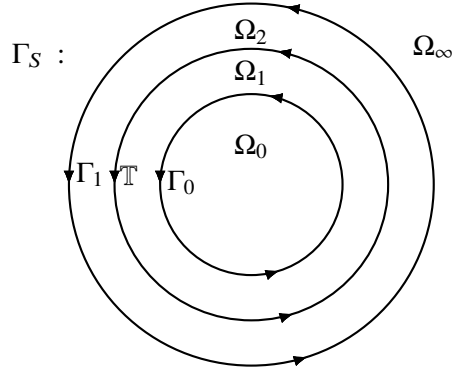
\begin{figure}
	\centering
	
	\begin{tikzpicture}[scale=0.4]

		\draw[ ->-=0.22,->-=0.5,->-=0.8,thick] (-3,0) circle (4.5cm);
		
		\draw[ ->-=0.22,->-=0.5,->-=0.8,thick] (-3,0) circle (3cm);
		
		\draw[ ->-=0.22,->-=0.5,->-=0.8,thick] (-3,0) circle (6cm);
		
		
		
		\node at (-3,4.45) [below] {$\Om_1$};
		
		\node at (-3,5.8) [below] {$\Om_2$};
		
		\node at (-8.3,-0.4) [above] {$\Ga_1$};
		
		
		
		\node at (-6.4,0.3) [left] {$\T$};
		
		\node at (-5.3,0.8) [below] {$\Ga_0$};

		\node at (-3,2) [below] {$\Om_0$};
		
		
		
		\node at (3,5.3) [below] {$\Om_{\infty}$};
		
		\node at (-10,5) [below] {$\Ga_S \ :$};
	\end{tikzpicture}

	\caption{Opening of lenses: the jump contour for the $S$-RHP.}
	\label{S_contour}
\end{figure}
Therefore we have the following Riemann--Hilbert problem for $S(z;n)$

\begin{itemize}
	\item \textbf{RH-S1} \qquad  $ S(\cdot;n):\C\setminus \Ga_S \to \C^{2\times2}$ is analytic.
	\item \textbf{RH-S2} \qquad  $S_{+}(z;n)=S_{-}(z;n) J_{S}(z;n), \qquad  z \in \Ga_S.$
	\item \textbf{RH-S3} \qquad  $ S(z;n)=I+O(1/z), \qquad \text{as} \ z \to \infty$.
\end{itemize}
Note that the matrices $J_0(z;n)$ and $J_1(z;n)$ tend to the identity matrix uniformly on their respective contours, exponentially fast as $n \to \infty$.	We are looking for a piecewise analytic function $P^{(\infty)}(z): \C \setminus \T : \to \C^{2\times2}$ such that \begin{itemize}
	\item \textbf{RH-Global1} \qquad    $P^{(\infty)}$ is holomorphic in $\C \setminus \T$.
	
	\item \textbf{RH-Global2} \qquad for $z\in \T$ we have \begin{equation}\label{44}
		P_+^{(\infty)}(z)=P_-^{(\infty)}(z)  \begin{pmatrix}
			0 & \phi(z) \\
			- \phi^{-1}(z) & 0
		\end{pmatrix}.
	\end{equation}
	
	\item \textbf{RH-Global3} \qquad   $P^{(\infty)}(z)=I+O(1/z),  \qquad  \text{as} \ z \to \infty$.
	
\end{itemize}
We can find a piecewise analytic function $\al$ which solves the following scalar multiplicative Riemann--Hilbert problem \begin{equation}\label{47}
	\al_+(z)=\al_-(z)\phi(z) \qquad  z \in \T.
\end{equation}
By Plemelj--Sokhotski formula we have \begin{equation}\label{45}
	\al(z)=\exp \left[ \frac{1}{2 \pi \ii } \int_{\T} \frac{\ln(\phi(\tau))}{\tau-z}d\tau \right],
\end{equation}
Now, using (\ref{47}) we have the following factorization \begin{equation}\label{46}
	\begin{pmatrix}
		0 & \phi(z) \\
		- \phi^{-1}(z) & 0
	\end{pmatrix}= \begin{pmatrix}
		\al_-^{-1}(z) & 0 \\
		0 & \al_-(z)
	\end{pmatrix} \begin{pmatrix}
		0 & 1 \\
		- 1 & 0
	\end{pmatrix} \begin{pmatrix}
		\al^{-1}_+(z) & 0 \\
		0 & \al_+(z)
	\end{pmatrix}.
\end{equation}
So, the function \begin{equation}\label{48}
	P^{(\infty)}(z) :=\begin{cases}
		\begin{pmatrix}
			0 & \al(z) \\
			-\al^{-1}(z) & 0
		\end{pmatrix},  & |z|<1, \\
		\begin{pmatrix}
			\al(z) & 0 \\
			0 & \al^{-1}(z)
		\end{pmatrix}, & |z|>1,
	\end{cases}
\end{equation}
satisfies (\ref{44}). Also, by the properties of the Cauchy integral, $P^{(\infty)}(z)$ is holomorphic in $\C \setminus \T$. Moreover, $\al(z)=1+O(z^{-1})$, as $z\to \infty$ and hence \begin{equation}\label{49}
	P^{(\infty)}(z)=I+O(1/z), \qquad  z \to \infty.
\end{equation}
Therefore $P^{(\infty)}$ given by (\ref{48}) is the unique solution of the global parametrix Riemann--Hilbert problem. Let us now consider the ratio

\begin{equation}
	R(z;n):= S(z;n) \left[ P^{(\infty)}(z) \right]^{-1}.
\end{equation}
We have the following Riemann--Hilbert problem for $R(z;n)$

\begin{itemize}
	\item \textbf{RH-R1} \qquad $ R$ is holomorphic in $\C\setminus (\Ga_0 \cup \Ga_1)$.
	\item \textbf{RH-R2} \qquad    $R_{+}(z;n)=R_{-}(z;n) J_{R}(z;n), \qquad  z \in \Ga_0 \cup \Ga_1 =: \Sigma_R$,
	\item \textbf{RH-R3} \qquad $ R(z;n)=I+O(1/z) \qquad \text{as} \ z \to \infty$.
\end{itemize}
This Riemann Hilbert problem is solvable for large $n$ (see e.g. \cite{DKMVZ})  and $R(z;n)$ can be written as
\begin{equation}\label{Appendix pure toep R series}
	R(z;n) = I + R_1(z;n) + R_2(z;n) + R_3(z;n) + \cdots, \ \ \ \  \ \ n \geq n_0
\end{equation}
where $R_k$ can be found recursively. Indeed 
\begin{equation}\label{Recursive R_k Def}
	R_k(z;n) = \frac{1}{2\pi \ii}\int_{\Sigma_R} \frac{\left[ R_{k-1}(\mu;n)\right]_-  \left( J_R(\mu;n)-I\right) }{\mu-z}\dd \mu,  \qquad  z \in \C \setminus \Sigma_R, \qquad k\geq1.
\end{equation}	
It is easy to check that $R_{2\ell}(z;n)$ is diagonal and $R_{2\ell+1}(z;n)$ is off-diagonal; $\ell \in \N \cup \{0\}$, and that 
\begin{equation}\label{R_k's are small}
	R_{k,ij}(z;n) = O \left( \frac{\rho^{-kn}}{1+|z|} \right), \qquad n \to \infty, \qquad k \geq 1,
\end{equation}
uniformly in $z\in \C \setminus \Sigma_R$, where $\rho$ (resp. $\rho^{-1}$) is the radius of $\Ga_1$(resp. $\Ga_0$). Let us compute $R_1(z;n)$; we have
\begin{equation}\label{JR-I}
	J_R(z)-I = \begin{cases}
		P^{(\infty)}(z) \begin{pmatrix}
			0 & 0 \\
			z^n \phi^{-1}(z) & 0 \end{pmatrix} \left[ P^{(\infty)}(z) \right]^{-1}, & z \in \Ga_0, \\
		P^{(\infty)}(z) \begin{pmatrix}
			0 & 0 \\
			z^{-n} \phi^{-1}(z) & 0 \end{pmatrix} \left[ P^{(\infty)}(z) \right]^{-1}, & z \in \Ga_1,
	\end{cases} =  \begin{cases}
		\begin{pmatrix}
			0 & -z^{n} \phi^{-1}(z)\al^{2}(z) \\
			0 & 0 \end{pmatrix}, & z \in \Ga_0, \\
		\begin{pmatrix}
			0 & 0 \\
			z^{-n} \phi^{-1}(z)\al^{-2}(z) & 0 \end{pmatrix},  & z \in \Ga_1.
	\end{cases}
\end{equation}
Therefore

\begin{equation}\label{R1}
	R_1(z;n)=  \begin{pmatrix}
		0 & -\di \frac{1}{2\pi \ii }\int_{\Ga_0} \frac{\tau^{n} \phi^{-1}(\tau)\al^{2}(\tau)}{\tau-z}d\tau \\ \di \frac{1}{2\pi \ii}\int_{\Ga_1} \frac{\tau^{-n} \phi^{-1}(\tau)\al^{-2}(\tau)}{\tau-z}d\tau
		& 0 \end{pmatrix}.
\end{equation}
If we trace back the Riemann--Hilbert problems $R \mapsto S \mapsto T \mapsto X $ we will obtain

\begin{equation}\label{X in terms of R exact}
	X(z;n) = R(z;n)\begin{cases} \begin{pmatrix} \al(z) & 0 \\ 0 & \al^{-1}(z) \end{pmatrix}z^{n\sigma_3}, & z \in \Om_{\infty}, \\
		\begin{pmatrix} \al(z) & 0 \\ -z^{-n}\al^{-1}(z) \phi^{-1}(z) & \al^{-1}(z) \end{pmatrix}z^{n\sigma_3}, & z \in \Om_{2}, \\
		\begin{pmatrix} z^{n}\al(z) \phi^{-1}(z) & \al(z) \\ -\al^{-1}(z) & 0 \end{pmatrix}, & z \in \Om_{1}, \\  \begin{pmatrix} 0 & \al(z) \\ -\al^{-1}(z) & 0 \end{pmatrix}, & z \in \Om_{0}, \\  
	\end{cases}
\end{equation}
where for $z \in \C \setminus \Sigma_{R}$, as $n \to\infty$, we have
\begin{equation}\label{R asymp}
	R(z;n)=\di \begin{pmatrix}
		1 + O \left(\frac{ \rho^{-2n}}{ 1+|z| }\right) & R_{1,12}(z;n)+ O \left(\frac{ \rho^{-3n}}{ 1+|z| }\right) \\
		R_{1,21}(z;n)+ O \left(\frac{ \rho^{-3n}}{ 1+|z| }\right) & 1 +O \left(\frac{ \rho^{-2n}}{ 1+|z| }\right)
	\end{pmatrix}.    
\end{equation}

	\section*{Acknowledgements}
This work was initiated during the special session on {\it Orthogonal Polynomials and their Applications} at the Joint Mathematics Meetings (JMM) in Boston in January, 2023. The authors are grateful to JMM and to the organizers of that session. We also thank Robert Buckingham for help with the simulations.

\bibliographystyle{plain} 
\bibliography{refs8} 

\begin{thebibliography}{10}

\bibitem{Abeles}
Francine~F. Abeles.
\newblock Dodgson condensation: the historical and mathematical development of
  an experimental method.
\newblock {\em Linear Algebra Appl.}, 429(2-3):429--438, 2008.

\bibitem{Adler-Delepine-van_Moerbeke09}
Mark Adler, Jonathan Del\'{e}pine, and Pierre van Moerbeke.
\newblock Dyson's nonintersecting {B}rownian motions with a few outliers.
\newblock {\em Comm. Pure Appl. Math.}, 62(3):334--395, 2009.

\bibitem{Adler-Ferrari-van_Moerbeke10}
Mark Adler, Patrik~L. Ferrari, and Pierre van Moerbeke.
\newblock Airy processes with wanderers and new universality classes.
\newblock {\em Ann. Probab.}, 38(2):714--769, 2010.

\bibitem{YP}
Helen Au-Yang and Jacques H.~H. Perk.
\newblock Critical correlations in a {$Z$}-invariant inhomogeneous {I}sing
  model.
\newblock {\em Phys. A}, 144(1):44--104, 1987.

\bibitem{BKMM}
J.~Baik, T.~Kriecherbauer, K.~T.-R. McLaughlin, and P.~D. Miller.
\newblock {\em Discrete orthogonal polynomials}, volume 164 of {\em Annals of
  Mathematics Studies}.
\newblock Princeton University Press, Princeton, NJ, 2007.
\newblock Asymptotics and applications.

\bibitem{BDJ}
Jinho Baik, Percy Deift, and Kurt Johansson.
\newblock On the distribution of the length of the longest increasing
  subsequence of random permutations.
\newblock {\em J. Amer. Math. Soc.}, 12(4):1119--1178, 1999.

\bibitem{Baik-Jenkins13}
Jinho Baik and Robert Jenkins.
\newblock Limiting distribution of maximal crossing and nesting of
  {P}oissonized random matchings.
\newblock {\em Ann. Probab.}, 41(6):4359--4406, 2013.

\bibitem{Baik-Liu14}
Jinho Baik and Zhipeng Liu.
\newblock Discrete {T}oeplitz/{H}ankel determinants and the width of
  nonintersecting processes.
\newblock {\em Int. Math. Res. Not. IMRN}, (20):5737--5768, 2014.

\bibitem{BE}
Estelle Basor and Torsten Ehrhardt.
\newblock Asymptotic formulas for determinants of a special class of {T}oeplitz
  + {H}ankel matrices.
\newblock In {\em Large truncated {T}oeplitz matrices, {T}oeplitz operators,
  and related topics}, volume 259 of {\em Oper. Theory Adv. Appl.}, pages
  125--154. Birkh\"{a}user/Springer, Cham, 2017.

\bibitem{BEGIL}
Estelle Basor, Torsten Ehrhardt, Roozbeh Gharakhloo, Alexander Its, and Yuqi
  Li.
\newblock Asymptotics of bordered {T}oeplitz determinants and next-to-diagonal
  {I}sing correlations.
\newblock {\em J. Stat. Phys.}, 187(1):Paper No. 8, 49, 2022.

\bibitem{BE4}
Estelle~L. Basor and Torsten Ehrhardt.
\newblock On a class of {T}oeplitz + {H}ankel operators.
\newblock {\em New York J. Math.}, 5:1--16, 1999.

\bibitem{BE1}
Estelle~L. Basor and Torsten Ehrhardt.
\newblock Asymptotic formulas for determinants of a sum of finite {T}oeplitz
  and {H}ankel matrices.
\newblock {\em Math. Nachr.}, 228:5--45, 2001.

\bibitem{BE3}
Estelle~L. Basor and Torsten Ehrhardt.
\newblock Asymptotic formulas for the determinants of symmetric {T}oeplitz plus
  {H}ankel matrices.
\newblock In {\em Toeplitz matrices and singular integral equations
  ({P}obershau, 2001)}, volume 135 of {\em Oper. Theory Adv. Appl.}, pages
  61--90. Birkh\"{a}user, Basel, 2002.

\bibitem{BE2}
Estelle~L. Basor and Torsten Ehrhardt.
\newblock Determinant computations for some classes of {T}oeplitz-{H}ankel
  matrices.
\newblock {\em Oper. Matrices}, 3(2):167--186, 2009.

\bibitem{blackstone2023toeplitz}
Elliot Blackstone, Christophe Charlier, and Jonatan Lenells.
\newblock {T}oeplitz determinants with a one-cut regular potential and
  fisher--hartwig singularities i. equilibrium measure supported on the unit
  circle.
\newblock {\em arXiv preprint arXiv:2212.06763}, 2023.

\bibitem{Bleher-Bothner12}
Pavel Bleher and Thomas Bothner.
\newblock Exact solution of the six-vertex model with domain wall boundary
  conditions: critical line between disordered and antiferroelectric phases.
\newblock {\em Random Matrices Theory Appl.}, 1(4):1250012, 43, 2012.

\bibitem{BGM}
Pavel Bleher, Roozbeh Gharakhloo, and Kenneth T-R McLaughlin.
\newblock Phase diagram and topological expansion in the complex quartic random
  matrix model.
\newblock {\em Comm. Pure Appl. Math.}, 77(2):1405--1485, 2024.

\bibitem{Bleher-Liechty09}
Pavel Bleher and Karl Liechty.
\newblock Exact solution of the six-vertex model with domain wall boundary
  conditions. {F}erroelectric phase.
\newblock {\em Comm. Math. Phys.}, 286(2):777--801, 2009.

\bibitem{Bleher-Liechty10}
Pavel Bleher and Karl Liechty.
\newblock Exact solution of the six-vertex model with domain wall boundary
  conditions: antiferroelectric phase.
\newblock {\em Comm. Pure Appl. Math.}, 63(6):779--829, 2010.

\bibitem{Bleher-Liechty11}
Pavel Bleher and Karl Liechty.
\newblock Uniform asymptotics for discrete orthogonal polynomials with respect
  to varying exponential weights on a regular infinite lattice.
\newblock {\em Int. Math. Res. Not. IMRN}, (2):342--386, 2011.

\bibitem{Bleher-Liechty14}
Pavel Bleher and Karl Liechty.
\newblock {\em Random matrices and the six-vertex model}, volume~32 of {\em CRM
  Monograph Series}.
\newblock American Mathematical Society, Providence, RI, 2014.

\bibitem{BD}
Pavel~M. Bleher and Alfredo Dea\~{n}o.
\newblock Topological expansion in the cubic random matrix model.
\newblock {\em Int. Math. Res. Not. IMRN}, (12):2699--2755, 2013.

\bibitem{Bleher-Fokin06}
Pavel~M. Bleher and Vladimir~V. Fokin.
\newblock Exact solution of the six-vertex model with domain wall boundary
  conditions. {D}isordered phase.
\newblock {\em Comm. Math. Phys.}, 268(1):223--284, 2006.

\bibitem{Bogoliubov-Pronko-Zvonarev02}
N.~M. Bogoliubov, A.~G. Pronko, and M.~B. Zvonarev.
\newblock Boundary correlation functions of the six-vertex model.
\newblock {\em J. Phys. A}, 35(27):5525--5541, 2002.

\bibitem{borodin2016stochastic}
Alexei Borodin, Ivan Corwin, and Vadim Gorin.
\newblock Stochastic six-vertex model.
\newblock {\em Duke Math. J.}, 165(3):563--624, 2016.

\bibitem{Borodin-Peche08}
Alexei Borodin and Sandrine P\'{e}ch\'{e}.
\newblock Airy kernel with two sets of parameters in directed percolation and
  random matrix theory.
\newblock {\em J. Stat. Phys.}, 132(2):275--290, 2008.

\bibitem{BGK}
Ga\"{e}tan Borot, Alice Guionnet, and Karol~K. Kozlowski.
\newblock Large-{$N$} asymptotic expansion for mean field models with {C}oulomb
  gas interaction.
\newblock {\em Int. Math. Res. Not. IMRN}, (20):10451--10524, 2015.

\bibitem{BS}
Albrecht B\"{o}ttcher and Bernd Silbermann.
\newblock {\em Analysis of {T}oeplitz operators}.
\newblock Springer Monographs in Mathematics. Springer-Verlag, Berlin, second
  edition, 2006.
\newblock Prepared jointly with Alexei Karlovich.

\bibitem{Bressoud}
David~M. Bressoud.
\newblock {\em Proofs and confirmations}.
\newblock MAA Spectrum. Mathematical Association of America, Washington, DC;
  Cambridge University Press, Cambridge, 1999.
\newblock The story of the alternating sign matrix conjecture.

\bibitem{BGIKMMV}
L.~Brightmore, G.~P. Geh\'{e}r, A.~R. Its, V.~E. Korepin, F.~Mezzadri, M.~Y.
  Mo, and J.~A. Virtanen.
\newblock Entanglement entropy of two disjoint intervals separated by one spin
  in a chain of free fermion.
\newblock {\em J. Phys. A}, 53(34):345303, 31, 2020.

\bibitem{C}
Christophe Charlier.
\newblock Asymptotics of {H}ankel determinants with a one-cut regular potential
  and {F}isher-{H}artwig singularities.
\newblock {\em Int. Math. Res. Not. IMRN}, (24):7515--7576, 2019.

\bibitem{CharlierMuttalib-Borodin}
Christophe Charlier.
\newblock Asymptotics of {M}uttalib-{B}orodin determinants with
  {F}isher-{H}artwig singularities.
\newblock {\em Selecta Math. (N.S.)}, 28(3):Paper No. 50, 60, 2022.

\bibitem{CC}
Christophe Charlier and Tom Claeys.
\newblock Thinning and conditioning of the circular unitary ensemble.
\newblock {\em Random Matrices Theory Appl.}, 6(2):1750007, 51, 2017.

\bibitem{CG}
Christophe Charlier and Roozbeh Gharakhloo.
\newblock Asymptotics of {H}ankel determinants with a {L}aguerre-type or
  {J}acobi-type potential and {F}isher-{H}artwig singularities.
\newblock {\em Adv. Math.}, 383:Paper No. 107672, 69, 2021.

\bibitem{CR}
Tom Claeys and Stefano Romano.
\newblock Biorthogonal ensembles with two-particle interactions.
\newblock {\em Nonlinearity}, 27(10):2419--2444, 2014.

\bibitem{Colomo-Pronko08}
F.~Colomo and A.~G. Pronko.
\newblock Emptiness formation probability in the domain-wall six-vertex model.
\newblock {\em Nuclear Phys. B}, 798(3):340--362, 2008.

\bibitem{Colomo-Pronko10}
F.~Colomo and A.~G. Pronko.
\newblock The arctic curve of the domain-wall six-vertex model.
\newblock {\em J. Stat. Phys.}, 138(4-5):662--700, 2010.

\bibitem{Colomo-Pronko-ZinnJustin10}
F.~Colomo, A.~G. Pronko, and P.~Zinn-Justin.
\newblock The arctic curve of the domain wall six-vertex model in its
  antiferroelectric regime.
\newblock {\em J. Stat. Mech. Theory Exp.}, (3):L03002, 11, 2010.

\bibitem{Corwin-Hammond11}
Ivan Corwin and Alan Hammond.
\newblock Brownian {G}ibbs property for {A}iry line ensembles.
\newblock {\em Invent. Math.}, 195(2):441--508, 2014.

\bibitem{Corwin-Quastel-Remenik13}
Ivan Corwin, Jeremy Quastel, and Daniel Remenik.
\newblock Continuum statistics of the {${\rm Airy}_2$} process.
\newblock {\em Comm. Math. Phys.}, 317(2):347--362, 2013.

\bibitem{DIK2}
P.~Deift, A.~Its, and I.~Krasovsky.
\newblock Eigenvalues of {T}oeplitz matrices in the bulk of the spectrum.
\newblock {\em Bull. Inst. Math. Acad. Sin. (N.S.)}, 7(4):437--461, 2012.

\bibitem{DKMVZ}
P.~Deift, T.~Kriecherbauer, K.~T.-R. McLaughlin, S.~Venakides, and X.~Zhou.
\newblock Uniform asymptotics for polynomials orthogonal with respect to
  varying exponential weights and applications to universality questions in
  random matrix theory.
\newblock {\em Comm. Pure Appl. Math.}, 52(11):1335--1425, 1999.

\bibitem{DZ}
P.~Deift and X.~Zhou.
\newblock A steepest descent method for oscillatory {R}iemann-{H}ilbert
  problems. {A}symptotics for the {MK}d{V} equation.
\newblock {\em Ann. of Math. (2)}, 137(2):295--368, 1993.

\bibitem{D}
Percy Deift.
\newblock Integrable operators.
\newblock In {\em Differential operators and spectral theory}, volume 189 of
  {\em Amer. Math. Soc. Transl. Ser. 2}, pages 69--84. Amer. Math. Soc.,
  Providence, RI, 1999.

\bibitem{DIK}
Percy Deift, Alexander Its, and Igor Krasovsky.
\newblock Asymptotics of {T}oeplitz, {H}ankel, and {T}oeplitz+{H}ankel
  determinants with {F}isher-{H}artwig singularities.
\newblock {\em Ann. of Math. (2)}, 174(2):1243--1299, 2011.

\bibitem{DIK1}
Percy Deift, Alexander Its, and Igor Krasovsky.
\newblock Toeplitz matrices and {T}oeplitz determinants under the impetus of
  the {I}sing model: some history and some recent results.
\newblock {\em Comm. Pure Appl. Math.}, 66(9):1360--1438, 2013.

\bibitem{FIK}
A.~S. Fokas, A.~R. Its, and A.~V. Kitaev.
\newblock The isomonodromy approach to matrix models in {$2$}d quantum gravity.
\newblock {\em Comm. Math. Phys.}, 147(2):395--430, 1992.

\bibitem{FI}
P.~J. Forrester and J.~R. Ipsen.
\newblock Selberg integral theory and {M}uttalib-{B}orodin ensembles.
\newblock {\em Adv. in Appl. Math.}, 95:152--176, 2018.

\bibitem{Forrester-Majumdar-Schehr11}
Peter~J. Forrester, Satya~N. Majumdar, and Gr\'{e}gory Schehr.
\newblock Non-intersecting {B}rownian walkers and {Y}ang-{M}ills theory on the
  sphere.
\newblock {\em Nuclear Phys. B}, 844(3):500--526, 2011.

\bibitem{Fulmek-Kleber}
Markus Fulmek and Michael Kleber.
\newblock Bijective proofs for {S}chur function identities which imply
  {D}odgson's condensation formula and {P}l\"{u}cker relations.
\newblock {\em Electron. J. Combin.}, 8(1):Research Paper 16, 22, 2001.

\bibitem{Gessel-Viennot85}
Ira Gessel and G\'{e}rard Viennot.
\newblock Binomial determinants, paths, and hook length formulae.
\newblock {\em Adv. in Math.}, 58(3):300--321, 1985.

\bibitem{G23}
Roozbeh Gharakhloo.
\newblock Strong {S}zeg{\H o} limit theorems for multi-bordered, framed, and
  multi-framed {T}oeplitz determinants.
\newblock {\em arXiv preprint arXiv:2309.14695}, 2023.

\bibitem{GI}
Roozbeh Gharakhloo and Alexander Its.
\newblock A {R}iemann-{H}ilbert approach to asymptotic analysis of
  {T}oeplitz+{H}ankel determinants.
\newblock {\em SIGMA Symmetry Integrability Geom. Methods Appl.}, 16:Paper No.
  100, 47, 2020.

\bibitem{GW}
Roozbeh Gharakhloo and Nicholas~S. Witte.
\newblock Modulated bi-orthogonal polynomials on the unit circle: the {$2j-k$}
  and {$j-2k$} systems.
\newblock {\em Constr. Approx.}, 58(1):1--74, 2023.

\bibitem{Gorin-Liechty23}
Vadim Gorin and Karl Liechty.
\newblock Boundary statistics for the six-vertex model with {DWBC}.
\newblock {\em arXiv preprint arXiv:2310.12735}, 2023.

\bibitem{HoIndianaMathJournal}
Mark~C. Ho.
\newblock Properties of slant {T}oeplitz operators.
\newblock {\em Indiana Univ. Math. J.}, 45(3):843--862, 1996.

\bibitem{HoAdjoints1}
Mark~C. Ho.
\newblock Adjoints of slant {T}oeplitz operators.
\newblock {\em Integral Equations Operator Theory}, 29(3):301--312, 1997.

\bibitem{HoMichiganMathJournal}
Mark~C. Ho.
\newblock Spectra of slant {T}oeplitz operators with continuous symbols.
\newblock {\em Michigan Math. J.}, 44(1):157--166, 1997.

\bibitem{HoAdjoints2}
Mark~C. Ho.
\newblock Adjoints of slant {T}oeplitz operators. {II}.
\newblock {\em Integral Equations Operator Theory}, 41(2):179--188, 2001.

\bibitem{Hothesis}
Mark Chung-Hsuan Ho.
\newblock {\em Spectral properties of slant {T}oeplitz operators}.
\newblock ProQuest LLC, Ann Arbor, MI, 1996.
\newblock Thesis (Ph.D.)--Purdue University.

\bibitem{JK}
B.~Q. Jin and V.~E. Korepin.
\newblock Entanglement entropy for disjoint subsystems in xx spin chain.
\newblock {\em arXiv preprint arXiv:1104.1004}, 2011.

\bibitem{Johansson03}
Kurt Johansson.
\newblock Discrete polynuclear growth and determinantal processes.
\newblock {\em Comm. Math. Phys.}, 242(1-2):277--329, 2003.

\bibitem{Johansson-Nordenstam06}
Kurt Johansson and Eric Nordenstam.
\newblock Eigenvalues of {GUE} minors.
\newblock {\em Electron. J. Probab.}, 11:no. 50, 1342--1371, 2006.

\bibitem{Karlin-McGregor59}
Samuel Karlin and James McGregor.
\newblock Coincidence probabilities.
\newblock {\em Pacific J. Math.}, 9:1141--1164, 1959.

\bibitem{Krasovsky}
I.~V. Krasovsky.
\newblock Correlations of the characteristic polynomials in the {G}aussian
  unitary ensemble or a singular {H}ankel determinant.
\newblock {\em Duke Math. J.}, 139(3):581--619, 2007.

\bibitem{KMcLVAV}
A.~B.~J. Kuijlaars, K.~T.-R. McLaughlin, W.~Van~Assche, and M.~Vanlessen.
\newblock The {R}iemann-{H}ilbert approach to strong asymptotics for orthogonal
  polynomials on {$[-1,1]$}.
\newblock {\em Adv. Math.}, 188(2):337--398, 2004.

\bibitem{Liechty12}
Karl Liechty.
\newblock Nonintersecting {B}rownian motions on the half-line and discrete
  {G}aussian orthogonal polynomials.
\newblock {\em J. Stat. Phys.}, 147(3):582--622, 2012.

\bibitem{Liechty-Nguyen-Remenik22}
Karl Liechty, Gia~Bao Nguyen, and Daniel Remenik.
\newblock Airy process with wanderers, {KPZ} fluctuations, and a deformation of
  the {T}racy-{W}idom {GOE} distribution.
\newblock {\em Ann. Inst. Henri Poincar\'{e} Probab. Stat.}, 58(4):2250--2283,
  2022.

\bibitem{Liechty-Wang16}
Karl Liechty and Dong Wang.
\newblock Nonintersecting {B}rownian motions on the unit circle.
\newblock {\em Ann. Probab.}, 44(2):1134--1211, 2016.

\bibitem{lindstrom73}
Bernt Lindstr{\"o}m.
\newblock On the vector representations of induced matroids.
\newblock {\em Bull. London Math. Soc.}, 5:85--90, 1973.

\bibitem{McCoy-Wu}
Barry~M. McCoy and Tai~Tsun Wu.
\newblock {\em The Two-dimensional Ising Model}.
\newblock Harvard University Press, 1973.

\bibitem{Mehta}
Madan~Lal Mehta.
\newblock {\em Random matrices}, volume 142 of {\em Pure and Applied
  Mathematics (Amsterdam)}.
\newblock Elsevier/Academic Press, Amsterdam, third edition, 2004.

\bibitem{SzegoOP}
G\'{a}bor Szeg\H{o}.
\newblock {\em Orthogonal polynomials}, volume Vol. XXIII of {\em American
  Mathematical Society Colloquium Publications}.
\newblock American Mathematical Society, Providence, RI, fourth edition, 1975.

\bibitem{Sz}
G.~Szeg\"{o}.
\newblock On certain {H}ermitian forms associated with the {F}ourier series of
  a positive function.
\newblock {\em Comm. S\'{e}m. Math. Univ. Lund [Medd. Lunds Univ. Mat. Sem.]},
  1952:228--238, 1952.

\bibitem{Tracy-Widom94}
Craig~A. Tracy and Harold Widom.
\newblock Level-spacing distributions and the {A}iry kernel.
\newblock {\em Comm. Math. Phys.}, 159(1):151--174, 1994.

\bibitem{Tracy-Widom96}
Craig~A. Tracy and Harold Widom.
\newblock On orthogonal and symplectic matrix ensembles.
\newblock {\em Comm. Math. Phys.}, 177(3):727--754, 1996.

\bibitem{Tracy-Widom07}
Craig~A. Tracy and Harold Widom.
\newblock Nonintersecting {B}rownian excursions.
\newblock {\em Ann. Appl. Probab.}, 17(3):953--979, 2007.

\bibitem{WIDOMBlock}
Harold Widom.
\newblock Asymptotic behavior of block {T}oeplitz matrices and determinants.
\newblock {\em Advances in Math.}, 13:284--322, 1974.

\bibitem{Zinn_Justin00}
P.~Zinn-Justin.
\newblock Six-vertex model with domain wall boundary conditions and one-matrix
  model.
\newblock {\em Phys. Rev. E (3)}, 62(3):3411--3418, 2000.

\end{thebibliography}

\end{document}